 \providecommand\@dotsep{5}
 \def\listtodoname{List of Todos}
 \def\listoftodos{\@starttoc{tdo}\listtodoname}
\definecolor{skyblue}{rgb}{0.85,0.85,1} 
\newtheorem{theorem}{Theorem}[section]
\newtheorem{remark}[theorem]{Remark}
\newtheorem{definition}[theorem]{Definition}
\newtheorem{proposition}[theorem]{Proposition}
\newtheorem{lemma}[theorem]{Lemma}
\newtheorem{corollary}[theorem]{Corollary}
\newcommand{\tr}{\operatorname{tr}}
\newcommand{\dist}{\operatorname{dist}}
\newcommand{\R}{{\mathbb R}}
\newcommand{\C}{{\mathbb C}}
\newcommand{\Hsym}{\mathbb{H}_{\mathrm{sym}}}
\newcommand{\Htrace}{\mathbb{H}_{\mathrm{trace}}}
\newcommand{\e}{\varepsilon}
\theoremstyle{plain}
\theoremstyle{definition}
\numberwithin{equation}{section}
\def\squarebox#1{\hbox to #1{\hfill\vbox to #1{\vfill}}}
\newcommand{\p}{\partial}
\newcommand{\abs}[1]{\left\vert{#1}\right\vert}
\author{Dmitry Golovaty}
\address{Department of Mathematics, The University of Akron, Akron, OH 44325, USA}
\email{dmitry@uakron.edu}
\author{Matthias Kurzke}
\address{School of Mathematical Sciences, University of Nottingham, University Park, Nottingham, NG7 2RD, UK}
\email{matthias.kurzke@nottingham.ac.uk}
\author{Jose Alberto Montero}
\address{Santiago, Chile}
\email{j.alberto.montero.z@gmail.com}
\author{Daniel Spirn}
\address{School of Mathematics, University of Minnesota, Minneapolis, MN 55455}
\email{spirn@umn.edu}
\title
{
Tetrahedral frame fields via constrained third order symmetric tensors}
\begin{document}    

\begin{abstract}
    Tetrahedral frame fields have applications to certain classes of nematic liquid crystals and frustrated media.
    We consider the problem of constructing a tetrahedral frame field in three dimensional domains in which the boundary normal vector is included in the frame on the boundary. 
    To do this we identify an isomorphism between a given tetrahedral frame and a symmetric, traceless third order tensor under a particular nonlinear constraint.  We then define a Ginzburg-Landau-type functional which penalizes the associated nonlinear constraint.  Using gradient descent, one retrieves a globally defined limiting tensor outside of a singular set. The tetrahedral frame can then be recovered from this tensor by a determinant maximization method, developed in this work. The resulting numerically generated frame fields are smooth outside of one dimensional filaments that join together at triple junctions.  
\end{abstract}

\maketitle

\setcounter{tocdepth}{1} 

\tableofcontents
\section{Introduction}
In this paper we continue with our program that aims to use variational methods for tensor-valued functions in order to describe frame-valued fields in $\R^n$. Here a \emph{frame} $\textfrak{F}$ is a fixed set of $m$ vectors in $\R^n$ that satisfies some symmetry conditions, while a \emph{frame-valued field} $R(x)\textfrak{F}$ assigns a rigid rotation $R(x)\in SO(n)$ of $\textfrak{F}$ to every point $x\in\Omega\subset\R^n$. 

Whenever $-a\in\textfrak{F}$ for all $a\in\textfrak{F},$ the frame $\textfrak{F}$ of $m$ vectors can be identified with a frame composed of $m/2$ lines. Here of particular interest is a set of $n$ orthogonal lines in $\R^n$, known as an $n$-\emph{cross}. An $n$-\emph{cross field} associates an $n$-cross with every point in $\R^n$. In \cite{GMS} we investigated whether it is possible to construct a smooth field of $n$-crosses in $\Omega$, assuming certain behavior of that field on $\partial\Omega$. This problem has received a considerable attention in computer graphics and mesh generation \cite{Vaxman}. 

In two dimensions (or on surfaces in three dimensions)  quad meshes can be obtained by finding proper parametrization based on a $2$-cross field defined over a triangulated surface \cite{Li:2012:AMU:2366145.2366196}.  A similar hexahedral mesh generation approach in three dimensions is typically accomplished by constructing a $3$-cross field on a tetrahedral mesh and then using a parametrization algorithm to produce a hexahedral mesh \cite{05_IMR23_Kowalski,Nieser_2011}. From a mathematical perspective, the first step in this procedure requires a $3$-cross field in $\Omega\subset\R^3$ that is sufficiently smooth and properly fits to $\partial\Omega$, e.g., by requiring that one of the lines of the field is orthogonal to $\partial\Omega$. Generally, an $n$-cross field of this type has singularities on $\partial\Omega$ and/or in $\Omega$ due to topological constraints \cite{GMS}. 

A number of approaches have been proposed to construct a $2$- or $3$-cross fields \cite{BeaufortPoincare,BERNARD2014175,bommes,Huang:2011:BAS:2070781.2024177,05_IMR23_Kowalski,Li:2012:AMU:2366145.2366196,ViertelOsting} but of particular interest to us in \cite{GMS} was a promising direction identified in \cite{BeaufortPoincare,ViertelOsting} for $2$-cross fields where a connection to the harmonic map relaxation, i.e., asymptotic limits in Ginzburg-Landau theory was noticed. While this connection is transparent in two dimensions, the appropriate descriptors in three dimensions, however, were not known until very recently \cite{Chemin2019,palmer2019algebraic}. One of our contributions in \cite{GMS} was to propose a unified tensor-based approach to constructing $n$-cross fields that takes advantage of classical PDE theory. 

Our framework in \cite{GMS} applies in arbitrary dimensions and associates an $n$-cross with a symmetric $4$-tensor that satisfies certain trace conditions and a nonlinear constraint. We relax this constraint by introducing an appropriate penalty term to obtain a global Ginzburg-Landau-type variational problem for relaxed, tensor-valued maps.  The Ginzburg-Landau relaxation embeds the problem into a global steepest descent that allows for a new selection principle for the limiting $n$-cross field that we were able to explore numerically.  
 
{In this paper we adapt this procedure to optimal generation of tetrahedral frame fields in Lipschitz domains.} Here a tetrahedral frame $\textfrak{T}$ is a set of four position vectors of the vertices of a tetrahedron in $\mathbb{R}^3$. The frame $\textfrak{T}$ can be identified with a constrained $3$-tensor and, using the ideas of \cite{GMS}, we develop a scheme for constructing tetrahedral frame fields using energetic relaxation for $3$-tensor-valued functions.  The primary novel feature of our approach is the recovery procedure that allows for extraction of the tetrahedral frame from a constrained 3-tensor (Theorem \ref{t:recovery_3_d}). In fact, our construction gives an \emph{isometric} embedding of tetrahedral frames into the space of constrained 3-tensors (Theorem \ref{thm55}).

When we constrain a frame field to contain the normal on the boundary for topological reasons singularities emerge, as is in the case of 3-cross fields, \cite{GMS}. Our variational relaxation approach allows us to observe formation of singularities numerically, Fig.~\ref{fig:junctions}, and to study their topological properties. Here unexpected features arise as consequence of noncommutativity of the fundamental group of the target manifold.
\begin{figure}[H]
        \centering
           \subfloat[Singular sets connecting in triple junctions for $SO(3)/T$-targets.]{%
              \includegraphics[height=2.5in]{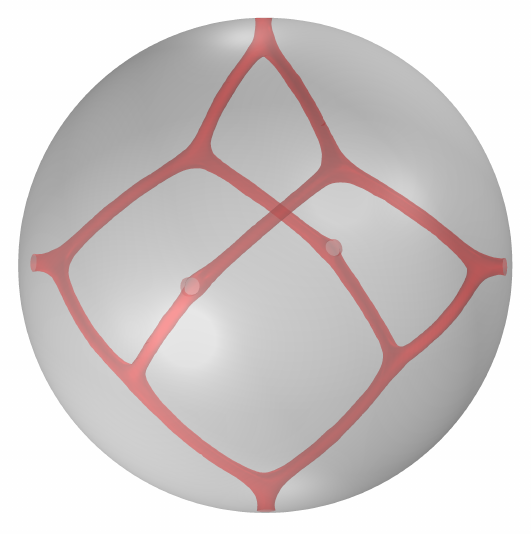}%
              \label{fig:sphere.tetra}%
           }\qquad
           \subfloat[Singular sets connecting in quadruple junctions for $SO(3)/O$-targets.]{%
              \includegraphics[height=2.5in]{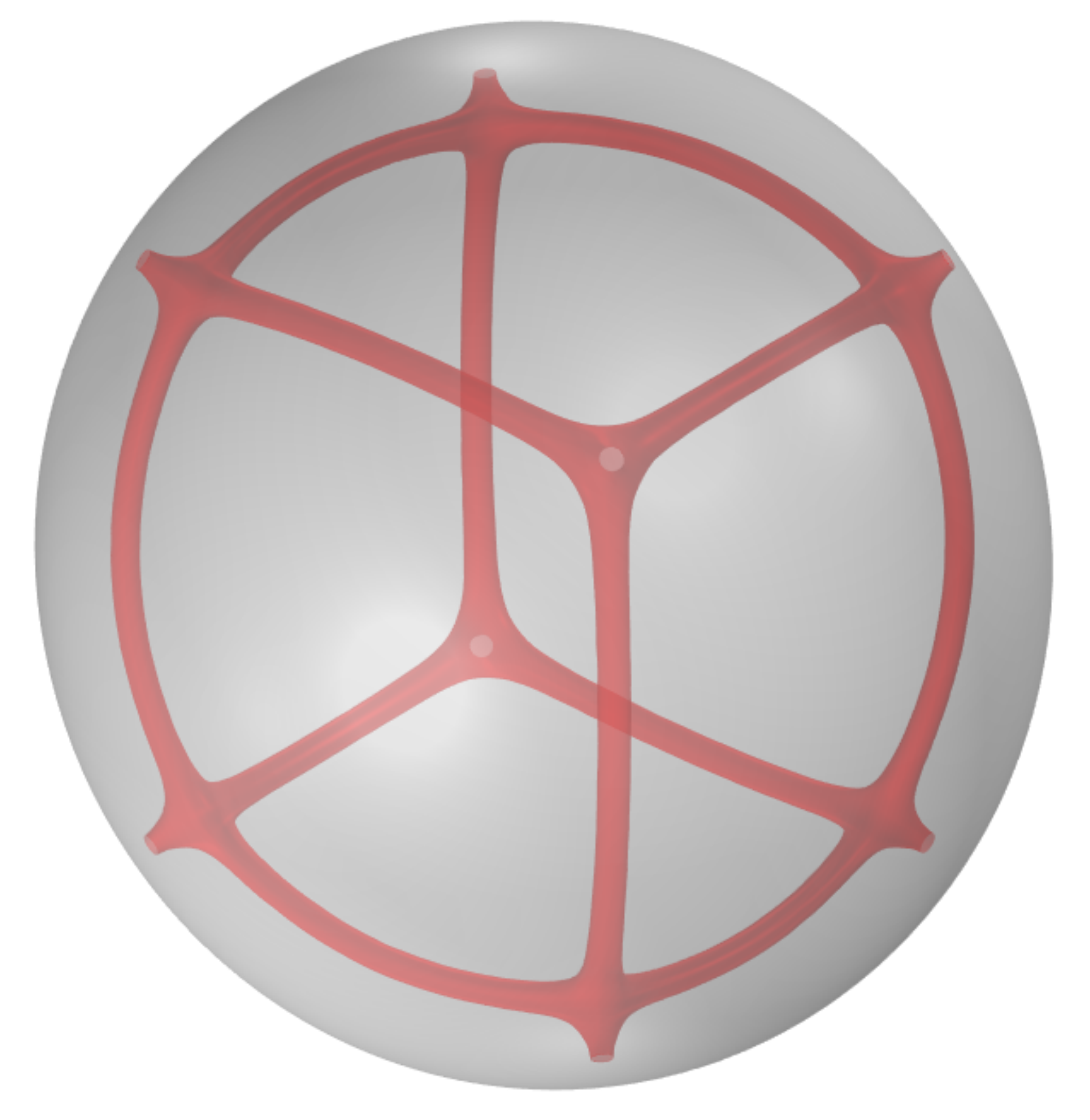}%
              \label{fig:sphere.cross}%
           }
           \caption{Different quotients of $SO(3)$ lead to different types of junctions.  See \cite{GMS} for more details on the $SO(3)/O$ computations and Section \ref{sec:numerics} of this paper for more details on $SO(3)/T$ computations.}
           \label{fig:junctions}
\end{figure}
In particular, unlike the standard Ginzburg-Landau where the number of singularities is dictated by the degree of the boundary data, here we observe local minimizers with different number of vortices for the same boundary conditions. Further, local minimizers in three dimensional domains exhibit one-dimensional singular sets that meet at triple junctions (as opposed to quadruple junctions in \cite{GMS}). These junctions and their structure are another novel feature of our numerical experiments. Investigation of properties of minimizers of our relaxed problem is a fascinating challenge for further analysis. 

Note that tetrahedral frame fields on $\Omega$
can be written as maps  $\Omega\to SO(3)/T=SU(2)/2T$, 
where $T$ is the tetrahedral group and $2T$ the binary tetrahedral group, its pre-image under the covering map $SU(2)\to SO(3)$. It is well known since the work of F. Klein \cite{KleinIkosaeder} that the quotient of $\C^{2}$ by a finite subgroup $G$ of $SU(2)$ can be realized as an algebraic variety in $\C^3$, using the theory of invariant polynomials. Because here we are interested in the quotient $SU(2)/G$ which is a closed subset of $\C^2/G$, the Klein construction would require us to impose additional polynomial constraints.  Rather than use this approach, we instead embed $SU(2)/G$ into an algebraic variety of a Euclidean space of a large enough dimension so that the resulting polynomial equations are of order two, the lowest possible for a non-linear polynomial. As it turns out, this embedding is into a set that can be endowed with a natural matrix structure that leads to nice compactness properties in weak topologies, see \cite{GMS2}. 

Our interest in tetrahedral frame fields represented by third order symmetric traceless tensors is not purely mathematical as they have drawn significant attention from the physicists since the early 80's
\cite{brand2005tetrahedratic,Turnbull1978,LuiPhysX,Fel,Nelson1983,NelsonToner1981,Trebin1984}. As we will discuss in the next section, of particular relevance to this work is modeling of bent-core liquid crystals, where phases with tetrahedral symmetry can arise in certain temperature regimes. 

\subsection{Results and organization of the paper}

{ 
We now describe the organization of this paper. Section~\ref{sec:nlc} provides necessary background and motivation for our work from the modeling of nematic liquid crystals with symmetries  and other physical applications.

In Section \ref{s:nD}, we start with $n+1$ vectors, $\{ \mathbf{u}^j\}$ in $\R^n$ that are equally spaced out on $\mathbb{S}^{n-1}$.  Not only does this include tetrahedral frames, it also includes  the two-dimensional analog of a tetrahedral frame. This analogous frame  consists of the position vectors of vertices of an equilateral triangle with the center of mass at the origin, and we refer to such a three-pronged shape as an \emph{MB frame}. MB frame fields appears naturally when we discuss traces of tetrahedral frame fields on the boundary of a three-dimensional domain. 
Associated with the $n+1$ vectors is a symmetric three tensor, 
\begin{equation} \label{e:introDefQ}
\mathcal{Q}_{ijk} = \sum_{\ell=1}^{n+1} \mathbf{u}^\ell_i \otimes \mathbf{u}^\ell_j \otimes  \mathbf{u}^\ell_k.
\end{equation}
The rest of Section~\ref{s:nD}  identifies invariants of tensors \eqref{e:introDefQ}: not only are the $\mathcal{Q}$'s symmetric and traceless, they also satisfy an SVD-type identity, 
\begin{equation} \label{e:IntroSVD}
\mathcal{Q} \mathcal{Q}^T = \lambda_n^2 I(n) ,
\end{equation}
where $I(n)$ is the $n\times n$ identity matrix and $\lambda^2_n ={(n+1) (n^2 - 1) \over n^2}$ , see Proposition~\ref{p:3tensorlaws}.  Identity \eqref{e:IntroSVD} proves to be a crucial tool, as we will use it to "push" our linear space towards $SO(3)/T$.  Additionally, we show that these $\mathcal{Q}$'s enjoy an eigenvector-eigentensor structure, see  Proposition~\ref{p:etevpairs2}.  A particularly useful consequence of the eigenvector-eigentensor pairing is a mechanism to ensure that the normal vector is contained within and MB or tetrahedral frame field, 
\begin{equation} \label{e:normalvectorIntro}
\mathcal{Q} ( \nu \otimes \nu) = {n^2 - 1\over n^2} \nu.
\end{equation}

Suppose that $\Htrace(n,k)$ is the set of $k$-th order tensors in $\mathbb R^n$ that are symmetric and traceless, i.e.  invariant with respect to permutation of indices and such that a contraction using the last two indices produces a zero $k-2$-order tensor in $\mathbb R^n$. In Section~\ref{sec:rec} we establish two theorems that enable us to recover frames of interest in two and three dimensions. When $n=2,$ we prove that
\begin{equation}  
\label{e:isomercedes0}
    SO(2)/D_3 \equiv  \Htrace(2,3) \cap \left\{ \mathcal{Q} \mathcal{Q}^T =  \lambda_2^2 I(2) \right\},
\end{equation} 
where $D_3$ is the ${2 \pi \over 3}$-rotation group. When $n=3,$ we establish that
\begin{equation}
\label{e:isotetra0}
    SO(3)/T \equiv  \Htrace(3,3) \cap\left\{ \mathcal{Q} \mathcal{Q}^T =  {32 \over 27} I(3)   \right\},
\end{equation} 
where $T$ is the tetrahedral group. For tensors in $\Htrace(3,3) \cap\left\{ \mathcal{Q} \mathcal{Q}^T =  {32 \over 27} I(3)   \right\}$ we also provide an algorithm for computing four vectors of a tetrahedral frame from a given tensor. These theorems are proved in Section~\ref{sec:rec} and Appendix~B. 

In Section~\ref{sec:relax} we apply our results from Section~\ref{sec:rec} to generate a frame field in Lipschitz domains. 
Simple constructions in this section  show that requiring the normal vector on the boundary be included in either an MB or tetrahedral frame  induces nonexistence of a smooth frame field in the interior.
To avoid this, we work in the linear subspace $\Htrace(n,3)$ and push towards the constraint \eqref{e:IntroSVD}. Indeed the relaxation procedure is defined  by generating a sequence, 
\[
\mathcal{Q}_\e = \arg \min_{\mathcal{A} \in H^1_\nu(\Omega; \Htrace(n,3))} \int_\Omega 
{1\over 2} \left| \nabla \mathcal{A} \right|^2 + {1\over \e^2} \left| \mathcal{A} \mathcal{A}^T - {\lambda_n^2 } I(n)\right|^2   dx,
\]
where the space $H^1_\nu(\Omega;\Htrace(n,3))$ is described earlier in the section. This space includes constraints to ensure \eqref{e:normalvectorIntro} holds, or is satisfied in the limit.
In later sections, we examine the limiting $\mathcal{Q}_\e$ via computational experiments and show that  MB and tetrahedral frame fields are smooth outside of co-dimension 2 singular sets. 

In Section~\ref{sec:qua} we develop a connection between tetrahedral frames and quaternions in $S^3/2T$, where $S^3$ are the unit quaternions and $2T$ is a specific finite
subgroup. We show that 
a natural map from quaternions
to symmetric traceless tensors
induces an isometric
embedding of the space of tetrahedra. We also compute the fundamental 
group of the space of tetrahedra. As 
the group is non-abelian, the free 
homotopy classes are characterized
by the conjugacy classes of the 
fundamental group.

In Section~\ref{sec:poi-hop} we provide some global geometric information on tetrahedral frame fields in smooth three dimensional domains.  In particular an adaptation of the classical  Poincare-Hopf theorem to frame fields, see \cite{ray:NSDF:2006}, provides a constraint on the total index of the tangential MB field induced by the requirement that  the normal vector being contained in the tetrahedral frame field on boundary.  If one defines the index of the tangential MB field on the surface as the angular change about a singular point divided by $2\pi$, then this results in a formula, 
\begin{equation} \label{e:indexformulaIntro}
\sum_{x\in A} i(x) = 2-2g,
\end{equation}
where $i(x)$ is the index of the zero, $A$ is the set of singularities  of the tangential MB field on the surface, and $g$ is the genus of the bounding surface, 
see Remark~\ref{rmk:PH}.  Applying \eqref{e:indexformulaIntro} to a ball in 3D, one finds $i(x) = 6$, which  corresponds to the number of boundary point singularities in simulation $(a)$ in Figure~\ref{fig:junctions}.

Section~\ref{sec:numerics} describes typical examples of tetrahedron-valued critical points for both two- and three-dimensional energies obtained numerically via gradient flow. In numerical simulations, the trace of each competitor on the boundary of the domain is assumed to contain the normal to the boundary. Topological obstructions associated with these boundary conditions give rise to formation of both boundary point- and interior line singularities. 

In prior sections we have looked at energy minimizing sequences in which the Dirichlet energy is appended with a particular fourth-order nonlinear potential $\widetilde{W}(\mathcal{Q}) = {1\over \e^2} |\mathcal{Q} \mathcal{Q}^T - \lambda_n^2 I(n)|^2$ that pushes our linear space towards a tetrahedral (or MB) frame field.  
In Appendix~C we explore the connection between our work and a more general fourth order potential for $\Htrace(3,3)$ introduced in \cite{LR2002} to describe bent-core nematic liquid crystals. 
This potential,
\begin{align*}
    \label{eq:radlub}
    W(\mathcal{Q}) = \frac{\abs{\mathcal{Q}}^4}{4} - \frac{\alpha}{2}\abs{\mathcal{Q}}^2 + \frac{\beta}{4}\sum_{i, j = 1}^3 \langle \mathcal{Q}_i, \mathcal{Q}_j\rangle ^2.
\end{align*}
allows for much richer sets of minimizers.  We characterize some features of energy minimizers with this more general potential in terms of the $\alpha$ and $\beta$.  In fact, we find both MB and tetrahedral frames in three dimensional domains, depending on the values of the parameters.
}

\section{Bent-core nematic liquid crystals}
\label{sec:nlc}
A liquid crystal is a state of matter intermediate between a solid and a liquid in that it retains some degree of order characteristic of a solid, yet in can flow like a liquid. For example, a \emph{nematic} liquid crystal---typically composed of molecules that have highly anisotropic shapes---possesses orientational order for a certain range of temperatures or concentrations. Two other common types of liquid crystals include \emph{cholesterics} formed by screw-shaped molecules that exhibit orientational order with a spontaneous twist and \emph{smectics} where, in addition, to orientational order, the molecules tend to assemble into layers. 

Suppose that a nematic occupies the domain $\Omega\subset\mathbb{R}^3$. {Locally, orientational order can be described by a parametrized probability density function $f:\Omega\times\mathbb{S}^2\to\mathbb{R}$ that measures the likelihood that a liquid crystalline molecule near $x\in\Omega$ is oriented within a given solid angle in $\mathbb S^2$.} For nematics, the probability of finding the head or the tail of a molecule pointing in a given direction are always the same, hence $f(x,-\mathbf{m})=f(x,\mathbf{m})$ for every $\mathbf{m}\in\mathbb{S}^2$ and $x\in\Omega$. 

A practically useful approach to describe a probability distribution over $\mathbb{S}^2$ is to generate its moments over the sphere.  In classical Landau-de Gennes theory for nematics the invariance of $f(x,\cdot)$ with respect to inversions guarantees that the first nontrivial moment of $f(x,\cdot)$ for every $x\in\Omega$ is the second moment 
\[Q(x):=\left<\mathbf{m}\otimes\mathbf{m}-\frac{1}{3}I(3)\right>_{f(x,\cdot)}.\]
Here the second-order tensor $Q(x)$ is symmetric and traceless and
$$
\left<h\right>_{f(x,\cdot)}:=\int_{\mathbb{S}^2}h(\mathbf{m})f(x,\mathbf{m})\,d\mathbf{m},
$$
for any map $h$ defined on $\mathbb{S}^2$. The liquid crystal is in the {\emph{uniaxial nematic}} phase if exactly two eigenvalues of $Q,$ e.g., $\lambda_1,\lambda_2$ are equal so that \begin{equation}
    \label{eq:nematic}
    Q=s\left(\mathbf{n}\otimes\mathbf{n}-\frac{1}{3}I(3)\right),
\end{equation}
where $s=\frac{3}{2}\lambda_3$ is \emph{the degree of orientation} of the nematic, $\mathbf{n}$ is the nematic \emph{director} and $\left(\lambda_3,\mathbf{n}\right)$ is an eigenvalue-eigenvector pair for $\mathcal{Q}$. On the other hand, when $Q=0,$ the liquid crystal has no orientational order and it is said to be in the \emph{isotropic} phase.  
The tensor $Q$ is the \emph{order parameter} of the Landau-de Gennes variational theory, in which equilibrium configurations of a nematic liquid crystals are assumed to minimize the (nondimensional) energy 
\begin{equation}
    \label{eq:ldgen}
E[Q]:=\int_\Omega\left[F\left(Q,\nabla Q\right)+\frac{1}{\delta^2}W\left(Q,\tau\right)\right]\,dx. 
\end{equation}
In this expression,  $F\left(Q,\nabla{Q}\right)$ is the orientational elastic energy, $W\left({Q},\tau\right)$ is the potential energy, $\tau$ is temperature and $0<\delta\ll1$ is the nematic coherence length. For a thermotropic nematic, there exists a critical temperature $\tau_c$ such that $W$ is minimized by the isotropic phase ${Q}=0$ when $\tau>\tau_c$ while it is minimized by any ${Q}$ of the form \eqref{eq:nematic} in the manifold of nematic states when $\tau<\tau_c$. We say that the liquid crystal undergoes an isotropic-to-nematic phase transition at $\tau_c$.

The most striking feature of a liquid crystal in a nematic phase are defect patterns of points, lines and walls that can be observed optically under crossed polarizers. Mathematically nematic defects are topological singularities of minimizers of \eqref{eq:ldgen}, associated with the nonlinear constraint $W({Q},\tau)=\min_{{Q}}\,W({Q},\tau)$. This constraint ensures that \eqref{eq:nematic} holds approximately on the entire domain $\Omega$, except for a singular set of a small measure (determined by the size of $\delta$) where the tensor ${Q}$ is {either biaxial or isotropic.} Understanding singularities of minimizers of the Landau-de Gennes energy has been a subject of extensive investigations in the last decade \cite{MR4314141},\cite{Can01}\nocite{Can02,CMR,CZ02,CZ01}-\cite{CTZ}, \cite{dFRSZ02,dFRSZ01,GM,MR4473115,HM,HMP}, \cite{INSZ01}\nocite{INSZ02,INSZ03}-\cite{INSZ04}, \cite{KRSZ,MZ,NZ}.

A relatively recent discovery of novel liquid crystalline phases formed by bent-core, banana-shaped molecules \cite{RevModPhys.90.045004,doi:10.1080/21680396.2013.803701} prompted modifications to the Landau-de Gennes theory to account for symmetries of these phases that do not exist in standard nematics \cite{brand2005tetrahedratic,pleiner2014low,LR2002,Radzihovsky_2001}. For example, it has been shown in \cite{LR2002} that an appropriate continuum theory in absence of positional order should depend on the first \emph{three} moments of an orientational probability distribution of V-shaped bent-core molecules. To this end, suppressing the dependence on $x$, recall that the probability density function can be expanded in terms of powers of $\mathbf{m}$ by using the Buckingham's formula \cite{Buckingham1967,Turzi} written as 
\begin{equation}
\label{eq:buckingham}
f(\mathbf{m}) = 
    \frac{1}{4\pi} \left(
    1 + \sum_{k=1}^\infty \frac{ (2k+1)!!}{ k!} {\left<\overbracket{\mathbf{m}^{\otimes k }} \right>}_f\cdot \mathbf{m}^{\otimes k} 
    \right),
\end{equation}
where 
\[
\mathbf{m}^{\otimes k} = \underbrace{\mathbf{m} \otimes \cdots \otimes \mathbf{m}}_k,
\]
the quantity $\overbracket{\mathcal{A}}$ is the symmetric traceless part of the tensor $\mathcal{A}$ and $``\cdot"$ denotes tensor contraction. By isolating the first three terms in \eqref{eq:buckingham} we obtain
\begin{equation}
    f(\mathbf{m})=\frac{1}{4\pi}\left(1+3\,\mathbf{p}\cdot\mathbf{m}+\frac{15}{2}\,Q\cdot(\mathbf{m}\otimes\mathbf{m})+\frac{35}{2}\,\mathcal{T}\cdot(\mathbf{m}\otimes\mathbf{m}\otimes\mathbf{m})+\ldots\right).
\end{equation}
In this expression, the first moment $\mathbf{p}$ is the \emph{polarization vector}, the second moment $Q$ is the $Q$-tensor defined in \eqref{eq:nematic} and the third moment describing tetrahedratic order is given by the third order tensor $\mathcal{T}$ with components
\[\mathcal{T}_{ijk}:=\left<m_im_jm_k-\frac{1}{5}\left(m_i\delta_{jk}+m_j\delta_{ik}+m_k\delta_{ij}\right)\right>_f.\]
Note that at fourth order, we retrieve a tensor that is useful in describing an ordering with cubic symmetry considered in \cite{Chemin2019,GMS}.

The appropriate Landau-de Gennes free energy functional can be constructed as a rotationally-invariant power series expansion around the isotropic state in the order parameters $\mathbf{p}$, $Q$ and $\mathcal{T}$ and their gradients. The contribution from the gradients of the order parameter fields is the elastic energy while the remaining terms that do not vanish in a spatially homogeneous material comprise the Landau-de Gennes potential. The coefficients of this potential, in general, are temperature-dependent and thus they control the structure of the minimal set of the potential, or the phase in which the material is observed at a given temperature. Given that the bent-core liquid crystals are described by three order parameters, there is a large number of possible phases that form via interactions between different material symmetries. For example, nematic order described by the standard second order tensor $Q$ may induce  tetrahedratic order described by the third order tensor $\mathcal{T}$ and vice versa via appropriate coupling terms \cite{LR2002}. If one were to neglect the contributions from lower order moments $\mathbf{p}$ and $Q$, the form of the Landau-de Gennes energy for bent-core liquid crystals for third order tensor fields \cite{LR2002} with the potential
\[
    W(\mathcal{T}) = \frac{\abs{\mathcal{T}}^4}{4} - \frac{\alpha}{2}\abs{\mathcal{T}}^2 + \frac{\beta}{4}\sum_{i, j = 1}^3 \langle \mathcal{T}_i, \mathcal{T}_j\rangle ^2.
\]
is a more complex version of the relaxed energy functional considered in this work. Because both functionals would require identical algebraic and analytical tools to obtain rigorous mathematical results, the present work can serve as a first step toward understanding of the Landau-de Gennes models for third-order tensors in higher dimensions. As a first step in this direction, inspired by \cite{LR2002}, in Appendix C we rigorously describe the minima of $W$. Note that recent analysis results \cite{MR2924435,MR3853605,MR3423208,doi:10.1137/16M1099789} have not considered phases of bent-core liquid crystals with tetrahedral symmetry.






\section{Symmetric, traceless 3rd order $\mathcal{Q}$-tensors}
\label{s:nD}


\subsection{Notation}

We will define a series of sub and affine spaces based on a set of vectors $\mathbf{u}^j \in \mathbb{R}^n$.  For a given vector $\mathbf{u}^j \in \mathbb{R}^n$, we can write it component-wise,  $\mathbf{u}^j = (u^j_1, \ldots, u^j_n)^T$.  Let $\mathbf{e}^1, \ldots, \mathbf{e}^n$ denote the canonical basis in $\R^n$.  Let $\mathbb{V}(n,\mathcal{R})$ be the set of all $n$-vectors with entries from a ring $\mathcal{R}$.  We will frequently drop $n$ when the dimension of the vector is clear.  In particular, $\mathbb{V}(n,\mathbb{R}) = \mathbb{R}^n$.

Let $\mathbb M(m, n,\mathcal{R})$ be the set of all $m\times n$ matrices with entries from a ring $\mathcal{R}$.  In particular, $\mathbb{M}(m,1,\mathcal{R}) = \mathbb{V}(m,\mathcal{R})$.  For a square $n\times n$ matrix with entries in $\mathcal{R}$, we write $\mathbb{M}(n,\mathcal{R})$.  We denote elements  $A  \in \mathbb{M}(m,n,\mathbb{R})$ with capital letters.  One particularly important class of matrices for us are projections $P^\ell \in \mathbb{M}(n,n,\mathbb{R})$ for a unit vector  $\mathbf{u}^\ell \in \mathbb{R}^n$ with $P^\ell_{j k} = u^\ell_j u^\ell_k = (\mathbf{u}^\ell \otimes \mathbf{u}^\ell)_{j k}$.   We also denote the $n\times n$ identity matrix, ${I}(n)$.

We next define generic $k$-th order tensors
\[
\mathbb{H}(n,k)
= \underbrace{\mathbb{R}^n \otimes \cdots \otimes \mathbb{R}^n}_{k \hbox{ times}}
\]
with elements $\mathcal{A} \in \mathbb{H}(n,k)$ that have indices $\mathcal{A}_{i_1 i_2 \ldots i_k}$ for $i_j \in\{1,\ldots,n\}$ with script letters.  We finally define symmetric $k$-order tensors as
\[
\Hsym(n,k) = \{ \mathcal{A} \in \mathbb{H}(n,k) \hbox{ such that } \mathcal{A}_{\sigma(i_1 \ldots i_k)} = \mathcal{A}_{i_1 \ldots i_k} \hbox{ for all } \sigma \in S_k\},
\]
where $S_k$ is the group of permutations of $k$-length words.
Finally, we define set of traceless, symmetric $k$-order tensors
\begin{equation} \label{e:tracelesstens}
\Htrace (n,k) = \{ \mathcal{A} \in \Hsym(n,k) \hbox{ such that } \sum_{j=1}^n \mathcal{A}_{a_1 a_2 \ldots a_{k-2}  j j } = 0 \hbox{ for } a_\ell \in \{1,\ldots, n\} \}.
\end{equation}
Given this notation, we now describe the specific class of  third order tensors that we will study.

\begin{remark}\label{rm:tensors_3_4_9_2}
We define an especially useful bijection $Q_{\mathcal{Q}}: \mathbb{H}(n,4) \to \mathbb{H}(n^2,2)$, where for any choice $i,j,k,\ell \in \{1,\ldots, n\}$,
\begin{equation} \label{e:tensor2matrix}
(Q_{\mathcal{Q}})_{(i-1)n+k,(j-1)n+\ell} = \mathcal{Q}_{ijk\ell}.
\end{equation}
This bijection identifies a canonical element of   $\mathbb{H}(n^2,2)=\mathbb{M}(n^2,n^2)$ with an element of $\mathbb{H}(n,4)$, and vice versa.  Given this bijection, we will frequently refer to elements in $\mathbb{H}(n,4)$ and $\mathbb{H}(n^2,2)$ interchangeably.

Likewise, we define the bijection $Q_{\mathcal{Q}}: \mathbb{H}(n,3) \to \mathbb{M}(n,n^2)$ by
\begin{equation}
\label{e:3tensor2matrix}
(Q_{\mathcal{Q}})_{i,(j-1)n+k} 
= \mathcal{Q}_{ijk},
\end{equation}
for $i,j,k \in \{1,\ldots, n\}.$
As with the prior definition, we will  refer to elements in $\mathbb{H}(n, 3)$ and $\mathbb{M}(n,n^2)$ interchangeably.
\end{remark}

\subsection{Elements of $\Htrace(n,3)$ generated by a frame with $n+1$-hedral symmetry}
\label{ss:nDresults}

We say a  collection of vectors  $\{\mathbf{u}^\ell \}_{\ell=1}^{n+1} \in \mathbb{S}^{n-1} \subset \mathbb{R}^n$ has "$n+1$-hedral symmetry" if the following condition holds: 
\begin{align} \label{e:innerprodcondition}
 \langle \mathbf{u}^j , \mathbf{u}^k \rangle & = -\frac{1}{n} + \frac{n+1}{n} \delta_{jk} \ \hbox{ for all } j,k \in\{1,\ldots, n+1\}.
\end{align}
Such collections satisfy the following result.
\begin{lemma}\label{e:propsvecsangles}
Suppose we have $n+1$ vectors $\{\mathbf{u}^j\}$ satisfying \eqref{e:innerprodcondition}, then
\begin{equation}
\label{e:spanRn} \mbox{any}\,\,\, n \,\,\, \mbox{of the vectors}\,\,\, \mathbf{u}^j, \,j=1, ..., n+1 \,\,\, \mbox{are linearly independent.}
\end{equation}
In particular, the vectors $\{\mathbf{u}^j\}_{j=1}^{n+1}$ span $\mathbb{R}^n$.  Furthermore
\begin{align}
\label{e:sumvecszero}
    \sum_{\ell=1}^{n+1} \mathbf{u}^\ell & = 0, \\
\label{e:sumprojections}
\frac{1}{n+1} \sum_ {\ell=1}^{n+1} P^\ell & = \frac{1}{n} {I}(n) 
\end{align}
where $P^k$ denotes the projection matrix generated by $\mathbf{u}^k$.
\end{lemma}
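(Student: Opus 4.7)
My plan is to establish the three claims in the natural order that lets each one feed into the next, with the Gram matrix argument doing the main work for linear independence and short inner product calculations handling the rest.

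First I would prove the zero sum identity \eqref{e:sumvecszero} by a direct norm computation. Setting $v=\sum_{\ell=1}^{n+1}\mathbf{u}^\ell$, I would expand
\begin{equation*}
|v|^2 = \sum_{j,k=1}^{n+1}\langle\mathbf{u}^j,\mathbf{u}^k\rangle = \sum_{j,k=1}^{n+1}\left(-\tfrac{1}{n} + \tfrac{n+1}{n}\delta_{jk}\right) = -\tfrac{(n+1)^2}{n} + \tfrac{(n+1)^2}{n} = 0,
\end{equation*}
forcing $v=0$. This is slick because it avoids needing the spanning property first.

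Next, for linear independence of any $n$ of the $\mathbf{u}^j$ (statement \eqref{e:spanRn}), I would pick any subset of size $n$, say $\mathbf{u}^1,\ldots,\mathbf{u}^n$, and study its Gram matrix $G$ with entries $G_{jk}=\langle\mathbf{u}^j,\mathbf{u}^k\rangle = -\frac{1}{n} + \frac{n+1}{n}\delta_{jk}$. I would write $G=\frac{n+1}{n}I(n) - \frac{1}{n}J$, where $J$ is the all-ones $n\times n$ matrix whose eigenvalues are $n$ (once) and $0$ (with multiplicity $n-1$). Consequently the eigenvalues of $G$ are $\frac{1}{n}$ and $\frac{n+1}{n}$, both strictly positive, so $G$ is nonsingular and the $n$ vectors are linearly independent. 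This is the technical heart of the lemma, and I expect it to be the main (though mild) obstacle; an alternative is to argue by contradiction using a linear relation combined with the explicit inner products, but the Gram determinant route is cleaner. Since any $n$ of the $n+1$ vectors span $\mathbb{R}^n$, so does the full collection.

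Finally, for the projection identity \eqref{e:sumprojections}, I would set $M=\sum_{\ell=1}^{n+1}P^\ell = \sum_{\ell}\mathbf{u}^\ell(\mathbf{u}^\ell)^T$ and evaluate it on an arbitrary $\mathbf{u}^j$:
\begin{equation*}
M\mathbf{u}^j = \sum_{\ell=1}^{n+1}\mathbf{u}^\ell\langle\mathbf{u}^\ell,\mathbf{u}^j\rangle = -\tfrac{1}{n}\sum_{\ell=1}^{n+1}\mathbf{u}^\ell + \tfrac{n+1}{n}\mathbf{u}^j = \tfrac{n+1}{n}\mathbf{u}^j,
\end{equation*}
where the last equality invokes \eqref{e:sumvecszero}. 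Since the $\mathbf{u}^j$ span $\mathbb{R}^n$ by the previous step, $M$ agrees with $\frac{n+1}{n}I(n)$ on a spanning set and therefore $M=\frac{n+1}{n}I(n)$, which upon dividing by $n+1$ yields the claim. This final step depends on both prior steps, which is why I sequence the argument this way.
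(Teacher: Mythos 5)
Your proof is correct, and while the core linear-algebra ideas overlap with the paper's, you organize the argument differently in a way worth noting. The paper proves linear independence first (by dotting a vanishing linear combination with each $\mathbf{u}^j$ to obtain the same Gram-type system $\bigl(\tfrac{n+1}{n}I(n)-\tfrac1n\boldsymbol{1}_n\boldsymbol{1}_n^T\bigr)\alpha=0$ that you analyze), then uses the resulting spanning property to deduce $\sum_\ell \mathbf{u}^\ell=0$ by pairing the sum against an arbitrary basis vector, and only then establishes the projection identity. You instead prove $\sum_\ell\mathbf{u}^\ell=0$ first by the self-contained computation $|\sum_\ell\mathbf{u}^\ell|^2=\sum_{j,k}\langle\mathbf{u}^j,\mathbf{u}^k\rangle=0$, which requires nothing but the inner-product condition and is cleaner than the paper's detour through spanning. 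Your treatment of linear independence via explicit eigenvalues of $G=\tfrac{n+1}{n}I-\tfrac1nJ$ is essentially the same invertibility observation the paper makes (it notes $\tfrac1nJ$ is a rank-one orthogonal projection), just framed as positive-definiteness of the Gram matrix. For the projection identity, you evaluate $M=\sum_\ell P^\ell$ on the $\mathbf{u}^j$ themselves and invoke spanning, whereas the paper expands an arbitrary $\mathbf{w}$ in the basis $\mathbf{u}^1,\dots,\mathbf{u}^n$ and computes $\sum_\ell P^\ell\mathbf{w}$ directly; these are equivalent, with yours a touch more economical. In short, your route buys a leaner logical dependency graph (the zero-sum identity becomes a standalone fact) at no cost in rigor.
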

\begin{proof}
To prove \ref{e:spanRn} we consider for example $\mathbf{u}^j$, $j=1, ..., n$ and consider scalars $\alpha_j \in \mathbb{R}$ such that
$$
\alpha_1 \mathbf{u}^1 + ... + \alpha_n \mathbf{u}^n = 0.
$$
Taking the dot product of this with $\mathbf{u}^j$, $j=1,..., n$, gives
\begin{align*}
0 &= \alpha_1 - \frac{1}{n}\alpha_2 - ...- \frac{1}{n}\alpha_n, \\
&\vdots \\
0 &= -\frac{1}{n} \alpha_1 - ... - \frac{1}{n}\alpha_{n-1} + \alpha_n.
\end{align*}
This can be written in matrix form as follows:
$$
0 = \left (  \frac{n+1}{n}I(n) - \frac{1}{n}\boldsymbol{1}_n \boldsymbol{1}_n^T \right )  \left (  \begin{array}{c} \alpha_1 \\ \vdots \\ \alpha_n \end{array}\right ),
$$
where we use the notation $\boldsymbol{1}_n = \left (  \begin{array}{c} 1 \\ \vdots \\ 1 \end{array}\right ) \in \R^{n}$.  It is easy to check that $\frac{1}{n}\boldsymbol{1}_n \boldsymbol{1}_n^T$ is a rank-1, orthogonal projection matrix.  Hence, the matrix $ \frac{n+1}{n}I(n) - \frac{1}{n}\boldsymbol{1}_n \boldsymbol{1}_n^T$ is invertible.  It follows that $\alpha_1 = ... = \alpha_n = 0$, so $\mathbf{u}^j$, $j=1, ..., n$ are linearly independent.

Next, since $\{\mathbf{u}^\ell\}$ span $\R^n$ then $\mathbf{e}^k = \sum_{j=1}^{n+1} \alpha_{jk} \mathbf{u}^j$ for some constants $\alpha_{jk}$. This implies 
\begin{align*}
   \left\langle   \sum_{\ell =1 }^{n+1}\mathbf{u}^\ell , \mathbf{e}^k  \right\rangle
& = \left\langle \sum_{\ell=1}^{n+1} \mathbf{u}^\ell , (\sum_{j=1}^{n+1}\alpha_{jk} \mathbf{u}^j) \right\rangle = \sum_{j=1}^{n+1} \alpha_{jk} \left( \sum_{\ell=1}^{n+1} \left\langle \mathbf{u}^j , \mathbf{u}^\ell  \right\rangle \right) \stackrel{\eqref{e:innerprodcondition}}{=} 0.
\end{align*}

Finally, we choose $\mathbf{w} \in \R^n$.  Since the frame $\{\mathbf{u}^\ell\}$ spans $\R^n$ then \eqref{e:sumvecszero} implies we can write $\mathbf{w} = \sum_{j=1}^n a_j \mathbf{u}^j$ for unique constants $\{a_j\}$. Therefore, 
\begin{align*}
    \sum_{\ell=1}^{n+1} P^\ell \mathbf{w} & = 
    \sum_{\ell=1}^{n} \left\langle (\sum_{j=1}^n a_j \mathbf{u}^j) , \mathbf{u}^\ell \right\rangle \mathbf{u}^\ell
    - \left\langle (\sum_{j=1}^n a_j \mathbf{u}^j) ,    \mathbf{u}^{n+1}\right\rangle \sum_{i=1}^n  \mathbf{u}^i \\
    & \stackrel{\eqref{e:innerprodcondition}}{=} \sum_{\ell=1}^{n} \left( a_\ell-  {1\over n}\sum_{1 \leq j \leq n, j \neq \ell} a_j \right) \mathbf{u}^\ell
    + {1\over n} \left( \sum_{j=1}^n a_j  \right)  \sum_{\ell=1}^n  \mathbf{u}^\ell = {n+1 \over n} \mathbf{w}.
\end{align*}
\end{proof}

We associate with these vectors a third order tensor $\mathcal{Q} \in \Hsym(n, 3)$, defined as
\begin{equation} \label{e:def3tensor}
	\mathcal{Q}_{ijk} = \sum_{\ell=1}^{n+1} u^\ell_i (P^\ell)_{jk} = \sum_{\ell=1}^{n+1} u^\ell_i u^\ell_{j} u^\ell_k.
\end{equation}
For simplicity, throughout this section we write $\mathcal{Q}_{i} = \sum_{\ell=1}^{n+1} u^\ell_i P^\ell$ to denote the $i$-th $n \times n$ submatrix of $\mathcal{Q}$. 
This tensor $\mathcal{Q}$ generated by a set of $n+1$ vectors such that \eqref{e:innerprodcondition}-\eqref{e:spanRn} hold, satisfy the following identities: 
\begin{proposition} \label{p:3tensorlaws}
Given $\{\mathbf{u}^j\}_{j=1}^{n+1}$ satisfying \eqref{e:sumvecszero} and let $\mathcal{Q}$ be defined as in \eqref{e:def3tensor} then the following holds:
\begin{align}
\label{e:traceless}
\tr\mathcal{Q}_\ell &  = \mathcal{Q} I(n) = 0, \\
\label{e:suminteriorprod}
\mathcal{Q} \mathcal{Q}^T 
& =  {(n+1) (n^2 -1) \over n^3 } {I}(n), \\
\label{e:eigenvector}
\mathcal{Q} \mathbf{u}^k
      & = {n + 1 \over n} \left( P^k - {1 \over n } I(n) \right), \\
\label{e:eigentensor}
\mathcal{Q} P^k
      & = {n^2 -1 \over n^2} \mathbf{u}^k.
\end{align}
\end{proposition}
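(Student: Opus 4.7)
The plan is to prove each of the four identities by direct computation from the defining formula $\mathcal{Q}_{ijk} = \sum_\ell u^\ell_i u^\ell_j u^\ell_k$, using only the hypotheses (unit length, the inner product condition \eqref{e:innerprodcondition}, and the vanishing sum \eqref{e:sumvecszero}) together with the consequence \eqref{e:sumprojections} already established in Lemma~\ref{e:propsvecsangles}. No deeper structural input is needed; the work lies in bookkeeping.

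For the tracelessness identity \eqref{e:traceless}, I would write $\mathcal{Q}_i = \sum_\ell u^\ell_i P^\ell$ and observe that $\tr P^\ell = |\mathbf{u}^\ell|^2 = 1$, so $\tr \mathcal{Q}_i = \sum_\ell u^\ell_i$, which vanishes by \eqref{e:sumvecszero}. The remaining identities are instances of the same pattern: contract $\mathcal{Q}$ with a test vector or matrix, expand each $\mathbf{u}^\ell$-factor, and collapse the interior sums using $\langle \mathbf{u}^a, \mathbf{u}^b\rangle = -\tfrac{1}{n} + \tfrac{n+1}{n}\delta_{ab}$ from \eqref{e:innerprodcondition}.

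For the eigenvector identity \eqref{e:eigenvector}, I compute
\begin{equation*}
(\mathcal{Q}\mathbf{u}^k)_{ij} = \sum_m \mathcal{Q}_{ijm} u^k_m = \sum_\ell u^\ell_i u^\ell_j \langle \mathbf{u}^\ell, \mathbf{u}^k\rangle = -\tfrac{1}{n}\sum_\ell P^\ell_{ij} + \tfrac{n+1}{n}P^k_{ij},
\end{equation*}
and then apply \eqref{e:sumprojections} to get the asserted form. The eigentensor identity \eqref{e:eigentensor} is the analogous double contraction
\begin{equation*}
(\mathcal{Q}P^k)_i = \sum_\ell u^\ell_i \langle \mathbf{u}^\ell, \mathbf{u}^k\rangle^2 = u^k_i + \tfrac{1}{n^2}\sum_{\ell\neq k} u^\ell_i = u^k_i - \tfrac{1}{n^2}u^k_i = \tfrac{n^2-1}{n^2}u^k_i,
\end{equation*}
using \eqref{e:sumvecszero} in the middle step to drop the sum over $\ell\neq k$ against $u^k_i$.

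The only identity requiring a bit more care is \eqref{e:suminteriorprod}, because a naive expansion yields a quadruple sum. The trick is to contract the two internal indices first:
\begin{equation*}
(\mathcal{Q}\mathcal{Q}^T)_{ij} = \sum_{a,b} u^a_i u^b_j \,\langle \mathbf{u}^a, \mathbf{u}^b\rangle^2,
\end{equation*}
split the diagonal and off-diagonal terms using \eqref{e:innerprodcondition} (giving factors of $1$ and $1/n^2$ respectively), collapse the off-diagonal piece via $\sum_a \mathbf{u}^a = 0$ to obtain $\frac{n^2-1}{n^2}\sum_a u^a_i u^a_j$, and finally invoke \eqref{e:sumprojections} to produce $\frac{(n+1)(n^2-1)}{n^3}\delta_{ij}$. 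This is the only step with any real obstacle, and the obstacle is purely one of not losing track of the bookkeeping; once the two middle indices are contracted first, everything factors through the hypotheses cleanly.
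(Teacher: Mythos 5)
Your proof is correct and follows essentially the same approach as the paper: direct expansion from the defining formula, splitting sums via the inner-product condition \eqref{e:innerprodcondition}, and invoking $\sum_\ell \mathbf{u}^\ell = 0$ and \eqref{e:sumprojections} to close the bookkeeping. The only cosmetic difference is that you verify \eqref{e:eigentensor} by an independent contraction rather than (as the paper does) reading it off as an immediate consequence of \eqref{e:eigenvector}, but both routes are equally short and rely on the same facts.
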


\begin{proof}
First we have from  \eqref{e:sumvecszero}, 
\begin{align*}
\tr\mathcal{Q}_i &  = \sum_{\ell = 1}^{n+1} u_i^\ell \tr  P^\ell 
 = \sum_{\ell = 1}^{n+1} u_i^\ell  = 0,
\end{align*}
which proves \eqref{e:traceless}.  
To prove \eqref{e:suminteriorprod} we write
\begin{align*}
\mathcal{Q}  \mathcal{Q}^T & 
= \sum_{\ell = 1}^n \mathcal{Q}^2_\ell 
 = \sum_{\ell = 1}^{n+1} \sum_{m=1}^{n+1} ( \sum_{k=1} u_k^\ell u_k^m ) P^\ell P^m  
 = \sum_{\ell = 1}^{n+1} \sum_{m=1}^{n+1} \left\langle \mathbf{u}^\ell , \mathbf{u}^m \right\rangle^2 ( \mathbf{u}^\ell \otimes \mathbf{u}^m ) \\
 & = \sum_{\ell = 1}^{n+1} P^\ell
 + {1\over n^2} \sum_{j < k} \mathbf{u}^j \otimes \mathbf{u}^k + \mathbf{u}^k \otimes \mathbf{u}^j, 
\end{align*} 
and since $0 \stackrel{\eqref{e:sumvecszero}}{=} ( \mathbf{u}^1+ \cdots + \mathbf{u}^{n+1}) \otimes ( \mathbf{u}^1+ \cdots + \mathbf{u}^{n+1}) = \sum_{\ell = 1}^{n+1} P^\ell + \sum_{j < k} \mathbf{u}^j \otimes \mathbf{u}^k + \mathbf{u}^k \otimes \mathbf{u}^j$  then
\[
\mathcal{Q}\mathcal{Q}^T = \left( 1 - {1\over n^2} \right)\sum_{\ell = 1}^{n+1} P^\ell \stackrel{\eqref{e:sumprojections}}{=} {(n+1)(n^2 - 1) \over n^2 } I(n),
\]
which implies \eqref{e:suminteriorprod}.
Next, we prove  identity \eqref{e:eigenvector},
\begin{align*}
    \mathcal{Q}\mathbf{u}^k
    & = \sum_{\ell = 1}^{n+1} \left\langle P^\ell \mathbf{u}^\ell , \mathbf{u}^k  \right\rangle
\stackrel{\eqref{e:innerprodcondition}}{=} P^k - {1\over n} \sum_{\ell \neq k} P^\ell \\
& \stackrel{\eqref{e:sumprojections}}{=} P^k - {1 \over n} \left(
{n+1 \over n} I(n) - P^k 
\right) = { n+1 \over n} P^k - { n+1 \over n^2} I(n).
\end{align*}
Finally, \eqref{e:eigentensor} follows directly from \eqref{e:eigenvector}.
\end{proof}

One simple consequence of \eqref{e:traceless} is that the tensors defined by \eqref{e:def3tensor} live in $\Htrace(n,3)$.

\subsection{Linear algebraic results for  $\Hsym(n,3)$ and $\Htrace(n,3)$}


Our first results provides a minimal representation for the unknowns in our space of symmetric, trace-free 3-tensors in $n$ dimensions.  This will be used in later sections for numerically computing tetrahedral frame fields.  

\begin{lemma} \label{l:uniqueelements}
Let $n$ denote the number of variables then the number of 
unique monomials in $\Htrace(n,3)$ 
is 
\begin{equation} \label{eq:number3monomials}
{n (n+4) (n-1) \over 6}.
\end{equation}
\end{lemma}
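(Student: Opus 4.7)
The plan is to compute the dimension via the standard rank-nullity argument: first count the symmetric monomials in $\Hsym(n,3)$, then subtract off the number of independent trace conditions that cut out $\Htrace(n,3)$.

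First, I would count $\dim \Hsym(n,3)$. Since $\mathcal{A}_{i_1 i_2 i_3}$ depends only on the multiset $\{i_1, i_2, i_3\}$ by symmetry, the independent components are in bijection with unordered triples (with repetition) drawn from $\{1,\ldots,n\}$, giving
\[
\dim \Hsym(n,3) = \binom{n+2}{3} = \frac{n(n+1)(n+2)}{6}.
\]

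Next, I would argue that the $n$ trace conditions from \eqref{e:tracelesstens}, namely
\[
L_a(\mathcal{A}) := \sum_{j=1}^n \mathcal{A}_{ajj} = 0, \qquad a = 1,\dots,n,
\]
are linearly independent as functionals on $\Hsym(n,3)$. The key observation is that $L_a$ contains the diagonal component $\mathcal{A}_{aaa}$ (arising from the term $j=a$) with coefficient $1$, and does not involve $\mathcal{A}_{bbb}$ for any $b \neq a$. Therefore, if $\sum_a c_a L_a \equiv 0$ on $\Hsym(n,3)$, then evaluating at the basis element corresponding to the monomial $\mathcal{A}_{bbb}$ forces $c_b = 0$ for each $b$. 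So the $n$ functionals are linearly independent.

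Finally I would conclude by rank–nullity that
\[
\dim \Htrace(n,3) = \binom{n+2}{3} - n = \frac{n(n+1)(n+2) - 6n}{6} = \frac{n\bigl(n^2 + 3n - 4\bigr)}{6} = \frac{n(n-1)(n+4)}{6},
\]
matching \eqref{eq:number3monomials}. No step here really poses an obstacle; the only thing to be careful about is the independence check, which is why I would make the $\mathcal{A}_{aaa}$-coefficient observation explicit rather than just assert it.
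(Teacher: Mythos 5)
Your proof follows exactly the same route as the paper: count $\dim\Hsym(n,3)=\binom{n+2}{3}$ and subtract the $n$ trace constraints. The only difference is that you explicitly verify linear independence of the trace functionals via the $\mathcal{A}_{aaa}$ coefficients, a small but worthwhile addition that the paper omits.
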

\begin{proof}
Note that the number of monomials of degree 3 of $n$ variables is ${3 + n -1 \choose n-1}$.
There are additionally $n$ constraints, since $\tr \mathcal{Q}_j = 0$, so 
the total number of unique elements is 
\[
{
    3 + n - 1 \choose n-1} - n = {n (n+4) (n-1) \over 6}.
\]
\end{proof}




We conclude with a few results on vector / matrix pairings that our tensors satisfy.  We will discuss these identities in the context of eigentensor-eigenvector pairings in Subsection~\ref{ss:3Dresults}.  The following result was established by Qi \cite{qi2017transposes} in the case of $\Hsym(3,3)$.  We generalize the result to arbitrary dimensions here.

\begin{proposition} \label{p:etevpairs}
Let $\mathcal{Q}\in \Hsym(n, 3)$ with $n$ nonzero singular values.  There are matrices $B^k \in \Hsym(n, 2)$, $k=1, ..., n$, unit vectors $\mathbf{f}^k \in \R^{n}$, $k=1, ..., n$, and scalars $\lambda_k \in \R{}$, $k=1, ..., n$, such that
$$
\langle B^i, B^j\rangle = \langle \mathbf{f}^i, \mathbf{f}^j\rangle = \delta_{ij},
$$
$$
\sum_{k=1}^n \langle B^k, \mathcal{Q}_k \rangle \mathbf{e}^k = \lambda_k \mathbf{f}^k,
$$
$$
\sum_{k=1}^n \langle \mathbf{f}^k, \mathbf{e}^k\rangle \mathcal{Q}_k = \lambda_k B^k,
$$
and
$$
\mathcal{Q} = \sum_{k=1}^n \lambda_k \mathbf{f}^k \left ({\mathbf{X}}_{B^k}\right)^T.
$$

\end{proposition}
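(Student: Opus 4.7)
The plan is to recognize the statement as the classical singular value decomposition of $\mathcal{Q}$, viewed as the rectangular matrix $Q_{\mathcal{Q}}\in\mathbb{M}(n,n^2,\mathbb{R})$ produced by the bijection \eqref{e:3tensor2matrix}, together with the observation that the right singular vectors associated with nonzero singular values are forced to lie in the subspace corresponding to symmetric $n\times n$ matrices. Under this translation the $\lambda_k$ are the singular values of $Q_{\mathcal{Q}}$, the $\mathbf{f}^k$ are its left singular vectors, and the matrices $B^k\in\Hsym(n,2)$ are defined by declaring $\mathbf{X}_{B^k}$ to be the $k$-th right singular vector.

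Concretely, I would first apply the classical SVD to $Q_{\mathcal{Q}}$. Under the hypothesis that $\mathcal{Q}$ has $n$ nonzero singular values, this produces an orthonormal basis $\{\mathbf{f}^k\}_{k=1}^n$ of $\mathbb{R}^n$, an orthonormal system $\{\mathbf{v}^k\}_{k=1}^n\subset\mathbb{R}^{n^2}$, and positive scalars $\lambda_1,\dots,\lambda_n$ with
\[
Q_{\mathcal{Q}} \;=\; \sum_{k=1}^n \lambda_k\, \mathbf{f}^k (\mathbf{v}^k)^T, \qquad Q_{\mathcal{Q}} \mathbf{v}^k = \lambda_k \mathbf{f}^k, \qquad Q_{\mathcal{Q}}^T \mathbf{f}^k = \lambda_k \mathbf{v}^k.
\]
Next I would show that each $\mathbf{v}^k$ is the vectorization of a symmetric matrix. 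Identifying $\mathbb{R}^{n^2}$ with $\mathbb{M}(n,n,\mathbb{R})$ via the index pairing of \eqref{e:3tensor2matrix}, a direct index computation shows that $Q_{\mathcal{Q}}$ applied to a matrix $X$ produces the vector with $i$-th entry $\langle \mathcal{Q}_i, X\rangle$. Because $\mathcal{Q}\in\Hsym(n,3)$, each slice $\mathcal{Q}_i$ is symmetric, so this Frobenius pairing annihilates the antisymmetric part of $X$. Hence the orthogonal complement of the symmetric subspace $V\subset\mathbb{R}^{n^2}$ lies in $\ker Q_{\mathcal{Q}}$, the row space of $Q_{\mathcal{Q}}$ is contained in $V$, and every right singular vector with nonzero singular value must lie in $V$. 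Writing $\mathbf{v}^k=\mathbf{X}_{B^k}$ with $B^k\in\Hsym(n,2)$, the Euclidean inner product on $\mathbb{R}^{n^2}$ restricts to the Frobenius pairing on $\Hsym(n,2)$, so $\langle B^i,B^j\rangle=\delta_{ij}$, while $\langle \mathbf{f}^i,\mathbf{f}^j\rangle=\delta_{ij}$ is automatic from the SVD.

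The remaining identities in the proposition are obtained by unpacking the two singular-vector equations componentwise. From $Q_{\mathcal{Q}}\mathbf{X}_{B^k}=\lambda_k\mathbf{f}^k$ one reads $\langle \mathcal{Q}_i, B^k\rangle=\lambda_k f^k_i$ for each $i$, i.e. $\sum_{\ell}\langle B^k,\mathcal{Q}_\ell\rangle \mathbf{e}^\ell = \lambda_k \mathbf{f}^k$. Reading $Q_{\mathcal{Q}}^T\mathbf{f}^k$ back as a matrix yields the $(j,\ell)$-entry $\sum_i f^k_i\, \mathcal{Q}_{ij\ell}$, so $\sum_{\ell}\langle \mathbf{f}^k,\mathbf{e}^\ell\rangle \mathcal{Q}_\ell = \lambda_k B^k$. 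The outer-product expansion $\mathcal{Q}=\sum_k \lambda_k\, \mathbf{f}^k (\mathbf{X}_{B^k})^T$ is just the SVD of $Q_{\mathcal{Q}}$, translated back through the bijection.

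The only genuinely non-bookkeeping step is the symmetry argument placing the right singular vectors inside $V$; everything else is a routine dictionary between matrix statements about $Q_{\mathcal{Q}}$ and tensor statements about $\mathcal{Q}$. This is exactly the content that extends the $n=3$ case of \cite{qi2017transposes} to general $n$: only symmetry of $\mathcal{Q}$ in its last two indices is used, and that holds in $\Hsym(n,3)$ for every $n$.
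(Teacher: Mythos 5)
Your proof is correct, and it takes a genuinely different, more elementary route than the paper's. The paper diagonalizes $\mathcal{Q}\mathcal{Q}^T = RDR^T$, forms the conjugated tensor $\mathcal{S}=R^T\mathcal{Q}\mathcal{R}_R$ which (because of the full $\Hsym(n,3)$ symmetry) again lives in $\Hsym(n,3)$ and has mutually orthogonal slices $\mathcal{S}^k$ with $\langle\mathcal{S}^i,\mathcal{S}^j\rangle=\lambda_i^2\delta_{ij}$, and then transports the resulting expansion back to $\mathcal{Q}$ using the identity $\mathcal{R}_R\mathbf{X}_A=\mathbf{X}_{RAR^T}$; symmetry of $B^k=R\mathcal{S}^kR^T$ is automatic. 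You instead apply the classical rectangular SVD to $Q_{\mathcal{Q}}\in\mathbb{M}(n,n^2)$ and supply the one missing fact — that the orthogonal complement of the vectorized symmetric matrices lies in $\ker Q_{\mathcal{Q}}$, so the row space, and hence every right singular vector with nonzero singular value, lies in the symmetric subspace. Both arguments hinge on the same linear-algebraic kernel (orthogonal diagonalization of $\mathcal{Q}\mathcal{Q}^T$, which is exactly the content of the SVD), but your version avoids the auxiliary conjugation identity and the detour through $\mathcal{S}$, and it isolates exactly which hypothesis is doing the work: only symmetry of $\mathcal{Q}$ in two of its slots is used, which is indeed a mild strengthening of the statement as phrased. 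One cosmetic note: the displayed identities in the proposition as stated in the paper overload the summation index $k$ (it appears both as a bound index in the sum and as a free index on the right); your reading of them — with the sum running over a fresh index $i$ or $\ell$, as also appears in Proposition~\ref{p:etevpairs2} — is the intended one, and your unwinding of $Q_{\mathcal{Q}}\mathbf{X}_{B^k}=\lambda_k\mathbf{f}^k$ and $Q_{\mathcal{Q}}^T\mathbf{f}^k=\lambda_k\mathbf{X}_{B^k}$ reproduces them correctly.
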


\begin{proof}
Let us first notice that $\mathcal{Q}\mathcal{Q}^T \in \Hsym(n, 2)$, so there is $R\in O(n)$ such that
$$
\mathcal{Q}\mathcal{Q}^T = RDR^T,
$$
where $D$ is a diagonal matrix.  Furthermore, $\mathcal{Q}\mathcal{Q}^T$ is non-negative definite.  Hence we can call the elements of its diagonal
$$
(\lambda_1)^2 \geq ... \geq (\lambda_n)^2 > 0,
$$
due to the assumption on the singular values.
Let now 
$$
\mathcal{R}_R = \left ( \begin{array}{ccc} R_{11}R & \hdots & R_{1n}R \\ \vdots & \ddots & \vdots \\ R_{n1}R & \hdots & R_{nn}R \end{array}\right ),
$$
and define 
$$
\mathcal{S} = R^T \mathcal{Q}\mathcal{R}_R.
$$
By construction, $\mathcal{S}\in \Hsym(n, 3)$, and if we write $\mathcal{S} = (\mathcal{S}^1 \ ... \ \mathcal{S}^n)$, then $\mathcal{S}^k \in \Hsym(n, 2)$, and
$$
\langle \mathcal{S}^i, \mathcal{S}^j\rangle = (\lambda_i)^2 \delta_{ij}.
$$
By the first claim of Proposition 8.1 of the main draft, we have
$$
\mathcal{S}= \sum_{k=1}^n \mathbf{e}^k (\mathbf{X}_{\mathcal{S}^k})^T,
$$
so
$$
\mathcal{Q}= \sum_{k=1}^n R\mathbf{e}^k (\mathbf{X}_{\mathcal{S}^k})^T\mathcal{R}_R^T.
$$
To finish the proof we remember that equation 8.7 of the main draft tells us that
$$
\mathcal{R}_R \mathbf{X}_{\mathcal{S}^k} = \mathbf{X}_{R\mathcal{S}^k R^T}.
$$
Defining $\mathbf{f}^k = R\mathbf{e}^k$, and $B^k = R\mathcal{S}^k R^T$, we obtain the conclusion of the proposition.
\end{proof}

If we consider three tensors generated by  \eqref{e:def3tensor} via $n+1$ vectors satisfying \eqref{e:sumvecszero}, then we can get explicit eigenvector-eigentensor pairs.

\begin{proposition} \label{p:etevpairs2}
For every $n \geq 2$ there exists an $n\times (n+1)$ matrix, $C^n$ such that the following holds. For any set of $n+1$ unit vectors in $\R^n$,  $\{\mathbf{u}^j\}_{j=1}^{n+1}$, that satisfy the inner product condition \eqref{e:innerprodcondition}, and its associated 3-tensor $\mathcal{Q} \in \Htrace(n, 3)$ generated by these $\mathbf{u}^j$'s via \eqref{e:def3tensor}.  Let $\mathcal{Q} = (\mathcal{Q}_1 | ... | \mathcal{Q}_n)$ where the $\mathcal{Q}_j \in \Htrace(n, 2)$ are defined by
$$
\mathcal{Q}_j = \sum_{k=1}^n \langle \mathbf{u}^k, \mathbf{e}^j\rangle \mathbf{u}^k(\mathbf{u}^k)^T.
$$
If we denote $A_n$ the $n\times (n+1)$ matrix whose columns are the vectors $\{\mathbf{u}^j\}_{j=1}^{n+1}$.  Then, the matrix
$$
R = \frac{n}{n+1}A_nC_n^T \in O(n)
$$
is orthogonal, and $A_n = RC_n$.  Letting $\mathbf{f}^k = R\mathbf{e}^k$, $k=1, ..., n$, and
$$
B^k = \frac{1}{\lambda_n}\sum_{j=1}^n\langle \mathbf{f}^k, \mathbf{e}^j\rangle \mathcal{Q}_j,
$$
where
$$
\lambda_n = \left (\frac{(n^2-1)(n+1)}{n^3}\right )^{\frac{1}{2}},
$$
then, the vectors $\mathbf{f}^k$ along with the tensors $B^k$, $k=1, ..., n$ are eigenvector-eigentensor pairs for $\mathcal{Q}$ in the sense of Qi (see Remark~\ref{r:xiconditions}).  In particular, 
$$
\langle \mathbf{f}^j, \mathbf{f}^k \rangle = \langle B^j, B^k \rangle = \delta_{jk},
$$
$$
\sum_{i=1}^n \langle B^k, \mathcal{Q}_i\rangle \mathbf{e}^i = \lambda_n \mathbf{f}^k, \,\,\,\mbox{and}\,\,\, \sum_{j=1}^n\langle \mathbf{f}^k, \mathbf{e}^j\rangle \mathcal{Q}_j = \lambda_n B^k.
$$
Finally, one can recover the $3$-tensor $\mathcal{Q}$ from the eigenvector-eigentensor pairs, in the sense that
$$
\mathcal{Q}_i = \sum_{j=1}^n \langle \mathbf{f}^j, \mathbf{e}^i\rangle B^j,
$$
for $i=1, ..., n$.

\end{proposition}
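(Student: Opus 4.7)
The overall strategy is to fix a reference matrix $C_n$ whose columns are any canonical collection of $n+1$ unit vectors satisfying \eqref{e:innerprodcondition}, realize $A_n$ as an orthogonal rotation of $C_n$, and then read off all the eigenvector-eigentensor relations directly from the identity $\mathcal{Q}\mathcal{Q}^T = \lambda_n^2 I(n)$ proved in Proposition~\ref{p:3tensorlaws}.

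First I would compute the Gram matrices on both sides. For any matrix $A$ whose $n+1$ columns $\{\mathbf{u}^j\}$ satisfy \eqref{e:innerprodcondition}, the inner product condition yields
\[A^T A = \tfrac{n+1}{n}\, I(n+1) - \tfrac{1}{n}\, \mathbf{1}_{n+1}\mathbf{1}_{n+1}^T,\]
while \eqref{e:sumprojections} of Lemma~\ref{e:propsvecsangles} gives $A A^T = \tfrac{n+1}{n}\, I(n)$ and \eqref{e:sumvecszero} gives $A \mathbf{1}_{n+1} = 0$. These apply to both $A_n$ and $C_n$. Combining them,
\[R R^T = \tfrac{n^2}{(n+1)^2} A_n C_n^T C_n A_n^T = \tfrac{n^2}{(n+1)^2}\cdot \tfrac{n+1}{n}\, A_n A_n^T = I(n),\]
so $R \in O(n)$, and the companion identity $R C_n = \tfrac{n}{n+1}\, A_n C_n^T C_n = A_n$ follows from the same simplification (the $\mathbf{1}_{n+1}\mathbf{1}_{n+1}^T$ piece kills itself against $A_n$). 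Orthonormality of $\{\mathbf{f}^k = R\mathbf{e}^k\}$ is then immediate.

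With $R$ in hand, the remaining claims reduce to exploiting the fact that the block decomposition of $\mathcal{Q}\mathcal{Q}^T = \lambda_n^2 I(n)$, in the matrix form of Remark~\ref{rm:tensors_3_4_9_2}, is exactly the Gram relation $\langle \mathcal{Q}_i, \mathcal{Q}_j\rangle = \lambda_n^2\, \delta_{ij}$. Given this single input, the three asserted identities follow from one-line computations: $\sum_j \langle \mathbf{f}^k,\mathbf{e}^j\rangle \mathcal{Q}_j = \lambda_n B^k$ is literally the definition of $B^k$; the pairing $\langle B^i, B^j\rangle$ expands and collapses via $\langle \mathcal{Q}_k,\mathcal{Q}_\ell\rangle = \lambda_n^2 \delta_{k\ell}$ to $\langle \mathbf{f}^i,\mathbf{f}^j\rangle = \delta_{ij}$; and $\sum_i \langle B^k,\mathcal{Q}_i\rangle\mathbf{e}^i$ collapses identically to $\lambda_n\mathbf{f}^k$. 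The recovery formula for $\mathcal{Q}_i$ is then obtained by substituting the definition of $B^j$ and using the completeness identity $\sum_j \langle \mathbf{f}^j,\mathbf{e}^i\rangle\langle \mathbf{f}^j,\mathbf{e}^k\rangle = \delta_{ik}$.

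The main technical step is the orthogonality of $R$: it is the only place that simultaneously invokes all three structural identities of Lemma~\ref{e:propsvecsangles} (linear independence to ensure $C_n$ and hence the Gram matrix $A^T A$ have the right rank, $\sum_\ell \mathbf{u}^\ell = 0$ to annihilate the rank-one correction in $A^T A$, and the projection sum to give $A A^T = \tfrac{n+1}{n} I(n)$). Once this is in place everything else is bookkeeping with the single orthogonality relation $\langle \mathcal{Q}_i,\mathcal{Q}_j\rangle = \lambda_n^2\,\delta_{ij}$, and no further nontrivial geometric input is required.
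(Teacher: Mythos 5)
Your argument follows the same outline as the paper's Appendix A proof and the computational content (orthogonality of $R$ via the Gram relations $A_n^T A_n = \tfrac{n+1}{n}I(n+1) - \tfrac1n\mathbf{1}\mathbf{1}^T$, $A_nA_n^T = \tfrac{n+1}{n}I(n)$, $A_n\mathbf{1}_{n+1}=0$, and then reducing everything else to $\langle\mathcal{Q}_i,\mathcal{Q}_j\rangle = \lambda_n^2\delta_{ij}$) is correct and matches what the paper does in its Steps 2 and 3.

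However, there is a real gap at the very start: the proposition asserts \emph{``For every $n\ge 2$ there exists an $n\times(n+1)$ matrix $C^n$ such that\ldots''}, so the existence of a matrix with the stated properties is itself part of the claim. You begin by saying ``fix a reference matrix $C_n$ whose columns are any canonical collection of $n+1$ unit vectors satisfying \eqref{e:innerprodcondition}'' — that is, you assume what needs to be produced. The paper's Step 1 is precisely this existence proof: a concrete inductive construction of $C_n$ starting from $C_2 = \bigl(\begin{smallmatrix}\sqrt{3}/2 & -\sqrt{3}/2 & 0\\ -1/2 & -1/2 & 1\end{smallmatrix}\bigr)$ and appending one extra row and column at each step, together with explicit verification of $C_nC_n^T = \tfrac{n+1}{n}I(n)$, $C_n\mathbf{1}_{n+1}=0$, and $C_n^TC_n = \tfrac{n+1}{n}I(n+1) - \tfrac1n\mathbf{1}\mathbf{1}^T$. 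You should either reproduce that construction or at minimum exhibit one regular-simplex configuration in $\R^n$ and check these three identities for it. The other thing worth noting (not a fault of your proof, since you inherit it from the statement) is that the recovery formula $\mathcal{Q}_i = \sum_j \langle\mathbf{f}^j,\mathbf{e}^i\rangle B^j$ is off by a factor of $\lambda_n$ once $B^k$ is defined with the $1/\lambda_n$ normalization needed for $\langle B^j,B^k\rangle=\delta_{jk}$; a careful write-up of the ``bookkeeping'' step would catch this.
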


The proof of this proposition can be found in Appendix A. 

\begin{remark}\label{r:xiconditions}
When $n=3$, the vectors $\{\mathbf{r}^j\}_{j=1}^3$ and tensors $\{V^j\}_{j=1}^3$ form  eigenvector-eigentensor pairs that are identical to those of Qi:
\begin{equation}
\begin{split}
    \mathcal{Q}\mathbf{r}^j & = \sigma V^j \\
    \mathcal{Q} V^j & = \sigma \mathbf{r}^j \\
    \left\langle \mathbf{r}^j , \mathbf{r}^k \right\rangle & = \left\langle V^j , V^k \right\rangle = \delta_{jk},
    \end{split}
\end{equation}
where the eigenvalue $\sigma = -\sqrt{32 \over27}$. 
\end{remark}

\subsection{Rotations and tetrahedral frames}
To conclude this section, we introduce two canonical  tetrahedral frames that will be utilized below: $\{\mathbf{u}_0^j\}_{j=1}^4$ and $\{\mathbf{v}_0^j\}_{j=1}^4$. The first set of vectors includes a vector aligned with $\mathbf{e}^3$: 
\begin{align*}
	\mathbf{u}^1_0 \equiv \begin{pmatrix} 0 \\ 0 \\ 1\end{pmatrix}, \ 
	\mathbf{u}^2_0 \equiv \begin{pmatrix} \sqrt{8 \over 9} \\ 0 \\ - {1\over 3} \end{pmatrix}, \
	\mathbf{u}^ 3_0 \equiv \begin{pmatrix}   -\sqrt{2 \over 9} \\ \sqrt{2\over 3} \\ - {1\over 3} \end{pmatrix},  \
	\mathbf{u}^4_0 \equiv \begin{pmatrix}   -\sqrt{2 \over 9} \\ - \sqrt{2\over 3} \\ - {1\over 3} \end{pmatrix} .
\end{align*} 
There is an additional set of canonical  vectors typically associated with four vertices on a cube:
\begin{equation} \label{e:stdTet}
\mathbf{v}^1_0\equiv \frac1{\sqrt{3}}\begin{pmatrix} 1\\ 1\\ 1\end{pmatrix},
\mathbf{v}^2_0 \equiv \frac1{\sqrt{3}}\begin{pmatrix} 1\\ -1\\ -1\end{pmatrix},
\mathbf{v}^3_0 \equiv \frac1{\sqrt{3}}\begin{pmatrix} -1\\ 1\\ -1\end{pmatrix},
\mathbf{v}^4_0 \equiv \frac1{\sqrt{3}}\begin{pmatrix} -1\\ -1\\ 1\end{pmatrix},
\end{equation}
In both cases, $\left\langle \mathbf{u}^j_0 , \mathbf{u}^k_0 \right\rangle = \left\langle \mathbf{v}^j_0 , \mathbf{v}^k_0 \right\rangle = {4 \over 3} \delta_{jk}-{1 \over 3}$, and so the corresponding three tensors $\mathcal{Q}(\mathbf{u}^j_0)$ or $\mathcal{Q}(\mathbf{v}^j_0)$ enjoy the results of Proposition~\ref{p:3tensorlaws}.
One can rotate the set of vectors $\{\mathbf{v}^j_0\}$ into $\{\mathbf{u}^j_0\}$ with the rotation matrix,
\[
R_0 \equiv {1\over \sqrt{24}}
 \begin{bmatrix}
4 & - 2 &  - 2 \\
0 & 2 \sqrt{3} & - 2 \sqrt{3} \\
\sqrt{8} & \sqrt{ 8} & \sqrt{8}
\end{bmatrix}.
\]\
More generally, we can rotate into any tetrahedral configuration on $\mathbb{S}^2$ from either canonical set of tetrahedral vectors. In particular, 
$\mathbf{u}^{\sigma(j) }= R\mathbf{v}^{j}_0$ for a rotation matrix, $R = R(\mathbf{u}^j)$, and permutation operator on four elements, $\sigma$.  The corresponding three-tensor satisfies
\[\mathcal{Q}_{ijk}(\mathbf{u}^\ell)  = \sum_{\ell=1}^4 u^\ell_i u^\ell_j u^\ell_k = \sum_{\ell=1}^4 \sum_{p,q,r} (R_{i p}v^\ell_{0,p})(R_{j q}v^\ell_{0,q})
(R_{k r}v^\ell_{0,r})  
 = \sum_{p,q,r=1}^3 \mathcal{W}_{ijkpqr} \mathcal{Q}_{pqr}(\mathbf{v}_0).
\]
where 
\[ 
\mathcal{W}_{ijkpqr} = R_{ip}R_{jq}R_{kr}
\]
is an element of $\mathbb{H}(3,6)$.   We will use this rotational perspective in the proof of the recovery.

\section{Recovery of $n+1$-hedral frame in $n = 2,3$ dimensions}
\label{sec:rec}

In this section we show the converse of results in Subsection~\ref{ss:nDresults}, namely that elements in $\Htrace(n,3)$ with a specific nonlinear constraint produce unique $n+1$-hedral frame fields. 

\subsection{Tensors in $\Htrace(2,3)$ and MB frames}
Focusing on $n=2$, we show that the identities developed in Subsection~\ref{ss:nDresults} are necessary and sufficient to uniquely describe the associated $2+1$-frame.  These frames are characterized by three planar vectors with equal ${2\pi\over 3}$-angles between. Given the shape, they are commonly referred to as \emph{Mercedes-Benz frames}, though we will refer to them as \emph{MB frames}.  Given its structure, it is sufficient to provide a single angle in $[0,{2\pi\over3})$ to fully characterize the frame.  Our result in this subsection is

\begin{theorem} \label{t:isomorphism2D}
The following diffeomorphism holds:
\begin{equation}  \label{e:isomercedes}
    SO(2)/D_3 \equiv  \Htrace(2,3) \cap \left\{ \mathcal{Q} \mathcal{Q}^T =  {9 \over 8} I(2) \right\},
\end{equation} 
where $D_3$ is the ${2 \pi \over 3}$-rotation group.
\end{theorem}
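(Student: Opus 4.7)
The plan is to define an explicit map $\Phi\colon SO(2)\to\Htrace(2,3)$, verify that its image lies in the constraint set, show it factors through $SO(2)/D_3$ as a $3$-to-$1$ covering, and deduce that the induced map is a diffeomorphism. Fix the canonical MB frame $\mathbf{u}_0^j=(\cos\theta_j,\sin\theta_j)$ with $\theta_j=2\pi(j-1)/3$ for $j=1,2,3$; one checks immediately that \eqref{e:innerprodcondition} holds with $n=2$. Define $\Phi(R)$ by applying \eqref{e:def3tensor} to the rotated frame $\{R\mathbf{u}_0^j\}_{j=1}^3$. Proposition~\ref{p:3tensorlaws} (specialized to $n=2$, where $(n+1)(n^2-1)/n^3=9/8$) then places $\Phi(R)$ inside the target set $\Htrace(2,3)\cap\{\mathcal{Q}\mathcal{Q}^T=\tfrac{9}{8}I(2)\}$. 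Rotation by $2\pi/3$ cyclically permutes the three canonical MB vectors, and since $\mathcal{Q}$ in \eqref{e:def3tensor} depends symmetrically on the $\mathbf{u}^\ell$, we have $\Phi(R_{2\pi/3}R)=\Phi(R)$ for every $R$. Hence $\Phi$ descends to a well-defined smooth map $\overline{\Phi}\colon SO(2)/D_3\to\Htrace(2,3)\cap\{\mathcal{Q}\mathcal{Q}^T=\tfrac{9}{8}I(2)\}$.

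Next I would reduce both sides to one-dimensional coordinates. Parametrize $\Htrace(2,3)$ by $(a,b)=(\mathcal{Q}_{111},\mathcal{Q}_{222})$; permutation symmetry and the trace condition $\sum_j\mathcal{Q}_{ijj}=0$ force $\mathcal{Q}_{122}=-a$ and $\mathcal{Q}_{112}=-b$, consistent with the dimension count in Lemma~\ref{l:uniqueelements}. A direct computation yields
\[
\mathcal{Q}\mathcal{Q}^T = 2(a^2+b^2)\,I(2),
\]
so the quadratic constraint becomes the circle $a^2+b^2=9/16$. Specializing to the canonical frame one gets $(a,b)=(3/4,0)$, and using the identities $\cos^3\theta=(3\cos\theta+\cos 3\theta)/4$ and $\sin^3\theta=(3\sin\theta-\sin 3\theta)/4$ together with $\sum_{j=1}^3 e^{i\theta_j}=0$ and $\sum_{j=1}^3 e^{3i\theta_j}=3$, one finds that for $R_\phi$ equal to rotation by angle $\phi$, the corresponding parameter is $a+ib=\tfrac{3}{4}e^{-3i\phi}$.

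In these coordinates, $\overline{\Phi}$ is the map $[\phi]\mapsto\tfrac{3}{4}e^{-3i\phi}$ from $SO(2)/D_3$ (itself a circle) onto the circle of radius $3/4$, which is a smooth bijection with non-vanishing derivative, hence a diffeomorphism. The kernel of the $SO(2)$-action on the parameter $a+ib$ is exactly $\{\phi:3\phi\in 2\pi\Z\}$, which equals $D_3$ and provides an independent check that $\Phi$ descends and that the induced map is injective. The only conceptual ingredient is the identification of $\Htrace(2,3)$ with the weight-$3$ $SO(2)$-representation (equivalently, harmonic polynomials of degree $3$ in two variables), after which the conclusion is forced by the fact that both sides are smooth, compact, connected $1$-manifolds. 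I do not anticipate any significant obstacle.
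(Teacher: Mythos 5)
Your proposal is correct and follows essentially the same route as the paper: both parametrize $\Htrace(2,3)$ by two scalar coordinates, identify the constraint $\mathcal{Q}\mathcal{Q}^T=\tfrac98 I(2)$ with a circle of radius $3/4$, and recognize that the map from the rotation angle to this circle is the standard $z\mapsto z^{3}$ triple cover, so that it descends to a diffeomorphism from $SO(2)/D_3$. The only cosmetic difference is your choice of coordinates $(a,b)=(\mathcal{Q}_{111},\mathcal{Q}_{222})$ versus the paper's $(q_1,q_2)=(\mathcal{Q}_{111},\mathcal{Q}_{112})$ (so $b=-q_2$), and your slightly more explicit packaging of the argument as ``$\Phi$ descends to the quotient, the induced map is a smooth bijection of compact connected $1$-manifolds with nonvanishing derivative,'' whereas the paper constructs the inverse directly via $\theta=\tfrac13\arg(q_1,q_2)$.
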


\begin{proof}

Consider three vectors that are ${2 \pi \over 3}$ rotations of each other.  If we let $$R_{2\pi \over 3} = \begin{pmatrix} -{1\over 2} & -{\sqrt{3} \over 2} \\ {\sqrt{3} \over 2} & - {1\over 2} \end{pmatrix}\hbox{ and }
R_{4\pi \over 3} = \begin{pmatrix} -{1\over 2} & {\sqrt{3} \over 2} \\ -{\sqrt{3} \over 2} & - {1\over 2} \end{pmatrix}$$
denote  ${2 \pi \over 3}$ and ${4 \pi \over 3}$ rotation, and if $\theta$ denotes the angle off the $x$-axis, then an MB frame can be described by three vectors 
\begin{equation} \label{e:threevecs}
	\mathbf{u}^1  = \begin{pmatrix} \cos(\theta) \\ \sin(\theta) \end{pmatrix}, \  
	\mathbf{u}^2  = R_{2\pi \over 3} \begin{pmatrix} \cos(\theta) \\ \sin(\theta) \end{pmatrix},  \ 
	\mathbf{u}^3 = R_{4\pi \over3} \begin{pmatrix} \cos(\theta) \\ \sin(\theta) \end{pmatrix}.
\end{equation}
Consequently, $\mathbf{u}^i \cdot \mathbf{u}^j  = -{1\over 2} + {3\over 2} \delta_{ij}$.  Recalling our definition $\mathcal{Q}_{ijk} = \sum_{\ell=1}^3 {u}^\ell_i{u}^\ell_j{u}^\ell_k,$ then Proposition~\ref{p:3tensorlaws} and Lemma~\ref{l:uniqueelements} imply $\mathcal{Q}$ is a symmetric, traceless 3-tensor with two unique elements, which implies it can be written as
\begin{equation} \label{e:2Dframeform}
	\mathcal{Q}\equiv \begin{pmatrix}
		q_1 & q_2 & q_2 & - q_1 \\
		q_2 & - q_1 & - q_1 & - q_2
	\end{pmatrix},
\end{equation}
where $\mathcal{Q}\mathcal{Q}^T = {9 \over 8} I(2)$. 
Since $ (\cos(\theta) , \sin(\theta))^T$ is part of the frame, then \eqref{e:eigentensor} implies
$$ \begin{pmatrix}
    \cos(2\theta) & \sin(2 \theta) \\
    -\sin(2\theta) & \cos(2 \theta)
\end{pmatrix} \begin{pmatrix} q_1 \\q_2 \end{pmatrix} = {3 \over 4} \begin{pmatrix} \cos(\theta) \\ \sin(\theta) \end{pmatrix}. 
$$
As a consequence, $q_1 = {3\over 4} \cos(3 \theta)$, $q_2 = {3 \over 4} \sin(3 \theta)$, and so
\begin{equation}\label{e:mercedesexplicit}
\mathcal{Q} \equiv \begin{pmatrix}
{3 \over 4} \cos(3 \theta) & {3 \over 4} \sin(3 \theta) & {3 \over 4} \sin(3 \theta) & -{3 \over 4} \cos(3 \theta) \\
{3 \over 4} \sin(3 \theta) & -{3 \over 4} \cos(3 \theta) & -{3 \over 4} \cos(3 \theta) & - {3 \over 4} \sin(3 \theta) 
\end{pmatrix}.
\end{equation}
Tensor \eqref{e:mercedesexplicit} recovers the expected three-fold symmetry of the MB frame in two dimensions, and it also provides an explicit representation for boundary alignment of a frame field, as will be discussed later. 

We now consider the converse. Suppose $\mathcal{Q} \in \Htrace(2,3)$ with 
 $\mathcal{Q}\mathcal{Q}^T = {9 \over 8} I(2)$ then the representation \eqref{e:2Dframeform} and its nonlinear constraint implies
$2 q^2_1 + 2 q^2_2 = 9/8$.
Consequently,  
\begin{equation} \label{e:2Dthreefold}
q_1 = {3\over4}\cos(\phi) \hbox{ and } q_2 = {3\over 4}\sin(\phi)
\end{equation}
for some angle $\phi$.  Now, assuming that some vector $(\cos(\theta),\sin(\theta))^T$ is part of the MB frame, and if 
 $\arg: \R^2\backslash\{0\} \mapsto [0,2\pi)$ returns the unique angle in $[0,2\pi)$ associated to the ordered pair off the $x$-axis, then \eqref{e:mercedesexplicit} implies
\begin{equation} \label{e:onethirdratio}
\theta = {1\over 3} \arg(q_1,q_2).
\end{equation}
Therefore, our tensor $\mathcal{Q}$ is determined by a unique angle $\theta \in [0,{2 \pi \over 3})$, and since that angle retrieves the other two vectors by ${2\pi \over 3}$ and ${4\pi \over 3}$ rotations of the vector associated to \eqref{e:onethirdratio}, we retrieve the full MB frame.
\end{proof}

\subsection{Tensors in $\Htrace(3,3)$ and tetrahedral frames}
\label{ss:3Dresults}

We turn to $n=3$ and show our the identities are necessary and sufficient to uniquely describe an tetrahedral frame.  Our result is
\begin{theorem}\label{t:recovery_3_d}
We have the following diffeomorphism:
\begin{equation}
\label{e:isotetra}
    SO(3)/T \equiv  \Htrace(3,3) \cap\left\{ \mathcal{Q} \mathcal{Q}^T =  {32 \over 27} I(3)   \right\},
\end{equation} 
where $T$ is the tetrahedral group.  

More explicitly, for every $\mathcal{Q}\in \Htrace(3, 3) \cap\left\{ \mathcal{Q} \mathcal{Q}^T =  {32 \over 27} I(3)   \right\}$ 
there are vectors $\mathbf{b}^j \in \mathbb{S}^2$, $j=1, ..., 4$, such that
$$
\langle \mathbf{b}^i, \mathbf{b}^j \rangle = \frac{1}{3} \left ( 4\delta_{ij} - 1 \right ),
$$
and such that
$$
\mathcal{Q} = \sum_{j=1}^4 \mathbf{b}^j \left (  \mathbf{X}_{\mathbf{b}^j \left ( \mathbf{b}^j \right )^T} \right )^T.
$$
The four vectors $\mathbf{b}^1,\ldots, \mathbf{b}^4$ are the four unique maximizers of 
\begin{equation}\label{e:def_mu}
\mu_{\mathcal{Q}}(\mathbf{a}) = {\rm det} \left( \sum_{j=1}^3 \langle \mathbf{e}^j, \mathbf{a} \rangle \mathcal{Q}_j\right).
\end{equation}
\end{theorem}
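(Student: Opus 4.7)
The forward inclusion $SO(3)/T\hookrightarrow\Htrace(3,3)\cap\{\mathcal{Q}\mathcal{Q}^T = \tfrac{32}{27}I(3)\}$ is an immediate consequence of Proposition~\ref{p:3tensorlaws} specialized to $n=3$: any tetrahedral frame $\{\mathbf{b}^\ell\}_{\ell=1}^4$ with $\langle\mathbf{b}^i,\mathbf{b}^j\rangle = \tfrac{1}{3}(4\delta_{ij}-1)$ produces a tensor $\mathcal{Q}_{ijk}=\sum_\ell b^\ell_i b^\ell_j b^\ell_k \in \Htrace(3,3)$ satisfying $\mathcal{Q}\mathcal{Q}^T = \tfrac{32}{27}I(3)$. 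Permutation symmetry in $\ell$ makes this assignment $T$-invariant, so it descends to a smooth map $\Phi:SO(3)/T\to \Htrace(3,3)\cap\{\mathcal{Q}\mathcal{Q}^T = \tfrac{32}{27}I(3)\}$.

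To analyze $\mu_\mathcal{Q}$ on the image of $\Phi$, write $U=[\mathbf{b}^1|\cdots|\mathbf{b}^4]\in\R^{3\times 4}$, so that $\sum_j a_j\mathcal{Q}_j = U\,\mathrm{diag}(\langle\mathbf{a},\mathbf{b}^\ell\rangle)\,U^T$. Cauchy--Binet, together with the fact that $|\det U_S|$ is a single constant $c$ across all $3$-subsets $S\subset\{1,\ldots,4\}$ (by tetrahedral symmetry and \eqref{e:sumprojections}), gives
\[
\mu_\mathcal{Q}(\mathbf{a}) \;=\; c^2\, e_3\!\bigl(\langle\mathbf{a},\mathbf{b}^1\rangle,\ldots,\langle\mathbf{a},\mathbf{b}^4\rangle\bigr) \;=\; \tfrac{c^2}{3}\sum_{\ell=1}^4\langle\mathbf{a},\mathbf{b}^\ell\rangle^3,
\]
the second equality by Newton's identity and $\sum_\ell\mathbf{b}^\ell=0$ (Lemma~\ref{e:propsvecsangles}). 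Invoking \eqref{e:sumprojections} in the form $\sum_\ell\langle\mathbf{a},\mathbf{b}^\ell\rangle\mathbf{b}^\ell = \tfrac{4}{3}\mathbf{a}$, the Lagrange equation for critical points on $\mathbb{S}^2$ becomes $\sum_\ell (x_\ell^2 - \tfrac{3\kappa}{4}x_\ell)\mathbf{b}^\ell = 0$ with $x_\ell = \langle\mathbf{a},\mathbf{b}^\ell\rangle$; since $\sum_\ell\mathbf{b}^\ell = 0$ is the only linear relation among the $\mathbf{b}^\ell$, this forces the $x_\ell$ to be roots of a common quadratic. Case-splitting on root multiplicities subject to $\sum_\ell x_\ell = 0$ and $\sum_\ell x_\ell^2 = \tfrac{4}{3}$ yields exactly 14 critical points: four global maxima at $\mathbf{a}=\mathbf{b}^\ell$ (value $8c^2/27$), four global minima at $\mathbf{a}=-\mathbf{b}^\ell$, and six zero-value saddles along the 2-fold axes $\pm(\mathbf{b}^i-\mathbf{b}^j)/\|\mathbf{b}^i-\mathbf{b}^j\|$. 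This both establishes injectivity of $\Phi$ and verifies the determinant-maximization recovery formula on the image of $\Phi$.

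For surjectivity I would exploit the $SO(3)$-equivariance of the rotation action $(R\cdot\mathcal{Q})_{ijk} = \sum R_{ip}R_{jq}R_{kr}\mathcal{Q}_{pqr}$, which preserves the constraint set and satisfies $\mu_{R\cdot\mathcal{Q}}(\mathbf{a}) = \mu_\mathcal{Q}(R^T\mathbf{a})$. Given arbitrary $\mathcal{Q}$ in the constrained set, pick a maximizer $\mathbf{b}^1\in\mathbb{S}^2$ of $\mu_\mathcal{Q}$ and rotate so that $\mathbf{b}^1 = \mathbf{u}^1_0 = \mathbf{e}^3$. I would then combine the first-order Lagrange condition at $\mathbf{e}^3$, namely $\mathrm{tr}(\mathrm{adj}(\mathcal{Q}_3)\mathcal{Q}_i)=0$ for $i=1,2$, with $\mathcal{Q}\mathcal{Q}^T=\tfrac{32}{27}I(3)$ and tracelessness, to pin down the slice $\mathcal{Q}_3$ to the canonical form $\mathrm{diag}(-\tfrac{4}{9},-\tfrac{4}{9},\tfrac{8}{9})$; this is the eigentensor identity $\mathcal{Q}(\mathbf{b}^1\otimes\mathbf{b}^1) = \tfrac{8}{9}\mathbf{b}^1$ of \eqref{e:eigentensor}. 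The eigenvector-eigentensor pairing of Proposition~\ref{p:etevpairs2} and Remark~\ref{r:xiconditions} then determines the mixed slices $\mathcal{Q}_{ij3}$ from $\mathcal{Q}_3$, while the purely horizontal block $\{\mathcal{Q}_{ijk}\}_{i,j,k\in\{1,2\}}$ lies in $\Htrace(2,3)$ with the norm constraint required by Theorem~\ref{t:isomorphism2D}; applying that theorem furnishes an MB frame in the plane perpendicular to $\mathbf{e}^3$ whose three unit vectors, once lifted by $-\tfrac{1}{3}\mathbf{e}^3$ and adjoined to $\mathbf{b}^1$, form a tetrahedral frame reproducing $\mathcal{Q}$ via \eqref{e:def3tensor}.

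The main obstacle is making the last step rigorous --- extracting the canonical form of $\mathcal{Q}_3$ from the first-order condition at $\mathbf{b}^1$ together with the six scalar constraints in $\mathcal{Q}\mathcal{Q}^T=\tfrac{32}{27}I(3)$. A cleaner alternative is a dimension-plus-connectedness argument: the linearization of $\mathcal{Q}\mapsto\mathcal{Q}\mathcal{Q}^T$ on the $7$-dimensional space $\Htrace(3,3)$ has rank exactly $4$ at every tetrahedral $\mathcal{Q}$ (its kernel contains the $3$-dimensional orbit tangent space, and the remaining four directions can be shown independent), so the constrained set is a smooth $3$-manifold near the image of $\Phi$. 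This renders $\Phi(SO(3)/T)$ both open (as the equidimensional image of a smooth injection) and closed (compact) inside the constrained set; connectedness of the constrained set --- presumably the content of an algebro-geometric irreducibility argument to be carried out in Appendix~B --- would then force $\Phi$ to be surjective, and the recovery formula transfers automatically from the computation in the second paragraph.
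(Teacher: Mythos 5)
Your forward direction, and your analysis of $\mu_{\mathcal{Q}}$ on the image of $\Phi$ via Cauchy--Binet and Newton's identity, are both correct; the reduction $\mu_{\mathcal{Q}}(\mathbf{a})=\tfrac{c^2}{3}\sum_{\ell}\langle\mathbf{a},\mathbf{b}^\ell\rangle^3$ and the resulting count of fourteen critical points is a cleaner way to locate the maximizers on tetrahedral tensors than a brute-force coordinate computation. The converse, however, has a genuine gap, and you have correctly flagged where it lies. The step you call ``the main obstacle'' --- extracting the normal form of the slice $\mathcal{Q}_3$ at a maximizer from the Lagrange conditions plus $\mathcal{Q}\mathcal{Q}^T=\tfrac{32}{27}I(3)$ --- is precisely where Appendix~B deploys its central algebraic input, Lemma~\ref{long_unimaginative_lemma} and its Corollary~\ref{block_sum_property}: for \emph{any} $\mathcal{Q}$ in the constraint set, $\sum_{j=1}^{3}\mathcal{Q}_j\mathcal{Q}_i\mathcal{Q}_j=\tfrac{\alpha}{2}\mathcal{Q}_i$. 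Together with the block identities \eqref{product_of_blocks} and \eqref{Q_i_e_j}, this upgrades the raw Lagrange condition $\sum_i\tr\bigl(\mathrm{adj}(\eta_{\mathcal{Q}}(\mathbf{a}))\mathcal{Q}_i\bigr)\mathbf{e}^i=\lambda\mathbf{a}$ to the eigenvalue statement $\eta_{\mathcal{Q}}(\mathbf{a})\mathbf{a}=\tfrac{2\lambda}{\alpha}\mathbf{a}$, which is qualitatively stronger: it makes the rotated slice diagonal, not merely orthogonal to $\mathcal{Q}_1,\mathcal{Q}_2$ in a scalar sense. Without such an identity, the two Lagrange equations plus the six scalar constraints do not visibly determine the slice from among the seven parameters. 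You also omit the degenerate case of a zero Lagrange multiplier (i.e.\ $\eta_{\mathcal{Q}}(\mathbf{a})\mathbf{a}=0$ at a critical point), which the paper handles separately in Step~5 by producing a second critical point with nonzero eigenvalue; that case is not automatic.

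Your dimension-plus-connectedness alternative is also incomplete as stated. Showing the linearization of $\mathcal{Q}\mapsto\mathcal{Q}\mathcal{Q}^T$ has rank $4$ \emph{at tetrahedral points} gives openness of $\Phi(SO(3)/T)$ inside the constraint set, not global smoothness of that set; there could be other components or singular strata away from the image. Moreover, connectedness of the real points of a variety does not follow from irreducibility over $\CC$, so the ``algebro-geometric irreducibility argument'' you anticipate would not close the gap even if carried out --- and Appendix~B in fact uses no such argument, proceeding entirely by the tensor identity above, a rotation to normal form, and, as you correctly guess in your first alternative, a final appeal to the 2D recovery theorem on the reduced planar block.
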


\begin{proof}
Given four tetrahedral vectors, the corresponding tensor $\mathcal{Q}$ lives in $\Htrace(3, 3) \cap\left\{ \mathcal{Q} \mathcal{Q}^T =  {32 \over 27} I(3)   \right\}$ due to our results in Section~\ref{s:nD}.  
The converse is much more involved, and  its proof can be found in Appendix B.  
\end{proof}

\begin{remark}Note that in \cite{GaetaVirga2019} eigenvalues and eigenvectors in the sense of \cite{qi2017transposes} were obtained for third order symmetric traceless tensors by maximizing the so-called octupolar potential
\[\Phi(\mathbf{x})=\mathcal{Q}_{ijk}x_ix_jx_k.\]
The potential $\Phi$ is different from $\mu_{\mathcal{Q}}(\mathbf{a})$ in Theorem \ref{t:recovery_3_d}. Both $\Phi$ and $\mu_{\mathcal{Q}}(\mathbf{a})$ have exactly the same maximizing set when $\mathcal{Q}\in SO(3)/T$, however the maximizing sets no longer coincide for $\mathcal{Q}\notin SO(3)/T$.
\end{remark}

\section{Ginzburg-Landau relaxation to the appropriate variety in 2 and 3 dimensions}
\label{sec:relax}

Since MB and tetrahedral frame fields can be identified by nonlinear sets in $\Htrace(n,3)$ for $n=\{2,3\}$, we propose a Ginzburg-Landau relaxation towards these constraints.  This procedure leads to a direct method for generating these frame fields on Lipschitz domains.


\subsection{MB Frames: $SO(2)/D_3 \equiv \Htrace(2,3) \cap \{ \mathcal{Q}\mathcal{Q}^T = {9\over 8} I(2)\}$}

MB-frame fields are an example of an $n$-direction field, in which each point in domain or tangent to a surface (see Section~\ref{sec:poi-hop}) is assigned $n$ evenly spaced vectors, see the review article \cite{Vaxman} for  background on this topic.  We now show how our framework generates an MB-frame on a two dimensional domain, outside of small number of singular sets and reduces to methods similar to those found in \cite{BeaufortPoincare,ViertelOsting} for 2-cross fields. 

By Theorem~\ref{t:isomorphism2D}, we can uniquely represent our MB frame field by an element of $\Htrace(2,3) \cap \{ \mathcal{Q} \mathcal{Q}^T = {9 \over 8} I(2) \}$.  However, not all maps with boundary data  in $\{ \mathcal{Q} \mathcal{Q}^T = {9 \over 8} I(2) \}$ extend smoothly into the interior.  


In particular from Theorem~\ref{t:isomorphism2D} we let
\begin{align*}
	\mathcal{Q}\equiv \begin{pmatrix}
		q_1 & q_2 & q_2 & - q_1 \\
		q_2 & - q_1 & - q_1 & - q_2
	\end{pmatrix},
\end{align*}
and if $\mathcal{Q}\mathcal{Q}^T = {9 \over 8} I(2)$, then
$(\cos(3\theta),\sin(3\theta)) = {4 \over 3} (q_1,q_2) $
          where $\theta  = {1\over 3} \arg(q_1,q_2)$.
Therefore, our MB frame can be generated by  determining  angle $\theta \in [0,{2 \pi \over 3})$ and computing  the other two vectors by ${2\pi \over 3}$ and ${4\pi \over 3}$ rotations of the vector associated to $\theta$.

 \begin{definition}\label{def:degree_first_def}
 Let $A:U\mapsto TU/\sim$ be a map with $A(x)=[(x,v)]$ with $v\neq 0$ except at
 isolated points,
 and $(x,v) \sim (x,e^{2\pi i/n} v)$ for some $n\in \mathbb{N}$.
 For a simple closed curve $\gamma:[0,1]\to U$ not meeting any of the zeroes, 
 we define the index of $A$ on $\gamma$ by finding a continuous 
 lifting via the universal cover, $\theta:[0,1]\to \R$,
with $\frac{v}{|v|}=e^{i\theta}$ and setting 
\[
\#(A,\gamma)=\frac1{2\pi} (\theta(1)-\theta(0)).
\]
 The index of an isolated zero $a$ is defined as that of $A$ 
on any closed, Jordan curve $\gamma:[0,1] \to U$ surrounding $a$ (in a counter clockwise manner) and no other zeroes. 
 We denote the index about this $a$ as 
\[
i(a) = \#(A,\gamma).
\]
\end{definition}
It is standard that the  index just defined takes values in 
$\frac 1n \mathbb{Z}$, compare \cite[p. 108]{Hopf} for the case $n=2$. 
We often use the words \emph{index}, \emph{degree} and \emph{winding number} interchangeably.
 

For this reason, instead of looking for smooth extensions of boundary data, we relax towards $SO(2)/D_3$ using the variety associated to this quotient.
In particular we look for $\mathcal{Q} (x) \in H^1(\Omega;\Htrace(2,3))$ such that $\mathcal{Q}\mathcal{Q}^T = {9\over 8} I(2)$ is satisfied
on the boundary, and the normal is a part of an MB frame.  
We can, therefore, define the space
\[
H^1_\nu(\Omega;\Htrace(2,3))
= \left\{  
\mathcal{A}  \in H^1(\Omega;\Htrace(2,3)) \hbox{ such that } 
\mathcal{A} (\nu \otimes \nu) = {3 \over 4} \nu
\hbox{ on } \p \Omega
 \right\}, 
\]
i.e. the normal, $\nu$, is part of the MB frame.
If the normal $\nu = (\cos(\theta(x)), \sin(\theta(x)))^T$ is smoothly defined on the boundary, it induces nontrivial topology in $\left.\mathcal{Q}\right|_{\p \Omega}$, due to \eqref{e:mercedesexplicit}, 
and this can preclude globally defined MB frame fields. 
In particular, by the constraint we can set $(q_1, q_2) = {3 \over 4}(u_1,u_2) = {3 \over 4 }\mathbf{u}$ with $\mathbf{u} \in \mathbb{S}^1$ and  $\mathbf{u} = (\cos(3\theta),\sin(3\theta)) \equiv g$ on $\p \Omega$.  Setting
$$H^1_{g}(\Omega;\mathbb{S}^1) = \{ \mathbf{v} \in H^1(\Omega; \mathbb{S}^1) \hbox{ such that } \left. \mathbf{v} \right|_{\p \Omega} = g \},$$
then we can try to minimally extend the boundary data into the interior subject to
\eqref{e:mercedesexplicit}, which entails minimizing
\[
\min_{\substack{\mathcal{Q} \in H^1(\Omega;\Htrace(2,3)) \\
\mathcal{Q}\mathcal{Q}^T = {9\over 8}I(2) \\
\left. \nu \right|_{\p \Omega} \in \mathcal{Q}}} {1\over2} \int_{\Omega} \left| \nabla  \mathcal{Q} \right|^2 dx
=  \min_{\mathbf{u} \in H^1_g (\Omega;\mathbb{S}^1)} {9 \over 8}\int_{\Omega} \left| \nabla  \mathbf{u} \right|^2 dx .
\]
However, by classical  arguments $H^1_g (\Omega;\mathbb{S}^1) = \varnothing$ for any domain  topologically equivalent to a disk.

To avoid such problems, we relax towards the manifold $\Htrace(2,3)\cap \{ \mathcal{Q}\mathcal{Q}^T = {9 \over 8} I(2) \}$.
In particular we take a sequence
 $\mathcal{Q}_\e \in  H^1(\mathbb{R}^2, \Htrace(3,2))$
 and penalize the distance from the variety $\{\mathcal{Q}\mathcal{Q}^T = {9 \over 8} I(2)\}$ by looking for minimizers of the associated Ginzburg-Landau functional, 
\begin{equation} \label{e:GL2D}
	\mathcal{E}^{2d}_\e(\mathcal{A}) \equiv {1\over2} \int_\Omega \left|\nabla \mathcal{A}\right|^2 + {1\over \e^2}\left| \mathcal{A} \mathcal{A}^T - {9\over 8} I(2)\right|^2 dx .
\end{equation}
We then consider a sequence 
\[
\mathcal{Q}_\e = \arg \min_{\mathcal{A} \in H^1_\nu(\Omega; \Htrace(2,3)) } \mathcal{E}^{2d}_\e(\mathcal{A})
\]
subject to the boundary conditions that have normal as a part of the MB frame which, in turn, induces the $3\theta$ dependency.

The condition that competitors on the boundary are MB frames containing the normal to $\partial\Omega$ can also be enforced in a weak sense by introducing the surface energy term that penalizes deviations from this condition. With the help of \eqref{e:eigentensor}, the surface energy can be taken in the form
\begin{equation}
    \label{eq:surfMB}
    \int_{\partial\Omega}\left\{\frac{1}{\delta_1^2}{\left|\mathcal{A}\left(\nu\otimes\nu\right)-\frac{3}{4}\nu\right|}^2+\frac{1}{\delta_2^2}{\left| \mathcal{A} \mathcal{A}^T - {9\over 8} I(2)\right|}^2\right\}\,dS,
\end{equation}
so that the energy functional is
\begin{multline}
     \label{eq:GL2DW}
	\mathcal{E}^{2d}_{\e,\delta_1,\delta_2}(\mathcal{A}) \equiv {1\over2} \int_\Omega \left|\nabla \mathcal{A}\right|^2 + {1\over \e^2}\left| \mathcal{A} \mathcal{A}^T - {9\over 8} I(2)\right|^2 dx\\ +\int_{\partial\Omega}\left\{\frac{1}{\delta_1^2}{\left|\mathcal{A}\left(\nu\otimes\nu\right)-\frac{3}{4}\nu\right|}^2+\frac{1}{\delta_2^2}{\left| \mathcal{A} \mathcal{A}^T - {9\over 8} I(2)\right|}^2\right\}\,dS.
\end{multline}
The interplay between the parameters $\delta_1,\ \delta_2$ and $\varepsilon$ will be discussed in Section \ref{sec:bcrelax} in the case of tetrahedral frames.
 
\subsection{Tetrahedral Frames: $SO(3)/T \equiv \Htrace(3,3) \cap \{ \mathcal{Q}\mathcal{Q}^T = {32\over 27} I(3)\}$}
\label{s:3Dsection}

We now turn our attention to foliating a Lipschitz domain with tetrahedral frame fields.  Using Theorem~\ref{t:recovery_3_d}, this is equivalent to looking for harmonic maps in $H^1 (\Omega ; \Htrace(3,3)\cap \{ \mathcal{Q}\mathcal{Q}^T = {32 \over 27} I(3)\})$.  However, as in the 2D problem, this manifold typically generates singularities due to boundary conditions, and so we will use a harmonic map relaxation with prescribed boundary conditions.   

Suppose that $\mathcal{Q} \in \Htrace(3,3)$ with $\mathcal{Q} \mathcal{Q}^T = {32 \over 27} I(3) $, we define the operator
$\mu_\mathcal{Q}: \Hsym(3,3) \mapsto \mathbb{R}$ via
\begin{equation} \label{e:mumaxQ}
\mu_\mathcal{Q}(\mathbf{b}) := \det \left(
\sum_{j=1}^3\mathcal{Q}_{j} \left< \mathbf{e}^j, \mathbf{b} \right>\right)
\end{equation}
then by \eqref{e:def_mu} in the proof of Theorem~\ref{t:recovery_3_d}, the four vectors which maximize \eqref{e:mumaxQ} define a unique tetrahedral frame in $SO(3)/T$.
Therefore, we can generate a tetrahedral frame field by filling our domain with harmonic maps in $\Htrace(3,3)\cap \{\mathcal{Q} \mathcal{Q}^T = {32 \over 27} I(3) \}$ and  generate the tetrahedral vectors at each point in the domain using maximizers of $\mu_\mathcal{Q}(\mathbf{b})$.  
However, as in the MB frame field situation, if we look to fill out our Lipschitz domain $\Omega \subset \mathbb{R}^3$ with tetrahedral frame fields that adhere to the boundary, we find topological obstructions.  
The challenge, again, is the nonlinear constraint, and so we again relax towards the nonlinear constraint using a different  nonlinearity.


We now describe this procedure in detail.  
From \eqref{eq:number3monomials} a general $\mathcal{Q} \in \Htrace(3,3)$ has
the $\# \{ \hbox{unique monomials} \} = 7$.
Letting
$\mathbf{q} = (q_1, \ldots, q_7)^T \in \mathbb{R}^7$ allows us to express  
\[
\mathcal{Q}(\mathbf{q}) = 
\begin{bmatrix}
	\begin{pmatrix}
		q_1 & q_2 & q_3 \\
		q_2 & q_4 & q_5 \\
		q_3 & q_5  & ( - q_1 - q_4)
	\end{pmatrix} & 
\begin{pmatrix}
	q_2 & q_4 & q_5 \\
	q_4 & q_6 & q_7 \\
	q_5 & q_7  & ( - q_2 - q_6)
\end{pmatrix} &
\begin{pmatrix}
	q_3 & q_5 & (-q_1-q_4) \\
	q_5 & q_7 & (-q_2-q_6) \\
	(-q_1-q_4) & (-q_2-q_6)  & ( - q_3 - q_7)
\end{pmatrix} 
\end{bmatrix}, 
\]
and our potential
\begin{equation} \label{e:Wdef}
\begin{split}
W(\mathcal{Q}(\mathbf{q})) & = 
 \left| \mathcal{Q}(\mathbf{q}) \mathcal{Q}^T(\mathbf{q}) - {32 \over 27} I(3) \right|^2 \\
&	= \left((q_1 + q_4)^2 + q_1^2 + 2q_2^2 + 2q_3^2 + q_4^2 + 2q_5^2 - {32\over 27} \right)^2 \\
	& \quad  + \left((q_2 + q_6)^2 + q_2^2 + 2q_4^2 + 2q_5^2 + q_6^2 + 2q_7^2 - {32\over 27} \right)^2 \\
	& \quad + \left(2(q_1 + q_4)^2 + 2(q_2 + q_6)^2 + (q_3 + q_7)^2 + q_3^2 + 2q_5^2 + q_7^2 - {32\over 27}\right)^2 \\
	& \quad + 2\left(2q_2q_3 - 2q_1q_5 - q_2q_7 + q_3q_6\right)^2 + 2(q_3q_4 - q_1q_7 - 2q_4q_7 + 2q_5q_6)^2 \\
	& \quad + 2\left(2q_1q_2 + 3q_2q_4 + q_1q_6 + 2q_3q_5 + 2q_4q_6 + 2q_5q_7\right)^2 .
	\end{split}
\end{equation}

Before generating the associated Ginzburg-Landau energy, we need to provide suitable boundary conditions that ensure the normal vector is included in the tetrahedral frame. 

\subsection{Boundary conditions and reduction to the MB frame}

In order to prescribe natural boundary conditions of the tetrahedral frame, we impose that the normal on the boundary $\nu$ comprises one of the four vectors of the frame.  Consequently, we arrive at the following conditions on the frame at the boundary, due to \eqref{e:eigenvector} and \eqref{e:eigentensor}:
\begin{lemma}\label{bdry_conds}
If $\nu$ is a normal on the boundary and  an element of the tetrahedral frame $\mathcal{Q} = \mathcal{Q}(\mathbf{q})$, then
\begin{align*}
\mathcal{Q}_{ijk}\nu_k & = {4\over 3} (\nu \otimes \nu)_{ij} - {4 \over 9} I(3), \\
\mathcal{Q}_{ijk}\nu_j \nu_k
& = {8 \over 9} \nu_i.
\end{align*}
Equivalently, if $\nu$ is the outer normal on the boundary and  an element of the tetrahedral frame $\mathcal{Q} = \mathcal{Q}(\mathbf{q})$, then
\begin{equation}\label{bdry_potential}
V(\mathcal{Q}, \nu) := \frac{1}{2}\abs{\mathcal{Q} \mathbf{X}_{\nu\nu^T} - \mu_3 \nu}^2 = 0.
\end{equation}
Here, for an $3\times 3$ matrix $A$, $\mathbf{X}_A$ is the vector in $\R^9$ that contains the columns of $A$ vertically, in order, and $\mu_3 = \frac{3^2-1}{3^2} = \frac{8}{9}$.

If this holds for  $\nu = (\nu_1, \nu_2, \nu_3)^T$, then we can solve for $\mathbf{q} = (q_1,\ldots, q_7)$ using the underdetermined system of equations
\begin{equation} \label{e:3Dboundarycondition}
    \begin{pmatrix}
    \nu_1 & \nu_2 & \nu_3 & 0 & 0 & 0 & 0 \\
    0 & \nu_1 & 0 & \nu_2 & \nu_3  & 0 & 0 \\
    -\nu_3 & 0 & \nu_1 & - \nu_3 & \nu_2 & 0 & 0 \\
    0 & 0 & 0 & \nu_1 & 0 & \nu_2  & \nu_3 \\
    0 & - \nu_3 & 0 & 0 & \nu_1 & - \nu_3 & \nu_2 \\
    - \nu_1 & - \nu_2 & - \nu_3 & -\nu_1 & 0 & -\nu_2 & - \nu_3 
    \end{pmatrix}
    \begin{pmatrix}
    q_1 \\ q_2 \\ q_3 \\ q_4 \\ q_5 \\ q_6 \\ q_7
    \end{pmatrix}
    = \begin{pmatrix}
    {4 \over 3} \nu_1^2 - {4 \over 9} \\  {4 \over 3} \nu_1 \nu_2 \\  {4 \over 3} \nu_1 \nu_3 \\  {4 \over 3} \nu_2^2 - {4 \over 9} \\  {4 \over 3} \nu_2 \nu_3 \\  {4 \over 3}  \nu_3^2 - {4 \over 9}
    \end{pmatrix}.
\end{equation}
In particular, the matrix on the left has rank 5.
\end{lemma}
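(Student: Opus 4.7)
The first two tensor identities are immediate consequences of Proposition~\ref{p:3tensorlaws} applied with $n=3$ and with the frame vector $\mathbf{u}^k$ taken to be $\nu$. Equation \eqref{e:eigenvector} gives $\mathcal{Q}\nu = \tfrac{4}{3}\bigl(\nu\otimes\nu - \tfrac{1}{3}I(3)\bigr)$, which in components is $\mathcal{Q}_{ijk}\nu_k = \tfrac{4}{3}(\nu\otimes\nu)_{ij} - \tfrac{4}{9}\delta_{ij}$; contracting with $\nu_j$ (or equivalently invoking \eqref{e:eigentensor}) yields $\mathcal{Q}_{ijk}\nu_j\nu_k = \tfrac{8}{9}\nu_i$. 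The equivalent formulation $V(\mathcal{Q},\nu)=0$ is then a direct rewrite of the second identity: by the column-stacking convention for $\mathbf{X}_{\nu\nu^T}$ together with the bijection \eqref{e:3tensor2matrix}, the $i$-th entry of $\mathcal{Q}\,\mathbf{X}_{\nu\nu^T}$ equals $\sum_{j,k}\mathcal{Q}_{ijk}\nu_j\nu_k$, and this equals $\mu_3\nu_i$ with $\mu_3=\tfrac{8}{9}$.

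To obtain the underdetermined linear system \eqref{e:3Dboundarycondition}, I would expand the symmetric $3\times 3$ identity $\mathcal{Q}\nu = \tfrac{4}{3}\,\nu\otimes\nu - \tfrac{4}{9}I(3)$ componentwise against the parameterization $\mathcal{Q}=\mathcal{Q}(\mathbf{q})$. Taking the six independent index pairs $(i,j)\in\{(1,1),(1,2),(1,3),(2,2),(2,3),(3,3)\}$ and reading off entries from the block decomposition of $\mathcal{Q}(\mathbf{q})$ — making use of the tracelessness substitutions $\mathcal{Q}_{133}=-q_1-q_4$, $\mathcal{Q}_{233}=-q_2-q_6$, $\mathcal{Q}_{333}=-q_3-q_7$ — reproduces exactly the six rows of the stated $6\times 7$ coefficient matrix together with the six right-hand side entries.

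The main algebraic point is the rank-$5$ claim, which I would argue via matching upper and lower bounds. For the upper bound, the sum of rows $1$, $4$, and $6$ of the coefficient matrix is identically the zero row, while the sum of the corresponding three entries on the right-hand side is $\tfrac{4}{3}|\nu|^2-\tfrac{4}{3}=0$; this single redundancy is just the identity $\sum_i \mathcal{Q}_{iik}\nu_k = 0$, which holds automatically on $\Htrace(3,3)$, so the rank is at most $5$. For the lower bound, I would reinterpret the left-hand side as the linear contraction map $T_\nu:\Htrace(3,3)\to\operatorname{Sym}_0(3)$, $\mathcal{Q}\mapsto \mathcal{Q}\nu$, whose image lies in the $5$-dimensional space of traceless symmetric $3\times 3$ matrices. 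Since $T_\nu$ is $SO(3)$-equivariant under simultaneous rotation of $\nu$ and the tensor, it suffices to compute $\ker T_\nu$ at $\nu=\mathbf{e}^3$: the conditions $\mathcal{Q}_{ij3}=0$ combined with tracelessness in $\Htrace(3,3)$ force $\mathcal{Q}_{122}=-\mathcal{Q}_{111}$ and $\mathcal{Q}_{112}=-\mathcal{Q}_{222}$ (with $\mathcal{Q}_{333}$ and all other entries involving index $3$ vanishing), leaving a $2$-dimensional kernel parameterized by $\mathcal{Q}_{111}$ and $\mathcal{Q}_{222}$. Since $\dim\Htrace(3,3)=7$ by Lemma~\ref{l:uniqueelements}, the rank-nullity theorem gives $\dim \operatorname{image}(T_\nu)=5$, so $T_\nu$ is surjective onto $\operatorname{Sym}_0(3)$ and the coefficient matrix has rank exactly $5$. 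The main obstacle is this kernel computation, but it collapses once one recognizes that the image of $T_\nu$ is \emph{constrained} to be traceless, matching exactly the one observed dependence among the six rows.
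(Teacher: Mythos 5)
Your proof is correct. The paper states Lemma~\ref{bdry_conds} without a displayed proof, observing only that the tensor identities follow from \eqref{e:eigenvector} and \eqref{e:eigentensor}; your derivation is exactly that, and your componentwise expansion reproducing the system \eqref{e:3Dboundarycondition} is the computation the paper leaves implicit. The rank-$5$ argument, which the paper simply asserts, is handled cleanly: the single row dependence $r_1+r_4+r_6=0$ is the tracelessness constraint $\sum_i \mathcal{Q}_{iik}\nu_k=0$, and the $SO(3)$-equivariance of $T_\nu$ lets you reduce the kernel computation to $\nu=\mathbf{e}^3$, where $\mathcal{Q}_{ij3}=0$ and tracelessness force $q_3=q_5=q_7=0$, $q_4=-q_1$, $q_6=-q_2$, giving a two-dimensional kernel and hence rank $7-2=5$ by rank-nullity.
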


If we assume the boundary is locally $\mathbb{R}^2$ then $\nu = (0,0,1)^T$ is an element of the frame.  We can then locally parametrize the orientation by planar rotation of $\theta$.  The corresponding $\mathcal{Q}$ becomes:
\[
Q = 
\begin{bmatrix}
	\begin{pmatrix}
		a & b & -{4 \over 9} \\
		b & -a & 0 \\
		-{4 \over 9} & 0  & 0
	\end{pmatrix} & 
\begin{pmatrix}
	b & -a & 0 \\
	-a & -b & -{4 \over 9} \\
	0 & -{4 \over 9}  & 0
\end{pmatrix} &
\begin{pmatrix}
	-{4 \over 9} & 0 & 0 \\
	0 & -{4 \over 9} & 0 \\
	0 & 0  & {8 \over 9}
\end{pmatrix} 
\end{bmatrix}.
\]
Furthermore, condition \eqref{e:suminteriorprod} implies $a^2 + b^2  = {32 \over 81}$ or 
\[
a = {4\sqrt{2} \over 9}\cos(\phi) \hbox{ and } b = {4\sqrt{2} \over 9}\sin(\phi).
\]
Given the three-fold symmetry of the remaining three vectors of the tetrahedral frame implies a direct analogue of the MB frame result: 
\begin{align*}
\mathcal{Q} = \begin{bmatrix}    
{4 \sqrt{2} \over 9} \cos(3 \theta) &  -{4 \sqrt{2} \over 9} \sin(3 \theta) &  -{4 \over   9} & -{4 \sqrt{2} \over 9} \sin(3 \theta) &  -{4 \sqrt{2} \over 9} \cos(3 \theta) &     0 & -{4 \over   9} &    0 &    0 \\
-{4 \sqrt{2} \over 9} \sin(3 \theta) &  -{4 \sqrt{2} \over 9} \cos(3 \theta) &     0 &  -{4 \sqrt{2} \over 9} \cos(3 \theta) &   {4 \sqrt{2} \over 9} \sin(3 \theta) &  -{4 \over   9} &    0 &  -{4 \over   9} &   0 \\
                  -{4 \over   9} &                       0 &     0 &                        0 &                     -{4 \over   9} &    0 &     0 &     0 &   {8 \over   9}
\end{bmatrix} .
\end{align*}

\subsection{Relaxation}
\label{sec:bcrelax}
We can now consider a harmonic map relaxation to the tetrahedral frame field using a Ginzburg-Landau formalism.
We first set 
\[
H^1_\nu(\Omega; \Htrace(3,3)) = \left\{ \mathcal{A} \in  H^1(\Omega; \Htrace(3,3)) \hbox{ such that } \mathcal{A} = \mathcal{A}(\mathbf{q}) \hbox{ satisfies } \eqref{e:3Dboundarycondition} \right\}.
\]
For elements of $H^1_\nu(\Omega; \Htrace(3,3))$, we define a relaxed energy, 
\begin{equation}
\label{eq:grafstrong}
\mathcal{E}^{3d}_\e(\mathcal{A}) \equiv {1\over 2} \int_{\Omega} \left| \nabla \mathcal{A} \right|^2 + {1\over \e^2} \left| \mathcal{A} \mathcal{A}^T - {32 \over 27} I(3) \right|^2 dx
\end{equation}
and consider
\[
\mathcal{Q}_\e = \arg\min_{\mathcal{A} \in H^1(\Omega;\Htrace(3,3))} \mathcal{E}^{3d}_\e (\mathcal{A}).
\]
As pointed out in Lemma \ref{bdry_conds}, the condition that the normal to the boundary $\nu$ is part of the tetrahedral frame $\mathcal{Q}$ can be conveniently imposed by assuming that
$$
V(\mathcal{Q}, \nu) = 0.
$$
This condition could also be imposed in a weak form through a boundary integral.  This leads us to the alternative energy
\begin{equation}
    \label{eq:graf}
    \mathcal{E}^{3d}_{\e, \delta_1, \delta_2}(\mathcal{A}) \equiv {1\over 2} \int_{\Omega} \left| \nabla \mathcal{A} \right|^2 + {1\over \e^2} W(\mathcal{A}) dx + \int_{\partial \Omega} \frac{V(\mathcal{A}, \nu)}{\delta_1^2} + \frac{W(\mathcal{A})}{\delta_2^2} \, dS,
\end{equation}
where
$$
W(\mathcal{A}) =  \frac{1}{2}\left| \mathcal{A} \mathcal{A}^T - {32 \over 27} I(3) \right|^2.
$$
For our next result, we will use the notation
$$
\Pi : \mathbb{H}(3, 3) \to \Htrace(3, 3)
$$
for the orthogonal projection from the set of rank-3 tensors $\mathbb{H}(3, 3)$ onto the our relaxation space $\Htrace(3, 3)$.
\begin{proposition}
Let $\gamma = \frac{\delta_2^2}{\delta_1^2}$.  The critical points of the energy $\mathcal{E}^{3d}_{\e, \delta_1, \delta_2}$ satisfy
$$
\abs{\mathcal{A}} \leq \max \left \{  \frac{8}{3}, \sqrt[3]{\frac{8\gamma}{3}} \right \},
$$
independent of $\e > 0$.
\end{proposition}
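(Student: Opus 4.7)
The plan is to run a maximum-principle argument on the scalar $u=\tfrac12|\mathcal{A}|^2$, producing an interior bound from the subharmonicity of $u$ and a boundary bound from the Robin-type boundary condition.

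First I would write out the Euler--Lagrange system. Variations in $H^1(\Omega;\Htrace(3,3))$, together with self-adjointness of the orthogonal projection $\Pi$ onto $\Htrace(3,3)$, give the interior equation
\[
\Delta\mathcal{A}=\tfrac{1}{\e^2}\,\Pi\bigl[\nabla_{\mathcal{A}}W(\mathcal{A})\bigr] \text{ in }\Omega,
\]
and the Robin-type boundary condition
\[
\partial_\nu\mathcal{A}+\tfrac{1}{\delta_1^2}\,\Pi\bigl[\nabla_{\mathcal{A}}V\bigr]+\tfrac{1}{\delta_2^2}\,\Pi\bigl[\nabla_{\mathcal{A}}W\bigr]=0 \text{ on }\partial\Omega.
\]
Elliptic regularity for these Robin PDEs gives $\mathcal{A}\in C^{1,\alpha}(\overline\Omega)$, so pointwise arguments at interior and boundary extrema are legitimate.

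Next I would compute the two key algebraic contractions. Because $\Pi\mathcal{A}=\mathcal{A}$ and $\Pi$ is self-adjoint, $\mathcal{A}:\Pi\nabla W=\mathcal{A}:\nabla W$. Writing $M:=\mathcal{A}\mathcal{A}^T-\tfrac{32}{27}I(3)$, a direct computation and the symmetry of $M$ yield
\[
\mathcal{A}:\nabla W=2\,\mathrm{tr}\!\bigl(M\,\mathcal{A}\mathcal{A}^T\bigr)=2\,\mathrm{tr}\!\bigl((\mathcal{A}\mathcal{A}^T)^2\bigr)-\tfrac{64}{27}|\mathcal{A}|^2\;\geq\;\tfrac{2}{3}|\mathcal{A}|^2\bigl(|\mathcal{A}|^2-\tfrac{32}{9}\bigr),
\]
where the inequality follows from Cauchy--Schwarz applied to the three eigenvalues of the positive-semidefinite $3\times 3$ matrix $\mathcal{A}\mathcal{A}^T$, giving $\mathrm{tr}((\mathcal{A}\mathcal{A}^T)^2)\geq \tfrac13|\mathcal{A}|^4$. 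For the boundary potential, setting $h=\mathcal{A}(\nu\otimes\nu)-\mu_3\nu$, $\mu_3=\tfrac89$, and $\alpha=\langle h,\nu\rangle=\mathcal{A}(\nu,\nu,\nu)-\mu_3$, an analogous computation gives
\[
\mathcal{A}:\nabla V=|h|^2+\mu_3\alpha=2V+\mu_3\alpha,
\]
and Cauchy--Schwarz on $|\mathcal{A}(\nu,\nu,\nu)|\leq|\mathcal{A}|$ yields the elementary lower bound $\alpha\geq -\mu_3-|\mathcal{A}|$.

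Now I would run the maximum principle. Combining the interior EL with the identity above,
\[
\Delta u=|\nabla\mathcal{A}|^2+\tfrac{1}{\e^2}\,\mathcal{A}:\nabla W\;\geq\;\tfrac{2}{3\e^2}|\mathcal{A}|^2\bigl(|\mathcal{A}|^2-\tfrac{32}{9}\bigr),
\]
so $u$ is subharmonic on $U:=\{|\mathcal{A}|^2>\tfrac{32}{9}\}$. Hence if $\max_{\overline\Omega}u$ is achieved in $\Omega$, necessarily $|\mathcal{A}|^2\leq\tfrac{32}{9}$, giving $|\mathcal{A}|\leq\tfrac{4\sqrt 2}{3}<\tfrac{8}{3}$ and the claim is immediate. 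Otherwise the maximum is attained at some $x_0\in\partial\Omega$, where $\partial_\nu u(x_0)\geq 0$. Contracting the boundary EL with $\mathcal{A}$ and multiplying by $\delta_2^2$ produces
\[
\gamma\bigl(2V+\mu_3\alpha\bigr)+\mathcal{A}:\nabla W\leq 0,\qquad \gamma=\tfrac{\delta_2^2}{\delta_1^2}.
\]
Discarding the non-negative $2\gamma V$ and substituting the lower bounds for $\alpha$ and $\mathcal{A}:\nabla W$ reduces this to the cubic inequality
\[
\tfrac{2}{3}|\mathcal{A}|^2\bigl(|\mathcal{A}|^2-\tfrac{32}{9}\bigr)\;\leq\;\gamma\mu_3\bigl(\mu_3+|\mathcal{A}|\bigr).
\]
In the only non-trivial regime $|\mathcal{A}|\geq\tfrac{8}{3}$ one has $|\mathcal{A}|^2-\tfrac{32}{9}\geq\tfrac12|\mathcal{A}|^2$ and $\mu_3\leq|\mathcal{A}|$, so the balance forces $|\mathcal{A}|^3=O(\gamma)$, and a careful tracking of the prefactors yields the stated bound $|\mathcal{A}|^3\leq\tfrac{8\gamma}{3}$.

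The main obstacle is extracting the sharp constant $\tfrac{8}{3}$ in the $\gamma^{1/3}$ estimate: the exponent $\tfrac{1}{3}$ is dictated by the balance between the quartic growth of $\mathcal{A}:\nabla W$ and the part of $\gamma\,\mathcal{A}:\nabla V$ that is linear in $|\mathcal{A}|$ through $\alpha$, but pinning down the exact prefactor requires either a refined bound on $\mathcal{A}(\nu,\nu,\nu)$ that exploits the trace-free structure of $\Htrace(3,3)$ or a careful combined use of $V\geq 0$ with the sign of $\alpha$ in the boundary inequality.
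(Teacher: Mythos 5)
Your overall architecture (a maximum-principle argument on $u=\tfrac12|\mathcal{A}|^2$, interior subharmonicity from the Euler--Lagrange equation, Robin-type boundary condition from the surface term, and a case split between interior and boundary maxima) is exactly the route the paper takes, and your interior bound $\mathcal{A}:\nabla W\geq\tfrac23|\mathcal{A}|^2(|\mathcal{A}|^2-\tfrac{32}{9})$ coincides with the paper's \eqref{eq:grad_W_dot_A}. The gap is precisely in the boundary estimate, and you have correctly identified it as the obstacle.

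The loss comes from the decomposition $\mathcal{A}:\nabla_{\mathcal{A}}V=2V+\mu_3\alpha$ followed by dropping $2V\geq 0$. Since $2V=|h|^2$ is typically larger than $|\mathcal{A}\mathbf{X}_{\nu\nu^T}|^2$ when $\alpha<0$ (the regime that matters), discarding it is strictly lossier than the paper's move of writing
\[
\mathcal{A}:\nabla_{\mathcal{A}}V=\bigl|\mathcal{A}\mathbf{X}_{\nu\nu^T}\bigr|^2-\mu_3\bigl\langle\mathcal{A}\mathbf{X}_{\nu\nu^T},\nu\bigr\rangle
\;\geq\;-\mu_3\bigl|\mathcal{A}\mathbf{X}_{\nu\nu^T}\bigr|\;\geq\;-\mu_3|\mathcal{A}|,
\]
which drops the non-negative $\bigl|\mathcal{A}\mathbf{X}_{\nu\nu^T}\bigr|^2$ instead. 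Your route gives $\mathcal{A}:\nabla_{\mathcal{A}}V\geq-\mu_3|\mathcal{A}|-\mu_3^2$, carrying an extra $-\mu_3^2$. Feeding this into the boundary inequality produces $\tfrac13|\mathcal{A}|^4\leq\gamma\mu_3(\mu_3+|\mathcal{A}|)\leq2\gamma\mu_3|\mathcal{A}|$ for $|\mathcal{A}|\geq\tfrac83\geq\mu_3$, hence $|\mathcal{A}|^3\leq6\gamma\mu_3=\tfrac{16\gamma}{3}$, which overshoots the target by a factor of $2$ inside the cube root. With the paper's bound there is no $\mu_3^2$ term, so the same manipulation gives $\tfrac{2|\mathcal{A}|}{3}\bigl(|\mathcal{A}|^2-\tfrac{32}{9}\bigr)\leq\gamma\mu_3$, and for $|\mathcal{A}|\geq\tfrac83$ the left side is at least $\tfrac{|\mathcal{A}|^3}{3}$, yielding the stated $|\mathcal{A}|^3\leq3\gamma\mu_3=\tfrac{8\gamma}{3}$. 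In short: replace the $h$-decomposition of $\mathcal{A}:\nabla_{\mathcal{A}}V$ by the direct Cauchy--Schwarz estimate above and the sharp prefactor falls out immediately; the trace-free structure of $\Htrace(3,3)$ plays no role.
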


\begin{proof}
First we observe that the Euler-Lagrange equation along with boundary conditions for the energy $\mathcal{E}^{3d}_{\e, \delta_1, \delta_2}$ are
\begin{align*}
-\Delta \mathcal{A} + \frac{1}{\e^2}\Pi\left ( (\nabla_{\mathcal{A}} W)(\mathcal{A}) \right ) &= 0 \,\,\,\mbox{in}\,\,\,\Omega, \\
\nabla \mathcal{A} \cdot \nu + \Pi( \frac{1}{\delta_1^2}(\nabla_{\mathcal{A}} V)(\mathcal{A}, \nu)+ \frac{1}{\delta_2^2}(\nabla_{\mathcal{A}} W)(\mathcal{A})) &= 0 \,\,\,\mbox{on}\,\,\,\partial \Omega.
\end{align*}
Here,
$$
(\nabla_{\mathcal{A}} W)(\mathcal{A}) = 2(\mathcal{A}\mathcal{A}^T - \lambda_3 I(3))\mathcal{A},
$$
and
$$
(\nabla_{\mathcal{A}} V)(\mathcal{A}, p) = \mathcal{A} \mathbf{X}_{pp^T} \mathbf{X}_{pp^T}^T - \mu_n pX_{pp^T}^T = (\mathcal{A} \mathbf{X}_{pp^T} - \mu_3 p) \mathbf{X}_{pp^T}^T
$$
denote the gradients of the potentials with respect to $\mathcal{A}$, and $\lambda_3 = \frac{32}{27}$, $\mu_3 = \frac{8}{9}$.

Now, since $\mathcal{A}\in \Htrace(3, 3)$, we have $\Pi(\mathcal{A}) = \mathcal{A}$.  We also have $\abs{\mathcal{A}\mathcal{A}^T}^2 \geq \frac{\abs{\mathcal{A}}^4}{3}$, so we can estimate
\begin{equation}\label{eq:grad_W_dot_A}
\langle \mathcal{A}, \Pi \left ( \nabla_{\mathcal{A}} W)(\mathcal{A}) \right ) \rangle = 2(\abs{\mathcal{A}\mathcal{A}^T}^2 - \lambda_3 \abs{\mathcal{A}}^2 ) \geq \frac{2\abs{\mathcal{A}}^2}{3}(\abs{\mathcal{A}}^2 -3\lambda_3).
\end{equation}

Next, for $\abs{p}=1$, and using the facts that $\Pi(\mathcal{A}) = \mathcal{A}$, $\langle \mathcal{A}, \mathcal{A} \mathbf{X}_{pp^T} \mathbf{X}_{pp^T}^T \rangle \geq 0$, $\abs{p \mathbf{X}_{pp^T}^T} = 1$, we obtain
\begin{equation}\label{eq:grad_V_dot_A}
\langle \mathcal{A}, \Pi \left ( \nabla_{\mathcal{A}} V)(\mathcal{A}, p) \right )\rangle = \langle \mathcal{A},  \mathcal{A} \mathbf{X}_{pp^T}\mathbf{X}_{pp^T}^T - \mu_3 p \mathbf{X}_{pp^T}^T  \rangle \geq -\mu_3 \langle \mathcal{A},  p \mathbf{X}_{pp^T}^T \rangle \geq -\mu_3 \abs{\mathcal{A}}.
\end{equation}

Now, we take the inner product of the equation satisfied by $\mathcal{A}$ with $\mathcal{A}$, and use \eqref{eq:grad_W_dot_A}, to obtain
$$
\Delta \left (  \frac{\abs{\mathcal{A}}^2}{2} \right ) \geq \abs{\nabla \mathcal{A}}^2 + \frac{2\abs{\mathcal{A}}^2}{n\e^2}(\abs{\mathcal{A}}^2-3\lambda_3).
$$

On the other hand, using \ref{eq:grad_W_dot_A} and \ref{eq:grad_V_dot_A}, and writing $\gamma = \frac{\delta_2^2}{\delta_1^2}$, we obtain
$$
\langle \mathcal{A}, \Pi \left (  \frac{1}{\delta_1^2} (\nabla_{\mathcal{A}} V)(\mathcal{A}, \nu) + \frac{1}{\delta_2^2} (\nabla_{\mathcal{A}} W)(\mathcal{A})  \right )  \rangle \geq \frac{\abs{\mathcal{A}}}{\delta_2^2} \left (  \frac{2\abs{\mathcal{A}}^3}{3}-\gamma \mu_3 - 2\lambda_3 \abs{\mathcal{A}} \right ).
$$
From here we obtain
$$
\langle \mathcal{A}, \Pi \left (  \frac{1}{\delta_1^2} (\nabla_{\mathcal{A}} V)(\mathcal{A}, \nu) + \frac{1}{\delta_2^2} (\nabla_{\mathcal{A}} W)(\mathcal{A})  \right )  \rangle \geq 0 \,\,\,  \mbox{for}\,\,\,  \abs{\mathcal{A}} \geq \max\{ \sqrt{6\lambda_3}, \sqrt[3]{3\gamma \mu_3}\}.
$$
From the equation satisfied by $\abs{\mathcal{A}}^2$, and Hopf Lemma, we conclude that $\abs{\mathcal{A}} \leq \max\{ \sqrt{3\lambda_3}, \sqrt[3]{3\gamma \mu_3}\}$.  This is the conclusion of the Proposition.

\end{proof}

Before we use this formalism to generate tetrahedral fields for different Lipschitz domains in two and three dimensions, we first discuss the topology of $SO(3)/T$ and its effect on the frame field.

\medskip
\medskip
\medskip
\medskip

\section{Tetrahedral frames and quaternions}
\label{sec:qua}

In this section we discuss how 
tetrahedral frames can be described 
using quaternions as $S^3/2T$,
where $S^3$ are the unit quaternions
and $2T$ is a specific finite
subgroup. We show that 
a natural map from quaternions
to symmetric traceless tensors
induces an isometric
embedding of the space of tetrahedra.

We compute the fundamental 
group of the space of tetrahedra,
following \cite{MRV2021}. As 
the group is non-abelian, the free 
homotopy classes are characterized
by the conjugacy classes of the 
fundamental group, compare
also \cite{Mermin}, \cite{Trebin1984}
for similar topological considerations
in theoretical physics.


    
    
    
    

\subsection{Quaternions, rotations and tetrahedra}
It is a well known result that the group of unit quaternions $S^3$ can be used to describe rotations in $\R^3$. The following lemma
is standard:
\begin{lemma}
Set $\mathbf{q}=a+b\mathbf{i}+c\mathbf{j}+d \mathbf{k}\in S^3$ for $a,b,c,d \in \mathbb{R}$, then 
$\mathbf{q}\mapsto R_\mathbf{q}$
\[
R_\mathbf{q}=\left(\begin{array}{ccc} a^2+b^2-c^2-d^2 & 2bc-2ad & 2ac+2bd\\ 2ad+2bc & a^2+c^2-b^2-d^2 & 2cd-2ab\\ 2bd-2ac & 2ab+2cd & a^2+d^2-b^2-c^2 \end{array}\right)
\]
with $a^2 + b^2 + c^2 + d^2 = 1$
is a group homomorphism $S^3\to SO(3)$ with kernel $\{\pm 1\}$.
\end{lemma}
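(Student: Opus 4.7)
The plan is to proceed through the classical route of realizing rotations as conjugation by unit quaternions on the pure imaginary quaternions, and then verify that the resulting matrix coincides with the one written in the statement.

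First I would identify $\mathbb{R}^3$ with the space of pure imaginary quaternions $\mathbf{v} = x\mathbf{i} + y\mathbf{j} + z\mathbf{k}$ via $(x,y,z)^T \mapsto \mathbf{v}$, and recall that for $\mathbf{q} = a + b\mathbf{i} + c\mathbf{j} + d\mathbf{k} \in S^3$ the conjugate $\bar{\mathbf{q}} = a - b\mathbf{i} - c\mathbf{j} - d\mathbf{k}$ satisfies $\mathbf{q}\bar{\mathbf{q}} = |\mathbf{q}|^2 = 1$, so $\bar{\mathbf{q}} = \mathbf{q}^{-1}$. I would then define the map $\rho_{\mathbf{q}}(\mathbf{v}) = \mathbf{q}\mathbf{v}\bar{\mathbf{q}}$. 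The first step is to check that $\rho_{\mathbf{q}}$ sends pure imaginary quaternions to pure imaginary quaternions: this follows from $\overline{\rho_{\mathbf{q}}(\mathbf{v})} = \mathbf{q}\bar{\mathbf{v}}\bar{\mathbf{q}} = -\mathbf{q}\mathbf{v}\bar{\mathbf{q}} = -\rho_{\mathbf{q}}(\mathbf{v})$, using $\overline{\alpha\beta} = \bar{\beta}\bar{\alpha}$ and that $\bar{\mathbf{v}} = -\mathbf{v}$ for pure imaginary $\mathbf{v}$.

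Next I would verify that $\rho_{\mathbf{q}}$ is an orientation-preserving isometry of $\mathbb{R}^3$. Isometry follows from multiplicativity of the quaternion norm: $|\rho_{\mathbf{q}}(\mathbf{v})| = |\mathbf{q}||\mathbf{v}||\bar{\mathbf{q}}| = |\mathbf{v}|$, so $\rho_{\mathbf{q}} \in O(3)$. To show $\det \rho_{\mathbf{q}} = +1$, I would note that $S^3$ is path connected, the map $\mathbf{q} \mapsto \rho_{\mathbf{q}}$ is continuous into $O(3)$, and $\rho_1 = \mathrm{id}$ has determinant $+1$; continuity of the determinant forces $\det \rho_{\mathbf{q}} = +1$ on all of $S^3$. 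The homomorphism property $\rho_{\mathbf{q}\mathbf{p}} = \rho_{\mathbf{q}} \circ \rho_{\mathbf{p}}$ is immediate from associativity of quaternion multiplication and the identity $\overline{\mathbf{q}\mathbf{p}} = \bar{\mathbf{p}}\bar{\mathbf{q}}$.

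The matrix identification step is the one direct computation: I would compute $\mathbf{q}\mathbf{i}\bar{\mathbf{q}}$, $\mathbf{q}\mathbf{j}\bar{\mathbf{q}}$, and $\mathbf{q}\mathbf{k}\bar{\mathbf{q}}$ using the multiplication table $\mathbf{i}^2 = \mathbf{j}^2 = \mathbf{k}^2 = -1$, $\mathbf{i}\mathbf{j} = \mathbf{k} = -\mathbf{j}\mathbf{i}$, etc., and read off the columns of $R_{\mathbf{q}}$ as the coefficients of $\mathbf{i}, \mathbf{j}, \mathbf{k}$; this reproduces the matrix displayed in the lemma. I expect the bookkeeping here to be the most tedious part, but it is routine once one is careful about signs and the order of multiplication.

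Finally, for the kernel I would argue that $\mathbf{q} \in \ker$ iff $\mathbf{q}\mathbf{v} = \mathbf{v}\mathbf{q}$ for every pure imaginary $\mathbf{v}$. Writing $\mathbf{q} = a + \mathbf{w}$ with $\mathbf{w} = b\mathbf{i}+c\mathbf{j}+d\mathbf{k}$, this reduces to $\mathbf{w}\mathbf{v} = \mathbf{v}\mathbf{w}$ for all pure imaginary $\mathbf{v}$. Taking successively $\mathbf{v} = \mathbf{i}, \mathbf{j}, \mathbf{k}$ and using the anticommutation of distinct imaginary units forces $b = c = d = 0$, so $\mathbf{q} = a \in \{\pm 1\}$ by the unit-norm constraint. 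Conversely, $\pm 1$ manifestly act trivially, so $\ker = \{\pm 1\}$.
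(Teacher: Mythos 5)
Your proposal is correct and follows essentially the same route as the paper: realize $R_{\mathbf q}$ as the matrix of the conjugation map $\mathbf p \mapsto \mathbf q \mathbf p \mathbf q^{-1}$ on pure imaginary quaternions. The paper's version is terser (it simply declares orthogonality and purity to be "an easy computation"), while you spell out the remaining details — isometry from norm multiplicativity, $\det = +1$ by connectedness of $S^3$, the homomorphism property, and the kernel computation — but the underlying idea is identical.
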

\begin{proof}
Note that $R_\mathbf{q}$ is the matrix representation of the map $\mathbf{p}\mapsto \mathbf{q}\mathbf{p}\mathbf{q}^{-1}$ for 
$\mathbf{p}=p_1\mathbf{i}+p_2\mathbf{j}+p_3\mathbf{k}$ a pure quaternion identified with a vector in $\R^3$. It is an easy computation 
that $\mathbf{q}\mathbf{p}\mathbf{q}^{-1}$ is a pure quaternion
and the matrix is orthogonal.
\end{proof}

We now define some useful groups:
The \textbf{tetrahedral group} $T$ is the subgroup of $R\in SO(3)$ that map the standard tetrahedron defined in \eqref{e:stdTet} to itself,
i.e. $R\mathbf{v}^0_j = \mathbf{v}^0_{\sigma(j)}$ where $\sigma$ is a permutation. It is well known that $T$ is isomorphic to the 
\textbf{alternating group} $A_4$: rotations preserve the orientation so cannot generate any transpositions. On the other hand, any $3$-cycle in $A_4$ can
be generated as a rotation around the leftover vector. 
Finally, we have the \textbf{binary tetrahedral group} $2T\subset S^3$, given by 
\[
2T= \{ \pm 1, \pm \textbf{i}, \pm \textbf{j}, \pm\textbf{k}, \frac12(\pm 1 \pm \textbf{i}\pm \textbf{j} \pm \textbf{k} )\},
\]
where each of the $\pm$ represents an independent choice so there are $24$ elements in total. It is generated by $\mathbf{s}=\frac12(1+\mathbf{i}+
\mathbf{j}+\mathbf{k})$ and $\mathbf{t}=\frac12(1+\mathbf{i}+
\mathbf{j}-\mathbf{k})$, which satisfy $\mathbf{s}^3=\mathbf{t}^3=-1$.

\begin{lemma}
The map $\mathbf{q}\mapsto R_\mathbf{q}$ induces a map $2T\to T$ that acts as follows on the standard tetrahedron:
\[
R_\mathbf{q}\mathbf{v}^0_j = \mathbf{v}^0_{\sigma(j)}  
\]
so there is an induced map $2T\to A_4$ that has the following representation:
\begin{center}
    \begin{tabular}{|c|c|}
        \hline 
    $2T$    & $A_4$ \\
\hline
$\pm 1$ &  $(1)$ \\
$\pm\mathbf{i}=\pm\mathbf{ts}$ & $(12)(34)$ \\
$\pm\mathbf{j}=\pm\mathbf{st}$ & $(13)(24)$ \\
$\pm\mathbf{k}=\pm\mathbf{s}\mathbf{t}^2\mathbf{s}$ & $(14)(23)$ \\
$\pm\frac12(1+\mathbf{i}+
\mathbf{j}+\mathbf{k}) =\pm \mathbf{s}$ & $(234)$ \\
$\pm\frac12(1-\mathbf{i}-
\mathbf{j}-\mathbf{k}) =\pm \mathbf{s}^{-1}$ & $(243)$ \\
$\pm\frac12(1+\mathbf{i}+
\mathbf{j}-\mathbf{k}) =\pm \mathbf{t}$ & $(123)$ \\
$\pm\frac12(1-\mathbf{i}-
\mathbf{j}+\mathbf{k}) =\pm \mathbf{t}^{-1}$ & $(132)$ \\
$\pm\frac12(1+\mathbf{i}-
\mathbf{j}+\mathbf{k}) =\pm \mathbf{st}^{-1}$ & $(142)$ \\
$\pm\frac12(1-\mathbf{i}+
\mathbf{j}-\mathbf{k}) =\pm \mathbf{ts}^{-1}$ & $(124)$ \\
$\pm\frac12(1-\mathbf{i}+
\mathbf{j}+\mathbf{k}) =\pm \mathbf{s}^{-1}\mathbf{t}$ & $(143)$ \\
$\pm\frac12(1+\mathbf{i}-
\mathbf{j}-\mathbf{k}) =\pm \mathbf{t}^{-1}\mathbf{s}$ & $(134)$ \\
\hline
    \end{tabular}
\end{center}
\end{lemma}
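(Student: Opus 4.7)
The plan is to leverage the homomorphism $\mathbf{q}\mapsto R_\mathbf{q}$ established in the preceding lemma, together with the fact that $2T$ is generated by $\mathbf{s}$ and $\mathbf{t}$. Since a group homomorphism is determined by its values on a generating set, the table will reduce to direct computation on two elements followed by mechanical composition.

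First I would verify that the restricted map sends $2T$ into $T$. Because $2T$ is generated by $\mathbf{s}$ and $\mathbf{t}$, it is enough to check that $R_\mathbf{s}$ and $R_\mathbf{t}$ permute the standard tetrahedron $\{\mathbf{v}^0_1,\mathbf{v}^0_2,\mathbf{v}^0_3,\mathbf{v}^0_4\}$. Substituting $a=b=c=d=\tfrac12$ into the formula for $R_\mathbf{q}$ yields
\[
R_\mathbf{s}=\begin{pmatrix} 0 & 0 & 1 \\ 1 & 0 & 0 \\ 0 & 1 & 0 \end{pmatrix},
\]
a cyclic coordinate permutation that fixes $\mathbf{v}^0_1$ and sends $\mathbf{v}^0_2\mapsto\mathbf{v}^0_3\mapsto\mathbf{v}^0_4\mapsto\mathbf{v}^0_2$; similarly, the explicit computation of $R_\mathbf{t}$ shows it implements the $3$-cycle $(123)$ fixing $\mathbf{v}^0_4$. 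This establishes that $R_\mathbf{s},R_\mathbf{t}\in T$, and hence $R_\mathbf{q}\in T$ for every $\mathbf{q}\in 2T$. Since $\{\pm 1\}\subset 2T$ is the kernel of $S^3\to SO(3)$, the induced map $2T\to T$ has image of size $24/2=12=|T|=|A_4|$, so it is surjective.

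Second, I would assemble the table. The identifications $\mathbf{s}\mapsto (234)$ and $\mathbf{t}\mapsto (123)$ from the generator computation, together with the homomorphism property, determine the permutation assigned to every word in $\mathbf{s},\mathbf{t}$. For instance, reading right-to-left, $\mathbf{ts}\mapsto (123)(234)=(12)(34)$, which must then equal the image of $\mathbf{i}$; this is consistent with the direct check using $a=0,\,b=1,\,c=d=0$, giving $R_\mathbf{i}=\operatorname{diag}(1,-1,-1)$ and hence the permutation $(12)(34)$. Analogously, $\mathbf{st}\mapsto (234)(123)=(13)(24)$ matches the direct computation for $\mathbf{j}$, and $\mathbf{st}^2\mathbf{s}\mapsto (14)(23)$ matches $\mathbf{k}$. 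Each of the remaining order-$3$ entries is either $\pm \mathbf{s}^{\pm 1}$ or $\pm\mathbf{t}^{\pm 1}$ or a two-letter word such as $\mathbf{st}^{-1}$, $\mathbf{ts}^{-1}$, $\mathbf{s}^{-1}\mathbf{t}$, $\mathbf{t}^{-1}\mathbf{s}$, whose image is the corresponding product in $A_4$; the sign ambiguity drops out because $\{\pm 1\}$ lies in the kernel.

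The only genuine verification needed beyond generator computation is that the quaternion expressions listed in the second column of the table actually equal the asserted half-integer quaternions in the first column. These are routine quaternion multiplications using $\mathbf{i}^2=\mathbf{j}^2=\mathbf{k}^2=\mathbf{ijk}=-1$; for example, $\mathbf{st}=\tfrac14(1+\mathbf{i}+\mathbf{j}+\mathbf{k})(1+\mathbf{i}+\mathbf{j}-\mathbf{k})=\mathbf{j}$ is a one-line calculation.

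The only potential obstacle is purely bookkeeping: ensuring that each of the twelve cosets $\{\pm\mathbf{q}\}\subset 2T$ has been expressed in the generators $\mathbf{s},\mathbf{t}$ consistently with the chosen convention for composing permutations (left-to-right versus right-to-left). I would fix this convention at the start by verifying one order-$2$ entry such as $\mathbf{i}\leftrightarrow(12)(34)$ directly from $R_\mathbf{i}=\operatorname{diag}(1,-1,-1)$ and use it to calibrate the remaining computations.
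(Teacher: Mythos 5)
Your proposal is correct, and it is essentially the same as the paper's, which simply states that the lemma follows by ``a straightforward computation'' and omits the details. Your contribution is to organize that computation efficiently: compute $R_{\mathbf{s}}$, $R_{\mathbf{t}}$ explicitly on $\{\mathbf{v}^0_j\}$ to get $(234)$ and $(123)$, then use the homomorphism property and the expression of each element of $2T$ as a word in $\mathbf{s},\mathbf{t}$ to fill the table, with a couple of direct checks (e.g.\ $R_{\mathbf{i}}=\operatorname{diag}(1,-1,-1)\mapsto(12)(34)$) to fix the composition convention -- all of which checks out.
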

\begin{proof}
This is a straightforward computation.
\end{proof}
We see that the tetrahedral group is composed of
$\pm\frac{2\pi}{3}$ rotations around of the vectors of the
tetrahedron (these correspond to the $3$-cycles) and 
$\pi$ rotations around the axis going through the midpoints of 
two opposite edges (these correspond to the products of two
transpositions). The binary tetrahedral group has twice 
as many elements, and we have the following characterisation of 
its conjugacy classes:
\begin{lemma}
There are seven conjugacy classes of elements of $2T$:
\begin{center}
\begin{tabular}{|c|c|c|}
\hline
    Elements & Geodesic distance from $1$ & description \\
\hline
    $1$ & $0$ & identity \\
$\mathbf{s},\mathbf{t}^{-1},\mathbf{t}^{-1}\mathbf{s},\mathbf{s}\mathbf{t}^{-1}$
& $\frac\pi3$ & 1/3 rotation around one of the tetrahedral vectors \\
$\mathbf{s}^{-1}, \mathbf{t}, \mathbf{t}\mathbf{s}^{-1}, \mathbf{s}^{-1}\mathbf{t}$ &
$\frac\pi3$ & -1/3 rotation around one of the tetrahedral vectors \\
$\mathbf{s}^2, \mathbf{t}^{-2},\mathbf{t}^{-1}\mathbf{s}\mathbf{t}^{-1}\mathbf{s},
\mathbf{s}\mathbf{t}^{-1}\mathbf{s}\mathbf{t}^{-1}
$ & $\frac{2\pi}3 $ & -1/3 rotation around one of the tetrahedral vectors \\
$\mathbf{s}^{-2}, \mathbf{t}^2, \mathbf{t}\mathbf{s}^{-1}\mathbf{t}\mathbf{s}^{-1}, \mathbf{s}^{-1}\mathbf{t}\mathbf{s}^{-1}\mathbf{t}$ & $\frac{2\pi}3 $ & 1/3 rotation around one of the tetrahedral vectors \\
    $-1$ & $\pi$ & full rotation \\
    $\pm\mathbf{i},\pm\mathbf{j},\pm\mathbf{k}$ & $\frac\pi 2$ & 
    rotation interchanging two pairs of vectors \\
\hline
\end{tabular}    
\end{center}
\end{lemma}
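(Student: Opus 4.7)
The plan is to derive the class structure of $2T$ from two invariants: the quotient homomorphism $\pi \colon 2T \to T \cong A_4$, which has central kernel $\{\pm 1\}$, and the fact that conjugation in $S^3$ preserves the real part of a quaternion. Because the geodesic distance from $1$ on $S^3$ equals $\arccos(\mathrm{Re}\,\mathbf{q})$, this second invariant immediately produces the distance column of the table once the classes are identified.

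First I would recall that $A_4$ has four conjugacy classes: $\{e\}$, the three double transpositions, and the two classes of four $3$-cycles distinguished by cyclic orientation. Since $\ker \pi$ is central, each $2T$-conjugacy class projects onto a single $A_4$-class, so I analyse the preimages one by one. For $\pi(g) = e$ we have $g \in \{\pm 1\}$, both central, giving two singleton classes at distances $0$ and $\pi$. For the double transposition class, $\pi^{-1}$ consists of $\{\pm \mathbf{i}, \pm \mathbf{j}, \pm \mathbf{k}\}$, all of real part $0$. A direct check shows $C_{2T}(\mathbf{i}) = \{\pm 1, \pm \mathbf{i}\}$ has order $4$, so the class of $\mathbf{i}$ has size $|2T|/4 = 6$ and exhausts the preimage, producing a single class at distance $\pi/2$.

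The crux is the $3$-cycle case: each of the two $A_4$-classes of size $4$ has preimage of size $8$, and I must show this splits into two $2T$-classes of size $4$. For the upper bound, $\pi(C_{2T}(g)) \subseteq C_{A_4}(\pi(g))$, which has order $3$, combined with $\{\pm 1\} \subseteq C_{2T}(g)$, forces $|C_{2T}(g)| \leq 6$. For the lower bound, $\langle g, -1\rangle \subseteq C_{2T}(g)$ has order $6$ in either case: if $g$ has order $3$ then $\langle g, -1\rangle = \{\pm 1, \pm g, \pm g^2\}$, while if $g$ has order $6$ then $-1 = g^3 \in \langle g\rangle$. Hence $|C_{2T}(g)| = 6$ and each class has size exactly $4$, so each $8$-element preimage breaks into exactly two classes. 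The real part invariant then distinguishes them: writing $g = \cos\theta + \sin\theta\,\mathbf{n}$ with $\mathbf{n}$ a unit pure quaternion, the constraint $\pi(g)^3 = e$ forces $g^3 \in \{\pm 1\}$, i.e.\ $\cos 3\theta = \pm 1$, so $\mathrm{Re}\,g \in \{\pm \tfrac{1}{2}\}$ away from $\pm 1$. The two real parts yield the two classes at distances $\pi/3$ and $2\pi/3$ respectively.

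Summing the sizes gives $1 + 1 + 6 + 4 + 4 + 4 + 4 = 24 = |2T|$ partitioned into seven classes. The descriptions in the rightmost column follow from the standard correspondence $g = \cos(\alpha/2) + \sin(\alpha/2)\,\mathbf{n}$ being a lift of the rotation by angle $\alpha$ about $\mathbf{n}$: real parts $\pm \tfrac{1}{2}$ correspond to $\pm 2\pi/3$ rotations about one of the four tetrahedral vertex axes, and real part $0$ to the $\pi$-rotations interchanging two pairs of opposite edges. The main obstacle is the $3$-cycle paragraph above, where one must combine the $A_4$-centralizer bound with the contribution of $\{\pm 1\}$ and $\langle g\rangle$ to pin down the centralizer exactly; the rest is bookkeeping about counting and real parts.
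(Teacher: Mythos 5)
Your proof is correct, but it takes a genuinely different route from the paper's. The paper argues hands-on from the generators: it observes that distinct powers $\mathbf{s}^0,\dots,\mathbf{s}^5$ cannot be conjugate to each other (so there are at least six classes among them), fills out each class by explicit conjugation with $\mathbf{i},\mathbf{j},\mathbf{k}$, and then uses the conjugation-invariance of the geodesic distance together with a couple of explicit computations (e.g.\ $-\mathbf{iji}=\mathbf{j}$, $\mathbf{t}^{-1}\mathbf{it}=\mathbf{j}$) to show that $\{\pm\mathbf{i},\pm\mathbf{j},\pm\mathbf{k}\}$ is a single separate class. Your argument instead pushes everything through the quotient $\pi\colon 2T\to A_4$: you use that the central kernel forces each $2T$-class to lie over a single $A_4$-class, and then pin down the splitting of the $3$-cycle preimages with a centralizer count ($|C_{2T}(g)|\le 2\cdot|C_{A_4}(\pi(g))| = 6$ from above, and $|\langle g,-1\rangle|=6$ from below), and finally use the real-part invariant only to label the two pieces. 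Both approaches land on the same table, but yours is the more structural one and would port immediately to the other binary polyhedral groups (binary octahedral, binary icosahedral) by the same centralizer bookkeeping, whereas the paper's argument is shorter here because the cyclic subgroup $\langle\mathbf{s}\rangle$ of order $6$ already meets every class but one, making direct inspection efficient for this particular group.

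One very small clarification worth making explicit in your write-up: when you invoke $|\pi(C_{2T}(g))|\le 3$ and $\{\pm1\}\subseteq C_{2T}(g)$ to conclude $|C_{2T}(g)|\le 6$, you are implicitly using that $\ker\pi\subseteq C_{2T}(g)$ \emph{exactly}, so that $|C_{2T}(g)| = |\ker\pi|\cdot|\pi(C_{2T}(g))|$; since $\ker\pi$ is central this is automatic, but stating it keeps the inequality chain airtight. Everything else checks out, including the size count $1+1+6+4+4+4+4=24$ and the identification of the two order-$6$ and two order-$3$ classes by $\mathrm{Re}\,g=\pm\tfrac12$.
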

\begin{proof}
We note that powers of $\mathbf{s}$ are conjugate to each other if 
and only if they are the same, which gives that the classes corresponding 
to $\mathbf{s}^0=1,\mathbf{s}^1, \mathbf{s}^2,\mathbf{s}^3=-1,\mathbf{s}^4=\mathbf{s}^{-2}, \mathbf{s}^5=\mathbf{s}^{-1}$.
are all separate. Conjugating each of these elements with $\mathbf{i}$, $\mathbf{j}$, $\mathbf{k}$ yields the rest of the conjugacy class. The elements 
$\pm \mathbf{i}, \pm \mathbf{j}, \pm \mathbf{k}$ form their own conjugacy class 
because the geodesic distance to $\pm 1$ is invariant under conjugation 
and we can compute $-\mathbf{iji}=\mathbf{j}$ and $\mathbf{t}^{-1}\mathbf{it}=
\mathbf{j}$ etc. 
\end{proof}

    


The configuration space of regular tetrahedra with vertices
on the unit sphere can be understood as follows. Let 
\[
\mathcal{M}_0=\left\{ (\mathbf{v}^1,\mathbf{v}^2,\mathbf{v}^3,\mathbf{v}^4) \in(\R^3)^4 : \left<\mathbf{v}^i,\mathbf{v}^j\right> = \frac43 \delta_{ij}-\frac13, \det(\mathbf{v}^1|\mathbf{v}^2|\mathbf{v}^3)>0  \right\}
\]
and $\mathcal{M}=\mathcal{M}_0/A_4$, where $A_4$ acts by permuting the
indices. Then $\mathcal{M}_0$ contains all collections
of oriented tetrahedra with indexed vertices and 
$\mathcal{M}$ the corresponding collection without numbered vertices.
\begin{proposition}
We can identify $\mathcal{M}=SO(3)/T=SU(2)/2T=S^2/2T$.
\end{proposition}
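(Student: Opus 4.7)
The plan is to build an $A_4$-equivariant bijection $\Phi:SO(3)\to\mathcal{M}_0$ based at the standard tetrahedron, descend it to an identification $SO(3)/T\cong\mathcal{M}$, and then transport the conclusion through the quaternion double cover $\rho:S^3=SU(2)\to SO(3)$ using the preimage $2T$ of $T$.

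First I would fix the standard tetrahedron $V_0=(\mathbf{v}^1_0,\ldots,\mathbf{v}^4_0)$ from \eqref{e:stdTet} and set $\Phi(R)=(R\mathbf{v}^1_0,\ldots,R\mathbf{v}^4_0)$. Rotations preserve the inner product condition and orientation, so $\Phi(R)\in\mathcal{M}_0$. Injectivity is immediate: any three of the $\mathbf{v}^j_0$ span $\R^3$ by Lemma~\ref{e:propsvecsangles}, so $R$ is pinned down by its action on them. For surjectivity, given $(\mathbf{v}^1,\ldots,\mathbf{v}^4)\in\mathcal{M}_0$, the triples $(\mathbf{v}^j)_{j=1}^3$ and $(\mathbf{v}^j_0)_{j=1}^3$ share the common Gram matrix dictated by the defining inner-product condition and carry the same positive orientation, so the unique linear $R$ with $R\mathbf{v}^j_0=\mathbf{v}^j$ for $j=1,2,3$ is orthogonal and orientation-preserving. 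The fourth vertex then matches automatically via \eqref{e:sumvecszero}: both $\mathbf{v}^4=-\mathbf{v}^1-\mathbf{v}^2-\mathbf{v}^3$ and $\mathbf{v}^4_0=-\mathbf{v}^1_0-\mathbf{v}^2_0-\mathbf{v}^3_0$, hence $R\mathbf{v}^4_0=\mathbf{v}^4$. Thus $\Phi$ is a smooth bijection.

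Next I would verify that the $A_4$ action on $\mathcal{M}_0$ is intertwined with right multiplication on $SO(3)$ by $T$. Each $\sigma\in A_4$ determines a unique $R_\sigma\in T\subset SO(3)$ by the relation $R_\sigma\mathbf{v}^j_0=\mathbf{v}^{\sigma(j)}_0$, and the assignment $\sigma\mapsto R_\sigma$ realizes the isomorphism $A_4\cong T$. Then $\Phi(RR_\sigma)=\bigl(R\mathbf{v}^{\sigma(j)}_0\bigr)_j$ differs from $\Phi(R)$ exactly by the permutation $\sigma$, so $\Phi$ descends to a bijection $SO(3)/T\to\mathcal{M}_0/A_4=\mathcal{M}$. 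Finally, the quaternion map $\rho:S^3\to SO(3)$, $\mathbf{q}\mapsto R_\mathbf{q}$, is the standard surjective double cover with kernel $\{\pm1\}$, and topologically $S^3\cong SU(2)$. Its preimage $\rho^{-1}(T)$ is a subgroup of order $24$, and the explicit table in the preceding lemma exhibits both lifts of each of the $12$ elements of $T\cong A_4$, so $\rho^{-1}(T)=2T$. Thus $\rho$ induces a homeomorphism $S^3/2T\to SO(3)/T$, completing the chain $\mathcal{M}=SO(3)/T=SU(2)/2T=S^3/2T$.

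The only real subtlety will be the automatic matching of the fourth vertex in the surjectivity step, which hinges on the linear relation \eqref{e:sumvecszero} among the tetrahedral vectors; without this one would have to track the fourth vertex separately and verify it lies in the expected $A_4$-orbit. Beyond that, the argument reduces to a routine equivariance computation and a direct invocation of the quaternionic double cover.
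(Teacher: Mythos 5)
Your proof is correct and follows essentially the same route as the paper: act on a fixed standard tetrahedron, check bijectivity and $A_4$-equivariance, quotient, and pass to the quaternion double cover. The one cosmetic difference is that you establish surjectivity via the Gram-matrix argument (both triples have the same Gram matrix and orientation, so the unique linear map between them is in $SO(3)$, with the fourth vertex following from $\sum_j\mathbf{v}^j=0$), whereas the paper builds $R$ geometrically by rotating $\mathbf{v}^1_0$ into $\mathbf{v}^1$ and then about that axis; both are fine, and yours is more explicit about the equivariance step that the paper leaves implicit.
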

\begin{proof}
We can map $SO(3)$ into $\mathcal{M}$ by considering the 
action on a fixed standard tetrahedron. The image of two rotations
in $\mathcal{M}$ is clearly the same iff they differ by an element of $T$.
To see that this map is surjective, note that
for any $(\mathbf{v}^1,\mathbf{v}^2,\mathbf{v}^3,\mathbf{v}^4)\in \mathcal{M}_0$, we can choose $R\in SO(3)$ such that 
$\mathbf{v}^j=R\mathbf{v}^j_0$ for $j=1,\dots,4$, by first rotating $\mathbf{v}^1_0$ into $\mathbf{v}^1$ and then rotating around this axis 
to align the other vectors. 

That $SO(3)/T=SU(2)/2T$ is almost the definition of $2T$, the preimage
of $T$ under the double covering $SU(2)\to SO(3)$. 
\end{proof}



 %
\subsection{Embedding of tetrahedra into tensor spaces}

Our main result in this subsection establishes an isometry between $SO(3)/T$ and our tensor space $\Htrace(3,3) \cap \{ \mathcal{Q}\mathcal{Q}^T = {32\over 27} I(3)\}$. 

\begin{theorem}\label{thm55}
Taking the vectors of a standard tetrahedron, $\{\mathbf{v}^j_0\}_{j=1}^4$, given in \eqref{e:stdTet}, we can generate the $\mathbf{q}$-rotated 
tetrahedron 
\begin{equation}
\mathcal{T}(\mathbf{q}) = \sum_{\ell=1}^4 R_\mathbf{q} \mathbf{v}^\ell_0\otimes R_\mathbf{q} \mathbf{v}^\ell_0\otimes R_\mathbf{q} \mathbf{v}^\ell_0
\end{equation}
The map $\mathcal{T}: S^3\to \Htrace(3,3)$ given by
$q\mapsto \mathcal{T}(\mathbf{q})$ 
is up to scaling  a local isometry in the sense that
it satisfies $|d\mathcal{T}(\mathbf{q})(\mathbf{v})|^2
=\alpha_0 |\mathbf{v}|^2$ for all $\mathbf{q}\in S^3$ and 
$\mathbf{v}\in T_\mathbf{q} S^3$, the tangent space, where $\alpha_0=\frac{512}{9}$.

The induced map 
$\mathcal{T}:S^3/2T \to \Htrace(3,3)$ is well-defined and injective, 
and up to scaling we find an isometry 
$S^3/2T \equiv SO(3)/T \equiv \Htrace(3,3) \cap \{ \mathcal{Q}\mathcal{Q}^T = {32\over 27} I(3)\}$.
\end{theorem}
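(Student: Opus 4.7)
The plan is to prove the theorem in three logical stages: establish the $S^3$-equivariance of $\mathcal{T}$, use it together with the recovery theorem to handle well-definedness and injectivity on the quotient, and compute the pullback metric at the identity via a Schur-type argument.

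First I would record the equivariance identity $\mathcal{T}(\mathbf{q}_0\mathbf{q}) = R_{\mathbf{q}_0}\cdot \mathcal{T}(\mathbf{q})$, where $R\cdot \mathcal{Q}$ denotes the natural $SO(3)$-action on tensors, $(R\cdot\mathcal{Q})_{ijk}=R_{ia}R_{jb}R_{kc}\mathcal{Q}_{abc}$. This is immediate from the homomorphism property $R_{\mathbf{q}_0\mathbf{q}}=R_{\mathbf{q}_0}R_{\mathbf{q}}$ and the definition of $\mathcal{T}$. As a consequence the image lies in $\Htrace(3,3)\cap\{\mathcal{Q}\mathcal{Q}^T=\tfrac{32}{27}I(3)\}$ via Proposition~\ref{p:3tensorlaws} applied to $\mathcal{T}(1)$, since both conditions are preserved by the tensor action. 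To see that $\mathcal{T}$ descends to $S^3/2T$, I would note that any $\mathbf{q}_0\in 2T$ satisfies $R_{\mathbf{q}_0}\in T$, which permutes the vertices $\{\mathbf{v}_0^\ell\}$ and hence fixes $\mathcal{T}(1)$; combined with equivariance this yields $\mathcal{T}(\mathbf{q}\mathbf{q}_0)=R_\mathbf{q}\cdot\mathcal{T}(1)=\mathcal{T}(\mathbf{q})$. For injectivity on the quotient, if $\mathcal{T}(\mathbf{q}_1)=\mathcal{T}(\mathbf{q}_2)$, then Theorem~\ref{t:recovery_3_d} (uniqueness of the four maximizers of $\mu_\mathcal{Q}$) forces the sets $\{R_{\mathbf{q}_1}\mathbf{v}_0^\ell\}$ and $\{R_{\mathbf{q}_2}\mathbf{v}_0^\ell\}$ to agree, so $R_{\mathbf{q}_1}^{-1}R_{\mathbf{q}_2}\in T$ and hence $\mathbf{q}_1^{-1}\mathbf{q}_2\in 2T$ by lifting through the double cover.

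For the local isometry, I would use left-equivariance to reduce to the identity. Left quaternion multiplication is an isometry of $S^3\subset\mathbb{H}$ and the tensor $SO(3)$-action preserves the Frobenius inner product on $\Htrace(3,3)$, so differentiating the equivariance gives $|d\mathcal{T}(\mathbf{q})(\mathbf{v})|^2 = |d\mathcal{T}(1)(\mathbf{q}^{-1}\mathbf{v})|^2$, reducing matters to the pullback form $B(\mathbf{w},\mathbf{w}'):=\langle d\mathcal{T}(1)(\mathbf{w}),d\mathcal{T}(1)(\mathbf{w}')\rangle$ on $T_1S^3\cong \mathfrak{su}(2)\cong \R^3$. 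Differentiating the equivariance along the inner automorphism $\mathbf{q}\mapsto \mathbf{q}_0\mathbf{q}\mathbf{q}_0^{-1}$ for $\mathbf{q}_0\in 2T$ shows that $B$ is invariant under the adjoint action of $2T$, which is exactly the standard rotation action of $T\cong A_4$ on $\R^3$. Since that real representation is irreducible, Schur's lemma applied to symmetric bilinear forms forces $B=\alpha_0\langle\cdot,\cdot\rangle$ for some constant $\alpha_0\geq 0$.

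To pin down $\alpha_0$ I would evaluate $|d\mathcal{T}(1)(\mathbf{i})|^2$ directly, using the first-order expansion $R_{1+t\mathbf{i}}=I+2tM_{\mathbf{i}}+O(t^2)$ (where $M_\mathbf{i}$ is the skew matrix of $\mathbf{e}_1\times(\cdot)$) to expand $d\mathcal{T}(1)(\mathbf{i}) = 2\sum_\ell\bigl(M_{\mathbf{i}}\mathbf{v}_0^\ell\otimes\mathbf{v}_0^\ell\otimes\mathbf{v}_0^\ell + \text{two cyclic terms}\bigr)$. Squaring, the resulting sum over $(\ell,m)\in\{1,\ldots,4\}^2$ splits into a diagonal piece with summand $\langle M_\mathbf{i}\mathbf{v}_0^\ell, M_\mathbf{i}\mathbf{v}_0^m\rangle\langle\mathbf{v}_0^\ell,\mathbf{v}_0^m\rangle^2$ and a mixed piece with summand $\langle M_\mathbf{i}\mathbf{v}_0^\ell,\mathbf{v}_0^m\rangle\langle M_\mathbf{i}\mathbf{v}_0^m,\mathbf{v}_0^\ell\rangle\langle \mathbf{v}_0^\ell,\mathbf{v}_0^m\rangle$; both are evaluated from $\langle\mathbf{v}_0^\ell,\mathbf{v}_0^m\rangle=\tfrac{4}{3}\delta_{\ell m}-\tfrac{1}{3}$ combined with the identity $(\mathbf{e}_1\times \mathbf{u})\cdot(\mathbf{e}_1\times \mathbf{v}) = \mathbf{u}\cdot \mathbf{v} - u_1 v_1$. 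Collecting terms gives exactly $\alpha_0=512/9$, and the chain $S^3/2T \equiv SO(3)/T \equiv \Htrace(3,3)\cap\{\mathcal{Q}\mathcal{Q}^T=\tfrac{32}{27}I(3)\}$ then follows from the double cover identification and Theorem~\ref{t:recovery_3_d}. I expect the main obstacle to be the careful bookkeeping that matches the adjoint $2T$-action on $\mathfrak{su}(2)$ with the standard rotation action of $T$ on $\R^3$, so that the Schur step is unambiguous; the norm computation for $\alpha_0$ is long but routine once the irreducibility is in place.
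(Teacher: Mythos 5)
Your proposal is correct, and the treatment of well-definedness, injectivity, and surjectivity on $S^3/2T$ via Theorem~\ref{t:recovery_3_d} matches the paper's argument essentially step for step. Where you genuinely diverge from the paper is in proving that $d\mathcal{T}(1)$ is conformal: the paper computes $A=|d\mathcal{T}(1)(\mathbf{a})|^2$ directly for an arbitrary pure quaternion $\mathbf{a}$, expands the sum over $(j,k)$ into the diagonal piece $A_1$ and the mixed piece $A_2$, shows each equals $\frac{32}{3}\sum_j|\mathbf{a}\times\mathbf{u}^j|^2$, and then uses $\sum_j\mathbf{u}^j(\mathbf{u}^j)^T=\frac{4}{3}I(3)$ to collapse the sum to $\frac{512}{9}|\mathbf{a}|^2$, invoking the parallelogram identity to polarize back to the bilinear form. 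You instead observe that the pullback form $B$ on $T_1S^3\cong\R^3$ is invariant under the adjoint action of $2T$, identify that action with the standard irreducible rotation action of $T\cong A_4$ on $\R^3$, and invoke Schur's lemma for real irreducible representations to conclude $B=\alpha_0\langle\cdot,\cdot\rangle$; the explicit computation then only needs to be carried out for a single tangent direction $\mathbf{i}$. Your route is more structural and reduces the arithmetic burden (one evaluation instead of establishing the full quadratic form), at the cost of a small amount of representation-theoretic setup, including the observation that for a real-type irreducible representation any invariant symmetric bilinear form is a scalar multiple of the invariant inner product. The paper's route is more elementary and self-contained but requires the somewhat longer direct verification that the cross terms also collapse to a multiple of $|\mathbf{a}|^2$. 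Both are valid; neither has a gap.
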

\begin{proof}
The heart of the matter for proving the local (scaled) isometry
character is to use that $S^3$ is a group, so it 
suffices to show this for $\mathbf{q}=1$. 
Writing a geodesic through $1$ as 
$\exp(t\mathbf{a})=\cos(t|\mathbf{a}|)+\frac{\mathbf{a}}{|\mathbf{a}|}\sin(t|\mathbf{a}|)$ 
for $\mathbf{a}=a_1 \mathbf{i} +a_2 \mathbf{j} +a_3 \mathbf{k}$, 
we have $\frac{d}{dt}\big|_{t=0} \exp(t\mathbf{a})=\mathbf{a}$ and
$\frac{d}{dt}\big|_{t=0} R_{\exp(t\mathbf{a})}\mathbf{v}=\mathbf{a}\mathbf{v}-\mathbf{v}\mathbf{a}=2\mathbf{a}\times\mathbf{v}$.
Let $\mathbf{u}^j$, $j=1,\dots,4$ be the four unit vector of
a tetrahedron. We compute
\[
A=A_1+A_2= |d\mathcal{T}(1)(\mathbf{a})|^2 = 12\sum_{j,k} \langle \mathbf{a}\times \mathbf{u}^j,
\mathbf{a}\times\mathbf{u}^k\rangle \langle\mathbf{u}^j,\mathbf{u}^k\rangle^2
+ 
24\sum_{j,k} \langle \mathbf{a}\times \mathbf{u}^j,
\mathbf{u}^k\rangle\langle\mathbf{u}^j,\mathbf{a}\times \mathbf{u}^k\rangle \langle\mathbf{u}^j,\mathbf{u}^k\rangle.
\]
Using $\langle\mathbf{u}^j,\mathbf{u}^k\rangle=\frac{4}{3}\delta_{jk}-\frac{1}{3}$ and $\langle\mathbf{u}^j,\mathbf{u}^k\rangle^2 = \frac{8}{9}\delta_{jk}+\frac{1}{9}$ yields
\[
A_1=\frac{32}{3}\sum_j |\mathbf{a}\times \mathbf{u}^j|^2.
\]
For $A_2$, we note that
\[
A_2= -24\sum_{j,k} \langle \mathbf{a}\times \mathbf{u}^j,
\mathbf{u}^k\rangle^2\left ( \frac{4}{3}\delta_{jk}-\frac{1}{3}\right ) = 8\sum_{j, k} \langle \mathbf{a}\times \mathbf{u}^j,
\mathbf{u}^k\rangle^2 = \frac{32}{3} \sum_{j}\abs{\mathbf{a}\times \mathbf{u}^j}^2,
\]
because $\langle \mathbf{a}\times \mathbf{u}^j, \mathbf{u}^j\rangle = 0$, and $\sum_j \mathbf{u}^j(\mathbf{u}^j)^T = \frac{4}{3} I(3)$.  So far then we have
\[
A = A_1 + A_2 = \frac{64}{3} \sum_{j}\abs{\mathbf{a}\times \mathbf{u}^j}^2.
\]
However, again using the fact that $\sum_j \mathbf{u}^j(\mathbf{u}^j)^T = \frac{4}{3} I(3)$, we obtain
\[
\sum_{j}\abs{\mathbf{a}\times \mathbf{u}^j}^2 = \frac{8\abs{\mathbf{a}}^2}{3}.
\]
Putting everything together, we then conclude that
\[
A = \frac{512}{9}\abs{\mathbf{a}}^2.
\]
This is a multiple of $|\mathbf{a}|^2$. The parallelogram identity
then implies that $\mathcal{T}$ is (up to scaling) a local isometry.

If $\mathcal{T}(\mathbf{q}_1)=\mathcal{T}(\mathbf{q}_2)$ then by Theorem~\ref{t:recovery_3_d} we have that 
$\{R_{\mathbf{q}_1}\mathbf{v}_0^j : j=1,..,4 \} = \{R_{\mathbf{q}_2}\mathbf{v}_0^j : j=1,..,4 \} $ so $R_{\mathbf{q}_1\mathbf{q}_2^{-1}} \in T$ 
and $\mathbf{q}_1\mathbf{q}_2^{-1} \in 2T$, so $\mathcal{T}$ is well-defined and injective on $S^3/2T$. It is surjective by 
Theorem~\ref{t:recovery_3_d}.


\end{proof}

\subsection{Homotopy of the set of tetrahedral frames}
With our identification of the space of tetrahedra as $S^3/2T$, we 
can now determine its fundamental group:
\begin{proposition}
The fundamental group of the space of tetrahedra is $\pi_1(\mathcal{M})
=\pi_1(SO(3)/T) = \pi_1(S^3/2T) = 2T$.
\end{proposition}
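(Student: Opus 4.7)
The plan is to exhibit $S^3 \to S^3/2T$ as the universal covering map and then apply the standard correspondence between deck transformations and the fundamental group of the base.

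First I would recall that $S^3$, viewed as the unit quaternions, is a simply connected topological group: as a smooth manifold it is diffeomorphic to the $3$-sphere, whose fundamental group is trivial. Next, because $2T$ is a \emph{subgroup} of the Lie group $S^3$, the left translation action $2T \times S^3 \to S^3$, $(\mathbf{g}, \mathbf{q}) \mapsto \mathbf{g}\mathbf{q}$, is free: $\mathbf{g}\mathbf{q} = \mathbf{q}$ forces $\mathbf{g} = 1$. Moreover, since $2T$ is finite (24 elements), the action is automatically properly discontinuous.

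Having established these two points, the quotient map $\pi \colon S^3 \to S^3/2T$ is a regular covering map whose deck transformation group is precisely $2T$. Because the total space $S^3$ is simply connected, $\pi$ is the universal cover of $S^3/2T$, and the standard identification (see e.g. Hatcher, Proposition 1.40) gives
\[
\pi_1(S^3/2T) \;\cong\; \mathrm{Deck}(S^3 \to S^3/2T) \;\cong\; 2T.
\]
Combining this with the identifications $\mathcal{M} \equiv SO(3)/T \equiv S^3/2T$ established in the preceding proposition (via the double cover $SU(2) \cong S^3 \to SO(3)$ restricting to $2T \to T$) finishes the proof.

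The only subtlety worth commenting on is that one must be careful to use left (or right) multiplication by $2T$ on $S^3$, not the conjugation action used to induce rotations on $\mathbb{R}^3$: conjugation is \emph{not} free because $\pm 1 \in 2T$ act trivially, and that action would give back $SO(3)/T$ via a different route with the wrong fundamental group computation. The freeness of left multiplication is what makes the argument genuinely work, and it is essentially automatic once $2T$ is recognized as sitting inside the group $S^3$ as a subgroup rather than acting by outer symmetries. No delicate computation is required beyond this bookkeeping, so I do not anticipate a real obstacle; the content of the proposition lies almost entirely in the preceding identification $\mathcal{M} = S^3/2T$ rather than in the $\pi_1$ computation itself.
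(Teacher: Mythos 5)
Your proof is correct and takes essentially the same route as the paper: both invoke the standard covering-space fact that for a simply connected space $S^3$ with a properly discontinuous (free) action of $2T$, the quotient map is the universal cover and $\pi_1(S^3/2T) \cong 2T$. The paper simply cites Bredon and \cite{MRV2021} for this, while you helpfully spell out the freeness of the left-translation action and flag that the conjugation action would \emph{not} be free, but the mathematical content is identical.
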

\begin{proof}
As $S^3$ is simply connected and locally path connected and $2T$ acts properly discontinuously
on $S^3$, this is a consequence of standard results on 
covering spaces, see e.g. \cite[Corollary III.7.3]{Bredon} 
or \cite[Proposition 9.1]{MRV2021}.
\end{proof}
The definition of $\pi_1$ means that its elements are 
represented by homotopy classes of (continuous) loops
where the homotopy keeps the start/end point $1$ fixed. 
The following result considers
also \emph{free} homotopies
where the start/end point is 
\emph{not} fixed throughout 
the homotopy.
\begin{proposition}
If $\gamma_1,\gamma_2:S^1\to S^3/2T$ are loops in $S^3/2T$ starting and
ending at $1$, then $\gamma_1$ and $\gamma_2$ are (freely) homotopic 
to each other if and only if their homotopy classes $[\gamma_1], [\gamma_2]\in \pi_1(S^3/2T)$ are conjugate,
i.e. if there exists $h\in 2T$ with
\[
[\gamma_1] = h [\gamma_2] h^{-1}.
\]
\end{proposition}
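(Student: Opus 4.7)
The plan is to reduce this to the classical fact that for any path-connected space $X$ with basepoint $x_0$, the set of free homotopy classes of loops in $X$ is in natural bijection with the set of conjugacy classes of $\pi_1(X, x_0)$. Once this is in hand, the result follows by applying it to $X = S^3/2T$ and translating through the isomorphism $\pi_1(S^3/2T) \cong 2T$ from the preceding proposition.

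For the forward implication, assume $H : S^1 \times [0,1] \to S^3/2T$ is a free homotopy with $H(\cdot, 0) = \gamma_1$ and $H(\cdot, 1) = \gamma_2$, and let $* \in S^1$ denote the basepoint that both $\gamma_i$ send to $1$. The restriction $\alpha(s) := H(*, s)$ is a loop at $1$, and a standard cut-and-paste on the cylinder $S^1 \times [0,1]$ yields a based homotopy between $\gamma_1$ and the concatenation $\alpha \cdot \gamma_2 \cdot \alpha^{-1}$. Setting $h := [\alpha] \in \pi_1(S^3/2T, 1) \cong 2T$, this gives $[\gamma_1] = h [\gamma_2] h^{-1}$ in $2T$.

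For the reverse implication, suppose $[\gamma_1] = h [\gamma_2] h^{-1}$ and choose a representative loop $\alpha$ of $h$. First, there is a based homotopy from $\gamma_1$ to $\alpha \cdot \gamma_2 \cdot \alpha^{-1}$. Next, build a free homotopy from $\alpha \cdot \gamma_2 \cdot \alpha^{-1}$ to $\gamma_2$ by progressively contracting the $\alpha$ and $\alpha^{-1}$ tails: at parameter $s \in [0,1]$, the loop starts at $\alpha(s)$, runs along the remainder of $\alpha$ to $1$, traverses $\gamma_2$, then returns along $\alpha^{-1}$ back to $\alpha(s)$; at $s = 0$ this is the full concatenation and at $s = 1$ it reduces to $\gamma_2$, while the basepoint sweeps out $\alpha$ during the deformation. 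Composing the two homotopies yields the desired free homotopy from $\gamma_1$ to $\gamma_2$.

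The main technical step is writing the explicit reparametrizations in the cut-and-paste in both directions; however, these are entirely standard in algebraic topology and can be cited directly from, for example, \cite{Bredon}. A pleasant byproduct, in view of the table of seven conjugacy classes of $2T$ established earlier, is an explicit classification of free homotopy classes of loops in $S^3/2T$ into exactly seven types, which encodes the admissible topological defect types observed in the tetrahedral frame field simulations.
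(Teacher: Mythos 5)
Your proposal is correct and matches the paper's approach: the paper simply cites this as a standard fact (Bredon, Proposition III.2.4), and your argument is precisely the standard cut-and-paste proof that the citation points to. Both directions are handled correctly, and the observation about seven free homotopy classes is the intended takeaway.
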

\begin{proof}
This is a standard result in elementary algebraic topology, see 
e.g. \cite[Proposition III.2.4]{Bredon}.
\end{proof}
As the identity is the only elements in its conjugacy class, 
it follows that for simply connected $\Omega$,
a  $g\in C^0(\partial\Omega,S^3/2T)$ has an 
extension $h\in C^0(\overline{\Omega},S^3/2T)$ if and only $[g]=1$.





For boundary conditions 
that cannot be resolved by a globally continuous map,
it is possible to find "topological resolutions". 
The following is a special case of the 
treatment in \cite[Section 2.1]{MRV2021}. 
\begin{definition}
Let $\Omega$ be a simply connected
sufficiently smooth domain in $\R^2$ and 
let $g\in C^0(\partial\Omega, S^3/2T)$. 
A collection of maps $\gamma_j\in C^0(S^1,S^3/2T)$
is called a \emph{topological resolution}
of $g$ if there exist distinct points $a_j\in\Omega$
and $\rho\in(0,\min(\{\frac12\dist(a_i,a_j): i\neq j\}
\cup \{ \dist(a_i, \partial\Omega)\})$ 
such that there is $h \in C^0(\overline{\Omega}
\setminus \bigcup_i B_\rho(a_i), S^3/2T)$
with $h=g$ on $\partial\Omega$ and $h(a_i+\rho z)
=\gamma_i(z)$ on each copy of $S^1$.
\end{definition}
\begin{proposition}
Let $\Omega$ be a simply connected
sufficiently smooth domain in $\R^2$ and 
let $g\in C^0(\partial\Omega, S^3/2T)$. 
A collection of maps $\gamma_j\in C^0(S^1,S^3/2T)$
is a {topological resolution}
of $g$ if and only if there exist 
$q_0,q_j \in 2T$ such that the homotopy classes $[g]$
of $g$ and $[\gamma_j]$ of $\gamma_j$ satisfy
\[
q_0^{-1}[g]q_0 = q_1^{-1}[\gamma_1]q_1 \dots q_k^{-1}
[\gamma_k]q_k,
\]
i.e. iff a conjugate of the homotopy class of the outer
boundary map $g$
can be written as a product of conjugates of the 
homotopy classes of the inner boundary maps $\gamma_j$.
\end{proposition}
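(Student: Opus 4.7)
The plan is to exploit the fact that $D := \Omega \setminus \bigcup_j \overline{B_\rho(a_j)}$ is homotopy equivalent to a wedge of $k$ circles, so $\pi_1(D)$ is the free group $F_k$, and to translate both the existence of the extension $h$ and the algebraic identity in $2T$ into matching statements about homomorphisms $F_k \to 2T$. Concretely, I would fix a basepoint $p_0 \in \partial\Omega$, choose disjoint embedded arcs $\alpha_j$ in $\overline{D}$ from $p_0$ to points $p_j \in \partial B_\rho(a_j)$, and let $c_j \in \pi_1(D, p_0)$ be the ``lasso'' loop that goes out along $\alpha_j$, once around $\partial B_\rho(a_j)$, and back along $\alpha_j^{-1}$. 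These $c_j$ freely generate $\pi_1(D,p_0)$, and the outer boundary loop (run once around $\partial\Omega$ in the appropriate direction) represents $c_1 c_2 \cdots c_k \in \pi_1(D,p_0)$.

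For the direction $(\Rightarrow)$, given a topological resolution $h$, I would consider $h_*: \pi_1(D,p_0) \to \pi_1(S^3/2T, h(p_0)) = 2T$. Pushing forward the relation ``outer boundary $= c_1 \cdots c_k$'' gives $[h|_{\partial\Omega}] = h_*(c_1)\cdots h_*(c_k)$. Up to a conjugation $q_0$ needed to move the basepoint of $g$ to $h(p_0)$, the left side is $q_0^{-1}[g]q_0$; and each $h_*(c_j)$ is the conjugate of $[\gamma_j]$ by the homotopy class $q_j$ of $h\circ\alpha_j$ viewed as a path joining $h(p_0)$ to $h(p_j)$ (after composing with the chosen based-loop representatives of $[\gamma_j]$). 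This yields precisely the desired factorization.

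For the direction $(\Leftarrow)$, given the algebraic factorization, I would reverse the construction. Choose arcs $\alpha_j$ as above and cut $D$ open along them, producing a simply connected region $U$ whose boundary is a single Jordan curve $\partial U$ that traverses $\partial\Omega$, then successively makes excursions $\alpha_j \cdot \partial B_\rho(a_j) \cdot \alpha_j^{-1}$. I would then define a continuous $h: \partial U \to S^3/2T$ by prescribing $g$ on the outer part, $\gamma_j$ on each inner circle, and using paths in $S^3/2T$ representing $q_j$ along the two copies of $\alpha_j$, with $q_0$ absorbing the basepoint identification. The hypothesis $q_0^{-1}[g]q_0 = \prod_j q_j^{-1}[\gamma_j]q_j$ is exactly the statement that the homotopy class of $h|_{\partial U}$ in $\pi_1(S^3/2T)$ is trivial. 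Since $U$ is homeomorphic to a closed disk and $S^3 \to S^3/2T$ is the universal cover, the null-homotopy allows $h|_{\partial U}$ to lift to $S^3$, whereupon the standard extension to $U$ followed by projection produces the required continuous map on $D$.

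The main obstacle, I expect, will be bookkeeping: keeping the basepoints, orientations, and the identification of the abstract conjugators $q_j \in 2T$ with actual paths in $S^3/2T$ (hence in $S^3$ after lifting) all consistent, and checking that the prescribed boundary values along the two sides of each cut $\alpha_j$ glue continuously to give a well-defined map on $\overline{D}$. Once the bookkeeping is in place the topological content reduces to the two elementary facts used above: the presentation of $\pi_1(D)$ as $F_k$ with the stated relation for $\partial\Omega$, and the obstruction-theoretic characterization of null-homotopic boundary data for extensions into a disk, so no further technology beyond elementary covering space theory (as already invoked in the previous two propositions) is needed.
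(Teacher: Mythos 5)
Your proposal is correct and is, in substance, exactly the standard covering-space argument that underlies this statement. The paper itself does not give this proof: it simply cites \cite[Proposition 2.4]{MRV2021} as containing the result and refers the reader there for details. Your sketch therefore fills in what the paper only references. The two key facts you invoke --- that the $k$-times-punctured disk $D$ has $\pi_1(D)\cong F_k$ with the outer boundary class equal to the product of the lasso generators $c_1\cdots c_k$, and that a map on $\partial U$ of a disk $U$ extends to $U$ iff its class in $\pi_1(S^3/2T)$ is trivial --- are the correct reductions, and the conjugators $q_j$ and $q_0$ arise exactly as you say: from the transport along the arcs $\alpha_j$ and the choice of basepoint in $\pi_1(S^3/2T)$, respectively.

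Two points worth keeping in view as you flesh out the bookkeeping. First, you take the arcs $\alpha_j$ to be disjoint except at the common point $p_0\in\partial\Omega$; cutting along a bouquet of arcs with a shared endpoint does still produce a disk (each cut joins a new interior boundary circle to the outer one, and the Euler characteristic works out to $1$), but the traversal of $\partial U$ near $p_0$ is a little delicate and is the place where the cyclic order of the $\alpha_j$ at $p_0$ enters; this is precisely what controls the order of the factors $q_j^{-1}[\gamma_j]q_j$. Many expositions instead choose the $p_j$ to be distinct points of $\partial\Omega$ with pairwise disjoint arcs, which makes the cut picture cleaner; either works. Second, in the $(\Leftarrow)$ direction you do not really need to lift to $S^3$: once the boundary class is trivial in $\pi_1(S^3/2T)$, the null-homotopy directly gives the extension over the disk $U$ (the universal cover is a convenient way to see this but not necessary). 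Otherwise the proposal is sound, and it is a welcome expansion of the paper's proof-by-citation.
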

\begin{proof}
This is a slight reformulation of the 
simplest case of \cite[Proposition 2.4]{MRV2021},
adapted to our special case.
We refer to that article for a discussion 
about the independence of this result from 
the order used in the product. 
\end{proof}

\begin{remark} \label{rm:topres}
There are several possible topological 
resolutions of the identity with different numbers
of homotopy classes at geodesic distance $\frac{\pi}{3}$:
We have a length zero resolution
 $1=1$, a length $2$ resolution
 $1=\mathbf{s} \cdot \mathbf{s}^{-1}$ 
 and a length $3$ resolution 
 $1=\mathbf{st}^{-1}\cdot \mathbf{t}\cdot \mathbf{s}^{-1}$.
 From these we can construct arbitrary longer resolutions. Note that $\mathbf{s}^6=1$ gives 
 another resolution. For this reason, in numerical simulations we observe local minimizers with higher number of singularities than what is expected for a global minimizer. The same phenomenon has been observed in the Landau-de Gennes context for $Q$-tensors describing nematic liquid crystals, where local minimizers with singularities were rigorously shown to exist \cite{ignat2020symmetry} for topologically trivial boundary data.
 \end{remark}

\subsection{Generating data in free homotopy classes of tetrahedral frames}
\label{sec:gendata}
Armed with the results of the previous subsection,
we can now try to interpret tensor-valued maps
with target into $\Htrace(3,3) \cap \{ \mathcal{Q}\mathcal{Q}^T = {32\over 27} I(3)\}$
except for a finite number of point singularities
as topological resolutions of their boundary data.

To construct a map with given homotopy types, we can 
use the following recipe:





\begin{definition}
For $\sigma=w+x \mathbf{i} +y \mathbf{j}+z \mathbf{k}\in S^3$, we set 
$s=\arg(w+i\sqrt{1-w^2})\in(-\pi,\pi]$ and let
\[
G_\sigma(t) = \cos(st) +\frac{(x \mathbf{i} +y \mathbf{j}+z \mathbf{k}) \sin(st)}{\sqrt{1-w^2}}  
\]
with $G_1(t)=1$ and $G_{-1}(t)=\cos(\pi t)+ i \sin(\pi t)$.
\end{definition}
\begin{proposition}
The $G_\sigma$ satisfies $G_\sigma:[0,1]\to S^3$ with $G_\sigma(0)=1$,
$G_\sigma(1)=\sigma$. $G_\sigma$ is a smooth geodesic.

For  $\sigma\in 2T$ the formula reduces to  the following cases:

If $\sigma=\frac12 (1\pm \mathbf{i} \pm \mathbf{j} \pm \mathbf{k})$
then
\[
G_\sigma(t) = \cos \frac{\pi t}{3} 
+ \frac1{\sqrt{3}} \sin\frac{\pi t}{3} (\pm \mathbf{i} \pm \mathbf{j} \pm \mathbf{k})
\]
For $\sigma=\pm \mathbf{i}$ ($\pm \mathbf{j}$ and $\pm \mathbf{k}$ are analogous):
\[
{G_\sigma(t) = \cos \frac{\pi t}{2} \pm \mathbf{i} \sin \frac{\pi t}{2}.}
\]
If  $\sigma=\frac12(-1\pm \mathbf{i} + \mathbf{j} + \mathbf{k})$ then
\[
G_\sigma(t) = \cos \frac{2\pi t}{3} 
+ \frac1{\sqrt{3}} \sin\frac{2\pi t}{3} (\pm \mathbf{i} \pm \mathbf{j} \pm \mathbf{k}).
\]
\end{proposition}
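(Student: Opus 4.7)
The plan is to verify the three claims of the general statement in order—$G_\sigma(0)=1$, $G_\sigma(1)=\sigma$, and that $G_\sigma$ is a smooth geodesic—and then specialize to the elements of $2T$ listed.

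First I would observe that by definition $s=\arg(w+i\sqrt{1-w^2})$ forces $\cos s = w$ and $\sin s = \sqrt{1-w^2}\ge 0$, since the argument is applied to a complex number with nonnegative imaginary part. Then $G_\sigma(0) = \cos 0 = 1$ is immediate, and at $t=1$ one gets
\[
G_\sigma(1) = \cos s + \frac{(x\mathbf{i}+y\mathbf{j}+z\mathbf{k})\sin s}{\sqrt{1-w^2}} = w + (x\mathbf{i}+y\mathbf{j}+z\mathbf{k}) = \sigma,
\]
where the cancellation uses $\sin s = \sqrt{1-w^2}$.

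For the geodesic and smoothness claim, assume first $w\ne\pm 1$. Since $|x\mathbf{i}+y\mathbf{j}+z\mathbf{k}|^2 = x^2+y^2+z^2 = 1-w^2$, the quaternion $\mathbf{u}:=(x\mathbf{i}+y\mathbf{j}+z\mathbf{k})/\sqrt{1-w^2}$ is a unit pure imaginary quaternion, and
\[
G_\sigma(t) = \cos(st) + \mathbf{u}\sin(st) = \exp(st\,\mathbf{u}),
\]
a constant-speed parametrization of the great circle $\{\cos\theta + \mathbf{u}\sin\theta : \theta\in\mathbb{R}\}\subset S^3$. Great circles are the geodesics of $S^3$ with its standard round metric, and smoothness in $t$ is automatic. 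The degenerate cases $\sigma=\pm 1$ must be handled by hand, since the formula yields a $0/0$: $G_1\equiv 1$ is the trivial constant geodesic, while for $G_{-1}$ the choice of the direction $\mathbf{i}$ is one of infinitely many geodesics from $1$ to $-1$, fixed by convention.

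Finally, the explicit formulas follow by computing $w$ and $\sqrt{1-w^2}$ case by case. For $\sigma=\tfrac12(1\pm\mathbf{i}\pm\mathbf{j}\pm\mathbf{k})$, one has $w=1/2$, hence $\sqrt{1-w^2}=\sqrt{3}/2$ and $s=\arg(1/2+i\sqrt{3}/2)=\pi/3$; substituting and simplifying the ratio $\frac{1/2}{\sqrt{3}/2}=\frac{1}{\sqrt{3}}$ gives the stated expression. For $\sigma=\pm\mathbf{i}$, $w=0$ yields $\sqrt{1-w^2}=1$ and $s=\pi/2$, and the formula collapses to $\cos(\pi t/2)\pm\mathbf{i}\sin(\pi t/2)$ (the $\pm\mathbf{j},\pm\mathbf{k}$ cases are identical by symmetry). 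For $\sigma=\tfrac12(-1\pm\mathbf{i}+\mathbf{j}+\mathbf{k})$, $w=-1/2$ yields $\sqrt{1-w^2}=\sqrt{3}/2$ and $s=\arg(-1/2+i\sqrt{3}/2)=2\pi/3$, producing the listed formula.

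The proof is entirely direct verification; there is no substantive obstacle. The only real subtlety worth highlighting is that choosing the branch $\sin s\ge 0$ is what makes the endpoint condition $G_\sigma(1)=\sigma$ self-consistent, and that the two antipodal boundary cases $\sigma=\pm 1$ have to be prescribed by convention because the unit vector $\mathbf{u}$ is not defined there.
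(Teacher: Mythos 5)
Your proof is correct and is precisely the "straightforward computation" that the paper's one-line proof invokes. You fill it in cleanly: identifying $\cos s = w$, $\sin s = \sqrt{1-w^2}\ge 0$ to get the endpoint conditions, recognizing $G_\sigma(t)=\exp(st\,\mathbf{u})$ with $\mathbf{u}$ a unit imaginary quaternion to get the geodesic property, handling the degenerate cases $\sigma=\pm 1$ by convention, and then plugging in $w\in\{1/2,\,0,\,-1/2\}$ for the three families in $2T$.
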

\begin{proof}
This is a straightforward computation.
\end{proof}
\begin{lemma}
Let $\alpha,\beta\in 2T$, $0<\rho<1$.
Then the map $F_{\alpha,\beta}: \overline{B_1}\setminus B_\rho
\to S^3$,
\[
F_{\alpha,\beta} \left(re^{i\theta}\right) = G_\beta\left(\frac{1-r}{1-\rho}\right)
G_\alpha\left(2\pi \theta\right)
\]
induces after taking the quotient modulo $2T$ a
topological resolution 
of its boundary map. The homotopy class 
of the outer boundary map is $\alpha$ and 
the homotopy class of the inner boundary is 
$\beta^{-1}\alpha\beta$.
\end{lemma}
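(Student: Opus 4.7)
The plan is to reduce the claim to three verifications: continuity (and hence a well-defined map into $S^3/2T$), identification of the outer and inner boundary loops, and computation of their homotopy classes via the covering $\pi:S^3\to S^3/2T$.

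First, since $G_\alpha$ and $G_\beta$ are smooth paths in $S^3$ by the preceding proposition and quaternion multiplication is smooth, the map $F_{\alpha,\beta}$ is continuous (indeed smooth) from $\overline{B_1}\setminus B_\rho$ into $S^3$, and post-composing with $\pi$ yields a continuous map $h:=\pi\circ F_{\alpha,\beta}$ into $S^3/2T$. This immediately supplies the interior extension required by the definition of topological resolution.

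Second, I evaluate on the two boundary circles using $G_\beta(0)=1$ and $G_\beta(1)=\beta$:
\[
F_{\alpha,\beta}(e^{i\theta}) = G_\alpha(2\pi\theta), \qquad F_{\alpha,\beta}(\rho e^{i\theta}) = \beta\, G_\alpha(2\pi\theta).
\]
Both paths have endpoints in $2T$ (outer runs $1\to\alpha$, inner runs $\beta\to\beta\alpha$), so after projection they become closed loops in $S^3/2T$ based at $[1]$.

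Third, to compute homotopy classes I use the standard isomorphism $\pi_1(S^3/2T,[1])\cong 2T$ from the previous proposition, which identifies a based loop with the endpoint of its unique lift to $S^3$ starting at $1$. The outer lift is $G_\alpha(2\pi\theta)$ itself, terminating at $\alpha$, giving class $\alpha$. For the inner loop $\beta\, G_\alpha(2\pi\theta)$, the lift does not start at $1$, so I base-change via the radial segment $r\mapsto G_\beta((1-r)/(1-\rho))$ at $\theta=0$, which joins the outer point (projecting to $[1]$) to the inner point (projecting to $[\beta]=[1]$) and, viewed as a loop at $[1]$, carries class $\beta$. Concatenating the radial path, the inner boundary loop, and the reversed radial path at the identified endpoint, and tracking the unique lift to $S^3$ through each segment, the change-of-basepoint rule produces a loop at $[1]$ whose endpoint is $\beta^{-1}\alpha\beta$, which is the asserted class.

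The continuity and boundary-evaluation steps are routine. The substantive point, and the one requiring care, is the base-change computation: one must track on which side of the quaternion product each element appears, since $S^3/2T$ is a right quotient whereas the ansatz $F_{\alpha,\beta}=G_\beta\cdot G_\alpha$ places the radial transport $\beta$ on the left and the angular rotation $\alpha$ on the right. Getting this bookkeeping right is what pins down the specific representative $\beta^{-1}\alpha\beta$ (rather than any other element of the conjugacy class of $\alpha$) in the final formula.
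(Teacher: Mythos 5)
The central issue is your continuity claim. You assert that ``$F_{\alpha,\beta}$ is continuous (indeed smooth) from $\overline{B_1}\setminus B_\rho$ into $S^3$'' because $G_\alpha,G_\beta$ are smooth and quaternion multiplication is smooth. This is false, and it gets the logic of the lemma exactly backward. The domain is an annulus, so the angular coordinate is periodic; but $G_\alpha$ is a path from $G_\alpha(0)=1$ to $G_\alpha(1)=\alpha$, so as you traverse the circle once, the factor $G_\alpha$ jumps from $\alpha$ back to $1$. Unless $\alpha=1$, the formula does \emph{not} define a continuous map into $S^3$ --- it is only well-defined on a fundamental domain (a slit annulus), with a mismatch across the slit. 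The entire content of the paper's proof is that this mismatch is precisely right-multiplication by $\alpha\in 2T$, namely $G_\beta(s)\,G_\alpha(1)=G_\beta(s)\,\alpha$ versus $G_\beta(s)\,G_\alpha(0)=G_\beta(s)$, so the two values lie in the same right $2T$-coset and the map \emph{does} descend continuously to $S^3/2T$. You cannot get this by post-composing a genuine $S^3$-valued continuous map with the projection; the projection is needed to heal a real discontinuity.

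Your treatment of the homotopy classes is a sketch rather than a proof, and you flag this yourself. The outer class is indeed $\alpha$: the restriction to $r=1$ is $[\pi\circ G_\alpha]$, and the lift starting at $1$ ends at $\alpha$. For the inner boundary, write the loop as $t\mapsto[\beta\,G_\alpha(t)]$; the lift starting at $1$ is $t\mapsto\beta\,G_\alpha(t)\,\beta^{-1}$ (a right coset quotient forces the correction on the right), ending at $\beta\alpha\beta^{-1}$. Under the standard conventions (right quotient, lift-endpoint isomorphism $\pi_1\cong 2T$) this gives $\beta\alpha\beta^{-1}$, not $\beta^{-1}\alpha\beta$ as you assert; which form appears depends on the choice of isomorphism $\pi_1(S^3/2T)\cong 2T$ and the orientation/basepoint conventions, and the paper does not pin these down because for the purposes of the topological-resolution proposition only the conjugacy class matters. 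You would need to carry out the base-change carefully and state your conventions to defend the specific representative. The paper itself only proves continuity and leaves the class computations as a direct verification, so your attempt to supply them is in the right spirit --- but as written it does not actually do the computation, and the one step you do spell out (continuity) is wrong.
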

\begin{proof}
We need to show that $F_{\alpha,\beta}$ induces
a continuous map. This follows after taking the 
quotient from the fact that 
$G_\alpha(1)=\alpha = \alpha G_\alpha(0)$,
with $\alpha \in 2T$.

\end{proof}
\begin{proposition}
Let $\alpha_1,\dots,\alpha_k, \beta_1, \dots, \beta_k\in 2T$ and $a_1,\dots, a_k\in B_1$ distinct points.
Let 
$\mu_a(z)=\frac{z-a}{1-\overline{a}z}$
be a M\"obius transformation
mapping $a$ to $0$.
Then 
\[
\mathbf{q}(z)= F_{\alpha_1,\beta_1}(\mu_{a_1}(z))
\dots F_{\alpha_k,\beta_k}(\mu_{a_k}(z))
\]
induces a topological resolution of its
boundary data of homotopy type
$\alpha_1\dots\alpha_k$.

Post-composing this with the map $\mathcal{T}(\mathbf{q})$
of Theorem~\ref{thm55} leads to a 
corresponding resolution in the space of tensors.
\end{proposition}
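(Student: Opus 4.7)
The plan is to reduce to the characterization of topological resolutions from the preceding proposition by verifying three properties of $\mathbf{q}$: continuity of the induced map into $S^3/2T$ on the punctured disk, the free homotopy class on a small loop around each puncture $a_j$, and the free homotopy class of the outer boundary loop $\partial B_1$. The first step is to choose a single radius $\rho > 0$ small enough that the preimages $\mu_{a_i}^{-1}(B_\rho)$ are pairwise disjoint subsets of $B_1$, so that the product formula is defined on $\Omega_\rho := \overline{B_1} \setminus \bigcup_i \mu_{a_i}^{-1}(B_\rho)$. On this domain each factor projects to a continuous $S^3/2T$-valued map by the preceding lemma on $F_{\alpha,\beta}$; one then checks that the pointwise product in $S^3$ also descends to a continuous $S^3/2T$-valued map on $\Omega_\rho$.

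For the free homotopy class on a small loop $\gamma_j$ encircling $a_j$ once and no other puncture, note that along $\gamma_j$ the map $\mu_{a_j}(z)$ winds once around the origin in $B_1$, while for $i \neq j$ the map $\mu_{a_i}(z)$ traces a contractible loop bounded away from $0$. Thus the $j$-th factor restricts to the inner boundary loop of $F_{\alpha_j,\beta_j}$, whose free homotopy class is the conjugacy class of $\alpha_j$ by the preceding lemma, while each remaining factor restricts to a null-homotopic loop. Because $S^3$ is a topological group, the pointwise product of loops is freely homotopic to their concatenation, giving total free homotopy class conjugate to $\alpha_j$. For the outer boundary $\partial B_1$, each $\mu_{a_i}$ restricts to a degree-one self-map of $\partial B_1$, so each factor traces out the outer boundary loop of $F_{\alpha_i,\beta_i}$, of class conjugate to $\alpha_i$; concatenating in order gives total free homotopy class conjugate to $\alpha_1 \alpha_2 \cdots \alpha_k$.

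With these three items in hand, the characterization of topological resolutions (with suitable conjugating elements $q_0, q_j \in 2T$) identifies $\mathbf{q}$ as the claimed resolution. The final sentence of the proposition follows immediately from Theorem~\ref{thm55}, which exhibits $\mathcal{T}$ as an injective local isometry (up to a constant rescaling) of $S^3/2T$ onto its image in $\Htrace(3,3) \cap \{\mathcal{Q}\mathcal{Q}^T = \tfrac{32}{27} I(3)\}$; hence $\mathcal{T}$ preserves continuity, the positions of singularities, and all free homotopy classes. The main technical obstacle is the first step: since $S^3/2T$ is not a topological group (its fundamental group $2T$ is non-abelian), pointwise multiplication does not descend naturally to the quotient, so proving continuity of the projected product requires tracking the monodromy of each factor through its branch cut and verifying that the resulting right-insertions of $\alpha_i \in 2T$ can be absorbed into a single deck transformation of the covering $S^3 \to S^3/2T$; once this compatibility is secured, steps (ii) and (iii) reduce to standard monodromy calculations using the group structure of $S^3$.
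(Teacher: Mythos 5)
Your outline parallels the paper's own one-line argument (combine the lemma on $F_{\alpha,\beta}$ with the characterization of topological resolutions), and your closing remark correctly locates the crux: because $2T$ is a discrete subgroup of $S^3$ that is \emph{not normal}, a pointwise product of $S^3$-valued maps need not descend to a continuous $S^3/2T$-valued map. However, you treat this as a bookkeeping step that can be closed by ``tracking the monodromy'' and ``absorbing the right-insertions of $\alpha_i$ into a single deck transformation.'' That absorption is not possible in general. Cross the branch cut of the $j$-th factor: $F_{\alpha_j,\beta_j}(\mu_{a_j}(z))$ jumps by right multiplication by $\alpha_j$, so the full product $\mathbf{q}(z)=F_1\cdots F_k$ jumps by right multiplication by
\[
F_k(z)^{-1}\cdots F_{j+1}(z)^{-1}\,\alpha_j\,F_{j+1}(z)\cdots F_k(z),
\]
a conjugate of $\alpha_j$ by a $z$-dependent element of $S^3$. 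Since $2T$ is discrete and not normal, this conjugate lies outside $2T$ for generic $z$ on the cut whenever $j<k$, so the projected map is genuinely discontinuous there. Concretely, take $k=2$, $\alpha_1=\mathbf{s}$, $\alpha_2=\mathbf{i}$, $\beta_1=\beta_2=1$; then $F_2=G_{\mathbf{i}}(t)$ with $t=t(z)$, and the jump along the cut of the first factor is
\[
G_{\mathbf{i}}(t)^{-1}\,\mathbf{s}\,G_{\mathbf{i}}(t)=\tfrac12\bigl(1+\mathbf{i}+(\cos\pi t+\sin\pi t)\,\mathbf{j}+(\cos\pi t-\sin\pi t)\,\mathbf{k}\bigr),
\]
which lies in $2T$ only for $t\in\tfrac12\mathbb{Z}$.

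Consequently, for $k\geq 2$ with non-commuting $\alpha_j$'s the formula does not induce a continuous $S^3/2T$-valued map, and steps (ii) and (iii) of your plan — which tacitly presume that continuity so that free homotopy classes of boundary loops are well defined and multiplicative — cannot be carried out as written. This is a gap shared with the paper itself (its proof is the single sentence ``direct consequence of the preceding computations,'' and its numerical experiments only invoke $k=1$, where the problem does not arise). A correct construction either makes $\mathbf{q}$ equal to a fixed basepoint outside small disjoint disks around the $a_j$, so that at most one factor is nontrivial at any point, or restricts to mutually commuting $\alpha_j$ (e.g.\ all on a common one-parameter subgroup, as in the $\mathbf{s}\cdot\mathbf{s}^{-1}$ or $\mathbf{s}^6$ resolutions of Remark~7.10). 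Your final observation — that post-composing with $\mathcal{T}$ from Theorem~5.5 carries any valid $S^3/2T$-resolution to the tensor space because $\mathcal{T}$ is an injective local isometry, hence a homeomorphism onto its image — is correct and is the easy part.
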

\begin{proof}
This is a direct consequence of the preceding 
computations.
\end{proof}

\section{Poincare-Hopf for MB frame-valued maps on generic surfaces}
\label{sec:poi-hop}

In this section we recall a version of the Poincare-Hopf theorem adapted to our situation.  This theorem can be easily adapted from page 112 of Heinz Hopf's book \cite{Hopf}. Similar results have been shown to be valid for cross-field-valued maps, \cite{BeaufortPoincare, Beben, FoggPoincare,ray:NSDF:2006}. Theorem~\ref{thm:poincarehopf} below  will be useful in interpreting our numerical simulations in Section~\ref{sec:numerics}.




Let $\Sigma \subset \R^{3}$ be a closed, smooth, orientable surface with normal $\nu$, and fix an integer $m\in \mathbb{N}$, $m \geq 2$. Consider the set
$$
\overline{M} = \{(x, \omega) \in \Sigma \times \mathbb{S}^2 : \nu(x) \cdot \omega = 0\}.
$$
For $(x_j, \omega_j)\in \overline{M}$, $j=1, 2$, define the equivalence relation
$$
(x_1, \omega_1) \sim (x_2, \omega_2) \,\,\iff \,\,  x_1 = x_2 \,\,\, \mbox{and}\,\,\, \omega_1\cdot \omega_2 = \cos\left ( \frac{2k\pi}{m}\right ) \,\,\, \mbox{for some}\,\,\,  k \in \mathbb{N}.
$$
Define then
$$
M = \overline{M}/\sim.
$$
In other words, we consider the unit tangent bundle of $\Sigma$, and identify tangent vectors that are related to each other by a rotation of an integer multiple of $\frac{2\pi}{m}$ about the normal.  Let also $P_1 : M \to \Sigma$ be the projection onto the first coordinate.
\begin{definition}
Let $\Lambda \subset \Sigma$ be a finite set.  An $m$-gon valued field on $\Sigma$ is a continuous map $Q : \Sigma\setminus \Lambda \to M$ such that $P_1(Q(x)) = x$ for every $x\in \Sigma$.
\end{definition}
In the above definition, if $x\in \Lambda$ and the field $Q:\Sigma\setminus \Lambda\to M$ cannot be extended to $x$ by continuity, we say that $x$ is a singularity of $Q$.
Next, if $x\in \Lambda \subset \Sigma$ is a singular point of $Q : \Sigma\setminus \Lambda \to M$, we can define its index.  

\begin{definition}\label{def:degree_second_def}
Let $x\in \Lambda \subset \Sigma$  be a singular point of $Q : \Sigma\setminus \Lambda \to M$.  Consider a closed, continuous curve $\gamma \subset \Sigma \setminus \Lambda$ surrounding $x$, small enough to be contained in a single coordinate patch, and such that $x$ is the only singularity surrounded by $\gamma$.  
Consider a continuous lifting of $Q$ along $\gamma$ through a unit tangent vector, in the sense that, at every point in $\gamma$, $Q$ can be obtained rotating the unit vector by $\frac{2\pi}{m}$, $m-1$ times.  Compute the angle between this unit vector and one of the coordinate tangents.  The total change of this angle as we travel through $\gamma$ once anti-clockwise, divided by $2\pi$, will be called the index of the singularity $x\in \Lambda$ and denoted $i(x)$.
\end{definition}
Note when $\Sigma \subset \mathbb{R}^2\times\{0\}$, this definition agrees with  Definition~\ref{def:degree_first_def}.
In particular the index of a singularity in this case is of the form $\frac{ k}{m}$ for some integer $k \in \mathbb{Z}$.  Furthermore, as pointed out in Theorem 1.3, page 108 of \cite{Hopf}, the degree does not depend on the curve $\gamma$ nor on the coordinate patch used to define it.  With this terminology we can now state the following theorem.  Its proof can be found in page 112 of \cite{Hopf}, for the case $m=2$, or in the appendix of \cite{ray:NSDF:2006}, for any $m \geq 2$.  Hence, we omit it.

\begin{theorem}[\cite{Hopf,ray:NSDF:2006}] \label{thm:poincarehopf}
Let $\Sigma \subset \R^3$ be a closed, smooth, orientable surface, $A\subset \Sigma$ a finite set, and $Q:\Sigma \setminus A \to M$ an $m$-gon-valued field in $\Sigma$.  We assume further that every $x\in A$ is a singularity of $Q$.  Then
$$
\int_\Sigma K\,dS = 2\pi \sum_{x\in A} i(x).
$$
Here $K$ is the Gauss curvature of $\Sigma$, and $i(x)$ denotes the index of the singularity $x\in A$.
\end{theorem}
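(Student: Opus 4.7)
The strategy is to combine the classical Gauss--Bonnet theorem, $\int_\Sigma K \, dS = 2\pi \chi(\Sigma)$, with a generalized Poincar\'e--Hopf identity adapted to $m$-gon fields, namely
$\sum_{x \in A} i(x) = \chi(\Sigma).$
The stated formula follows by combining the two. The nontrivial content is the index identity, which I would establish by comparing $Q$ to an auxiliary smooth tangent vector field.

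To this end, I would fix a smooth tangent vector field $X$ on $\Sigma$ with isolated zeros $\{y_1, \dots, y_N\}$, chosen generically to be disjoint from $A$; classical Poincar\'e--Hopf gives $\sum_j \mu_j = \chi(\Sigma)$, where $\mu_j \in \mathbb{Z}$ is the Hopf index of $X$ at $y_j$. On $\Omega := \Sigma \setminus (A \cup \{y_1, \dots, y_N\})$, both $X/|X|$ and any continuous local lift of $Q$ give unit tangent vector fields. In each oriented tangent plane $T_p\Sigma$, let $\alpha(p)$ denote the angle between a representative of $Q(p)$ and $X(p)/|X(p)|$. Because $Q$ is only defined modulo rotations by $2\pi/m$, the scalar $\alpha$ itself is only defined mod $2\pi/m$, but $f := e^{im\alpha} \colon \Omega \to S^1$ is a globally single-valued continuous map.

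Now remove small disjoint geodesic disks $D_\epsilon(x_i)$ around each $x_i \in A$ and $D_\epsilon(y_j)$ around each $y_j$, obtaining a compact oriented surface $\Omega_\epsilon \subset \Omega$ with boundary. Since $\partial \Omega_\epsilon$ bounds $\Omega_\epsilon$ it is null-homologous there, so the sum of degrees of $f|_{\partial \Omega_\epsilon}$ over the boundary components (with induced orientation) vanishes. Orienting each circle $\partial D_\epsilon(\cdot)$ counterclockwise around the removed point, the induced boundary orientation of $\Omega_\epsilon$ is the opposite, so equivalently the counterclockwise degrees satisfy $\sum_i \deg(f|_{\partial D_\epsilon(x_i)}) + \sum_j \deg(f|_{\partial D_\epsilon(y_j)}) = 0$. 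Evaluating: on $\partial D_\epsilon(x_i)$ only the representative of $Q$ rotates, by $2\pi \cdot i(x_i)$ (Definition~\ref{def:degree_second_def}), contributing $m \cdot i(x_i)$ to $\deg f$; on $\partial D_\epsilon(y_j)$ only $X/|X|$ rotates, by $2\pi \mu_j$, shifting $\alpha$ by $-2\pi \mu_j$ and thus contributing $-m \mu_j$. Dividing by $m$ yields $\sum_{x \in A} i(x) = \sum_j \mu_j = \chi(\Sigma)$, and Gauss--Bonnet completes the proof.

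The main technical point is the boundary-degree computation. The key observation is that although each $i(x)$ lies only in $\tfrac{1}{m}\mathbb{Z}$ and may be fractional, the quantity $m \cdot i(x)$ is an integer, matching the integer-valued degree of the $S^1$-valued map $f$; this is precisely why raising to the $m$-th power desingularizes the mod-$2\pi/m$ ambiguity. Careful orientation bookkeeping around each puncture, the generic perturbation ensuring $\{y_j\} \cap A = \varnothing$, and the invariance of the index under the choice of encircling curve (already noted after Definition~\ref{def:degree_second_def}) are the remaining routine items.
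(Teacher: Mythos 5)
Your argument is correct, and since the paper explicitly omits the proof (deferring to \cite{Hopf} for $m=2$ and to the appendix of \cite{ray:NSDF:2006} for general $m$), there is no in-text proof to compare against; your argument is essentially the classical one in those references. The device of comparing the $m$-gon field against an auxiliary smooth tangent vector field, taking the signed relative angle $\alpha$, and passing to the globally single-valued circle map $f = e^{im\alpha}$ so that boundary degrees become honest integers is exactly the ``$m$-th power desingularization'' trick that reduces the problem to the classical Poincar\'e--Hopf count and Gauss--Bonnet. The orientation bookkeeping (clockwise induced boundary orientation on $\partial\Omega_\epsilon$ flipping to the counterclockwise convention used in Definition~\ref{def:degree_second_def}), the null-homologous boundary argument giving $\sum \deg(f|_{\partial D_\epsilon(\cdot)})=0$, and the local computations (winding of $X/|X|$ against a chart frame vanishes near $x_i$ because $X$ is non-vanishing on the small disk; winding of a local lift of $Q$ against the chart frame vanishes near $y_j$ because $Q$ is continuous there) are all handled correctly. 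Two small points worth tightening if this were to appear in print: (i) make the sign convention for $\alpha$ explicit (say, $\alpha$ is the oriented angle \emph{from} $X/|X|$ \emph{to} the representative of $Q$), since the computation relies on $\deg(f|_{\partial D_\epsilon(y_j)}) = -m\mu_j$ rather than $+m\mu_j$; and (ii) note that $f$ is continuous because the local lift of $Q$ and $X/|X|$ are both continuous, and the $m$-fold ambiguity in $\alpha$ is killed by the exponential---this is the key reason the construction is globally well-posed even though $Q$ has no global vector-field lift on $\Omega$.
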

\begin{remark}
\label{rmk:PH}
By the Gauss-Bonnet theorem, 
$$
\int_\Sigma K\,dS = 2\pi \xi(\Sigma) = 2\pi (2-2g(\Sigma)),
$$
where $g(\Sigma)$ and $\xi(\Sigma)$ are the genus and Euler characteristic of $\Sigma$, respectively.  Hence, we conclude that
\begin{equation}\label{e:indexformulaPH}
\sum_{x\in A} i(x) = 2-2g.
\end{equation}
\end{remark}



\section{Examples and numerical experiments} 
\label{sec:numerics}

In this section we discuss nontrivial examples of tetrahedron-valued maps, either constructed analytically or obtained via numerical simulations. In the latter case, the goal is to understand behavior of local minimizers of \eqref{e:GL2D} and  \eqref{eq:graf}, respectively by simulating gradient flow for each energy using the finite element software COMSOL \cite{comsol}. Note that the two analytical examples below only provide competitors and not minimizers of the corresponding variational problems. 

\subsection{Example of a map from $B_1 \subset \R^2$ into $SO(3)/T$}
Here we show that, similar to what is known for $Q$-tensors, normal boundary alignment of tetrahedron-valued maps in two-dimensional domains does not require singularities. Indeed, we can construct a map that is nonsingular because it "escapes into the third dimension" in the interior of the domain. 

On the unit disk with polar coordinates define
\begin{align*}
\mathbf{f}^1(r, \theta) & = \cos(\theta) \left ( \cos\left ( \frac{r\pi}{2}\right ) \mathbf{e}^r - \sin\left ( \frac{r\pi}{2}\right ) \mathbf{e}^3 \right ) - \sin(\theta) \mathbf{e}^\theta, \\
\mathbf{f}^2(r, \theta) & = \sin(\theta) \left ( \cos\left ( \frac{r\pi}{2}\right ) \mathbf{e}^r - \sin\left ( \frac{r\pi}{2}\right ) \mathbf{e}^3 \right ) + \cos(\theta) \mathbf{e}^\theta, \\
\mathbf{f}^3(r, \theta) & =  \sin\left ( \frac{r\pi}{2}\right ) \mathbf{e}^r + \cos\left ( \frac{r\pi}{2}\right ) \mathbf{e}^3 .
\end{align*}
Now set
$$
\mathbf{b}^2 = \mathbf{f}^2, \,\,\, \mathbf{b}^3 = -\frac{1}{2} \mathbf{f}^2 + \frac{\sqrt{3}}{2} \mathbf{f}^3, \,\,\,  \mathbf{b}^4 = -\frac{1}{2} \mathbf{f}^2 - \frac{\sqrt{3}}{2} \mathbf{f}^3,
$$
and define
$$
\mathbf{a}^1 = \mathbf{f}^1\quad\mbox{and}\quad\mathbf{a}^j = -\frac{1}{3} \mathbf{f}^1 + \frac{2\sqrt{2}}{3} \mathbf{b}^j\,\,\,\mbox{for}\,\,\, j=2, 3, 4.
$$
This gives
\begin{equation}
\label{eq:trivial}
\begin{aligned}
\mathbf{a}^1 & = \mathbf{f}^1,\\
\mathbf{a}^2 & = -\frac{1}{3}\mathbf{f}^1 + \frac{2\sqrt{2}}{3} \mathbf{f}^2, \\
\mathbf{a}^3 & = -\frac{1}{3} \mathbf{f}^1 - \frac{\sqrt{2}}{3} \mathbf{f}^2 + \frac{\sqrt{6}}{3} \mathbf{f}^3 \\
\mathbf{a}^4 & = -\frac{1}{3} \mathbf{f}^1 - \frac{\sqrt{2}}{3} \mathbf{f}^2 - \frac{\sqrt{6}}{3} \mathbf{f}^3.
\end{aligned}
\end{equation}
It is easy to check that the map that sends a point in the unit disk to $\{ \mathbf{a}^1, \mathbf{a}^2, \mathbf{a}^3, \mathbf{a}^4\}$ is tetrahedron-valued, nonsingular and the vector $\mathbf{a}^1$ coincides with the normal on the boundary of the disk.

Note that a quaternion representation of the rotation inherent in this map (starting from the tetrahedron containing $\mathbf{e}^3$)
is given by 
\[
\mathbf{q}(r,\theta)=\cos \left( \frac{\pi r}{4} \right) - \sin (\theta )
\sin \left( \frac{\pi r}{4 } \right) \mathbf{i} + \cos (\theta )
\sin \left( \frac{\pi r}{4 } \right) \mathbf{j}. 
\]
As this is a smooth map into $S^3$, it belongs
to the trivial homotopy class.

\subsection{Example of a map from $B_1(0)\subset \R^3$ into $SO(3)/T$}

The next example is that of a tetrahedron-valued map on the unit ball in $\R^3$ that has exactly one point singularity on the boundary of the ball (at the north pole) and no other interior point or line singularities. This map also has a finite Dirichlet integral.  The example is a straightforward adaptation of a similar computation for orthonormal frame-valued map in \cite{GMS}.

Let $\mathbf{x}= (x_1,x_2,x_3)^T$, then   define the vector fields
\begin{align*}
\mathbf{f}^1(\mathbf{x}) & = \left (  \frac{2x_1(1-x_3)}{x_1^2+x_2^2+(1-x_3)^2}, \frac{2x_2(1-x_3)}{x_1^2+x_2^2+(1-x_3)^2}, \frac{x_1^2+x_2^2-(1-x_3)^2}{x_1^2+x_2^2+(1-x_3)^2} \right )^T,\\
\mathbf{f}^2(\mathbf{x}) & = \left (  \frac{-x_1^2+x_2^2+(1-x_3)^2}{x_1^2+x_2^2+(1-x_3)^2}, \frac{-2x_1x_2}{x_1^2+x_2^2+(1-x_3)^2}, \frac{2x_1(1-x_3)}{x_1^2+x_2^2+(1-x_3)^2} \right )^T, \\
\mathbf{f}^3(\mathbf{x}) & = \left (  \frac{2x_1x_2}{x_1^2+x_2^2+(1-x_3)^2}, \frac{-(x_1^2-x_2^2+(1-x_3)^2)}{x_1^2+x_2^2+(1-x_3)^2}, \frac{-2x_2(1-x_3)}{x_1^2+x_2^2+(1-x_3)^2} \right )^T,
\end{align*}
for $\mathbf{x}\in B_1(0) \subset \R^3$, $\mathbf{x}\neq (0, 0, 1)^T$.  Note that $\mathbf{f}^i \cdot \mathbf{f}^j = \delta_{ij}$, $i,j=1,\ldots,3$ and $\mathbf{f}^1(\mathbf{x}) = \mathbf{x}$ whenever $\abs{\mathbf{x}}^2 = 1$.  It follows that $\{\mathbf{f}^2(\mathbf{x}), \mathbf{f}^3(\mathbf{x})\}$ is an orthonormal frame for the tangent plane at the boundary whenever $\mathbf{x}\in \partial B_1(0) \setminus \{(0, 0, 1)\}$.

Next set
$$
\mathbf{b}^2 =  \mathbf{f}^2, \,\,\, \mathbf{b}^3 = -\frac{1}{2} \mathbf{f}^2 + \frac{\sqrt{3}}{2} \mathbf{f}^3, \,\,\,  \mathbf{b}^4 = -\frac{1}{2} \mathbf{f}^2 - \frac{\sqrt{3}}{2} \mathbf{f}^3,
$$
then $\left\langle \mathbf{f}^1 ,  \mathbf{b}^j \right\rangle=0$ for $j=2, 3, 4$.  Further,
$
\left\langle \mathbf{b}^i ,  \mathbf{b}^j \right\rangle = \frac{3}{2}\delta_{ij}-\frac{1}{2} \,\,\, \mbox{for}\,\,\,  i, j = 2, 3, 4,
$
that is the map that sends $\mathbf{x} \in B_1(0)$ to $\{\mathbf{b}^2, \mathbf{b}^3, \mathbf{b}^4\}$ is MB-valued in $B_1(0)\setminus \{(0, 0, 1)\}$.  By construction, the two-dimensional MB frame is contained in the tangent plane at any $\mathbf{x}\in \partial B_1(0) \setminus \{(0, 0, 1)\}$.

Similar to the previous example, now let
$$
\mathbf{a}^1 = \mathbf{f}^1\quad\mbox{and}\quad \mathbf{a}^j = -\frac{1}{3} \mathbf{f}^1 + \frac{2\sqrt{2}}{3} \mathbf{b}^j\,\,\,\mbox{for}\,\,\, j=2, 3, 4,
$$
so that
\begin{align*}
\mathbf{a}^1 & = \mathbf{f}^1, \\
\mathbf{a}^2 & = -\frac{1}{3} \mathbf{f}^1 + \frac{2\sqrt{2}}{3} \mathbf{f}^2, \\
\mathbf{a}^3 & = -\frac{1}{3} \mathbf{f}^1 - \frac{\sqrt{2}}{3} \mathbf{f}^2 + \frac{\sqrt{6}}{3} \mathbf{f}^3, \\
\mathbf{a}^4 & = -\frac{1}{3} \mathbf{f}^1 - \frac{\sqrt{2}}{3} \mathbf{f}^2 - \frac{\sqrt{6}}{3} \mathbf{f}^3.
\end{align*}
A straightforward computation shows that, for $i, j\in \{1, 2, 3, 4\}$, we have
$$
\mathbf{a}^i \cdot \mathbf{a}^j = \frac{4}{3}\delta_{ij}- \frac{1}{3},
$$
Thus the vectors $\mathbf{a}^1, \mathbf{a}^2, \mathbf{a}^3, \mathbf{a}^4$ give the 4 vertices of a tetrahedron and we can consider the map that sends $\mathbf{x}\in B_1(0)$ to the tetrahedron defined by $\{\mathbf{a}^1, \mathbf{a}^2, \mathbf{a}^3, \mathbf{a}^4\}$.  Since $\mathbf{a}^1 = \mathbf{f}^1$ and $\mathbf{f}^1(\mathbf{x}) = \mathbf{x}$ for $\mathbf{x}\in \partial B_1(0) \setminus \{(0, 0, 1)$, one of the vectors of this tetrahedron coincides with the normal on the boundary of the unit ball.  

\subsection{An energy-minimizing map from $B_1\subset\mathbb{R}^2$ into $SO(2)/D_3$}

Here we examine an MB-frames-valued map generated by a gradient descent of the energy \eqref{e:GL2D} subject to the Dirichlet condition that the normal on the boundary is aligned with the MB frame. When simulating in a domain that has a shape of an equilateral triangle,  the gradient flow converges to a constant state with the three vectors of the MB frame perpendicular to the respective sides of the triangle. Clearly this state is also the global minimizer of \eqref{e:GL2D}. 

In order to observe a state with vortices, we excised a disk from the equilateral triangle. The restriction of the solution to the boundary of the triangle is still a constant MB frame, while the requirement that the normal to the boundary of the disk is aligned with the MB frame produces winding of the normal by the angle $2\pi$ when the boundary of the disk is traversed once in the positive direction. Because the quantum of winding for the MB frame is $2\pi/3$ (as the degree takes values in $\frac13\mathbb{Z}$), we thus expect three vortices with opposite of that winding to form in the interior of the domain. Indeed, in Figure~\ref{fig:triang2d} one sees that an MB frame aligns with the exterior boundary away from the excised disk, while the disk induces three vortices with winding of $-{2\pi \over 3}$ (Fig.~\ref{fig:test2}).

\begin{figure}[H]
        \centering
           \subfloat[]{%
              \includegraphics[height=3in]{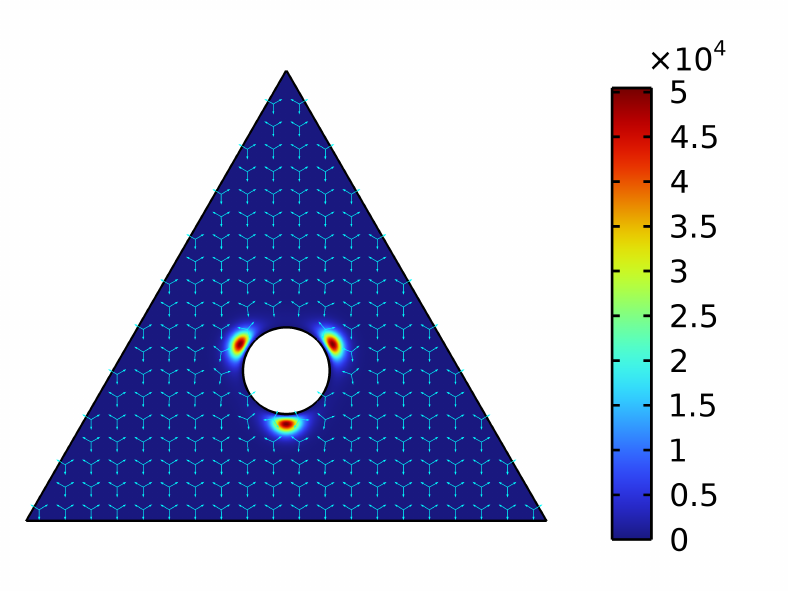}%
              \label{fig:triang2d}%
           }\qquad\raisebox{0.5in}
           {\subfloat[]{%
              \includegraphics[height=1.5in]{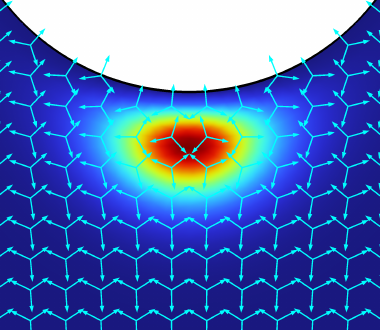}%
              \label{fig:test2}%
           }}
           \caption{A critical point of \eqref{eq:GL2DW} in an equilateral triangular domain with an excised disk. The vectors of the MB frame field are shown with the colorbar representing the values of the potential $\left|\mathcal{A} \mathcal{A}^T - {9\over 8} I(2)\right|^2$. Here $\varepsilon=0.04$. (a) A minimal MB frame shows three vortices with winding $-{2\pi \over 3}$; (b) Magnification about one of the singularities identifies the local rotation of the MB frame.}
           \label{fig:MB}
\end{figure}

\subsection{Computational examples of maps from $B_1\subset\mathbb{R}^2$ into $SO(3)/T$}

We now explore critical points of \eqref{eq:grafstrong} for two different choices of Dirichlet boundary conditions. 

{\bf (a.)} First, consider a tetrahedral frame field $\mathcal{Q}_a$ defined by the vectors given in \eqref{eq:trivial} and consider critical points of the energy $\mathcal{E}^{3d}_\e$ from \eqref{eq:grafstrong}, that satisfy the same Dirichlet data as $\mathcal{Q}_a$. We first run the gradient flow simulation for this setup assuming the the initial condition is also given by $\mathcal{Q}_a$. Because $\mathcal{Q}_a$ is smooth in $B_1$ and the energy of the gradient flow solution is bounded by its initial value, the critical points obtained starting from $\mathcal{Q}_a$ have energies that are uniformly bounded in $\varepsilon$ so that the critical point is nonsingular (Fig.~\ref{fig:trivial0}).

\begin{figure}[H]
        \centering
           \subfloat[]{%
           \includegraphics[height=2in]{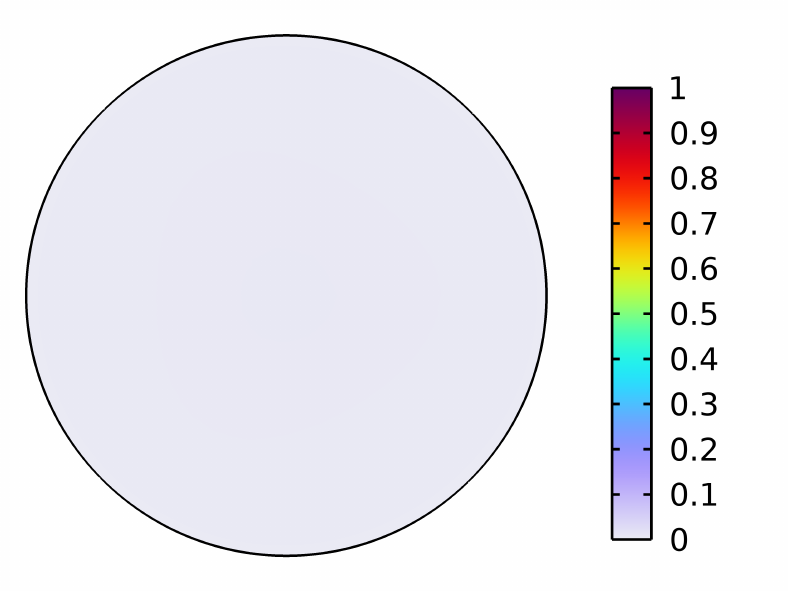}%
              \label{fig:trivial0-1}%
           }\qquad
           \subfloat[]{%
           \includegraphics[height=2in]{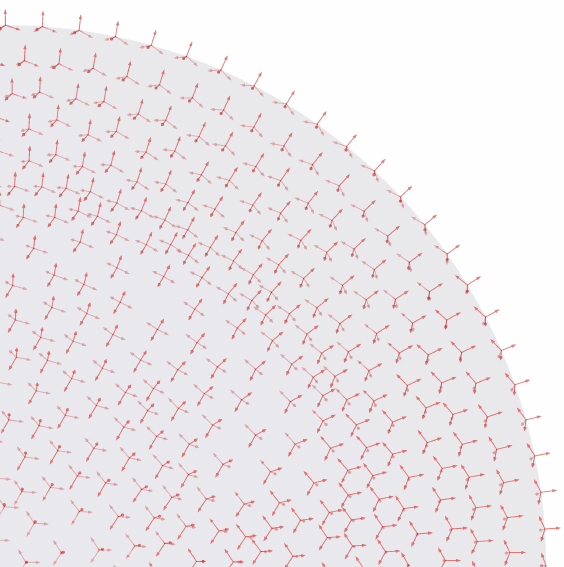}%
              \label{fig:trivial0-2}%
           }
               \caption{Gradient flow solution subject to the Dirichlet boundary data given by \eqref{eq:trivial} starting from the initial condition also given by \eqref{eq:trivial}. (a) Plot of $W(\mathcal{Q})$ shows no singularities in the interior of the domain; (b) Plot of the energy-minimizing tetrahedral frame field in the top right quadrant. Here $\varepsilon=0.05$.}
           \label{fig:trivial0}
\end{figure}
On the other hand, the gradient flow simulation that starts from the trivial initial condition $\mathcal{Q}\equiv0$ results in an entirely different stable critical point shown in Fig.~\ref{fig:trivial3}. This local minimizer has three equidistant point singularities such that the frame field over any curve surrounding one singularity is in a homotopy class conjugate to either $\mathbf{s}$ or $\mathbf{s}^{-1}$, i.e., one vector of the frame does not change while a curve is traversed, Fig.~\ref{fig:trivial3}(b)-(d).

The different critical points, given the same boundary data, is an example of distinct topological resolutions for that data, see Remark~\ref{rm:topres}. 
\begin{figure}[H]
        \centering
           \subfloat[]{%
           \includegraphics[height=2in]{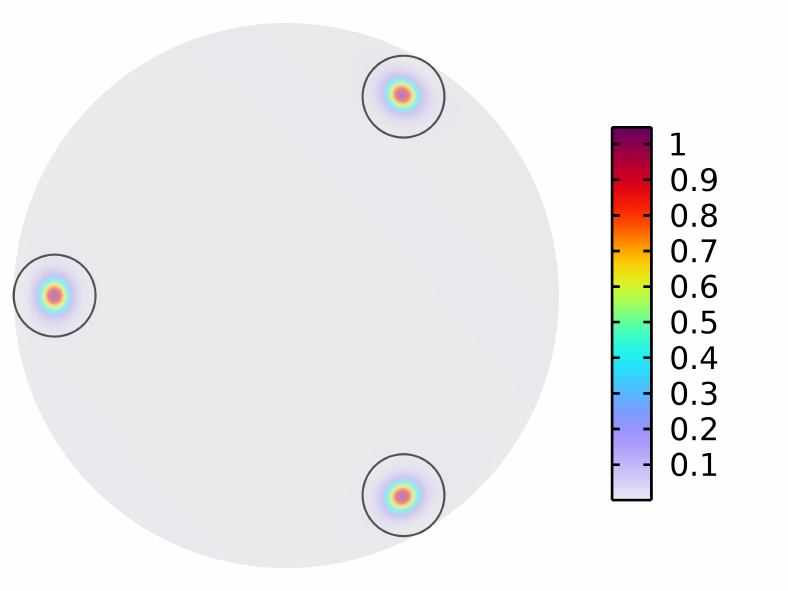}%
              \label{fig:trivial3-1}%
           }\qquad
           \subfloat[]{%
           \includegraphics[height=1.5in]{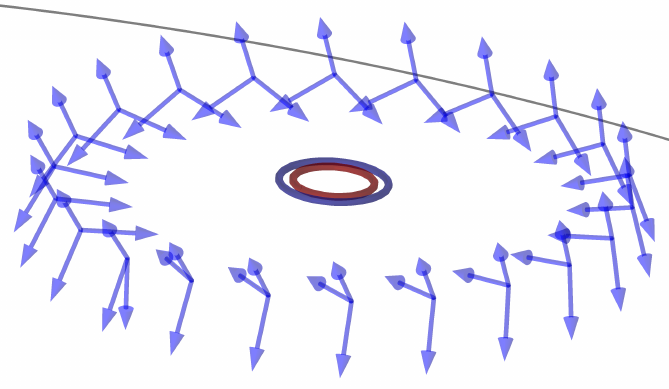}%
              \label{fig:trivial3-2}%
           } \\ 
           \subfloat[]{%
           \includegraphics[height=1.5in]{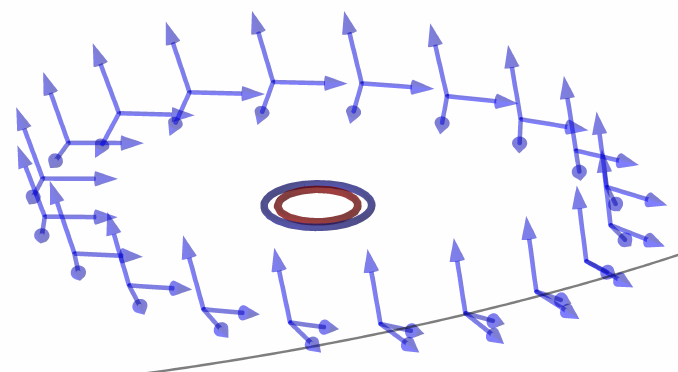}%
              \label{fig:trivial3-3}%
           }\qquad
           \subfloat[]{%
           \includegraphics[height=1.5in]{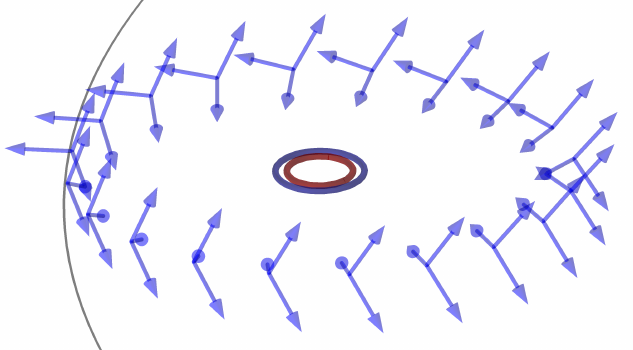}%
              \label{fig:trivial3-4}%
           }
               \caption{Gradient flow solution subject to the Dirichlet boundary data given by \eqref{eq:trivial} starting from the trivial initial condition $\mathcal{Q}\equiv0$. (a) Plot of $W(\mathcal{Q})$ shows three singularities in the interior of the domain; (b-d) Partial plots of the energy-minimizing tetrahedral frame field over the curves shown in black in (a). Here the frame field over all curves has a homotopy class conjugate to $\mathbf{s}$ or $\mathbf{s}^{-1}$ (one vector does not change while a curve is traversed). Here $\varepsilon=0.05$.}
           \label{fig:trivial3}
\end{figure}

{\bf (b.)} The next example shows a critical point of \eqref{eq:grafstrong} obtained via a gradient flow in a unit disk for maps satisfying Dirichlet boundary data that lies in a homotopy class conjugate to ${\bf i}$. Here both the boundary and the initial condition were obtained using the techniques described in Section \ref{sec:gendata}. The initial condition had a singularity at the center of the disk and the tetrahedral frame  map was in the same homotopy class as the boundary data for every circle surrounding the singularity. The simulation attains a local minimizer of \eqref{eq:grafstrong} shown in Fig.~\ref{fig:pivs2pi3}. This minimizer has two singularities in the interior of the disk and examination of winding of the frame vectors indicates that the frame field over the curves surrounding each singularity has a homotopy class conjugate to $\mathbf{s}$. Indeed, the state with two singularities corresponds to a shorter topological resolution than that for the boundary data and thus this state has a lower energy than a state with one singularity.
\begin{figure}[H]
        \centering
           \subfloat[]{%
           \includegraphics[height=2in]{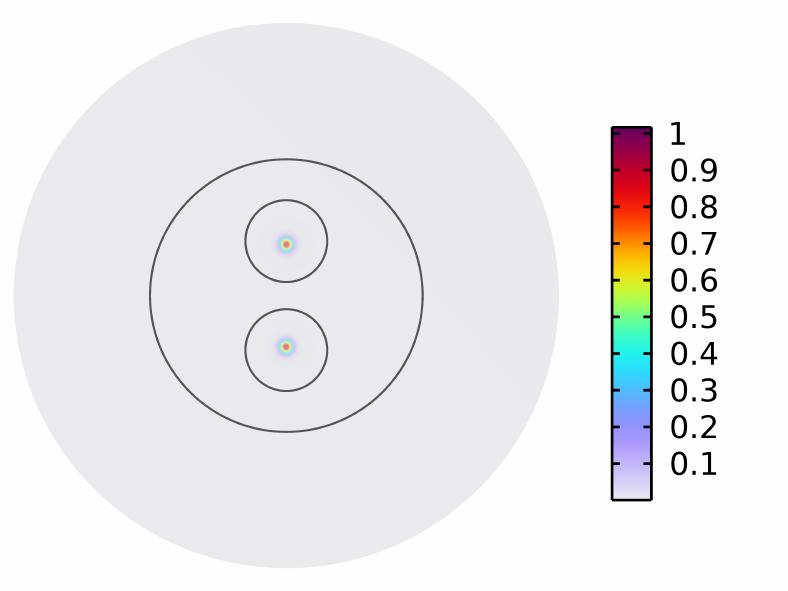}%
              \label{fig:pivs2pi3-1}%
           }\qquad
           \subfloat[]{%
           \includegraphics[height=1.5in]{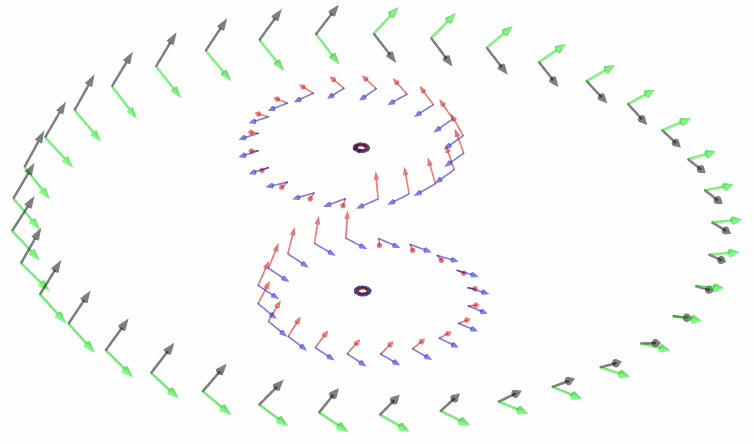}%
              \label{fig:pivs2pi3-2}%
           }
           \caption{Gradient flow solution subject to the Dirichlet boundary data homotopic to $\mathbf{i}$. (a) Plot of $W(\mathcal{Q})$ shows two singularities in the interior of the domain; (b) Partial plots of the energy-minimizing tetrahedral frame field over the curves shown in black in (a). Here the frame field over the larger curve has a homotopy class conjugate to $\mathbf{i}$ (both green and black vectors rotate by $\pi$ when the curved is traversed once) while the frame field over the smaller curves has a homotopy class conjugate to $\mathbf{s}$  (the blue vector does not change its orientation while red vector rotates by $2\pi/3$ when the respective curves are traversed once). Here $\varepsilon=0.05$.}
           \label{fig:pivs2pi3}
\end{figure}

The two remaining examples deal with the tetrahedron-valued maps obtained via a gradient flow for the energy \eqref{eq:graf} when $\delta_1=\delta_2=\delta\ll\varepsilon.$

\subsection{Computational examples of maps from $B_1(0)\subset\mathbb{R}^3$ into $SO(3)/T$}

{\bf (a.)} In Fig.~\ref{fig:sphere} we depict the singular set of a critical point of \eqref{eq:graf} in the ball of radius $1$, where the surface contribution to the energy forces the tetrahedron-valued map to contain the normal to the sphere $\partial B_1$ almost everywhere on $\partial B_1$. Then the trace of this map on $\partial B_1$ can be identified with an MB-valued map. From Remark \ref{rmk:PH}, it follows that any MB-valued map must have singularities on $\partial B_1$ with the degrees of these singularities adding up to $2$. Notice that the energy of an MB-frame field in \eqref{e:GL2D} is identical to the scaled Ginzburg-Landau energy after a short calculation. Recall that in Ginzburg-Landau theory, vortices of higher degree in equilibrium split into vortices of degree $1$ that repel each other, {\cite{BBH}}, and we expect analogous behavior from vortices of MB-valued maps. Because the degree of the MB-frame is measured in units of $1/3$, the singular set of total degree $2$ on the sphere should split into six equidistant vortices of degree $1/3$. 

Since we assumed that $\delta\ll\varepsilon$, the penalty associated with the surface energy is much stronger than that for the bulk energy and therefore the surface effects should dominate bulk effects. Indeed, the singular set in Fig.~\ref{fig:sphere}(a) intersects the surface of the sphere at six, approximately equidistant points, connected by line singularities in the interior of the domain. The bulk structure of the singular set is dictated by the topology of $SO(3)/T$ and the energy considerations. In particular, from Fig.~\ref{fig:sphere}(a) it seems to be clear that the total length of the singular set can be reduced by ``squeezing" it toward the center of the ball. This, in fact, is possibly what would happen as $\varepsilon,\delta\to0$ but such investigation is beyond the scope of the present paper. Rather, we are interested to understand the behavior of the singular set for {\it finite} $\varepsilon$ and $\delta$ by exploring the topological structure of a triple junction.
\begin{figure}[H]
        \centering
           \subfloat[Singular set]{%
              \includegraphics[height=2.5in]{sphere.pdf}%
              \label{fig:sphere-1}%
           }\qquad
           \subfloat[$z=0.25$]{%
              \includegraphics[height=2.5in]{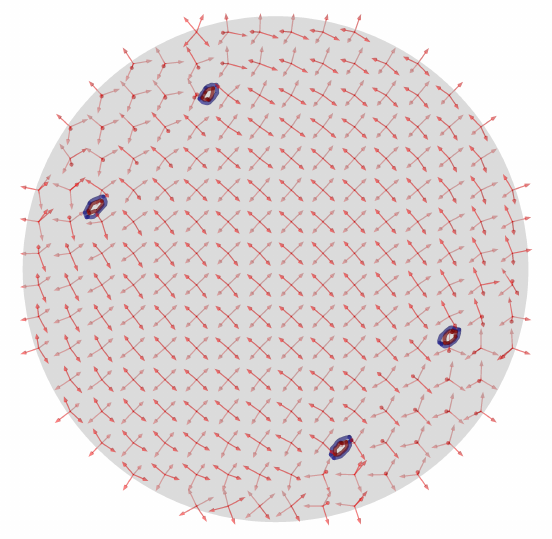}%
              \label{fig:sphere-2}%
           }\\
           \subfloat[$z=0.5$]{%
              \hspace{-0.5in}\includegraphics[height=2.5in]{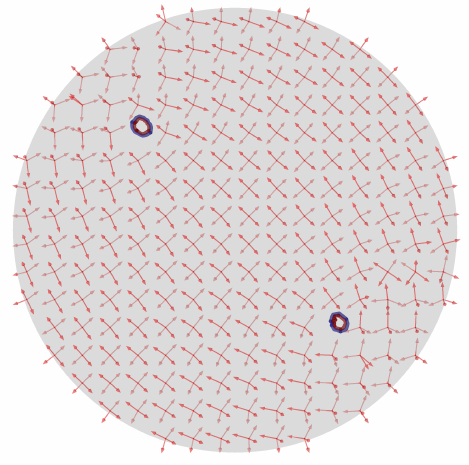}%
              \label{fig:sphere-3}%
           }
           \qquad\qquad\qquad\qquad\raisebox{0.5in}{\subfloat[$z=0.95$]{%
              \includegraphics[height=1.3in]{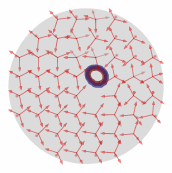}%
              \label{fig:sphere-4}%
           }}
           \caption{A locally minimizing tetrahedral frame field in a sphere of the radius $1$. (a) The singular set; (b-d) Cross-sections of the frame field and the line singularities by planes corresponding to different values of $z$. Here $\varepsilon=0.05$ and $\delta=0.01$.}
           \label{fig:sphere}
\end{figure}
In Fig.~\ref{fig:sphere-triple}(a), a small sphere surrounds the triple junction that we are interested in and in Fig.~\ref{fig:sphere-triple}(c) we plotted the vectors of the trace of the tetrahedron-valued frame field on the surface of that sphere. The circles on the same plot depict the intersection between the singular set and the surface of the sphere. 
\begin{figure}[H]
        \centering
           \subfloat[]{%
              \includegraphics[height=2.5in]{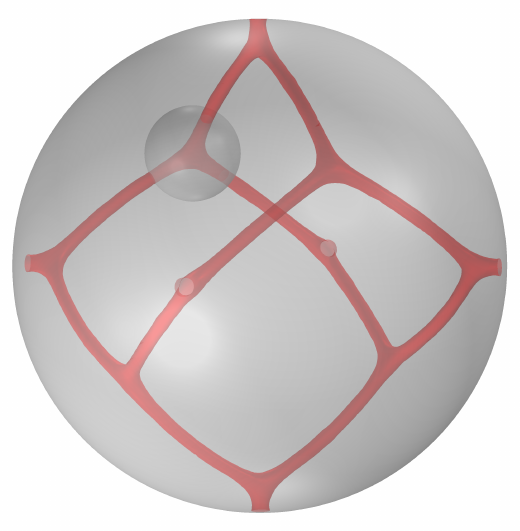}%
              \label{fig:triple-1}%
           }\qquad
           \subfloat[]{%
              \includegraphics[height=2.5in]{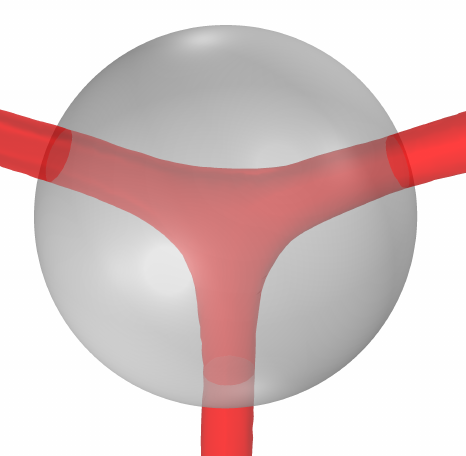}%
              \label{fig:triple-2}%
           }\\
           \subfloat[]{%
              \includegraphics[height=2.5in]{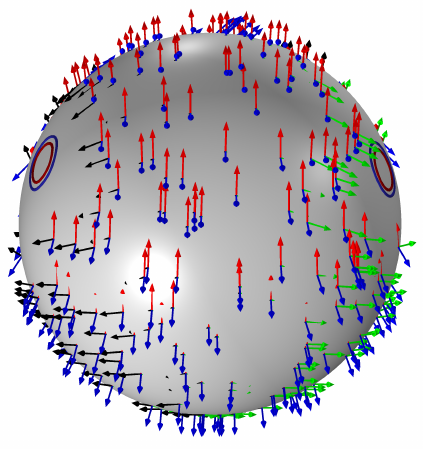}%
              \label{fig:triple-3}%
            }
           \caption{The structure of a triple junction. (a) The singular set with a small sphere surrounding a triple junction; (b) Zoom-in on the sphere with a triple junction; (c) Cross-section of the tetrahedral frame field by the same sphere. Circles correspond to cross-sections of the line singularities by the sphere. For the frame field near the singularity on the right, the black vector appears to be missing, while the green vector appears to be missing for the frame field near the singularity on the left. $\varepsilon=0.05$ and $\delta=0.01$.}
           \label{fig:sphere-triple}
\end{figure}

Fig.~\ref{fig:triple-13} shows the same small spherical cutout from the points of view of individual arms of the triple junction. Near each singularity on the sphere associated with an arm of the junction, a frame vector of one color points into the sphere toward the triple junction and is not visible, while the remaining three vectors form an MB frame field. This MB frame field rotates by $120^\circ$ counterclockwise when one travels around the singularity in the counterclockwise direction. The product of two $120^\circ$ counterclockwise rotations around any two adjacent edges of a tetrahedron indeed corresponds to an inverse of a counterclockwise rotation around a third edge and we expect that a similar structure is valid for the remaining triple junctions of the singular set in Fig.~\ref{fig:sphere}(a).
\begin{figure}[H]
        \centering
           \subfloat[]{%
              \includegraphics[height=2.3in]{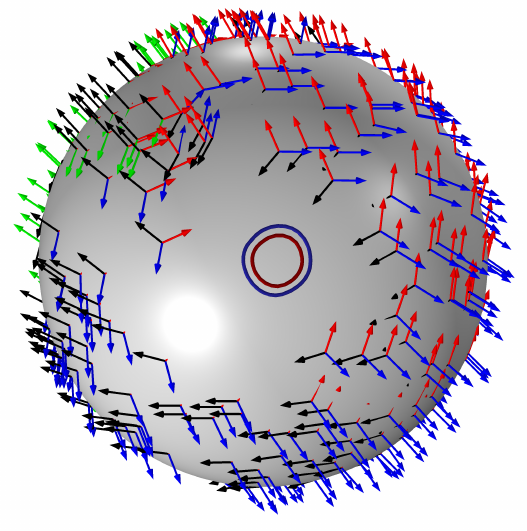}%
              \label{fig:triple-11}%
           }\qquad
           \subfloat[]{%
              \includegraphics[height=2.3in]{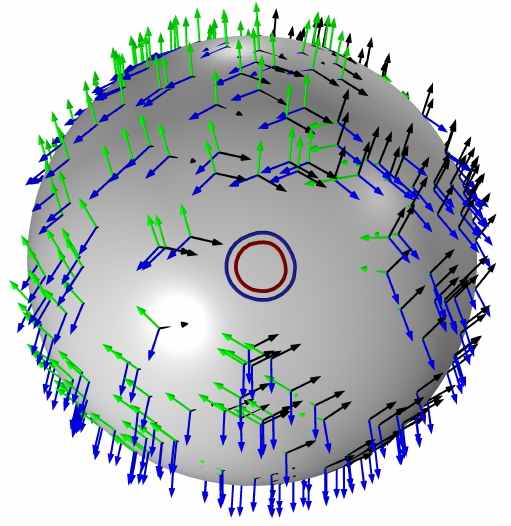}%
              \label{fig:triple-12}%
           }\\
           \subfloat[]{%
              \includegraphics[height=2.3in]{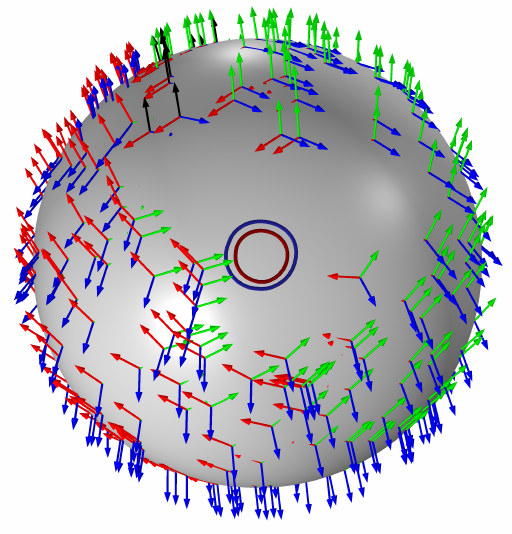}%
              \label{fig:triple-13}%
            }
           \caption{Cross-section of the tetrahedral frame field by the sphere in Fig.~\ref{fig:sphere-triple} viewed from the perspectives of three different arms of the triple junction. (a) Near the singularity, the green vector points into the sphere toward the triple junction and the MB frame consisting of the black, red and blue vectors rotates by $120^\circ$ counterclockwise when one travels around the singularity in the counterclockwise direction; (b) Near the singularity, the red vector points into the sphere toward the triple junction and the MB frame consisting of the blue, green and black vectors rotates by $120^\circ$ counterclockwise when one travels around the singularity in the counterclockwise direction; (c) Near the singularity, the black vector points into the sphere toward the triple junction and the MB frame consisting of the blue, green and red vectors rotates by $120^\circ$ counterclockwise when one travels around the singularity in the counterclockwise direction. Here $\varepsilon=0.05$ and $\delta=0.01$.}
           \label{fig:sphere-triple-1}
\end{figure}

{\bf (b.)} Finally, in Fig.~\ref{fig:sphere-out} we show the singular set of a critical point of \eqref{eq:graf} in a large domain in the {\it exterior} of the ball of radius $1$, where the surface contribution to the energy forces the tetrahedron-valued map to contain the normal to the sphere $\partial B_1$ almost everywhere on $\partial B_1$. The structure of this set is clearly similar to that in Fig.~\ref{fig:sphere} in that it has six equidistant vortices on the surface of the sphere and the same number of triple junctions but, this time, the total length of the line singularities composing the singular set can be reduced by compressing these singularities towards the sphere. This is indeed what is observed in Fig.~\ref{fig:sphere-out}.
\begin{figure}
    \centering
    \includegraphics[width=2in]{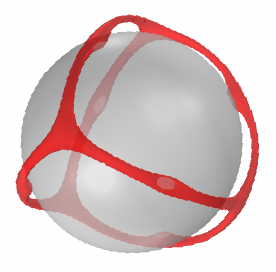}
    \caption{The line singularity of a locally minimizing tetrahedral frame field outside of a sphere of the radius $0.1$ and inside a large tetrahedron (not shown).  Here $\varepsilon=0.01$ and $\delta=0.01$.}
    \label{fig:sphere-out}
\end{figure}

\section{Discussion}
\label{sec:dis}

Tetrahedral symmetry arises in many contexts in nature, including in liquid crystals, condensed matter physics and materials science. The  framework presented here for generating such frame fields  entails identifying a bijection between a tetrahedral configuration and a specific variety on traceless, symmetric three-tensors.  When tetrahedral frame fields are constrained to align with the normal, singularities arise generically. To avoid this, the variety suggests a natural harmonic map (Ginzburg-Landau) relaxation  that helps in computation to identify a local minimizing configuration with isolated near-singular sets. We discuss some open questions and interesting lines of inquiry related to the work presented here.

In Section~\ref{s:nD} we showed that $n+1$ vectors satisfying \emph{$n+1$-hedral symmetry} on the  unit sphere in $\mathbb{R}^n$ defines a symmetric three tensor $\mathcal{Q}$ that satisfies $\mathcal{Q}\mathcal{Q}^T = {(n+1)(n^2 -1) \over n^3} I(n)$.  On the other hand, the recovery has only been shown for $n\in \{2,3\}$. In particular it would be natural to ask if for 
all $n \geq 4$ there is a bijection between $SO(n)/G_n$ and $\Htrace(n,3) \cap \{ \mathcal{Q}\mathcal{Q}^T = {(n+1)(n^2 -1) \over n^3} I(n)\}$, 
where $G_n$ represents the group of rotations leaving $n+1$ equipositioned points on the sphere $\mathbb{S}^{n-1}$ invariant.

There are many other interesting directions to explore. 
Recently, it was proved that the energy density of mappings from planar domains into relaxed vacuum manifold targets converge to a sum of delta functions with masses associated to the energy minimized over all nontrivial homotopy classes (so-called systolic geodesics), see \cite{ monteil2021ginzburg, MRV2021}.  
One interesting direction would be to perform a stability analysis of equivariant critical points that conform to different homotopy classes either on the boundary of a disk or at infinity, along the lines of \cite{ignat2020symmetry} for $Q$-tensor models.  This should provide additional information on the selection of particular topological resolutions found in our numerical experiments.   
Even more more important would be to  
rigorously establish the expected Gamma-limit of the relaxation energy, \eqref{eq:grafstrong}.  In particular, one should be able to identify a compact mapping, along the lines of the Jacobian in Ginzburg-Landau theory, that concentrates at singular points for two dimensional domains and co-dimension two, rectifiable sets for three dimensional domains. 
Results for the cross-field case are the subject of the work in 
 \cite{GMS2}, and we expect the Gamma-limit in the tetrahedral frame problem to behave similarly.

A more intriguing task would be to provide a rigorous rationale for the formation of triple junctions in the singular limit of tetrahedral frame fields for mappings  on three dimensional domains (versus quadruple junctions in the $SO(3)/O$ case), see Fig~\ref{fig:junctions}. Systolic geodesics should play a role here.

Even more generally, one can ask how this approach can be used to connect a quotient of a Lie group with a specific choice of symmetric $k$ tensor and a variety.  For example can a frame with icosahedral symmetry, $SO(3)/I$,  be generated by a suitable variety on the correct choice of $k$-symmetric tensors?  
How these $k$'s and choice of varieties arise remains mysterious.

\section{Appendix A: Proof of Proposition~\ref{p:etevpairs2}}

In this section we prove Proposition~\ref{p:etevpairs2}.  This requires the use of properties of families of vectors in $\R^n$ that we recorded in Lemma~\ref{e:propsvecsangles}, particularly the property stated in  \eqref{e:sumprojections}.  
For the convenience of the reader we first state the Proposition, and then proceed to its proof.

\begin{proposition}
For every $n \geq 2$ there exists an $n\times (n+1)$ matrix, $C^n$ such that the following holds. For any set of $n+1$ unit vectors in $\R^n$,  $\{\mathbf{u}^j\}_{j=1}^{n+1}$, that satisfy the inner product condition \eqref{e:innerprodcondition}, and its associated 3-tensor $\mathcal{Q} \in \Htrace(n, 3)$ generated by these $\mathbf{u}^j$'s via \eqref{e:def3tensor}.  Let $\mathcal{Q} = (\mathcal{Q}_1 | ... | \mathcal{Q}_n)$ where the $\mathcal{Q}^j \in \Htrace(n, 2)$ are defined by
$$
\mathcal{Q}_j = \sum_{k=1}^n \langle \mathbf{u}^k, \mathbf{e}^j\rangle \mathbf{u}^k(\mathbf{u}^k)^T.
$$
If we denote $A_n$ the $n\times (n+1)$ matrix whose columns are the vectors $\{\mathbf{u}^j\}_{j=1}^{n+1}$.  Then, the matrix
$$
R = \frac{n}{n+1}A_nC_n^T \in O(n)
$$
is orthogonal, and $A_n = RC_n$.  Letting $\mathbf{f}^j = R\mathbf{e}^k$, $k=1, ..., n$, and
$$
B^k = \frac{1}{\lambda_n}\sum_{j=1}^n\langle \mathbf{f}^k, \mathbf{e}^j\rangle \mathcal{Q}_j,
$$
where
$$
\lambda_n = \left (\frac{(n^2-1)(n+1)}{n^3}\right )^{\frac{1}{2}},
$$
then, the vectors $\mathbf{f}^k$ along with the tensors $B^k$, $k=1, ..., n$ are eigenvector-eigentensor pairs for $\mathcal{Q}$ in the sense of Remark~\ref{r:xiconditions}.  In particular, 
$$
\langle \mathbf{f}^j, \mathbf{f}^k \rangle = \langle B^j, B^k \rangle = \delta_{jk},
$$
$$
\sum_{i=1}^n \langle B_k, \mathcal{Q}_i\rangle \mathbf{e}^i = \lambda_n \mathbf{f}^k, \,\,\,\mbox{and}\,\,\, \sum_{j=1}^n\langle \mathbf{f}^k, \mathbf{e}^j\rangle \mathcal{Q}_j = \lambda_n B^k.
$$
Finally, one can recover the $3$-tensor $\mathcal{Q}$ from the eigenvector-eigentensor pairs, in the sense that
$$
\mathcal{Q}_i = \sum_{j=1}^n \langle \mathbf{f}^j, \mathbf{e}^i\rangle B_j,
$$
for $i=1, ..., n$.

\end{proposition}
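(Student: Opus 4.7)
The plan is to deduce everything from two algebraic identities: the Gram relations satisfied by the columns of any regular simplex (consequences of Lemma~\ref{e:propsvecsangles}) and the invariant $\mathcal{Q}\mathcal{Q}^T = \lambda_n^2 I(n)$ from Proposition~\ref{p:3tensorlaws}.

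First, I fix once and for all a canonical choice of $C_n$: any $n\times(n+1)$ matrix whose columns are $n+1$ unit vectors in $\R^n$ satisfying the inner product condition \eqref{e:innerprodcondition} (for instance, the vertices of a regular simplex centered at the origin). Lemma~\ref{e:propsvecsangles} gives $C_n C_n^T = \tfrac{n+1}{n}I(n)$ and $C_n\mathbf{1}_{n+1}=0$, while the inner product condition applied to the columns of $C_n$ gives
\[
C_n^T C_n = \tfrac{n+1}{n}I(n+1) - \tfrac{1}{n}\mathbf{1}_{n+1}\mathbf{1}_{n+1}^T.
\]
The same three relations hold with $A_n$ in place of $C_n$.

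The heart of the argument is that the rank-one correction above is annihilated whenever it is squeezed between two copies of $A_n$, thanks to $A_n\mathbf{1}_{n+1}=0$. This immediately yields the two key identities
\begin{align*}
R R^T &= \tfrac{n^2}{(n+1)^2} A_n \left(C_n^T C_n\right) A_n^T = \tfrac{n^2}{(n+1)^2}\cdot\tfrac{n+1}{n}\, A_n A_n^T = I(n),\\
R C_n &= \tfrac{n}{n+1}\, A_n C_n^T C_n = A_n,
\end{align*}
so $R\in O(n)$ and the vectors $\mathbf{f}^k=R\mathbf{e}^k$ automatically form an orthonormal basis. The orthogonality identity $\langle\mathcal{Q}_i,\mathcal{Q}_\ell\rangle=(\mathcal{Q}\mathcal{Q}^T)_{i\ell}=\lambda_n^2\,\delta_{i\ell}$ from \eqref{e:suminteriorprod} then does the remaining work: expanding $\langle B^j,B^k\rangle$ via the definition of $B^k$ and this orthogonality collapses the double sum to $\langle\mathbf{f}^j,\mathbf{f}^k\rangle=\delta_{jk}$, and the same substitution yields $\sum_i\langle B^k,\mathcal{Q}_i\rangle\mathbf{e}^i=\lambda_n\mathbf{f}^k$. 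The partner identity $\sum_j\langle\mathbf{f}^k,\mathbf{e}^j\rangle\mathcal{Q}_j=\lambda_n B^k$ is built into the definition of $B^k$. Finally, the reconstruction of $\mathcal{Q}_i$ follows by plugging the definition of $B^j$ into $\sum_j\langle\mathbf{f}^j,\mathbf{e}^i\rangle B^j$, exchanging the order of summation, and applying the completeness relation $\sum_j \mathbf{f}^j(\mathbf{f}^j)^T=I(n)$.

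The only substantive step is the orthogonality of $R$; the precise scaling factor $\tfrac{n}{n+1}$ was engineered exactly to cancel both the factor $\tfrac{n+1}{n}$ coming from $A_nA_n^T$ and the rank-one perturbation in $C_n^T C_n$, the latter via the vanishing condition \eqref{e:sumvecszero}. Everything else is routine bookkeeping built on the single invariant $\mathcal{Q}\mathcal{Q}^T=\lambda_n^2 I(n)$, with no further geometric input required.
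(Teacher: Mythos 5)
Your argument is correct and essentially mirrors the paper's Appendix A proof: you verify $RR^T = I(n)$ and $RC_n = A_n$ using $A_n\mathbf{1}_{n+1}=0$, $A_nA_n^T=\tfrac{n+1}{n}I(n)$ and $C_n^TC_n=\tfrac{n+1}{n}I(n+1)-\tfrac{1}{n}\mathbf{1}_{n+1}\mathbf{1}_{n+1}^T$, and then all the eigenvector--eigentensor identities collapse from the single orthogonality fact $\langle\mathcal{Q}_i,\mathcal{Q}_j\rangle=\lambda_n^2\delta_{ij}$ (Step 3 of the paper). The one place you diverge is that you skip the paper's Step~1 (an explicit induction on $n$ producing $C_n$) and simply take $C_n$ to be any fixed regular simplex in $\mathbb{R}^n$, relying on Lemma~\ref{e:propsvecsangles} and \eqref{e:innerprodcondition} for its Gram relations; this is a legitimate shortcut provided one accepts the (standard) existence of such a simplex for every $n$, which is precisely what the paper's inductive construction establishes.
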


\begin{proof}

For the proof we first construct the matrix $C^n$, and proving several of its properties.  The conclusions of the Proposition will then follow from here.

\noindent {\bf Step 1.}  In this step we will build an $n\times (n+1)$ matrix $C_n$, $n\geq 2$, such that $C_nC_n^T = \frac{n+1}{n}I(n)$, that
$$
C_n^TC_n = \frac{n+1}{n}I({n+1}) - \frac{1}{n} \boldsymbol{1}_{n+1}\boldsymbol{1}_{n+1}^T,
$$
and $C_n \boldsymbol{1}_{n+1}=0$, where we use the notation
$$
\boldsymbol{1}_n = \left ( \begin{array}{c} 1 \\ \vdots \\ 1 \end{array}\right ) \in \R^{n}.
$$

\medskip
\medskip

The construction of $C_n$ is inductive, starting with $n=2$.  Let
$$
C_2 = \left (  \begin{array}{ccc}  \frac{\sqrt{3}}{2} & -\frac{\sqrt{3}}{2} & 0 \\ -\frac{1}{2} & -\frac{1}{2} & 1 \end{array}\right ),
$$
and let $\mathbf{c}^j = C_n \mathbf{e}^j$ be the columns of $C_2$.  It is straight forward to verify that
$$
\mathbf{c}^i \cdot \mathbf{c}^j = \frac{3}{2}\delta_{ij}- \frac{1}{2}.
$$
It is also straight forward to check that $C_2 \boldsymbol{1}_3 = 0$.

The last two properties of $C_2$ that we need are the following:
$$
C_2C_2^T = \frac{3}{2} I(2), \,\,\,\mbox{and}\,\,\,  C_2^T C_2 = \frac{3}{2} I(3) - \frac{1}{2} \boldsymbol{1}_3\boldsymbol{1}_3^T.
$$
The first of these comes from the fact that the rows of $C_2$, thought of as vectors in $\R^3$, are orthogonal to each other, and have length $\frac{3}{2}$.  The second comes from the fact that the $ij$ entry of $C_2^T C_2$ is exactly $\mathbf{c}^i \cdot \mathbf{c}^j$.

Let us now suppose we have an $n\times (n+1)$ matrix $C_n$, $n\geq 2$, such that $C_nC_n^T = \frac{n+1}{n} I(n)$, that $C_n \boldsymbol{1}_{n+1}=0$, and that
$$
C_n^TC_n = \frac{n+1}{n}I(n+1) - \frac{1}{n} \boldsymbol{1}_{n+1}\boldsymbol{1}_{n+1}^T.
$$
All these conditions hold for $n=2$.  With all this let us define the $(n+1)\times (n+2)$ matrix
$$
C_{n+1} = \left (  \begin{array}{cc}  \frac{\sqrt{(n+1)^2-1}}{n+1}C_n & \begin{array}{c}0 \\ \vdots \\ 0 \end{array} \\ \begin{array}{ccc}-\frac{1}{n+1} & \hdots & -\frac{1}{n+1}\end{array} & 1 \end{array}\right ).
$$
Note that the last row of $C_{n+1}$, thought of as a vector in $\R^{n+2}$, is orthogonal to the others by the hypothesis $C_n \boldsymbol{1}_{n+1}$, and its length squared is $\frac{n+2}{n+1}$, whereas the length squared of the other rows is
$$
\frac{n+1}{n} \frac{n^2+2n}{(n+1)^2} = \frac{n+2}{n+1}.
$$
Since the other rows of $C_{n+1}$ are orthogonal among themselves by the hypotheses on $C_n$, this shows that $C_{n+1}C_{n+1}^T = \frac{n+2}{n+1} I(n+1)$.

It is also easy to see that each column of $C_{n+1}$ has length $1$, and that the dot product of the last column with any of the others is exactly $-\frac{1}{n+1}$.  Since the dot product between two different columns of $C_n$ is $-\frac{1}{n}$, we conclude that the dot product of any two different columns of $C_{n+1}$, not including the last, is
$$
-\frac{1}{n} \frac{n^2+2n}{(n+1)^2}+\frac{1}{(n+1)^2} = -\frac{1}{n+1}.
$$
This shows that
$$
C_{n+1}^TC_{n+1} = \frac{n+2}{n+1}I({n+2}) - \frac{1}{n+1} \boldsymbol{1}_{n+2}\boldsymbol{1}_{n+2}^T.
$$
Lastly, that $C_{n+1}\boldsymbol{1}_{n+2}=0$ follows easily from the form of the last row of $C_{n+1}$ and the hypotheses on $C_n$.  This concludes the proof of Step 1.

\medskip
\medskip

{\bf Step 2.}  In this step we now consider a family $\{\mathbf{u}^j\}_{j=1}^{n+1}\subset \R^n$ of unit vectors that satisfy the inner product conditions \eqref{e:innerprodcondition}, and its associated 3-tensor $\mathcal{Q} \in \Htrace(n, 3)$ generated by these $\mathbf{u}^j$'s via \eqref{e:def3tensor}.  Let $\mathcal{Q} = (\mathcal{Q}^1 | ... | \mathcal{Q}^n)$ where the $\mathcal{Q}^j \in \Htrace(n, 2)$ are defined by
$$
\mathcal{Q}^j = \sum_{k=1}^n \langle \mathbf{u}^k, \mathbf{e}^j\rangle \mathbf{u}^k(\mathbf{u}^k)^T.
$$
We will denote $A_n$ the $n\times (n+1)$ the matrix whose columns are the vectors $\{\mathbf{u}^j\}_{j=1}^{n+1}$.  It is easy to check that the inner product conditions \eqref{e:innerprodcondition} say exactly that
$$
A_n^T A_n = \frac{n+1}{n}I({n+1}) - \frac{1}{n} \boldsymbol{1}_{n+1}\boldsymbol{1}_{n+1}^T,
$$
whereas equations \ref{e:sumvecszero} and \ref{e:sumprojections} from Lemma \ref{e:propsvecsangles} say that
$$
A_n \boldsymbol{1}_{n+1}=0 \,\,\, \mbox{and}\,\,\, A_nA_n^T = \frac{n+1}{n}{I}(n).
$$
Our main claim in this step is the following:
$$
\mbox{The matrix}\,\,\,  R = \frac{n}{n+1}A_nC_n^T\,\,\, \mbox{is an orthogonal matrix, that is}\,\,\, RR^T = R^TR = {I}(n).
$$
From here it follows that $A_n =RC_n$, and $\mathbf{u}^j = A_n\mathbf{e}^j = RC_n \mathbf{e}^j$.

\medskip
\medskip

The proof is straight forward.  Indeed, we first observe that
$$
RR^T = \frac{n^2}{(n+1)^2}A_nC_n^TC_nA_n^T = \frac{n^2}{(n+1)^2}A_n \left (  \frac{n+1}{n}\mathbb{I}(n+1)-\frac{1}{n}\boldsymbol{1}_{n+1}\boldsymbol{1}_{n+1}^T\right )A_n^T = {I}(n),
$$
because $A_n\boldsymbol{1}_{n+1}=0$ and $A_nA_n^T = \frac{n+1}{n}{I}(n)$.

Next, we notice that
$$
RC_n = \frac{n}{n+1}A_nC_n^TC_n = \frac{n}{n+1}A_n\left (  \frac{n+1}{n}{I}(n+1)-\frac{1}{n}\boldsymbol{1}_{n+1}\boldsymbol{1}_{n+1}^T\right )=A_n,
$$
where again we used the fact that $A_n\boldsymbol{1}_{n+1}=0$.

\medskip
\medskip

{\bf Step 3.}  In this last step we build our eigenvector-eigentensor pairs from what we obtained in the previous step, and prove that the $3$-tensor $\mathcal{Q}$ we mentioned at the beginning of the previous step can be reconstructed from these pairs.  Our definitions are the following:
$$
\mbox{for the eigenvectors we set}\,\,\, \mathbf{f}^k = R\mathbf{e}^k, \,\,\, \mbox{and for the eigentensors}\,\,\, B^k = \frac{1}{\lambda_n}\sum_{j=1}^n\langle \mathbf{f}^k, \mathbf{e}^j\rangle \mathcal{Q}_j,
$$
where $\lambda_n = \left (\frac{(n^2-1)(n+1)}{n^3}\right )^{\frac{1}{2}}$.  We need to prove
$$
\mathbf{f}^j \cdot \mathbf{f}^k = \langle B^j, B^k \rangle = \delta_{jk},
$$
$$
\sum_{j=1}^n\langle \mathbf{f}^k, \mathbf{e}^j\rangle \mathcal{Q}_j = \lambda_n B^k \,\,\,\mbox{and}\,\,\, \sum_{j=1}^n \langle B^k, \mathcal{Q}^k \rangle \mathbf{e}^k = \lambda_n \mathbf{f}^k.
$$
$\mathbf{f}^j \cdot \mathbf{f}^k = \delta_{jk}$ is direct from their definition and the fact that $R\in O(n)$, whereas $\mathop{\sum}\limits_{j=1}^n\langle \mathbf{f}^k, \mathbf{e}^j\rangle \mathcal{Q}_j = \lambda_n B^k$ is by definition of $B^k$.  To prove the other two we first observe that $\langle \mathcal{Q}^j, \mathcal{Q}^k\rangle = \frac{(n+1)(n^2-1)}{n^3}\delta_{jk}=\lambda_n^2\delta_{jk}$, which is a direct consequence of equation \ref{e:suminteriorprod} from Proposition \ref{p:3tensorlaws}.  Then we compute
$$
\langle B^k, B^l \rangle = \frac{1}{\lambda_n^2}\sum_{i, j=1}^n \langle \mathbf{f}^k, \mathbf{e}^i\rangle \langle \mathbf{f}^l, \mathbf{e}^j\rangle \langle \mathcal{Q}_i, \mathcal{Q}_j \rangle = \mathbf{f}^k \cdot \mathbf{f}^l = \delta_{kl}.
$$
Next we compute
$$
\sum_{j=1}^n \langle B_k, \mathcal{Q}_j\rangle \mathbf{e}^j = \frac{1}{\lambda_n}\sum_{i, j=1}^n \langle \mathbf{f}^k, \mathbf{e}^j\rangle \langle \mathcal{Q}^j, \mathcal{Q}^i \rangle  \mathbf{e}^i = \lambda_n \sum_{j=1}^n \langle \mathbf{f}^k, \mathbf{e}^j\rangle \mathbf{e}^j = \lambda_n \mathbf{f}^k,
$$
where we again used the fact that $\langle \mathcal{Q}_j, \mathcal{Q}_k\rangle = \lambda_n^2\delta_{jk}$.  

Lastly we observe that
$$
\sum_{j=1}^n \langle \mathbf{f}^j, \mathbf{e}^i\rangle B_j = \sum_{j, k=1}^n \langle \mathbf{f}^j, \mathbf{e}^i\rangle \langle \mathbf{f}^j, \mathbf{e}^k \rangle \mathcal{Q}_k = \mathcal{Q}_i.
$$
This is the statement that the $3$-tensor $\mathcal{Q}$ can be recovered from the eigenvector-eigentensor pairs.

\end{proof}

\section{Appendix B: Proof of Theorem~\ref{t:recovery_3_d}}
\label{appendix:b}

In this section we will prove a useful property of the tensors in our algebraic variety.  Let us first recall that
\begin{align*}
\mathbb{H}(n,k)
&= \underbrace{\mathbb{R}^n \otimes \cdots \otimes \mathbb{R}^n}_{k \hbox{ times}}
\\
\Hsym(n,k) &= \{ \mathcal{A} \in \mathbb{H}(n,k) \hbox{ such that } \mathcal{A}_{\sigma(i_1 \ldots i_k)} = \mathcal{A}_{i_1 \ldots i_k} \hbox{ for all } \sigma \hbox{ permutations}\},
\end{align*}
and 
$$
\Htrace (n,k) = \{ \mathcal{A} \in \Hsym(n,k) \hbox{ such that } \sum_{j=1}^n \mathcal{A}_{i_1\ldots i_{k-2}j j} = 0 \hbox{ for all } i_\ell \in \{1, \ldots, n\} \}.
$$
Let also ${I}(n)$ denote the $n\times n$ identity matrix.

We define a contraction operator on symmetric tensors which is, in effect, the trace over the last two components.  
For $\mathcal{A} \in \mathbb{H}(n,k)$
we set the "block trace" operator, ${\rm BTr}:\mathbb{H}(n,k) \to \mathbb{H}(n,k-2)$, to be
\begin{equation} \label{e:blocktraceDef}
    {\rm BTr} (\mathcal{A})_{a_1 \ldots  a_{k-2}}
    = \sum_{\ell = 1}^n 
    \mathcal{A}_{a_1\ldots a_{k-2} \ell \ell} 
\end{equation}
for any $a_\ell \in \{1,\ldots, n\}$.
Consequently, if $\mathcal{Q} \in \Htrace(n, 3)$ then ${\rm BTr}(\mathcal{Q}) = 0$.  The block trace operator will be used extensively in the proof of Theorem~\ref{t:recovery_3_d}, as it was used in the recovery theorem in \cite{GMS}.
\medskip
\medskip

In this section we will think of the set $\mathbb{H}(3,2)$ as the set of $3\times 3$ matrices, and $\Hsym(3,2)$ as the set of symmetric $3\times 3$ ones.  For $A, B \in \mathbb{H}(3,2)$ we define the inner product
$$
\langle A, B\rangle = {\rm tr}(B^TA).
$$
We then define
$$
\mathbf{X}:\mathbb{H}(3,2) \to \R^9
$$
by the equation
$$
\mathbf{X}(A) :=  \mathbf{X}_A = \left ( \begin{array}{c} A \mathbf{e}^1 \\ A \mathbf{e}^2 \\ A \mathbf{e}^3 \end{array}\right ),
$$
where $\{\mathbf{e}^j\}_{j=1}^3$ denotes the canonical basis in $\R^3$.  
Notice that
$$
\langle A, B \rangle = \langle \mathbf{X}_A, \mathbf{X}_B\rangle,
$$
where the inner product in the left-hand side is the inner product in $\mathbb{H}(3,2)$, while the one in the right-hand side is the standard dot product in $\R^9$.  In other words, the map ${\bf X}:\mathbb{H}(3,2) \to \R^9$ is an isometry.

\medskip
\medskip

In this section we will also consider an element of the set $\mathcal{Q}\in \mathbb{H}(3,3)$ as a $3\times 9$ matrices with real entries.  For this, we think of vectors $\mathbf{q}\in \R^{m}$, in any Euclidean space, as columns.  In particular, for $\mathbf{p} \in \R^{m}$, $\mathbf{q} \in \R^{n}$, $\mathbf{p}\mathbf{q}^T$ is an $m\times n$ matrix.  Then, for $\mathbf{q}^1, \mathbf{q}^2, \mathbf{q}^3 \in \R^3$, we identify the tensor $\mathbf{q}^1 \otimes \mathbf{q}^2 \otimes \mathbf{q}^3$ with the $3\times 9$ matrix
$$
\mathbf{q}^1 \left (\mathbf{X}_{\mathbf{q}^2(\mathbf{q}^3)^T} \right )^T.
$$
With this notation the {\it permutation operators} can be expressed as follows: for $\mathbf{q}^1, \mathbf{q}^2, \mathbf{q}^3 \in \R^3$, so that $\mathbf{q}^2(\mathbf{q}^3)^T \in \mathbb{H}(3,2)$, and a permutation $\sigma\in S_3$, define $\mathcal{T}_\sigma : \mathbb{H}(3,3) \to \mathbb{H}(3,3)$ by
$$
\mathcal{T}_\sigma \left ( \mathbf{q}^1\left (\mathbf{X}_{\mathbf{q}^2(\mathbf{q}^3)^T} \right )^T \right ) = \mathbf{q}^{\sigma(1)}\left (\mathbf{X}_{\mathbf{q}^{\sigma(2)}(\mathbf{q}^{\sigma(3)})^T} \right )^T,
$$
and extend $T_\sigma$ to $\mathbb{H}(3,2)$ by linearity.

\medskip
\medskip
\medskip

Next recall the block-trace operator ${\rm BTr}:\Hsym(3,3)\to \mathbb{R}^3$
in \eqref{e:blocktraceDef}.  It is easy to check that this operator satisfies the condition
$$
{\rm BTr}\left (  \mathbf{q}^1\left (\mathbf{X}_{\mathbf{q}^2(\mathbf{q}^3)^T} \right )^T \right ) = \langle \mathbf{q}^1, \mathbf{q}^3 \rangle \mathbf{q}^2.
$$
This operator considers a $3\times 9$ matrix $\mathcal{Q}\in \mathbb{H}(3,3)$ as being built from $3$ blocks $\mathcal{Q}_j \in \mathbb{H}(3,2)$, and sends $\mathcal{Q}$ into a vector containing the trace of $\mathcal{Q}_j$ in its $j$-th component.

\medskip
\medskip

With all this notation in place we can restate the definition of $\Htrace (3, 3)$ as follows:
\begin{equation}
\Htrace (3, 3) = \{{\mathcal Q} \in \mathbb{H}(3,3): \,\,  {\rm BTr}(\mathcal{Q}) = 0, \,\,  \mathcal{T}_\sigma(\mathcal{Q}) = \mathcal{Q}\,\,\forall \,\, \sigma\in S_3\}.
\end{equation}
We now record some properties of tensors $\mathcal{Q}\in \Hsym(3, 3)$ that will be important to us.  Let us recall that the canonical orthonormal basis of $\R^3$ is denoted by $\{\mathbf{e}^j\}_{j=1}^3$.
\begin{proposition}
Let $\mathcal{Q}\in \Hsym(3, 3)$, and write $\mathcal{Q} = \left ( \mathcal{Q}_1 | \mathcal{Q}_2 | \mathcal{Q}_3 \right )$, where $\mathcal{Q}_j \in \Hsym(3, 2)$.  It holds
\begin{equation}\label{switch_blocks_rows}
\mathcal{Q} = \sum_{j=1}^3 \mathbf{e}^j \left ( \mathbf{X}_{\mathcal{Q}_j} \right )^T.
\end{equation}
Furthermore, we have
\begin{equation}\label{product_of_blocks}
\mathcal{Q}_i \mathcal{Q}_j = \sum_{k=1}^3 \mathcal{Q}_k \mathbf{e}^i (\mathbf{e}^j)^T \mathcal{Q}_k,
\end{equation}
for every $i, j = 1, 2, 3$.  Finally, we have
\begin{equation}\label{Q_i_e_j}
\mathcal{Q}_i \mathbf{e}^j = \mathcal{Q}_j \mathbf{e}^i, 
\end{equation}
also for every $i, j = 1, 2, 3$.
\end{proposition}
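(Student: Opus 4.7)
The plan is to prove each of the three claims by a direct component-wise computation, using only the full symmetry of $\mathcal{Q}$ under the permutation of indices and the explicit matrix conventions established just before the statement. All three identities should turn out to be essentially restatements of the symmetry $\mathcal{Q}_{ijk}=\mathcal{Q}_{\sigma(i)\sigma(j)\sigma(k)}$ for every $\sigma\in S_3$, so I do not expect a substantive obstacle; the exercise is mostly bookkeeping with the $3\times 9$ matrix representation.

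For identity \eqref{switch_blocks_rows}, I would first unpack the definition of $\mathbf{X}_{\mathcal{Q}_j}$. Since the $j$-th block $\mathcal{Q}_j$ is the $3\times 3$ matrix with $(\mathcal{Q}_j)_{ik}=\mathcal{Q}_{ijk}$, the vector $\mathbf{X}_{\mathcal{Q}_j}\in\mathbb R^9$ has entry $(\mathbf{X}_{\mathcal{Q}_j})_{(k-1)3+i}=\mathcal{Q}_{ijk}$. Therefore the outer product $\mathbf{e}^j(\mathbf{X}_{\mathcal{Q}_j})^T$ is the $3\times 9$ matrix whose $j$-th row is $(\mathbf{X}_{\mathcal{Q}_j})^T$ and whose other rows vanish. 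Summing on $j$, the $(r, (a-1)3+b)$ entry of the right-hand side equals $\mathcal{Q}_{brc}$ with $c=a$, which equals $\mathcal{Q}_{rab}$ by symmetry, and this is exactly the $(r,(a-1)3+b)$ entry of $\mathcal{Q}$ in the convention \eqref{e:3tensor2matrix}.

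For \eqref{product_of_blocks}, I would compute both sides entry-wise. Writing the $(a,b)$ entry of the left-hand side gives
\[
(\mathcal{Q}_i\mathcal{Q}_j)_{ab}=\sum_{c=1}^3(\mathcal{Q}_i)_{ac}(\mathcal{Q}_j)_{cb}=\sum_{c=1}^3\mathcal{Q}_{aic}\,\mathcal{Q}_{cjb}.
\]
For the right-hand side, the rank-one matrix $\mathbf{e}^i(\mathbf{e}^j)^T$ picks out the $(i,j)$ entry, so
\[
\bigl(\mathcal{Q}_k\mathbf{e}^i(\mathbf{e}^j)^T\mathcal{Q}_k\bigr)_{ab}=(\mathcal{Q}_k)_{ai}(\mathcal{Q}_k)_{jb}=\mathcal{Q}_{aki}\,\mathcal{Q}_{jkb}.
\]
Summing on $k$ and using the symmetry $\mathcal{Q}_{aki}=\mathcal{Q}_{aik}$ and $\mathcal{Q}_{jkb}=\mathcal{Q}_{kjb}$ yields $\sum_k\mathcal{Q}_{aik}\mathcal{Q}_{kjb}$, which coincides with the left-hand side after renaming the summation index. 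Finally, \eqref{Q_i_e_j} is a one-line consequence of symmetry: $(\mathcal{Q}_i\mathbf{e}^j)_a=(\mathcal{Q}_i)_{aj}=\mathcal{Q}_{aij}=\mathcal{Q}_{aji}=(\mathcal{Q}_j)_{ai}=(\mathcal{Q}_j\mathbf{e}^i)_a$. The only subtlety throughout is keeping the index conventions of Remark~\ref{rm:tensors_3_4_9_2} straight; once that is done the three identities follow mechanically.
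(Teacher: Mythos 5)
Your proof is correct; all three identities are verified soundly by componentwise computations that boil down to the full symmetry $\mathcal{Q}_{ijk}=\mathcal{Q}_{\sigma(ijk)}$. The route differs modestly from the paper's. The paper proves \eqref{switch_blocks_rows} abstractly by observing that the permutation operator $\mathcal{T}_{\sigma_{\rm br}}$ (with $\sigma_{\rm br}=(3,2,1)$) sends the blocks of $\mathcal{Q}$ to its rows, and then \emph{derives} \eqref{product_of_blocks} and \eqref{Q_i_e_j} as consequences of \eqref{switch_blocks_rows}: it writes $\mathcal{Q}^T\mathcal{Q}$ two ways, once as the block matrix $(\mathcal{Q}_i\mathcal{Q}_j)_{ij}$ and once, via \eqref{switch_blocks_rows}, as $\sum_k \mathbf{X}_{\mathcal{Q}_k}(\mathbf{X}_{\mathcal{Q}_k})^T$, and reads off \eqref{product_of_blocks} by comparing blocks; similarly \eqref{Q_i_e_j} drops out of substituting $\mathbf{X}_{\mathcal{Q}_j}=(\mathcal{Q}_j\mathbf{e}^1,\mathcal{Q}_j\mathbf{e}^2,\mathcal{Q}_j\mathbf{e}^3)^T$ into \eqref{switch_blocks_rows} and using $\mathcal{Q}_i^T=\mathcal{Q}_i$. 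You instead prove each identity independently by unwinding the $\mathbf{X}$- and block-conventions into explicit index expressions and invoking symmetry directly. Your version is more self-contained (no dependence of \eqref{product_of_blocks} on \eqref{switch_blocks_rows}) and arguably easier to check, while the paper's block-matrix formulation prepares the $\mathcal{N}_{A}=\mathbf{X}_A\mathbf{X}_A^T$ structure it reuses heavily in Appendix~B. Either is fine.
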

\begin{proof}
To prove identity \eqref{switch_blocks_rows}, for $\sigma_{\rm br}(1, 2, 3) = (3, 2, 1)$, let $\mathcal{T}_{\rm br} = \mathcal{T}_{\sigma_{\rm br}}$, and observe that the rows of the tensor $\mathcal{T}_{\rm br}(\mathcal{Q})$, as vectors in $\R^9$, correspond to the blocks of $\mathcal{Q}$.  Since $\mathcal{Q}\in \Hsym(3, 3)$, by definition we have $\mathcal{T}_{\rm br}(\mathcal{Q})=\mathcal{Q}$.  This shows \eqref{switch_blocks_rows}.

\medskip
\medskip

To prove \eqref{product_of_blocks}, observe that, owing to equation \eqref{switch_blocks_rows}, we have
$$
\mathcal{Q}^T\mathcal{Q} = \left ( \begin{array}{ccc} \mathcal{Q}_1\mathcal{Q}_1 & \mathcal{Q}_1\mathcal{Q}_2 & \mathcal{Q}_1\mathcal{Q}_3 \\ \mathcal{Q}_2\mathcal{Q}_1 & \mathcal{Q}_2\mathcal{Q}_2 & \mathcal{Q}_2\mathcal{Q}_3 \\ \mathcal{Q}_3\mathcal{Q}_1 & \mathcal{Q}_3\mathcal{Q}_2 & \mathcal{Q}_3\mathcal{Q}_3 \end{array} \right ) = \sum_{k=1}^3 \mathbf{X}_{\mathcal{Q}_k} \left (\mathbf{X}_{\mathcal{Q}_k}\right )^T.
$$
\eqref{product_of_blocks} follows from this last identity, and the definition of $\mathbf{X}:\mathbb{H}(3, 2) \to \R^9$.

\medskip
\medskip

Finally, to prove \eqref{Q_i_e_j}, recall that by definition
$$
\mathbf{X}_{\mathcal{Q}_j} = \left (  \begin{array}{c} \mathcal{Q}_j \mathbf{e}^1 \\ \mathcal{Q}_j \mathbf{e}^2 \\ \mathcal{Q}_j \mathbf{e}^3 \end{array} \right ).
$$
Because of this and \eqref{switch_blocks_rows}, we have
$$
\mathcal{Q} = \sum_{j=1}^3 \mathbf{e}^j \mathbf{X}_{\mathcal{Q}_j}^T = \sum_{j=1}^3 \left ( \mathbf{e}^j \left ( \mathbf{e}^1 \right )^T \mathcal{Q}_j | \mathbf{e}^j \left ( \mathbf{e}^2 \right )^T \mathcal{Q}_j | \mathbf{e}^j \left ( \mathbf{e}^3 \right )^T \mathcal{Q}_j \right ).
$$
From here we obtain
$$
\mathcal{Q}_i = \sum_{j=1}^3 \mathbf{e}^j \left ( \mathbf{e}^i \right )^T \mathcal{Q}_j = \sum_{j=1}^3 \mathcal{Q}_j \mathbf{e}^i \left ( \mathbf{e}^j \right )^T,
$$
where the last equation is because $\mathcal{Q}_i^T = \mathcal{Q}_i$.  \eqref{Q_i_e_j} follows directly from here.

\end{proof}

It will  be useful to have an explicit basis for the space $\Htrace(3, 3)$, in terms of the canonical orthonormal basis $\{\mathbf{e}^j\}_{j=1}^3$ of $\R^3$.  For this, let us define the tensors
$$
P^i = \mathbf{e}^i(\mathbf{e}^i)^T, \,\,\,i=1, 2, 3,
$$
associated to the canonical basis $\{\mathbf{e}^j\}_{j=1}^3$.
Note that the $P^j$'s are rank-$1$, orthogonal projection matrices with orthogonal images, and $\mathbb{I}(3) = \mathop{\sum}\limits_{i=1}^3 P^i$.  We emphasize that these $P^j$'s are different than those defined in Section~\ref{s:nD}. Define also
$$
S^{i, j} = \mathbf{e}^i (\mathbf{e}^j)^T + \mathbf{e}^j (\mathbf{e}^i)^T, \,\,\,  1 \leq i \neq j \leq 3.
$$
Observe that the $P^j$ together with the $S^{i,j}$ provide a basis for the set of symmetric, $3\times 3$ matrices with real entries.  With all these we now have the following.
\begin{proposition}\label{basis_Ten_3_3}
The following list provides a basis for $\Hsym(3,3)$:
\begin{align*}
{\mathcal A}^i & = \mathbf{e}^i \left ( \mathbf{X}_{P^i}\right )^T, i=1, 2, 3, \\
{\mathcal B}^{i, j} & = \mathbf{e}^i \left ( \mathbf{X}_{P^j} \right )^T+ \mathbf{e}^j  \left (\mathbf{X}_{S^{i, j}}\right )^T, \,\,\,\, 1 \leq i \neq j \leq 3, \\
{\mathcal C} & = \mathop{\Sigma}\limits_{\sigma \in S_3} T_\sigma \left (\mathbf{e}^1 \left ( \mathbf{X}_{\mathbf{e}^2 \left ( \mathbf{e}^3 \right )^T}\right )^T \right )= \mathbf{e}^1  \left (\mathbf{X}_{S^{2,3}}\right )^T + \mathbf{e}^2  \left (\mathbf{X}_{S^{1,3}}\right )^T + \mathbf{e}^3  \left (\mathbf{X}_{S^{1,2}}\right )^T.
\end{align*}
Given this notation, the following list is a basis for $\Htrace(3,3)$:
\begin{align*}
{\mathcal B}^{i, j} - {\mathcal A}^i & =  \mathbf{e}^i \left ( \mathbf{X}_{P^j} - \mathbf{X}_{P^i} \right )^T+ \mathbf{e}^j  \left (\mathbf{X}_{S^{i, j}}\right )^T, \,\,\,\, 1 \leq i \neq j \leq 3.\\
{\mathcal C} & = \mathbf{e}^1  \left (\mathbf{X}_{S^{2,3}}\right )^T + \mathbf{e}^2  \left (\mathbf{X}_{S^{1,3}}\right )^T + \mathbf{e}^3  \left (\mathbf{X}_{S^{1,2}}\right )^T.
\end{align*}
\end{proposition}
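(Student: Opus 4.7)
The plan is to verify, for each of the two lists, three things: (a) every tensor in the list belongs to the claimed subspace, (b) the count of tensors agrees with the dimension of that subspace, and (c) the tensors are linearly independent.  Since $\dim \Hsym(3,3) = \binom{5}{3} = 10 = 3+6+1$ and by Lemma~\ref{l:uniqueelements} $\dim \Htrace(3,3) = 7 = 6+1$, the counts already match; together (a) and (c) give the spanning property automatically.

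For the first list I would work entirely from \eqref{e:3tensor2matrix} and the column-stacking definition of $\mathbf{X}_A$.  Unpacking these identifications, the generic rank-one building block $\mathbf{e}^r \bigl(\mathbf{X}_{\mathbf{e}^s(\mathbf{e}^t)^T}\bigr)^T$ has a single unit entry located at the multi-index $(r,t,s)$ and zeros elsewhere.  Feeding this into the definitions yields: $\mathcal{A}^i$ has a unit entry only at $(i,i,i)$; for $i\neq j$, $\mathcal{B}^{i,j}$ has unit entries at $(i,j,j)$, $(j,j,i)$, and $(j,i,j)$, exhausting the three distinct permutations of $(i,j,j)$; and $\mathcal{C}$ has unit entries at all six permutations of $(1,2,3)$.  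In each case the support is a complete $S_3$-orbit, so each tensor is symmetric in all three indices, giving (a).  Because the ten tensors are supported on the ten disjoint $S_3$-orbits partitioning $\{1,2,3\}^3$, they are linearly independent, giving (c).

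For the second list I would apply the block-trace directly in coordinates.  For $\mathcal{A}^i$, $({\rm BTr}(\mathcal{A}^i))_a = \sum_\ell (\mathcal{A}^i)_{a\ell\ell}$ is nonzero only when $a=i$ and $\ell=i$, so ${\rm BTr}(\mathcal{A}^i) = \mathbf{e}^i$.  For $\mathcal{B}^{i,j}$ with $i\neq j$, among its three nonzero positions only $(i,j,j)$ has its last two indices equal, contributing $1$ to the $i$-th entry, so ${\rm BTr}(\mathcal{B}^{i,j}) = \mathbf{e}^i$ and $\mathcal{B}^{i,j} - \mathcal{A}^i$ is traceless.  For $\mathcal{C}$, none of its six nonzero positions has equal last two indices, so ${\rm BTr}(\mathcal{C}) = 0$.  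Combined with the symmetry already established, all seven tensors lie in $\Htrace(3,3)$.  Linear independence is then inherited from the first part: any relation $\sum_{i\neq j}\alpha_{ij}(\mathcal{B}^{i,j} - \mathcal{A}^i) + \beta\,\mathcal{C} = 0$ rearranges into a relation among the ten basis tensors of $\Hsym(3,3)$ in which $\mathcal{B}^{i,j}$ appears with coefficient $\alpha_{ij}$ and $\mathcal{C}$ appears with coefficient $\beta$, so every coefficient must vanish.

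The only subtle point I anticipate is the bookkeeping: the column-stacking convention for $\mathbf{X}_A$ swaps the last two tensor slots relative to a naive reading of $\mathbf{e}^r \otimes \mathbf{e}^s \otimes \mathbf{e}^t$, so one must pass through the identification \eqref{e:3tensor2matrix} carefully to confirm the support pattern of $\mathbf{e}^r\bigl(\mathbf{X}_{\mathbf{e}^s(\mathbf{e}^t)^T}\bigr)^T$.  Beyond this one-time index check, the proof reduces to a finite enumeration and I expect no essential obstacle.
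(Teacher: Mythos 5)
Your proof is correct, and it matches the strategy the paper indicates. The paper does not supply an explicit proof of this proposition (it is treated as elementary linear algebra), but the proof sketch for the analogous Proposition \ref{basis_H_4_block_traces} states that the basis claim ``follows from tensor combinations of the $\mathbf{e}^j$'s $\ldots$ along with symmetry assumptions in the indices,'' which is exactly what you carry out in detail: unpacking the identification \eqref{e:3tensor2matrix} and the column-stacking of $\mathbf{X}_A$ to find that $\mathbf{e}^r\bigl(\mathbf{X}_{\mathbf{e}^s(\mathbf{e}^t)^T}\bigr)^T$ is the indicator at $(r,t,s)$, then observing that the ten listed tensors are the indicator functions (with constant coefficient $1$) of the ten $S_3$-orbits in $\{1,2,3\}^3$, which simultaneously gives symmetry and independence, with the dimension count $\binom{5}{3}=10=3+6+1$ (and $7=6+1$ via Lemma \ref{l:uniqueelements}) closing the argument. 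Your coordinate computation of the block traces, ${\rm BTr}(\mathcal{A}^i)={\rm BTr}(\mathcal{B}^{i,j})=\mathbf{e}^i$ and ${\rm BTr}(\mathcal{C})=0$, correctly establishes that the second list lies in $\Htrace(3,3)$, and inheriting independence from the first list is sound. You also correctly flagged and resolved the one genuine bookkeeping subtlety, the index transposition induced by the $\mathbf{X}$-convention.
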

Note that this implies that $\Htrace(3,3)$ has dimension $7$.

Next, we will need to work in the set $\mathbb{H}(3, 4)$, and we recall that, by Remark \ref{rm:tensors_3_4_9_2}, we identify elements $\mathcal{Q}\in \mathbb{H}(3, 4)$ with $\mathcal{Q}\in \mathbb{H}(9, 2)$.  With this in mind, we point out that, for $\sigma \in S_4$ we can also define a permutation operator $\mathcal{T}_\sigma : \mathbb{H}(9, 2)\to \mathbb{H}(9, 2)$.  To define it, let  $\mathbf{a}^j\in \R^{3}$, $j=1, 2, 3, 4$, define $\mathcal{T}_\sigma$ by the condition
$$
\mathcal{T}_\sigma\left ( \mathbf{X}_{\mathbf{a}^1\left ( \mathbf{a}^2\right )^T} \left (\mathbf{X}_{\mathbf{a}^3\left ( \mathbf{a}^4\right )^T} \right )^T  \right ) = \mathbf{X}_{\mathbf{a}^{\sigma(1)}\left ( \mathbf{a}^{\sigma(2)}\right )^T} \left (\mathbf{X}_{\mathbf{a}^{\sigma(3)}\left ( \mathbf{a}^{\sigma(4)}\right )^T} \right )^T,
$$
and extend it to $\mathbb{H}(9, 2)$ by linearity.  We trust that the use of the same notation for the permutation operators in $\mathbb{H}(3, 4)$, $\mathbb{H}(9, 2)$ and $\mathbb{H}(3, 3)$ will not be a source of confusion.

There are two families of elements in $\mathbb{H}(9, 2)$ that will appear naturally in the proof of Lemma~\ref{long_unimaginative_lemma}.  Here we use the notation from the Appendix of \cite{GMS}.  For two $3\times 3$ matrices $A, B\in \mathbb{H}(3, 2)$ define
\begin{equation}\label{first_matrix_op}
\mathcal{N}_{A, B} = \mathbf{X}_A\left ( \mathbf{X}_B \right )^T.
\end{equation}

Observe that
$$
\mathcal{N}_{A, B} \mathbf{X}_C = \langle B, C\rangle \mathbf{X}_A.
$$
When $A=B$ we will simply write $\mathcal{N}_A$ in place of $\mathcal{N}_{A, A}$.

\medskip
\medskip

\noindent Next, again for $A, B\in \mathbb{H}(3, 2)$, let $\mathcal{M}_{A, B}\in \mathbb{H}(9, 2)$ be the $9\times 9$ matrix defined by the equation
\begin{equation}\label{second_matrix_op}
\mathcal{M}_{A, B} \mathbf{X}_C = \mathbf{X}_{ACB^T} \,\,\,  \forall \,\,C\in \mathbb{H}(3, 2).
\end{equation}
Again, when $A=B$ we will write $\mathcal{M}_A$ in place of $\mathcal{M}_{A, A}$.  From \cite{GMS}, we can give an expression for $\mathcal{M}_{A, B}$, $A, B \in \mathbb{H}(3, 2)$ as follows:  denoting $A_{i, j} = \langle A, \mathbf{e}^i \left ( \mathbf{e}^j \right )^T\rangle$, that is, $A_{i, j}$ is the $ij$ entry of $A$, then
\begin{equation}\label{matrix_conjugation}
\mathcal{M}_{A, B} = \left ( \begin{array}{ccc} B_{11}A & B_{12}A & B_{13}A \\ B_{21}A & B_{22}A & B_{23}A \\ B_{31}A & B_{32}A & B_{33}A \end{array} \right ).
\end{equation}

\medskip
\medskip
The next proposition appears in \cite{GMS}.  We include its proof for the reader's convenience.
\begin{proposition}\label{operator_T_*}
For the permutation $\sigma_*(1, 2, 3, 4) = (1,3,2,4)$, the operator $\mathcal{T}_* = \mathcal{T}_{\sigma_*} :\mathbb{H}(9, 2) \to \mathbb{H}(9, 2)$ satisfies
\begin{equation}\label{rel_matrix_ops}
\mathcal{T}_*(\mathcal{N}_{A, B}) = \mathcal{T}_{\sigma_*}(\mathcal{N}_{A, B}) = \mathcal{M}_{A, B}
\end{equation}
for every $A, B\in \mathbb{H}(3, 2)$.
\end{proposition}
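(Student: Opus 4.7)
The plan is to reduce to rank-one factors by bilinearity and then compute both sides against an arbitrary test argument $\mathbf{X}_C$. Since both $A \mapsto \mathcal{N}_{A,B}$ and $A \mapsto \mathcal{M}_{A,B}$ are bilinear in $(A,B)$ and $\mathcal{T}_*$ is linear, it is enough to check the identity when $A = \mathbf{a}^1(\mathbf{a}^2)^T$ and $B = \mathbf{a}^3(\mathbf{a}^4)^T$ for arbitrary vectors $\mathbf{a}^1,\mathbf{a}^2,\mathbf{a}^3,\mathbf{a}^4 \in \mathbb{R}^3$. For such a choice, $\mathcal{N}_{A,B}$ is precisely the rank-one outer product $\mathbf{X}_{\mathbf{a}^1(\mathbf{a}^2)^T}\bigl(\mathbf{X}_{\mathbf{a}^3(\mathbf{a}^4)^T}\bigr)^T$ on which $\mathcal{T}_*$ is defined by the paper's prescription.

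Applying the definition of $\mathcal{T}_{\sigma_*}$ with $\sigma_*(1,2,3,4) = (1,3,2,4)$ gives directly
\[
\mathcal{T}_*(\mathcal{N}_{A,B}) = \mathbf{X}_{\mathbf{a}^1(\mathbf{a}^3)^T}\bigl(\mathbf{X}_{\mathbf{a}^2(\mathbf{a}^4)^T}\bigr)^T = \mathcal{N}_{\mathbf{a}^1(\mathbf{a}^3)^T,\,\mathbf{a}^2(\mathbf{a}^4)^T},
\]
so that its action on any $\mathbf{X}_C$ with $C \in \mathbb{H}(3,2)$ is $\langle \mathbf{a}^2(\mathbf{a}^4)^T, C\rangle\,\mathbf{X}_{\mathbf{a}^1(\mathbf{a}^3)^T}$. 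On the other hand, $\mathcal{M}_{A,B}\mathbf{X}_C = \mathbf{X}_{ACB^T}$, and for our rank-one choices one computes
\[
ACB^T = \mathbf{a}^1(\mathbf{a}^2)^T C\,\mathbf{a}^4(\mathbf{a}^3)^T = \bigl((\mathbf{a}^2)^T C \mathbf{a}^4\bigr)\,\mathbf{a}^1(\mathbf{a}^3)^T = \langle \mathbf{a}^2(\mathbf{a}^4)^T,\,C\rangle\,\mathbf{a}^1(\mathbf{a}^3)^T,
\]
so $\mathcal{M}_{A,B}\mathbf{X}_C$ yields the same vector. Since the two operators agree on every $\mathbf{X}_C$ and $\mathbf{X}$ is an isometry of $\mathbb{H}(3,2)$ onto $\mathbb{R}^9$, the operators themselves coincide.

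Extending by bilinearity to general $A,B\in \mathbb{H}(3,2)$ then completes the proof. There is essentially no obstacle here: the only thing one must be careful about is matching the two index conventions in play, namely the stacking rule $(\mathbf{X}_A)_{(j-1)n+i}=A_{ij}$ used to identify $\mathbb{H}(9,2)$ with $\mathbb{H}(3,4)$ on the one hand, and the rank-one prescription for $\mathcal{T}_\sigma$ on the other. Once both are expanded in the $\mathbf{a}^j$ basis the equality is immediate from the elementary identity $(\mathbf{a}^2)^T C \mathbf{a}^4 = \langle \mathbf{a}^2(\mathbf{a}^4)^T, C\rangle$.
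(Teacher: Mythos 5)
Your proof is correct and follows essentially the same route as the paper's: reduce to rank-one $A,B$ by bilinearity, compare the actions of $\mathcal{T}_*(\mathcal{N}_{A,B})$ and $\mathcal{M}_{A,B}$ on a test vector $\mathbf{X}_C$, and invoke the identity $(\mathbf{a}^2)^T C\,\mathbf{a}^4 = \langle \mathbf{a}^2(\mathbf{a}^4)^T, C\rangle$. The only (cosmetic) difference is that you keep $C$ arbitrary throughout, whereas the paper also momentarily specializes $C$ to a rank-one matrix before appealing to linearity; your version is marginally cleaner.
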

\begin{proof}
A simple way to see that the operator $\mathcal{T}_*$ indeed has this property is to observe first that by definition
$$
\mathcal{T}_* \left ( \mathbf{X}_{\mathbf{a}^1\left ( \mathbf{a}^2\right )^T} \left (\mathbf{X}_{\mathbf{a}^3\left ( \mathbf{a}^4\right )^T} \right )^T \right ) = \mathbf{X}_{\mathbf{a}^1\left ( \mathbf{a}^3\right )^T} \left (\mathbf{X}_{\mathbf{a}^2\left ( \mathbf{a}^4\right )^T} \right )^T.
$$
Calling $A=\mathbf{a}^1\left ( \mathbf{a}^2\right )^T$, $B=\mathbf{a}^3\left ( \mathbf{a}^4\right )^T$, and $C=\mathbf{u}^1\left ( \mathbf{u}^2\right )^T$, then clearly
$$
\mathbf{X}_{\mathbf{a}^1\left ( \mathbf{a}^3\right )^T} \left (\mathbf{X}_{\mathbf{a}^2\left ( \mathbf{a}^4\right )^T} \right )^T \mathbf{X}_{C} = \langle \mathbf{a}^2\left ( \mathbf{a}^4 \right )^T, C \rangle \mathbf{X}_{\mathbf{a}^1 \left ( \mathbf{a}^3 \right )^T} = \langle \mathbf{a}^2, \mathbf{u}^1 \rangle \langle \mathbf{a}^4, \mathbf{u}^2 \rangle \mathbf{X}_{\mathbf{a}^1 \left ( \mathbf{a}^3 \right )^T} = \mathbf{X}_{ACB^T}.
$$
The result of the proposition follows by linearity, and the fact that every matrix in $\mathbb{H}(3, 2)$ is a linear combination of rank-1 matrices of the form $\mathbf{a}\left ( \mathbf{b} \right )^T$ with $\mathbf{a}, \mathbf{b} \in \R^3$.
\end{proof}

\begin{remark}\label{rm:change_basis}
For $\mathbf{q}^1, \mathbf{q}^2, \mathbf{q}^3 \in \R^3$, and $R\in O(3)$, the definition of $\mathcal{M}_R$ reads:
$$
\mathcal{M}_{R}\mathbf{X}_{\mathbf{q}^2 \left ( \mathbf{q}^3 \right )^T} = \mathbf{X}_{R\mathbf{q}^2 \left ( R\mathbf{q}^3 \right )^T}.
$$
Since every $\mathcal{Q}\in \Hsym(3, 3)$ is a linear combination of tensors of the form
$$
\sum_{\sigma \in S_3} \mathcal{T}_\sigma \left ( \mathbf{q}^1 \left(\mathbf{X}_{\mathbf{q}^2 \left ( \mathbf{q}^3 \right )^T} \right )^T \right ),
$$
it follows from here that
$$
R\,\mathcal{Q} \,\left (\mathcal{M}_R\right )^T = R\,\mathcal{Q} \,\mathcal{M}_{R^T} \in \Hsym(3, 3)
$$
for every $\mathcal{Q}\in \Hsym(3, 3)$.

\end{remark}


Next, we again recall the block-trace operator ${\rm BTr}: \Hsym(3,4) \to \Hsym(3,2)$ from \eqref{e:blocktraceDef}.  Under the identification of $\mathbb{H}(3, 4)$ with $\mathbb{H}(9, 2)$, this operator can be thought of as ${\rm BTr}: \Hsym(9,2) \to \Hsym(3,2)$, and then it can be defined by the condition
$$
{\rm BTr} \left (\mathbf{X}_{\mathbf{q}^1 \left ( \mathbf{q}^2 \right )^T}\left ( \mathbf{X}_{\mathbf{q}^3 \left ( \mathbf{q}^4\right )^T }\right )^T \right )= \langle \mathbf{q}^3, \mathbf{q}^4\rangle \,\, \mathbf{q}^1\left (\mathbf{q}^2\right )^T,
$$
Thinking of the elements $\mathcal{R}\in \Hsym(3, 4)$ as $9\times 9$ matrices, and we look at such an $\mathcal R$ as being built from $3\times 3$ blocks, then ${\rm BTr}(\mathcal{R})$ just adds the $3\times3$ blocks in the diagonal of $\mathcal{R}$.  For this reason we refer to ${\rm Btr}$ as the {\it block-trace} operator


\begin{remark}\label{rm:use_of_block_trace}
For $\mathcal{Q}\in \mathbb{H}(3, 3)$, a direct computation shows that ${\rm BTr}\left (\mathcal{Q}^T\mathcal{Q} \right ) = \mathcal{Q}\mathcal{Q}^T$.  This is a tensor analog of the fact that $\tr \left ( \mathbf{a}\left ( \mathbf{b}\right )^T \right ) = \mathbf{b}^T \mathbf{a} = \langle \mathbf{a}, \mathbf{b} \rangle$ for $\mathbf{a}, \mathbf{b}\in \R^3$.  However, to establish Lemma \ref{long_unimaginative_lemma} below, and its consequence, Corollary \ref{block_sum_property}, we need to analyze the full tensor $\mathcal{Q}^T\mathcal{Q}$, rather than $\mathcal{Q}\mathcal{Q}^T$.  This our main motivation to introduce this operator.
\end{remark}

Next, construction a basis for $\Hsym(3,4)$ in the spirit of \cite{GMS}.  We recall here that, by Remark \ref{rm:tensors_3_4_9_2}, we identify $\Hsym(3,4)$ with $\Hsym(9,2)$.
\begin{proposition}\label{basis_H_4_block_traces}
For the space 
$$
\Hsym(9, 2) = \{\mathcal{Q}\in \mathbb{H}(9, 2) : \,\, \,   T_\sigma(\mathcal{Q}) = \mathcal{Q} \,\,\,\,  \forall \,\, \sigma \in S_4\},
$$
the following list provides a basis:
\begin{align*}
{\mathcal P}^i &= \mathbf{X}_{P^i}\left ( \mathbf{X}_{P^i}\right )^T, \\
{\mathcal D}^{i, j} &= \mathbf{X}_{P^i}\left ( \mathbf{X}_{S^{i, j}}\right )^T + \mathbf{X}_{S^{i, j}}\left ( \mathbf{X}_{P^i}\right )^T, \\
{\mathcal F}^{i, j} &= \mathbf{X}_{P^i}\left ( \mathbf{X}_{P^j}\right )^T + \mathbf{X}_{S^{i, j}} \left (\mathbf{X}_{S^{i, j}}\right )^T + \mathbf{X}_{P^j} \left ( \mathbf{X}_{P^i}\right )^T, \\
{\mathcal G}^{i, j, k} &= \mathbf{X}_{P^i}\left ( \mathbf{X}_{S^{j, k}}\right )^T + \mathbf{X}_{S^{i, j}} \left (\mathbf{X}_{S^{i, k}}\right )^T + \mathbf{X}_{S^{i, k}} \left (\mathbf{X}_{S^{i, j}}\right )^T + \mathbf{X}_{S^{j, k}} \left (\mathbf{X}_{P^i}\right )^T, 
\end{align*}
for $1 \leq i, j, k \leq 3  \,\,\,  i\neq j, i \neq k, j \neq k$.
For their block-traces we have
\begin{align*}
{\rm BTr}({\mathcal P}^i) &= P^i, \\
{\rm BTr}({\mathcal D}^{i, j}) &= S^{i, j}, \\
{\rm BTr}({\mathcal F}^{i, j}) &= {\rm BTr}\left (\mathbf{X}_{S^{i, j}}\left (\mathbf{X}_{S^{i, j}}\right )^T \right ) = P^i + P^j, \,\,\,\mbox{and}\\
{\rm BTr}({\mathcal G}^{i, j, k}) &= {\rm BTr}\left (\mathbf{X}_{S^{i, j}}\left (  \mathbf{X}_{S^{i, k}}\right )^T+ \mathbf{X}_{S^{i, k}} \left (\mathbf{X}_{S^{i, j}}\right )^T \right ) = S^{j, k}.
\end{align*}
\end{proposition}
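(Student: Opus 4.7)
My plan proceeds in three stages: verifying membership in $\Hsym(9,2) = \Hsym(3,4)$, establishing linear independence via a dimension count, and computing the block-traces.

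First, I will show each candidate element belongs to $\Hsym(9,2)$ by identifying it with the full $S_4$-symmetrization of a degree-$4$ monomial in $\mathbf{e}^1,\mathbf{e}^2,\mathbf{e}^3$. Under the identification from Remark~\ref{rm:tensors_3_4_9_2}, a rank-one tensor $\mathbf{X}_{\mathbf{e}^a(\mathbf{e}^b)^T}(\mathbf{X}_{\mathbf{e}^c(\mathbf{e}^d)^T})^T$ corresponds to a specific ordered product $\mathbf{e}^a\otimes\mathbf{e}^b\otimes\mathbf{e}^c\otimes\mathbf{e}^d$ (up to the bookkeeping of the $\mathbf{X}$-packing). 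Expanding the candidates then yields: $\mathcal{P}^i$ is $(\mathbf{e}^i)^{\otimes 4}$, manifestly $S_4$-invariant; $\mathcal{D}^{i,j}$ expands into the $4$ distinct permutations of $(\mathbf{e}^i)^{\otimes 3}\otimes\mathbf{e}^j$; $\mathcal{F}^{i,j}$ expands into the $6 = \binom{4}{2}$ distinct permutations of $(\mathbf{e}^i)^{\otimes 2}\otimes(\mathbf{e}^j)^{\otimes 2}$; and $\mathcal{G}^{i,j,k}$ expands into the $12 = \frac{4!}{2!}$ distinct permutations of $(\mathbf{e}^i)^{\otimes 2}\otimes\mathbf{e}^j\otimes\mathbf{e}^k$. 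In each case the orbit representative appears with multiplicity exactly one, so we obtain the full symmetrization, which is automatically $S_4$-invariant.

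Second, I will match dimensions. The monomial types $(\mathbf{e}^1)^{m_1}(\mathbf{e}^2)^{m_2}(\mathbf{e}^3)^{m_3}$ with $m_1+m_2+m_3=4$ number $\binom{3+4-1}{4}=15$, partitioned as $3$ of type $(4,0,0)$, $6$ of type $(3,1,0)$, $3$ of type $(2,2,0)$, and $3$ of type $(2,1,1)$, exactly matching the cardinalities of $\{\mathcal{P}^i\}$, $\{\mathcal{D}^{i,j}\}$, $\{\mathcal{F}^{i,j}\}$, $\{\mathcal{G}^{i,j,k}\}$. Since distinct monomial symmetrizations are orthogonal (and hence linearly independent) in $\Hsym(3,4)$, and $\dim\Hsym(9,2)=15$, the $15$ candidates form a basis.

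Third, for the block-trace computations I will establish the key identity
\[
{\rm BTr}\!\left(\mathbf{X}_A\,\mathbf{X}_B^T\right) = A\,B \qquad \text{for symmetric } A,B\in\Hsym(3,2),
\]
directly from \eqref{e:blocktraceDef} and the component formula for $\mathbf{X}_A$. Given this, each block-trace reduces to a matrix product that is immediate from the algebra of $P^i$ and $S^{i,j}$: $(P^i)^2 = P^i$ yields ${\rm BTr}(\mathcal{P}^i)=P^i$; $P^iS^{i,j}+S^{i,j}P^i = S^{i,j}$ yields ${\rm BTr}(\mathcal{D}^{i,j})=S^{i,j}$; orthogonality $P^iP^j=0$ for $i\neq j$ together with $(S^{i,j})^2 = P^i+P^j$ yields ${\rm BTr}(\mathcal{F}^{i,j})=P^i+P^j$ and simultaneously confirms that the two $\mathbf{X}_{P^i}\mathbf{X}_{P^j}^T$-type terms contribute nothing to the block-trace; finally $S^{i,j}S^{i,k}=\mathbf{e}^j(\mathbf{e}^k)^T$ (verified by expansion against $\{\mathbf{e}^a\}$) yields ${\rm BTr}(\mathcal{G}^{i,j,k})=S^{j,k}$, with the $P^i$–$S^{j,k}$ cross terms vanishing by orthogonality. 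The only real subtlety—and the main obstacle—is the careful index bookkeeping needed when passing between $\mathbb{H}(3,4)$ and $\mathbb{H}(9,2)$ to confirm the $S_4$-symmetrization claim; once that is settled, the remainder is direct computation, leveraging Proposition~\ref{operator_T_*} if a more conceptual check of the $(2,3)$-swap symmetry is desired.
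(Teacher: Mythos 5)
Your proposal is correct and is the natural fleshing-out of the paper's one-line proof; the monomial-orbit interpretation of each candidate (Step~1), the count $3+6+3+3=15=\dim\Hsym(3,4)$ together with orthogonality of distinct orbits (Step~2), and the identity ${\rm BTr}(\mathbf{X}_A\mathbf{X}_B^T)=A^TB$ --- hence $AB$ for symmetric $A,B$ --- followed by the elementary algebra $P^iS^{i,j}+S^{i,j}P^i=S^{i,j}$, $(S^{i,j})^2=P^i+P^j$, $S^{i,j}S^{i,k}=\mathbf{e}^j(\mathbf{e}^k)^T$, $P^iP^j=P^iS^{j,k}=0$ (Step~3) are exactly what the paper means by ``tensor combinations of the $\mathbf{e}^j$'s, symmetry, and contraction.'' One remark worth recording for cross-checking: the rank-one form of ${\rm BTr}$ displayed in the appendix just above Remark~\ref{rm:use_of_block_trace} is mis-typed (as written it would give ${\rm BTr}\bigl(\mathbf{X}_{S^{i,j}}\mathbf{X}_{S^{i,j}}^T\bigr)=0$, and ${\rm BTr}(\mathcal{Q}^T\mathcal{Q})=0$ for traceless blocks, contradicting both this proposition and Remark~\ref{rm:use_of_block_trace}), whereas your $A^TB$ formula, obtained directly from \eqref{e:blocktraceDef} and the packing of Remark~\ref{rm:tensors_3_4_9_2}, is the one that the paper's later computations actually rely on.
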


\begin{proof}
The proof of the basis is straightforward and follows from tensor combinations of the $\mathbf{e}^j$'s in $\mathbb{R}^3$, along with symmetry assumptions in the indices. 
The block-trace identities likewise follow from the tensor constructions and the contraction of the last two indices.
\end{proof}

\begin{remark}\label{rm:use_of_bases}
It is important to notice that in general ${\mathcal D}^{i, j} \neq {\mathcal D}^{j, i}$, whereas ${\mathcal F}^{i, j} = {\mathcal F}^{j, i}$.  On the other hand ${\mathcal G}^{i, j, k}$ depends on the order of the indices $i, j, k$; however, ${\mathcal G}^{i, j, k} = {\mathcal G}^{i, k, j}$.

The main use of the bases for $\Htrace(3,3)$ and $\Hsym(9, 2)$ will be to prove Lemma \ref{long_unimaginative_lemma}.  Indeed, using these bases we will compute $\mathcal{Q}^T\mathcal{Q}$, which satisfies $\mathcal{Q}^T\mathcal{Q} \in \mathbb{H}(9, 2)$ for $\mathcal{Q}\in \Htrace(3, 3)$, and write it as a sum of a term that belongs to $\Hsym(9, 2)$, and a second term to which we can apply Proposition \ref{operator_T_*}.  The analysis of these terms will give us the proof of the aforementioned Lemma.
\end{remark}

\medskip
\medskip

\subsection{Block-trace conditions on $\Htrace(3,3)$} \hfill \\

A result for permutation invariant $3$-tensors that have traceless blocks and satisfy a normalization condition.  
Our main result in this section is the following.
\begin{lemma}\label{long_unimaginative_lemma}
Let ${\mathcal Q} \in \Htrace (3, 3)$, and assume
$$
{\rm BTr}({\mathcal Q}^T{\mathcal Q}) = \alpha \,\,  {I}(3),
$$
for some $\alpha > 0$.  Then there is ${\mathcal S} \in \Hsym(9, 2)$ such that
$$
{\mathcal Q}^T {\mathcal Q} = {\mathcal S} -\frac{\alpha}{2} \mathbf{X}_{I(3)}\left ( \mathbf{X}_{I(3)} \right )^T  = \mathcal{S}-\frac{\alpha}{2}\mathcal{N}_{I(3)},
$$
where we are using the notation $\mathcal{N}_A$ defined in equation \eqref{first_matrix_op}.
\end{lemma}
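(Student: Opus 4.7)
The starting point is the identity
\[
\mathcal{Q}^T\mathcal{Q} \;=\; \sum_{j=1}^3 \mathbf{X}_{\mathcal{Q}_j}(\mathbf{X}_{\mathcal{Q}_j})^T \;=\; \sum_{j=1}^3 \mathcal{N}_{\mathcal{Q}_j},
\]
which follows immediately from \eqref{switch_blocks_rows}. Viewed as a four-tensor $\mathcal{R}_{ijk\ell}$ through the bijection of Remark~\ref{rm:tensors_3_4_9_2}, the symmetry of each block $\mathcal{Q}_j$ combined with this sum-of-rank-ones structure implies that $\mathcal{Q}^T\mathcal{Q}$ is invariant under the subgroup $H\subset S_4$ generated by the three index transpositions that swap the row-outer with the row-inner index, swap the column-outer with the column-inner index, and perform the matrix transpose; $H$ is dihedral of order~$8$. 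The rank-one operator $\mathcal{N}_{I(3)}$ is manifestly $H$-invariant as well, so the candidate element $\mathcal{S}:=\mathcal{Q}^T\mathcal{Q}+\tfrac{\alpha}{2}\mathcal{N}_{I(3)}$ automatically belongs to the $H$-invariant subspace.

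Since $H$ together with the ``middle-swap'' transposition $\sigma_*$ of Proposition~\ref{operator_T_*} generates all of $S_4$, upgrading $H$-invariance to the full $S_4$-invariance characterizing $\Hsym(9,2)$ reduces to verifying $\mathcal{T}_*\mathcal{S}=\mathcal{S}$. Applying \eqref{rel_matrix_ops} gives $\mathcal{T}_*(\mathcal{Q}^T\mathcal{Q})=\sum_j\mathcal{M}_{\mathcal{Q}_j}$ and $\mathcal{T}_*\mathcal{N}_{I(3)}=\mathcal{M}_{I(3)}=I(9)$. Evaluating both sides of the required equality on $\mathbf{X}_C$ for an arbitrary $C\in\mathbb{H}(3,2)$ and invoking the injectivity of $\mathbf{X}$ reduces the whole lemma to the single algebraic identity
\[
\sum_{j=1}^3 \langle \mathcal{Q}_j, C\rangle\, \mathcal{Q}_j \;-\; \sum_{j=1}^3 \mathcal{Q}_j\, C\, \mathcal{Q}_j \;=\; \tfrac{\alpha}{2}\bigl(C-\tr(C)\,I(3)\bigr)
\qquad\text{for all }C\in\mathbb{H}(3,2).
\]
This is linear in $C$, so it suffices to verify it separately on the scalar, traceless-symmetric, and antisymmetric parts of $C$. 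The scalar case $C=I(3)$ collapses directly to $\sum_j\mathcal{Q}_j^2=\alpha I(3)$, which is the hypothesis after rewriting $\mathrm{BTr}(\mathcal{Q}^T\mathcal{Q})=\mathcal{Q}\mathcal{Q}^T$ via Remark~\ref{rm:use_of_block_trace}; this also records the orthonormality relations $\langle\mathcal{Q}_i,\mathcal{Q}_j\rangle=\alpha\delta_{ij}$.

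The main obstacle is verifying the identity on the antisymmetric and traceless-symmetric components of $C$. On the antisymmetric part the identity is equivalent (for $i\neq j$) to the commutator relations $[\mathcal{Q}_i,\mathcal{Q}_j]=\tfrac{\alpha}{2}\bigl(\mathbf{e}^j(\mathbf{e}^i)^T-\mathbf{e}^i(\mathbf{e}^j)^T\bigr)$, and the traceless-symmetric part produces an analogous system of quadratic relations among the blocks $\{\mathcal{Q}_i\}$. My plan is to dispatch these by expanding $\mathcal{Q}$ in the seven-parameter basis of $\Htrace(3,3)$ described in Section~\ref{s:3Dsection} and verifying each identity by direct polynomial computation, using the six quadratic constraints $\langle\mathcal{Q}_i,\mathcal{Q}_j\rangle=\alpha\delta_{ij}$ to eliminate monomials. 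An alternative, conceptually cleaner route is to first invoke Proposition~\ref{p:etevpairs} to bring $\mathcal{Q}$ into the ``diagonalized'' form $\mathcal{Q}=\sqrt{\alpha}\sum_k\mathbf{f}^k(\mathbf{X}_{B^k})^T$ with orthonormal $\{\mathbf{f}^k\}\subset\R^3$ and $\{B^k\}\subset\Htrace(3,2)$, whence $\mathcal{Q}^T\mathcal{Q}=\alpha\sum_k\mathcal{N}_{B^k}$; in the basis where $\mathbf{f}^k=\mathbf{e}^k$ the compatibility $(B^i)_{jk}=(B^j)_{ik}$ imposed by $\mathcal{Q}\in\Hsym(3,3)$ makes the remaining commutator identities amenable to an explicit check.
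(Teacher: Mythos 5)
Your reduction is correct and takes a genuinely different route from the paper. Where the paper expands $\mathcal{Q}$ directly in the seven-parameter basis of $\Htrace(3,3)$, computes $\mathcal{Q}^T\mathcal{Q}$ term by term in the $\Hsym(9,2)$ basis, and uses the block-trace constraints to cancel the non-invariant pieces, you exploit the built-in dihedral invariance of $\mathcal{Q}^T\mathcal{Q}$ to reduce full $S_4$-invariance of $\mathcal{S}=\mathcal{Q}^T\mathcal{Q}+\tfrac{\alpha}{2}\mathcal{N}_{I(3)}$ to invariance under the single transposition $\sigma_*$, and then convert that to the concrete identity
\[
\sum_{j=1}^3 \langle \mathcal{Q}_j, C\rangle\, \mathcal{Q}_j - \sum_{j=1}^3 \mathcal{Q}_j\, C\, \mathcal{Q}_j = \tfrac{\alpha}{2}\bigl(C-\tr(C)\,I(3)\bigr)\qquad\text{for all }C\in\mathbb{H}(3,2).
\]
This is a cleaner restatement of the lemma and has a nice side benefit: setting $C=\mathcal{Q}_i$ recovers Corollary \ref{block_sum_property} directly, without the detour the paper takes.

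However, as written your proof is not complete. You only close the scalar case, where the identity reduces to $\sum_j\mathcal{Q}_j^2=\alpha I(3)$, i.e.\ to the hypothesis. The antisymmetric and traceless-symmetric cases — equivalently the commutator relation $[\mathcal{Q}_i,\mathcal{Q}_j]=\tfrac{\alpha}{2}\bigl(\mathbf{e}^j(\mathbf{e}^i)^T-\mathbf{e}^i(\mathbf{e}^j)^T\bigr)$ and the companion symmetric relations — are where the real content lives: these genuinely use $\mathcal{Q}\in\Htrace(3,3)$, not just $\mathcal{Q}\mathcal{Q}^T=\alpha I(3)$, and they do not follow formally from the scalar case (for instance, rewriting $\sum_j\mathcal{Q}_jC\mathcal{Q}_j=\alpha C-\tfrac12\sum_j[\mathcal{Q}_j,[\mathcal{Q}_j,C]]$ shows you would need a Casimir-type identity $\sum_j[\mathcal{Q}_j,[\mathcal{Q}_j,C]]=3\alpha C$ on antisymmetric $C$, and that is exactly the nontrivial step). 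You defer these to a "direct polynomial computation" in the seven-parameter basis, which is precisely the work the paper's proof does. Until that computation is actually carried through, the proof has a gap. Your alternative route via Proposition \ref{p:etevpairs} also does not help as much as advertised: in the rotated frame it just re-derives $\langle\mathcal{Q}_i,\mathcal{Q}_j\rangle=\alpha\delta_{ij}$, which you already have, and the symmetry constraint $(\mathcal{Q}_i)_{jk}=(\mathcal{Q}_j)_{ik}$ is unchanged, so the commutator verification is not materially simplified.
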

Before giving the proof of this lemma we derive a corollary from it that we will need for our recovery argument.
\begin{corollary}\label{block_sum_property}
Let ${\mathcal Q} \in \Htrace(3, 3)$, and assume
$$
{\mathcal Q}{\mathcal Q}^T = \alpha \,\,  {I}(3),
$$
for some $\alpha > 0$.  Write $\mathcal{Q}= \left (  \mathcal{Q}_1 | \mathcal{Q}_2 |\mathcal{Q}_3 \right )$, where $\mathcal{Q}_j \in \Hsym(3, 2)$ and ${\rm tr}(\mathcal{Q}_j)=0$.  Then
$$
\sum_{j=1}^3 \mathcal{Q}_j \mathcal{Q}_i \mathcal{Q}_j = \frac{\alpha}{2}\mathcal{Q}_i
$$
for $i=1, 2, 3$.
\end{corollary}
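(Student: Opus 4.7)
The plan is to deduce this corollary directly from Lemma~\ref{long_unimaginative_lemma} by exploiting the permutation operator $\mathcal{T}_*$ from Proposition~\ref{operator_T_*}. First, I would observe that the hypothesis $\mathcal{Q}\mathcal{Q}^T = \alpha I(3)$ is equivalent, via Remark~\ref{rm:use_of_block_trace}, to ${\rm BTr}(\mathcal{Q}^T\mathcal{Q}) = \alpha I(3)$, so Lemma~\ref{long_unimaginative_lemma} applies and yields
\[
\mathcal{Q}^T\mathcal{Q} = \mathcal{S} - \frac{\alpha}{2}\mathcal{N}_{I(3)}
\]
for some $\mathcal{S}\in\Hsym(9,2)$. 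A second preliminary observation is the block expansion: using \eqref{switch_blocks_rows} one checks directly that $\mathcal{Q}^T\mathcal{Q} = \sum_j \mathbf{X}_{\mathcal{Q}_j}\mathbf{X}_{\mathcal{Q}_j}^T = \sum_j \mathcal{N}_{\mathcal{Q}_j}$, and that the $(i,j)$-entry of $\mathcal{Q}\mathcal{Q}^T$ equals $\langle \mathcal{Q}_i,\mathcal{Q}_j\rangle$. Thus the hypothesis also reads $\langle \mathcal{Q}_i,\mathcal{Q}_j\rangle = \alpha\delta_{ij}$.

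Next I would apply the permutation operator $\mathcal{T}_*$ (associated with $\sigma_*(1,2,3,4)=(1,3,2,4)$) to both sides of the identity from the lemma. Since $\mathcal{S}$ is invariant under every $\sigma\in S_4$, we have $\mathcal{T}_*(\mathcal{S})=\mathcal{S}$. By Proposition~\ref{operator_T_*}, $\mathcal{T}_*(\mathcal{N}_A)=\mathcal{M}_A$ for any $A\in\mathbb{H}(3,2)$, so applying $\mathcal{T}_*$ to $\mathcal{Q}^T\mathcal{Q} = \sum_j \mathcal{N}_{\mathcal{Q}_j}$ gives
\[
\sum_{j=1}^3 \mathcal{M}_{\mathcal{Q}_j} \;=\; \mathcal{S} - \frac{\alpha}{2}\mathcal{M}_{I(3)} \;=\; \mathcal{S} - \frac{\alpha}{2}\,I(9),
\]
where the last equality uses that $\mathcal{M}_{I(3)}\mathbf{X}_C = \mathbf{X}_{I(3)CI(3)^T}=\mathbf{X}_C$.

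Finally, I would evaluate both sides on $\mathbf{X}_{\mathcal{Q}_i}$ and use the definition \eqref{second_matrix_op} of $\mathcal{M}_A$, namely $\mathcal{M}_{\mathcal{Q}_j}\mathbf{X}_{\mathcal{Q}_i} = \mathbf{X}_{\mathcal{Q}_j\mathcal{Q}_i\mathcal{Q}_j^T}=\mathbf{X}_{\mathcal{Q}_j\mathcal{Q}_i\mathcal{Q}_j}$, since each $\mathcal{Q}_j$ is symmetric. The left-hand side becomes $\sum_j \mathbf{X}_{\mathcal{Q}_j\mathcal{Q}_i\mathcal{Q}_j}$. For the right-hand side, I would compute $\mathcal{S}\mathbf{X}_{\mathcal{Q}_i}$ by going back to the original identity: $\mathcal{S}\mathbf{X}_{\mathcal{Q}_i} = \mathcal{Q}^T\mathcal{Q}\mathbf{X}_{\mathcal{Q}_i} + \frac{\alpha}{2}\mathcal{N}_{I(3)}\mathbf{X}_{\mathcal{Q}_i}$, and the last term vanishes because $\mathcal{N}_{I(3)}\mathbf{X}_{\mathcal{Q}_i} = \langle I(3),\mathcal{Q}_i\rangle \mathbf{X}_{I(3)} = \tr(\mathcal{Q}_i)\,\mathbf{X}_{I(3)} = 0$ by the traceless assumption on the blocks. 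Meanwhile $\mathcal{Q}^T\mathcal{Q}\mathbf{X}_{\mathcal{Q}_i} = \sum_j \langle \mathcal{Q}_j,\mathcal{Q}_i\rangle \mathbf{X}_{\mathcal{Q}_j} = \alpha \mathbf{X}_{\mathcal{Q}_i}$ by the orthogonality established above. Collecting terms yields
\[
\sum_{j=1}^3 \mathbf{X}_{\mathcal{Q}_j\mathcal{Q}_i\mathcal{Q}_j} \;=\; \alpha\,\mathbf{X}_{\mathcal{Q}_i} - \frac{\alpha}{2}\,\mathbf{X}_{\mathcal{Q}_i} \;=\; \frac{\alpha}{2}\,\mathbf{X}_{\mathcal{Q}_i},
\]
and injectivity of $\mathbf{X}$ gives the claimed identity.

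The main obstacle here is really just keeping track of the formalism: matching up the $\mathcal{N}$ vs.\ $\mathcal{M}$ operators via $\mathcal{T}_*$ and exploiting the tracelessness of each $\mathcal{Q}_i$ to kill the $\mathcal{N}_{I(3)}$ term. No deep new idea is required once Lemma~\ref{long_unimaginative_lemma} is in hand; everything else is bookkeeping with the isometry $\mathbf{X}$ and the block structure of $\mathcal{Q}$.
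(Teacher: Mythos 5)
Your proof is correct and follows essentially the same route as the paper: translate the hypothesis into the block-trace condition, invoke Lemma~\ref{long_unimaginative_lemma}, push the resulting identity through the permutation operator $\mathcal{T}_*$ (turning $\mathcal{N}$ into $\mathcal{M}$), evaluate on $\mathbf{X}_{\mathcal{Q}_i}$, and use tracelessness plus the orthogonality $\langle\mathcal{Q}_i,\mathcal{Q}_j\rangle=\alpha\delta_{ij}$ to close. The only cosmetic difference is that you apply $\mathcal{T}_*$ directly to the lemma's identity using $\mathcal{T}_*(\mathcal{S})=\mathcal{S}$, whereas the paper instead applies $\mathcal{T}_*$ separately to both expressions for $\mathcal{Q}^T\mathcal{Q}$ and does a small add-and-subtract; these are the same calculation.
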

\begin{proof}
First observe that $\mathcal{Q}\mathcal{Q}^T = \mathop{\sum}\limits_{j=1}^3 \mathcal{Q}_j^2$.  A direct computation shows that ${\rm BTr}({\mathcal Q}^T{\mathcal Q}) =  \mathop{\sum}\limits_{j=1}^3 \mathcal{Q}_j^2$.  We conclude that ${\rm BTr}({\mathcal Q}^T{\mathcal Q}) =  \alpha \,\, {I}(3)$, so $\mathcal{Q}$ satisfies the hypotheses of Lemma \ref{long_unimaginative_lemma}.

\medskip
\medskip

\noindent Next, recall that by Remark \ref{switch_blocks_rows}, we can write
$$
\mathcal{Q} = \mathop{\sum}\limits_{j=1}^3 \mathbf{e}^j \left (\mathbf{X}_{\mathcal{Q}_j} \right )^T.
$$
Because of this, we obtain 
$$
\mathcal{Q}\mathcal{Q}^T = \sum_{j=1}^3 \langle \mathcal{Q}_i, \mathcal{Q}_j\rangle \mathbf{e}^i \left ( \mathbf{e}^j \right)^T.
$$
In particular, the hypothesis ${\mathcal Q}{\mathcal Q}^T = \alpha \,\,  {I}(3)$ tells us that
\begin{equation}\label{inner_prods}
\langle \mathcal{Q}_i, \mathcal{Q}_j\rangle = \delta_{ij}\, \alpha.
\end{equation}
Also from Remark \ref{switch_blocks_rows} we obtain the expression
\begin{equation}\label{QQ^T_first}
\mathcal{Q}^T\mathcal{Q} = \mathop{\sum}\limits_{j=1}^3 \mathbf{X}_{\mathcal{Q}_j}  \left (\mathbf{X}_{\mathcal{Q}_j} \right )^T = \sum_{j=1}^3 \mathcal{N}_{\mathcal{Q}_j}.
\end{equation}
By Lemma \ref{long_unimaginative_lemma} we know that
\begin{equation}\label{QQ^T_second}
\mathcal{Q}^T \mathcal{Q} = \mathcal{S} - \frac{\alpha}{2} \mathcal{N}_{I(3)}.
\end{equation}
Now recall the operator $\mathcal{T}_* :\mathbb{H}(3, 4)\to \mathbb{H}(3, 4)$ defined in \eqref{rel_matrix_ops}.  Applying this operator to \eqref{QQ^T_first} we obtain
$$
\mathcal{T}_*(\mathcal{Q}^T\mathcal{Q}) = \sum_{j=1}^3 \mathcal{M}_{\mathcal{Q}_j}.
$$
We can also apply $\mathcal{T}_*$ to \eqref{QQ^T_second} to obtain
$$
\mathcal{T}_*(\mathcal{Q}^T\mathcal{Q}) = \mathcal{S} - \frac{\alpha}{2} \mathcal{M}_{I(3)} = \mathcal{S} - \frac{\alpha}{2} \mathcal{N}_{I(3)} + \frac{\alpha}{2} \mathcal{N}_{I(3)} - \frac{\alpha}{2} \mathcal{M}_{I(3)} = \mathcal{Q}^T\mathcal{Q} + \frac{\alpha}{2} \mathcal{N}_{I(3)} - \frac{\alpha}{2} \mathcal{M}_{I(3)},
$$
because $\mathcal{S}\in \Hsym(9, 2)$ and \eqref{QQ^T_second}.

\medskip
\medskip

To conclude, we observe that
$$
\mathcal{T}_*(\mathcal{Q}^T\mathcal{Q}) \mathbf{X}_{\mathcal{Q}_i} =  \sum_{j=1}^3 \mathcal{M}_{\mathcal{Q}_j} \mathbf{X}_{\mathcal{Q}_i} = \sum_{j=1}^3 \mathbf{X}_{\mathcal{Q}_j \mathcal{Q}_i \mathcal{Q}_j},
$$
by the definition of $\mathcal{M}_A$ given in \eqref{second_matrix_op}.  However, we also have
$$
\mathcal{T}_*(\mathcal{Q}^T\mathcal{Q}) \mathbf{X}_{\mathcal{Q}_i} = \left ( \mathcal{Q}^T\mathcal{Q} + \frac{\alpha}{2} \mathcal{N}_{I(3)} - \frac{\alpha}{2} \mathcal{M}_{I(3)} \right ) \mathbf{X}_{\mathcal{Q}_i}  = \frac{\alpha}{2}\mathbf{X}_{\mathcal{Q}_i},
$$
by the definitions \eqref{first_matrix_op} and \eqref{second_matrix_op} of $\mathcal{N}_A$ and $\mathcal{M}_A$ respectively, the fact that $\tr{\mathcal{Q}_i}=0$,  \eqref{inner_prods}, and the expression \eqref{QQ^T_first} for $\mathcal{Q}^T\mathcal{Q}$.  The last two equations give the claim of the Corollary.

\end{proof}

\begin{proof}[Proof of Lemma \ref{long_unimaginative_lemma}]
 The proof consists in computing ${\mathcal Q}^T {\mathcal Q}$ using structure of the bases for $\Htrace(3, 3)$ and $\Hsym(9, 2)$ 
 provided by Lemmas \ref{basis_Ten_3_3} and \ref{basis_H_4_block_traces} to conclude.

\medskip
\medskip

\noindent {\bf Step 1.}  By Proposition \ref{basis_Ten_3_3} we can write ${\mathcal Q}\in \Htrace(3, 3)$ as
\begin{align}
{\mathcal Q}  = & \beta_{1, 2}({\mathcal B}^{1, 2} - {\mathcal A}^1) + \beta_{1, 3}({\mathcal B}^{1, 3} - {\mathcal A}^1) \nonumber \\
&+ \beta_{2, 1}({\mathcal B}^{2, 1} - {\mathcal A}^2) + \beta_{2, 3}({\mathcal B}^{2, 3} - {\mathcal A}^2) \nonumber \\
&+ \beta_{3, 1}({\mathcal B}^{3, 1} - {\mathcal A}^3) + \beta_{3, 2}({\mathcal B}^{3, 2} - {\mathcal A}^3) \nonumber \\
&+ \gamma \, {\mathcal C} \nonumber \\
:=  & \mathcal{H}_\mathcal{Q}^1 + \mathcal{H}_\mathcal{Q}^2 + \mathcal{H}_\mathcal{Q}^3 + \mathcal{H}_\mathcal{Q}^4.
\end{align}

A direct computation shows that
\begin{equation}
{\mathcal Q}^T {\mathcal Q} = \sum_{i=1}^4 (\mathcal{H}_\mathcal{Q}^i)^T \mathcal{H}_\mathcal{Q}^i 
+\sum_{1\leq i < j \leq 3}( (\mathcal{H}_\mathcal{Q}^i)^T \mathcal{H}_\mathcal{Q}^j + (\mathcal{H}_\mathcal{Q}^j)^T \mathcal{H}_\mathcal{Q}^i) 
+ \sum_{i=1}^3 ( (\mathcal{H}_\mathcal{Q}^i)^T \mathcal{H}_\mathcal{Q}^4 + (\mathcal{H}_\mathcal{Q}^4)^T \mathcal{H}_\mathcal{Q}^i) .
\end{equation}
We will expand each of these sums.
\medskip
\medskip

\noindent {\bf Step 2.}  Computation of $(\mathcal{H}_\mathcal{Q}^i)^T \mathcal{H}_\mathcal{Q}^i$.  By Proposition \ref{basis_Ten_3_3} we have
$$
\mathcal{H}_\mathcal{Q}^1 = \beta_{1, 2}\left (\mathbf{e}^1 \left (\mathbf{X}_{P^2} - \mathbf{X}_{P^1}\right )^T + \mathbf{e}^2 \left ( \mathbf{X}_{S^{1, 2}}\right )^T \right) + \beta_{1,3} \left (\mathbf{e}^1 \left ( \mathbf{X}_{P^3}- \mathbf{X}_{P^1} \right )^T + \mathbf{e}^3 \left (\mathbf{X}_{S^{1, 3}}\right )^T \right ).
$$
Since $\langle \mathbf{e}^i, \mathbf{e}^j \rangle = \delta_{i, j}$, we obtain
\begin{align*}
(\mathcal{H}_\mathcal{Q}^1)^T \mathcal{H}_\mathcal{Q}^1 &= \beta_{1, 2}^2 \left ( \mathbf{X}_{P^2-P^1} \left ( \mathbf{X}_{P^2-P^1}\right )^T  + \mathbf{X}_{S^{1, 2}}\left (\mathbf{X}_{S^{1, 2}}\right )^T \right ) \\
&+ \beta_{1, 2}\beta_{1, 3} \left ( \mathbf{X}_{P^2-P^1} \left ( \mathbf{X}_{P^3-P^1} \right )^T + \mathbf{X}_{P^3-P^1} \left ( \mathbf{X}_{P^2-P^1}\right )^T \right ) \\
&+ \beta_{1, 3}^2 \left ( \mathbf{X}_{P^3-P^1} \left (\mathbf{X}_{P^3-P^1}\right )^T + \mathbf{X}_{S^{1, 3}}\left (\mathbf{X}_{S^{1, 3}}\right )^T \right).
\end{align*}
From here, by adding and subtracting terms of the form $\mathbf{X}_{S^{j, k}}\left ( \mathbf{X}_{S^{j, k}}\right )^T$ we obtain
\begin{align*}
(\mathcal{H}_\mathcal{Q}^1)^T \mathcal{H}_\mathcal{Q}^1 &= \beta_{1, 2}^2 ( {\mathcal P}^1 + {\mathcal P}^2 - {\mathcal F}^{1, 2}) \\
&+ \beta_{1, 2}\beta_{1, 3} ( {\mathcal F}^{2, 3} - {\mathcal F}^{1, 2} - {\mathcal F}^{1, 3} + 2{\mathcal P}^1 )\\
&+ \beta_{1,3}^2 ( {\mathcal P}^1 + {\mathcal P}^3 - {\mathcal F}^{1, 3} ) \\
&+2\beta_{1, 2}^2 \mathbf{X}_{S^{1, 2}}\left ( \mathbf{X}_{S^{1, 2}}\right )^T \\
&+ \beta_{1, 2}\beta_{1, 3} \left (\mathbf{X}_{S^{1, 2}}\left ( \mathbf{X}_{S^{1, 2}}\right )^T+\mathbf{X}_{S^{1, 3}}\left ( \mathbf{X}_{S^{1, 3}}\right )^T  - \mathbf{X}_{S^{2, 3}}\left ( \mathbf{X}_{S^{2, 3}}\right )^T \right )\\
&+ 2\beta_{1,3}^2 \mathbf{X}_{S^{1, 3}}\left ( \mathbf{X}_{S^{1, 3}}\right )^T.
\end{align*}
Observe that in this expression the first three terms belong to $\Hsym(3,4)$, and have Block-trace equal to zero.  In contrast, the last three terms are not permutation invariant, and each has Block-trace equal to a linear combination of the $P^j$'s.


The same argument gives us
\begin{align*}
(\mathcal{H}_\mathcal{Q}^2)^T \mathcal{H}_\mathcal{Q}^2 &= \beta_{2, 1}^2 ( {\mathcal P}^1 + {\mathcal P}^2 - {\mathcal F}^{1, 2}) \\
&+ \beta_{2, 1}\beta_{2, 3} ( {\mathcal F}^{1, 3} - {\mathcal F}^{1, 2} - {\mathcal F}^{2, 3} + 2{\mathcal P}^2 )\\
&+ \beta_{2, 3}^2 ( {\mathcal P}^2 + {\mathcal P}^3 - {\mathcal F}^{2, 3} ) \\
&+2\beta_{2, 1}^2 \mathbf{X}_{S^{1, 2}}\left ( \mathbf{X}_{S^{1, 2}}\right )^T \\
&+ \beta_{2, 1}\beta_{2, 3} \left (\mathbf{X}_{S^{1, 2}}\left ( \mathbf{X}_{S^{1, 2}}\right )^T+\mathbf{X}_{S^{2, 3}}\left ( \mathbf{X}_{S^{2, 3}}\right )^T  - \mathbf{X}_{S^{1, 3}}\left ( \mathbf{X}_{S^{1, 3}}\right )^T \right )\\
&+ 2\beta_{2,3}^2 \mathbf{X}_{S^{2, 3}}\left ( \mathbf{X}_{S^{2, 3}}\right )^T,
\end{align*}
and
\begin{align*}
(\mathcal{H}_\mathcal{Q}^3)^T \mathcal{H}_\mathcal{Q}^3 &= \beta_{3, 1}^2 ( {\mathcal P}^1 + {\mathcal P}^3 - {\mathcal F}^{1, 3}) \\
&+ \beta_{3, 1}\beta_{3, 2} ( {\mathcal F}^{1, 2} - {\mathcal F}^{1, 3} - {\mathcal F}^{2, 3} + 2{\mathcal P}^3 )\\
&+ \beta_{3, 2}^2 ( {\mathcal P}^2 + {\mathcal P}^3 - {\mathcal F}^{2, 3} ) \\
&+2\beta_{3, 1}^2 \mathbf{X}_{S^{1, 3}}\left ( \mathbf{X}_{S^{1, 3}}\right )^T \\
&+ \beta_{3, 1}\beta_{3, 2} \left (\mathbf{X}_{S^{1, 3}}\left ( \mathbf{X}_{S^{1, 3}}\right )^T+\mathbf{X}_{S^{2, 3}}\left ( \mathbf{X}_{S^{2, 3}}\right )^T  - \mathbf{X}_{S^{1, 2}}\left ( \mathbf{X}_{S^{1, 2}}\right )^T \right )\\
&+ 2\beta_{3, 2}^2 \mathbf{X}_{S^{2, 3}}\left ( \mathbf{X}_{S^{2, 3}}\right )^T.
\end{align*}
Finally, we also have
$$
(\mathcal{H}_\mathcal{Q}^4)^T \mathcal{H}_\mathcal{Q}^4 = \gamma^2 \left (\mathbf{X}_{S^{2, 3}}\left ( \mathbf{X}_{S^{2, 3}}\right )^T + \mathbf{X}_{S^{1, 3}}\left ( \mathbf{X}_{S^{1, 3}}\right )^T + \mathbf{X}_{S^{1, 2}}\left ( \mathbf{X}_{S^{1, 2}}\right )^T \right).
$$


\noindent {\bf Step 3.}  Computation of $(\mathcal{H}_\mathcal{Q}^i)^T \mathcal{H}_\mathcal{Q}^j + (\mathcal{H}_\mathcal{Q}^j)^T \mathcal{H}_\mathcal{Q}^i$, $1 \leq i < j \leq 3$.


First we observe that
\begin{align*}
(\mathcal{H}_\mathcal{Q}^1)^T \mathcal{H}_\mathcal{Q}^2 + (\mathcal{H}_\mathcal{Q}^2)^T \mathcal{H}_\mathcal{Q}^1 &= \beta_{1, 3}\beta_{2, 1} \left ( \mathbf{X}_{P^3 - P^1} \left (\mathbf{X}_{S^{1, 2}} \right )^T + \mathbf{X}_{S^{1, 2}} \left ( \mathbf{X}_{P^3-P^1} \right )^T \right) \\
&+ \beta_{1, 2}\beta_{2, 3}\left (\mathbf{X}_{P^3-P^2} \left (\mathbf{X}_{S^{1, 2} }\right )^T + \mathbf{X}_{S^{1, 2}}\left ( \mathbf{X}_{P^3-P^2}\right )^T \right ) \\
&+ \beta_{1, 3}\beta_{2, 3} \left ( \mathbf{X}_{S^{1, 3}}\left ( \mathbf{X}_{S^{2, 3}}\right )^T + \mathbf{X}_{S^{2, 3}}\left ( \mathbf{X}_{S^{1, 3}}\right )^T \right).
\end{align*}
We now add and subtract terms of the form $\mathbf{X}_{S^{1, 3}}\left ( \mathbf{X}_{S^{2, 3}}\right )^T + \mathbf{X}_{S^{2, 3}}\left ( \mathbf{X}_{S^{1, 3}}\right )^T$ to obtain
\begin{align*}
(\mathcal{H}_\mathcal{Q}^1)^T \mathcal{H}_\mathcal{Q}^2 + (\mathcal{H}_\mathcal{Q}^2)^T \mathcal{H}_\mathcal{Q}^1 &= \beta_{1, 3}\beta_{2, 1} (\mathcal{G}^{3, 1, 2} - {\mathcal D}^{1, 2}) + \beta_{1, 2}\beta_{2, 3}(\mathcal{G}^{3, 1, 2}- {\mathcal D}^{2, 1}) \\
&+ (\beta_{1, 3}\beta_{2, 3} - \beta_{1,3}\beta_{2, 1}-\beta_{1, 2}\beta_{2, 3})\left (\mathbf{X}_{S^{1, 3}}\left ( \mathbf{X}_{S^{2, 3}}\right )^T + \mathbf{X}_{S^{2, 3}}\left ( \mathbf{X}_{S^{1, 3}}\right )^T \right ).
\end{align*}
Observe again that the two terms on right hand side of the first line are Block-traceless, permutation invariant, whereas the term in the second line is not.
We also have
\begin{align*}
(\mathcal{H}_\mathcal{Q}^1)^T \mathcal{H}_\mathcal{Q}^3 + (\mathcal{H}_\mathcal{Q}^3)^T \mathcal{H}_\mathcal{Q}^1 &= \beta_{1, 2}\beta_{3, 1} ({\mathcal G}^{2, 1, 3} - {\mathcal D}^{1, 3}) + \beta_{1, 3}\beta_{3, 2}(\mathcal{G}^{2, 1, 3}- {\mathcal D}^{3, 1}) \\
&+ (\beta_{1, 2}\beta_{3, 2} - \beta_{1,2}\beta_{3, 1}-\beta_{1, 3}\beta_{3, 2})\left (\mathbf{X}_{S^{1, 2}}\left ( \mathbf{X}_{S^{2, 3}}\right )^T + \mathbf{X}_{S^{2, 3}}\left ( \mathbf{X}_{S^{1, 2}}\right )^T \right),
\end{align*}
and
\begin{align*}
(\mathcal{H}_\mathcal{Q}^2)^T \mathcal{H}_\mathcal{Q}^3 + (\mathcal{H}_\mathcal{Q}^3)^T \mathcal{H}_\mathcal{Q}^2 &= \beta_{2, 1}\beta_{3, 2} ({\mathcal G}^{1, 2, 3} - {\mathcal D}^{2, 3}) + \beta_{2, 3}\beta_{3, 1}(\mathcal{G}^{1,2, 3}- {\mathcal D}^{3, 2}) \\
&+ (\beta_{2, 1}\beta_{3, 1} - \beta_{2,1}\beta_{3, 2}-\beta_{2, 3}\beta_{3, 1})\left (\mathbf{X}_{S^{1, 2}}\left ( \mathbf{X}_{S^{1, 3}}\right )^T + \mathbf{X}_{S^{1, 3}}\left ( \mathbf{X}_{S^{1, 2}}\right )^T \right).
\end{align*}


\noindent {\bf Step 3.}  Computation of $\mathop{\sum}\limits_{i=1}^3 ( (\mathcal{H}_\mathcal{Q}^i)^T\mathcal{H}_\mathcal{Q}^4 + (\mathcal{H}_\mathcal{Q}^4)^T \mathcal{H}_\mathcal{Q}^i)$.  With the same logic we have used so far we have
\begin{align*}
(\mathcal{H}_\mathcal{Q}^1)^T\mathcal{H}_\mathcal{Q}^4 + (\mathcal{H}_\mathcal{Q}^4)^T \mathcal{H}_\mathcal{Q}^1 &= \gamma \beta_{1, 2}({\mathcal D}^{2, 3} - {\mathcal G}^{1, 2, 3}) + \gamma \beta_{1, 3}({\mathcal D}^{3, 2}- {\mathcal G}^{1, 2, 3}) \\
&+ 2\gamma(\beta_{1,2}+ \beta_{1, 3})\left (\mathbf{X}_{S^{1, 2}}\left ( \mathbf{X}_{S^{1, 3}}\right )^T+\mathbf{X}_{S^{1, 3}}\left ( \mathbf{X}_{S^{1, 2}}\right )^T \right),
\end{align*}
\begin{align*}
(\mathcal{H}_\mathcal{Q}^2)^T\mathcal{H}_\mathcal{Q}^4 + (\mathcal{H}_\mathcal{Q}^4)^T \mathcal{H}_\mathcal{Q}^2 &= \gamma \beta_{2, 1}({\mathcal D}^{1, 3} - {\mathcal G}^{2, 1, 3}) + \gamma \beta_{2, 3}({\mathcal D}^{3, 1}- {\mathcal G}^{2, 1, 3}) \\
&+ 2\gamma(\beta_{2,1}+ \beta_{2, 3}) \left (\mathbf{X}_{S^{1, 2}}\left ( \mathbf{X}_{S^{2, 3}}\right )^T+\mathbf{X}_{S^{2, 3}}\left ( \mathbf{X}_{S^{1, 2}}\right )^T \right),
\end{align*}
and
\begin{align*}
(\mathcal{H}_\mathcal{Q}^3)^T\mathcal{H}_\mathcal{Q}^4 + (\mathcal{H}_\mathcal{Q}^4)^T \mathcal{H}_\mathcal{Q}^3 &= \gamma \beta_{3, 1}({\mathcal D}^{1, 2} - {\mathcal G}^{3, 1, 2}) + \gamma \beta_{3, 2}({\mathcal D}^{2, 1}- {\mathcal G}^{3, 1, 2}) \\
&+ 2\gamma(\beta_{3, 1}+ \beta_{3, 2}) \left (\mathbf{X}_{S^{1, 3}}\left ( \mathbf{X}_{S^{2, 3}}\right )^T+\mathbf{X}_{S^{2, 3}}\left ( \mathbf{X}_{S^{1, 3}}\right )^T \right).
\end{align*}

\medskip
\medskip

\noindent {\bf Step 4.}  Consequences of ${\rm BTr}({\mathcal Q}^T{\mathcal Q}) = \alpha {I}(3)$.  From all our previous computations we now have the following:
\begin{align*}
{\rm BTr}({\mathcal Q}^T{\mathcal Q}) &= 2((\beta_{1, 2}^2 + \beta_{2, 1}^2) + (\beta_{1, 3}^2 + \beta_{3, 1}^2 ) + \beta_{1, 2}\beta_{1,3} + \gamma^2) P^1 \\
&+ 2((\beta_{1, 2}^2 + \beta_{2, 1}^2) + (\beta_{2, 3}^2 + \beta_{3, 2}^2 ) + \beta_{2, 1}\beta_{2,3} + \gamma^2) P^2 \\
&+ 2((\beta_{1, 3}^2 + \beta_{3, 1}^2) + (\beta_{2, 3}^2 + \beta_{3, 2}^2 ) + \beta_{3, 1}\beta_{3, 2} + \gamma^2) P^3\\
&+(\beta_{1, 3}\beta_{2, 3}- \beta_{1, 3}\beta_{2, 1}-\beta_{1, 2}\beta_{2, 3} + 2\gamma(\beta_{3, 1}+\beta_{3, 2})) S^{1, 2}\\
&+ (\beta_{1, 2}\beta_{3, 2}- \beta_{1, 2}\beta_{3, 1}-\beta_{1, 3}\beta_{3, 2} + 2\gamma(\beta_{2, 1}+\beta_{2, 3})) S^{1, 3} \\
&+ (\beta_{2, 1}\beta_{3, 1}- \beta_{2, 1}\beta_{3, 2}-\beta_{2, 3}\beta_{3, 1} + 2\gamma(\beta_{1, 2}+\beta_{1, 3})) S^{2, 3}.
\end{align*}
From ${\rm BTr}({\mathcal Q}^T{\mathcal Q}) = \alpha I(3)$ we deduce that the coefficients of $S_{i, j}$, $i \neq j$, are zero, so
$$
\beta_{1, 3}\beta_{2, 3}- \beta_{1, 3}\beta_{2, 1}-\beta_{1, 2}\beta_{2, 3} + 2\gamma(\beta_{3, 1}+\beta_{3, 2}) = 0, 
$$
$$
\beta_{1, 2}\beta_{3, 2}- \beta_{1, 2}\beta_{3, 1}-\beta_{1, 3}\beta_{3, 2} + 2\gamma(\beta_{2, 1}+\beta_{2, 3}) = 0,
$$
and
$$
\beta_{2, 1}\beta_{3, 1}- \beta_{2, 1}\beta_{3, 2}-\beta_{2, 3}\beta_{3, 1} + 2\gamma(\beta_{1, 2}+\beta_{1, 3}) = 0.
$$
Looking carefully at all the terms we have obtained for ${\mathcal Q}^T{\mathcal Q}$ we deduce that there is ${\mathcal S} \in \Hsym(9, 2)$ such that
\begin{align}
{\mathcal Q}^T {\mathcal Q}-{\mathcal S} &= (2(\beta_{1, 2}^2 + \beta_{2, 1}^2) + \beta_{1, 2}\beta_{1, 3} + \beta_{2, 1}\beta_{2, 3}- \beta_{3, 1}\beta_{3, 2} + \gamma^2) \mathbf{X}_{S^{1, 2}}\left ( \mathbf{X}_{S^{1, 2}}\right )^T \nonumber \\
&+ (2(\beta_{1, 3}^2 + \beta_{3, 1}^2) + \beta_{1, 2}\beta_{1, 3} + \beta_{3, 1}\beta_{3, 2}- \beta_{2, 1}\beta_{2, 3} + \gamma^2) \mathbf{X}_{S^{1, 3}}\left ( \mathbf{X}_{S^{1, 3}}\right )^T \nonumber \\
&+ (2(\beta_{2, 3}^2 + \beta_{3, 2}^2) + \beta_{2, 1}\beta_{2, 3} + \beta_{3, 1}\beta_{3, 2}- \beta_{1, 2}\beta_{1, 3} + \gamma^2) \mathbf{X}_{S^{2, 3}}\left ( \mathbf{X}_{S^{2, 3}}\right )^T.\label{first_Q-S}
\end{align}
Next, we observe that ${\rm BTr}({\mathcal Q}^T{\mathcal Q}) = \alpha I(3)$ also implies that
$$
(\beta_{1, 2}^2 + \beta_{2, 1}^2) + (\beta_{1, 3}^2 + \beta_{3, 1}^2 ) + \beta_{1, 2}\beta_{1,3} + \gamma^2 = \frac{\alpha}{2},
$$
$$
(\beta_{1, 2}^2 + \beta_{2, 1}^2) + (\beta_{2, 3}^2 + \beta_{3, 2}^2 ) + \beta_{2, 1}\beta_{2,3} + \gamma^2 = \frac{\alpha}{2},
$$
and
$$
(\beta_{1, 3}^2 + \beta_{3, 1}^2) + (\beta_{2, 3}^2 + \beta_{3, 2}^2 ) + \beta_{3, 1}\beta_{3, 2} + \gamma^2 = \frac{\alpha}{2}.
$$
Adding the first two of these equations and subtracting the third we obtain the identity
$$
2(\beta_{1, 2}^2 + \beta_{2, 1}^2) + \beta_{1, 2}\beta_{1, 3} + \beta_{2, 1}\beta_{2, 3}- \beta_{3, 1}\beta_{3, 2} + \gamma^2 = \frac{\alpha}{2}.
$$
Adding the first and third, and subtracting the second we obtain
$$
2(\beta_{1, 3}^2 + \beta_{3, 1}^2) + \beta_{1, 2}\beta_{1, 3} + \beta_{3, 1}\beta_{3, 2}- \beta_{2, 1}\beta_{2, 3} + \gamma^2 = \frac{\alpha}{2}.
$$
A similar procedure gives
$$
2(\beta_{2, 3}^2 + \beta_{3, 2}^2) + \beta_{2, 1}\beta_{2, 3} + \beta_{3, 1}\beta_{3, 2}- \beta_{1, 2}\beta_{1, 3} + \gamma^2 = \frac{\alpha}{2}.
$$
Using this in \eqref{first_Q-S} we obtain
$$
{\mathcal Q}^T {\mathcal Q}-{\mathcal S} = \frac{\alpha}{2}\left (\mathbf{X}_{S^{1, 2}}\left ( \mathbf{X}_{S^{1, 2}}\right )^T + \mathbf{X}_{S^{1, 3}}\left ( \mathbf{X}_{S^{1, 3}}\right )^T + \mathbf{X}_{S^{2, 3}}\left ( \mathbf{X}_{S^{2, 3}}\right )^T \right).
$$
Now recall from Proposition \ref{basis_H_4_block_traces} that
$$
\mathbf{X}_{S^{i, j}}\left ( \mathbf{X}_{S^{i, j}}\right )^T = {\mathcal F}^{i, j} - \mathbf{X}_{P^i}\left ( \mathbf{X}_{P^j} \right )^T - \mathbf{X}_{P^j} \left ( \mathbf{X}_{P^i} \right )^T.
$$
We deduce that
\begin{align*}
&\mathbf{X}_{S^{1, 2}}\left ( \mathbf{X}_{S^{1, 2}}\right )^T + \mathbf{X}_{S^{1, 3}}\left ( \mathbf{X}_{S^{1, 3}}\right )^T + \mathbf{X}_{S^{2, 3}}\left ( \mathbf{X}_{S^{2, 3}}\right )^T\\
& = {\mathcal F}^{1, 2} + {\mathcal F}^{1, 3} + {\mathcal F}^{2, 3} \\
& \quad - \left (\mathbf{X}_{P^1}\left (\mathbf{X}_{P^2}\right )^T  + \mathbf{X}_{P^2} \left (\mathbf{X}_{P^1}\right )^T + \mathbf{X}_{P^1} \left (\mathbf{X}_{P^3}\right )^T + \mathbf{X}_{P^3} \left ( \mathbf{X}_{P^1}\right )^T + \mathbf{X}_{P^2}\left (\mathbf{X}_{P^3}\right )^T + \mathbf{X}_{P^3}\left (\mathbf{X}_{P^2}\right )^T \right).
\end{align*}
Finally, by adding and subtracting terms of the form $\mathbf{X}_{P^i}\left (\mathbf{X}_{P^i}\right )^T$, we observe that
\begin{align*}
& \mathbf{X}_{P^1}\left (\mathbf{X}_{P^2}\right )^T  + \mathbf{X}_{P^2} \left (\mathbf{X}_{P^1}\right )^T + \mathbf{X}_{P^1} \left (\mathbf{X}_{P^3}\right )^T + \mathbf{X}_{P^3} \left ( \mathbf{X}_{P^1}\right )^T + \mathbf{X}_{P^2}\left (\mathbf{X}_{P^3}\right )^T + \mathbf{X}_{P^3}\left (\mathbf{X}_{P^2}\right )^T \\
& \qquad = \mathbf{X}_{I(3)} \left (\mathbf{X}_{I(3)} \right )^T - {\mathcal P}^1 - {\mathcal P}^2 - {\mathcal P}^3.
\end{align*}
At this point we move every permutation invariant term in $\mathbf{X}_{S^{1, 2}}\left ( \mathbf{X}_{S^{1, 2}}\right )^T + \mathbf{X}_{S^{1, 3}}\left ( \mathbf{X}_{S^{1, 3}}\right )^T + \mathbf{X}_{S^{2, 3}}\left ( \mathbf{X}_{S^{2, 3}}\right )^T$ to the $\mathcal S$ on the left hand side of the identity
$$
{\mathcal Q}^T {\mathcal Q}-{\mathcal S} = \frac{\alpha}{2} \left (\mathbf{X}_{S^{1, 2}}\left ( \mathbf{X}_{S^{1, 2}}\right )^T + \mathbf{X}_{S^{1, 3}}\left ( \mathbf{X}_{S^{1, 3}}\right )^T + \mathbf{X}_{S^{2, 3}}\left ( \mathbf{X}_{S^{2, 3}}\right )^T \right),
$$
and redefine $\mathcal S$ accordingly.  This gives us
$$
{\mathcal Q}^T {\mathcal Q} - {\mathcal S} = -\frac{\alpha}{2} \mathbf{X}_{I(3)} \left (\mathbf{X}_{I(3)} \right )^T,
$$
where ${\mathcal S}\in \Hsym(3,4)$ by construction.  

\end{proof}

\subsection{Recovery Procedure}

Our main result in this section are our two recovery theorems in Subsection~\ref{ss:3Dresults}.

\begin{proof}[Proof of Theorem~\ref{t:recovery_3_d}]

Let $\mathcal{Q}\in \Htrace(3, 3)$, and define $
\eta_{\mathcal{Q}} : \mathbb{S}^2 \to \Hsym(3, 2)$ by
\begin{equation}\label{def_eta}
\eta_{\mathcal{Q}}(\mathbf{a}) = \sum_{j=1}^3 \langle \mathbf{e}^j, \mathbf{a} \rangle \mathcal{Q}_j.
\end{equation}
Then define $\mu_{\mathcal{Q}} : \mathbb{S}^2 \to \R{}$ by
\[
\mu_{\mathcal{Q}}(\mathbf{a}) = {\rm det}(\eta_{\mathcal{Q}}(\mathbf{a})).
\]
We will show that the maxima of $\mu_{\mathcal{Q}}$ in $\mathbb{S}^2$ are the vectors we seek.  We divide the proof in steps.

\medskip
\medskip

\noindent {\bf Step 1.}  The critical point condition for $\mu_{\mathcal{Q}}$ in $\mathbb{S}^2$ is
$$
(\nabla_{\mathbf{a}} \mu_{\mathcal{Q}} )(\mathbf{a}) = \sum_{i, j, k=1}^3  \langle \mathbf{a}, \mathbf{e}^j \rangle \langle \mathbf{a}, \mathbf{e}^k \rangle \langle \mathcal{Q}_i, \mathcal{Q}_j\mathcal{Q}_k \rangle \mathbf{e}^i = \lambda\, \mathbf{a},
$$
where $\lambda \in \R{}$ is a Lagrange multiplier for the constraint $\mathbf{a}\in \mathbb{S}^2$.  To prove this, let us recall that if $A \in \mathbb{H}(3, 2,\mathbb{R})$
has ${\rm tr}(A) = 0$, then Cayley-Hamilton Theorem tells us that
$$
A^3 -\frac{\abs{A}^2}{2}A - {\rm det}(A) {I}(3) = 0.
$$
From here we deduce that
$$
{\rm det}(A) = \frac{1}{3}\left ( {\rm tr} \left (A^3 \right ) \right ).
$$
Since $\mathcal{Q}\in \Htrace(3, 3)$, then $\eta_{\mathcal{Q}}(\mathbf{a}) \in \Htrace(3, 2)$ whenever $\mathbf{a}\in \mathbb{S}^2$.  Hence
$$
\mu_{\mathcal{Q}}(\mathbf{a}) = \frac{1}{3} \sum_{i, j, k=1}^3 \langle \mathbf{a}, \mathbf{e}^i \rangle \langle \mathbf{a}, \mathbf{e}^j \rangle \langle \mathbf{a}, \mathbf{e}^k \rangle {\rm tr}(\mathcal{Q}_i\mathcal{Q}_j\mathcal{Q}_k).
$$
We then deduce that the critical point condition for $\mu_{\mathcal{Q}}$ among $\mathbf{a}\in \mathbb{S}^2$ reads
$$
(\nabla_{\mathbf{a}} \mu_{\mathcal{Q}} )(\mathbf{a}) = \sum_{i, j, k=1}^3  \langle \mathbf{a}, \mathbf{e}^j \rangle \langle \mathbf{a}, \mathbf{e}^k \rangle {\rm tr}(\mathcal{Q}_i\mathcal{Q}_j\mathcal{Q}_k) \mathbf{e}^i = \lambda\, \mathbf{a},
$$
where $\lambda \in \R{}$ is a Lagrange multiplier for the constraint $\mathbf{a}\in \mathbb{S}^2$.  This is the claim of the step once we recall that for $A, B \in \mathbb{H}(3, 2)$ we have
$$
\langle A, B \rangle = {\rm tr}(B^TA).
$$

\medskip
\medskip

\noindent {\bf Step 2.}  If $\mathbf{a}\in \mathbb{S}^2$ is a critical point of $\mu_{\mathcal{Q}}$, then we claim that $\mathbf{a}$ satisfies the identity
$$
\eta_{\mathcal{Q}}(\mathbf{a}) \,\mathbf{a} = \frac{2\lambda}{\alpha} \, \mathbf{a},
$$
where $\lambda$ is the Lagrange multiplier from Step 1, and $\alpha$ is the real number in the hypothesis $\mathcal{Q}\mathcal{Q}^T = \alpha {I}(3)$.  To prove this, let us first recall that \eqref{product_of_blocks} tells us that
$$
\mathcal{Q}_j \mathcal{Q}_k = \sum_{l=1}^3 \mathcal{Q}_l \mathbf{e}^j \left (\mathbf{e}^k \right )^T \mathcal{Q}_l.
$$
From here we obtain
$$
\sum_{j, k=1}^3 \langle \mathbf{a}, \mathbf{e}^j\rangle \langle \mathbf{a}, \mathbf{e}^k \rangle \mathcal{Q}_j \mathcal{Q}_k = \sum_{l=1}^3 \mathcal{Q}_l \mathbf{a} \mathbf{a}^T \mathcal{Q}_l.
$$
We use this in the claim of Step 1 to deduce that critical points $\mathbf{a}$ of $\mu_{\mathcal{Q}}$ satisfy
$$
\sum_{i, l=1}^3 \langle \mathcal{Q}_i , \mathcal{Q}_l \mathbf{a}\mathbf{a}^T \mathcal{Q}_l \rangle \mathbf{e}^i = \lambda \,\mathbf{a}.
$$
From here and Corollary \ref{block_sum_property} we obtain
\begin{equation}\label{a_eigenvalue}
\sum_{i=1}^3 \langle \mathcal{Q}_i , \mathbf{a}\mathbf{a}^T \rangle \mathbf{e}^i = \frac{2\lambda}{\alpha} \,\mathbf{a}.
\end{equation}
Now we observe the following:
\begin{align*}
\sum_{i=1}^3 \langle \mathcal{Q}_i , \mathbf{a}\mathbf{a}^T \rangle \mathbf{e}^i &= \sum_{i, j, k=1}^3 \langle \mathbf{a}, \mathbf{e}^j\rangle \langle \mathbf{a}, \mathbf{e}^k\rangle \langle \mathcal{Q}_i , \mathbf{e}^j\left (\mathbf{e}^k\right )^T \rangle \mathbf{e}^i \\
&= \sum_{i, j, k=1}^3 \langle\mathbf{a}, \mathbf{e}^j\rangle \langle \mathbf{a}, \mathbf{e}^k\rangle \langle \mathcal{Q}_i\mathbf{e}^k , \mathbf{e}^j \rangle \mathbf{e}^i \\
&= \sum_{i, j, k=1}^3\langle \mathbf{a}, \mathbf{e}^j\rangle \langle \mathbf{a}, \mathbf{e}^k\rangle \langle \mathcal{Q}_k\mathbf{e}^i , \mathbf{e}^j \rangle \mathbf{e}^i,
\end{align*}
where the last identity is a consequence of \eqref{Q_i_e_j}.  Recalling the definition of $\eta_{\mathcal{Q}}$ from \eqref{def_eta}, this last sequence of identities can be summarized as follows:
$$
\sum_{i=1}^3 \langle \mathcal{Q}_i , \mathbf{a}\mathbf{a}^T \rangle \mathbf{e}^i = \sum_{i=1}^3 \langle \eta_{\mathcal{Q}}(\mathbf{a}) \mathbf{e}^i, \mathbf{a}\rangle \,\mathbf{e}^i.
$$
However, $\eta_{\mathcal{Q}}(\mathbf{a}) \in \Htrace(3, 2)$, so
$$
\sum_{i=1}^3 \langle \mathcal{Q}_i , \mathbf{a}\mathbf{a}^T \rangle \mathbf{e}^i = \sum_{i=1}^3 \langle \eta_{\mathcal{Q}}(\mathbf{a}) \mathbf{e}^i, \mathbf{a}\rangle \,\mathbf{e}^i = \eta_{\mathcal{Q}}(\mathbf{a})\mathbf{a}.
$$
Using this in \eqref{a_eigenvalue} we obtain the claim of this step.

\medskip
\medskip

\noindent {\bf Step 3.}  We claim there is an $R\in SO(3)$ such that $\tilde{\mathcal{Q}} = R\,\mathcal{Q} \mathcal{M}_{R^T} \in \Htrace(3, 3)$ has $\tilde{\mathcal{Q}}\tilde{\mathcal{Q}}^T = \alpha\,I(3)$ and
$$
\eta_{\tilde{\mathcal{Q}}}(\mathbf{e}^1)\mathbf{e}^1 = \frac{2\lambda}{\alpha} \,\mathbf{e}^1.
$$
To prove this let us start by observing that $\tilde{\mathcal{Q}} = R\mathcal{Q}\mathcal{M}_{R^T} \in \Hsym(3, 3)$ follows from Remark~\ref{rm:change_basis}.  

Next, if $R\in O(3)$,  $\mathcal{Q}\mathcal{Q}^T = \alpha\,{I}(3)$, and $\tilde{\mathcal{Q}} = R\,\mathcal{Q} \mathcal{M}_{R^T}$, then
$$
\tilde{Q}\tilde{Q}^T = R\mathcal{Q}\mathcal{M}_{R^T}\mathcal{M}_{R}\mathcal{Q}^T R^T = R\mathcal{Q}\mathcal{Q}^T R^T = \alpha\,R {I}(3)R^T = \alpha\, {I}(3).
$$
Next, a direct computation shows that
$$
\eta_{\mathcal{Q}}(\mathbf{b}) \mathbf{b} = R^T \eta_{\tilde{\mathcal{Q}}}(R\mathbf{b})R\mathbf{b}.
$$
Since $\eta_{\mathcal{Q}}(\mathbf{b}) \mathbf{b} = \frac{2\lambda}{\alpha} \, \mathbf{b}$, we conclude that $\eta_{\tilde{\mathcal{Q}}}(R\mathbf{b})R\mathbf{b} = \frac{2\lambda}{\alpha} \, R\mathbf{b}$.  Finally, the step follows by choosing $R\in SO(3)$ such that $R\mathbf{b} = \mathbf{e}^1$.

\medskip
\medskip

\noindent {\bf Step 4.}  We claim that 
$$
\tilde{\mathcal{Q}} = \left ( \begin{array}{ccccccccc} \gamma & 0 & 0 & 0 & a & b & 0 & b & -\gamma-a \\ 0 & a & b & a & c & d & b & d & -c \\ 0 & b & -\gamma-a & b & d & -c & -\gamma-a & -c & -d \end{array} \right ),
$$
where $\gamma = \frac{2\lambda}{\alpha}$ with  the constraints
$$
0 = -2\gamma \,b, \,\,\,  c^2 + d^2 = \gamma^2 +\gamma  \,a \,\,\,\mbox{and}\,\,\,  \gamma(\gamma+2a) = 0.
$$
To see this let us write $\tilde{\mathcal{Q}} = \left ( \tilde{\mathcal{Q}}_1 | \tilde{\mathcal{Q}}_2 | \tilde{\mathcal{Q}}_3 \right )$, and observe that Step 3 tells us that $\tilde{\mathcal{Q}}_1 \mathbf{e}^1 = \eta_{\tilde{\mathcal{Q}}}(\mathbf{e}^1)\mathbf{e}^1 = \frac{2\lambda}{\alpha}\mathbf{e}^1$.  Since $\tilde{\mathcal{Q}}_j \in \Htrace(3, 2)$, we obtain
$$
\tilde{\mathcal{Q}}_1 = \left ( \begin{array}{ccc} \gamma & 0 & 0 \\ 0 & a & b \\ 0 & b & -\gamma-a \end{array} \right ),
$$
for some real numbers $a, b \in \R$.  From here we deduce the rest of $\tilde{\mathcal{Q}}$ by imposing the condition $\tilde{\mathcal{Q}}\in \Htrace(3, 4)$.  The constraints $0 = -2\gamma \,b$, $c^2 + d^2 = \gamma^2 +\gamma  \,a$ and $\gamma(\gamma+2a) = 0$ follow from imposing the $\langle \tilde{\mathcal{Q}}_2 , \tilde{\mathcal{Q}}_3 \rangle = 0$, $\abs{\tilde{\mathcal{Q}}_1}^2 = \abs{\tilde{\mathcal{Q}}_2}^2$ and $\abs{\tilde{\mathcal{Q}}_2}^2 = \abs{\tilde{\mathcal{Q}}_3}^2$, respectively.

\medskip
\medskip

\noindent {\bf Step 5.}  We now analyze the possible cases for $\lambda$.  We start with the case $\lambda=0$.  From out Step 4, in this case $\tilde{\mathcal{Q}}$ reduces to
$$
\tilde{\mathcal{Q}} = \left ( \begin{array}{ccccccccc} 0 & 0 & 0 & 0 & a & b & 0 & b & -a \\ 0 & a & b & a & 0 & 0 & b & 0 & 0 \\ 0 & b & -a & b & 0 & 0 & -a & 0 & 0 \end{array} \right ).
$$
In this case we define
$$
\mathbf{w} = \frac{\sqrt{-b+ia}}{\sqrt{a^2+b^2}}, \,\,\,  \mathbf{z} = i\mathbf{w}.
$$
Observe that the vectors $\mathbf{c}^2 = \left ( {\rm Re}(\mathbf{w}), {\rm Im}(\mathbf{w}) \right )^T$, $\mathbf{c}^3 = \left ( {\rm Re}(\mathbf{z}), {\rm Im}(\mathbf{z}) \right )^T$, have $\langle \mathbf{c}^i, \mathbf{c}^j \rangle = \delta_{ij}$.  Define next
$$
\mathbf{a}^1 = \mathbf{e}^1, \,\,\,  \mathbf{a}^2 = \left (  \begin{array}{c} 0 \\ \mathbf{c}^2 \end{array}\right ), \,\,\, \mathbf{a}^3 = \left (  \begin{array}{c} 0 \\ \mathbf{c}^3 \end{array}\right ),
$$
and let $\mathbf{b}= -\frac{1}{\sqrt{3}} (\mathbf{a}^1 + \mathbf{a}^2 + \mathbf{a}^3)$.  Clearly $\langle \mathbf{a}^i, \mathbf{a}^j \rangle = \delta_{ij}$, so $\abs{\mathbf{b}} \in \mathbb{S}^2$.  A straight forward computation whoes that
$$
\eta_{\tilde{\mathcal{Q}}}(\mathbf{b})\mathbf{b} = \frac{2\sqrt{a^2+b^2}}{3}\mathbf{b}.  
$$
In other words, we have found $\mathbf{b}\in \mathbb{S}^2$ such that $\eta_{\tilde{\mathcal{Q}}}(\mathbf{b})$ has $\mathbf{b}$ as an eigenvector with non-zero eigenvalue.  This is the situation we consider in the next step.

\medskip
\medskip

\noindent {\bf Step 6.}  We consider here the case $\lambda\neq 0$.  Let us first observe that if $\lambda < 0$, we can always change $\mathbf{b}$ by $-\mathbf{b}$.  This will change $\lambda$ by $-\lambda$.  Hence, we can actually assume $\lambda > 0$.  Under this assumption, from Step 4 we obtain
$$
\tilde{\mathcal{Q}} = \left ( \begin{array}{ccccccccc} \gamma & 0 & 0 & 0 & a & b & 0 & b & -\gamma-a \\ 0 & a & b & a & c & d & b & d & -c \\ 0 & b & -\gamma-a & b & d & -c & -\gamma-a & -c & -d \end{array} \right ),
$$
where $\gamma = \frac{2\lambda}{\alpha}$.  From the constraints of Step 4 we also have $b=0$, $a=-\frac{\gamma}{2}$ and $c^2+d^2 = \frac{\gamma^2}{2}$.  This reduces $\mathcal{Q}$ to
$$
\tilde{\mathcal{Q}} = \left ( \begin{array}{ccccccccc} \gamma & 0 & 0 & 0 & -\frac{\gamma}{2} & 0 & 0 & 0 & -\frac{\gamma}{2} \\ 0 & -\frac{\gamma}{2} & 0 & -\frac{\gamma}{2} & c & d & 0 & d & -c \\ 0 & 0 & -\frac{\gamma}{2} & 0 & d & -c & -\frac{\gamma}{2} & -c & -d \end{array} \right ).
$$
Since we also have $\tilde{\mathcal{Q}}\tilde{\mathcal{Q}}^T = \alpha \,\mathbb{I}(3)$, we deduce that $\gamma^2 = \frac{2\alpha}{3}$.

\medskip
\medskip

Observe now that
$$
\mathcal{R} = \left (  \mathcal{R}_1 | \mathcal{R}_2 \right ) = \left (  \begin{array}{cccc} c & d & d & -c \\ d & -c & -c & -d \end{array} \right )
$$
satisfies $\mathcal{R}\mathcal{R}^T = \frac{\alpha}{3} {I}(2)$, and $\mathcal{R} \in \Htrace(2, 3)$.  Because of this, we can apply the 2D recovery procedure for such a $2\times 4$ tensor.  This yields vectors $\mathbf{c}^j \in \R^2$, $j=2, 3, 4$, such that $\langle \mathbf{c}^i, \mathbf{c}^j \rangle = \frac{\delta}{2}(3\delta_{ij}-1)$, and
$$
\mathcal{R}_j = \sum_{k=2}^4 \langle \mathbf{c}^k, \mathbf{f}^j \rangle \,  \mathbf{c}^k\left ( \mathbf{c}^k \right )^T, \,\,\,  j=1, 2.
$$
Here $\{\mathbf{f}^1, \mathbf{f}^2\}$ is the canonical basis of $\R^2$ such that
$$
\mathbf{e}^2 = \left (  \begin{array}{c} 0 \\ \mathbf{f}^1 \end{array}\right ) \,\,\,  \mbox{and}\,\,\, \mathbf{e}^3 = \left (  \begin{array}{c} 0 \\ \mathbf{f}^2 \end{array}\right ).
$$
We then define
$$
\mathbf{a}^1 = \mathbf{e}^1, \,\,\,  \mathbf{a}^j = -\frac{1}{3}\mathbf{e}^1 + \frac{2\sqrt{2}}{3}\left (  \begin{array}{c} 0 \\ \mathbf{c}^j \end{array}\right ), \,\,\,  j=2, 3, 4.
$$
A long straight forward computation shows that $\tilde{\mathcal{Q}} = \left (  \tilde{\mathcal{Q}}_1 | \tilde{\mathcal{Q}}_2 | \tilde{\mathcal{Q}}_3 \right )$ has
$$
\tilde{\mathcal{Q}}_j = \sum_{k=1}^4 \langle \mathbf{e}^j, \mathbf{a}^k\rangle \mathbf{a}^k \left (  \mathbf{a}^k \right )^T.
$$
This completes the proof of Theorem \ref{t:recovery_3_d}.
\end{proof}

\section{Appendix C. Potential for bent-core liquid crystals}
\label{sec:rad}

In this section we consider the more general potential for tensors $\mathcal{Q} \in \Htrace(3, 3)$ proposed in \cite{LR2002} to model bent-core liquid crystals.  The potential appears in equation (4.4) of \cite{LR2002}, and in view of equation (4.5b) of the same paper, it can be expressed as
\begin{equation}
    \label{eq:radlub}
    W(\mathcal{Q}) = \frac{\abs{\mathcal{Q}}^4}{4} - \frac{\alpha}{2}\abs{\mathcal{Q}}^2 + \frac{\beta}{4}\sum_{i, j = 1}^3 \langle \mathcal{Q}_i, \mathcal{Q}_j\rangle ^2.
\end{equation}

Here, for $\mathcal{Q} \in \Htrace(3, 3)$, we write $\mathcal{Q} = \left ( \mathcal{Q}_1 | \mathcal{Q}_2 | \mathcal{Q}_3 \right )$, where $\mathcal{Q}_j \in \Hsym(3, 2)$ has ${\rm tr}(\mathcal{Q}_j)=0$.  Our main result in this section is a rigorous version of a similar statement in \cite{LR2002} as described in the following proposition.

\begin{proposition}
For $\beta \leq -2$ the potential $W$ is unbounded from below.  For $\beta > -\frac{29}{15}$, $\beta \neq 0$, if $\alpha = 0$, the only global minimizer of $W$ is the tensor $\mathcal{Q}=0$.  Assume now $\alpha \neq 0$.  For $-\frac{29}{15}<\beta < 0$ the global minimizers of $W$ satisfy the condition that $\mathcal{Q}\mathcal{Q}^T$ is a rank-2 projection and correspond to the MB frames.  For $\beta > 0$ the global minimizers of $W$ satisfy the condition that $\mathcal{Q}\mathcal{Q}^T$ is a multiple of the $3\times 3$ identity matrix, and correspond to tetrahedral frames.  For $\beta = 0$, if $\alpha \leq 0$, the global only minimizer of $W$ is the tensor $\mathcal{Q}=0$.  For $\beta = 0$ and $\alpha > 0$, the set of global minimizers is the set tensors $\mathcal{Q}\in \Htrace(3, 3)$ that satisfy $\abs{\mathcal{Q}}^2 = \alpha$.
\end{proposition}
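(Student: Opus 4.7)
The strategy is to reduce the optimization of $W$ over the $7$-dimensional space $\Htrace(3,3)$ to a minimization in two scalar invariants. First observe, using $(\mathcal{Q}\mathcal{Q}^T)_{ij} = \langle \mathcal{Q}_i, \mathcal{Q}_j\rangle$, that
\[
s := |\mathcal{Q}|^2 = \sum_i |\mathcal{Q}_i|^2 = \operatorname{tr}(M), \qquad t := \sum_{i,j=1}^3 \langle \mathcal{Q}_i, \mathcal{Q}_j\rangle^2 = |M|^2 = \operatorname{tr}(M^2),
\]
where $M = \mathcal{Q}\mathcal{Q}^T \in \Hsym(3,2)$ is positive semidefinite. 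Consequently, the potential only depends on these two scalar invariants:
\[
W(\mathcal{Q}) = \frac{s^2}{4} - \frac{\alpha s}{2} + \frac{\beta t}{4}.
\]
Under the scaling $\mathcal{Q} \mapsto \lambda \mathcal{Q}$ we have $s \mapsto \lambda^2 s$ and $t \mapsto \lambda^4 t$, so the achievable set of invariants is completely determined by the scale-invariant ratio $\kappa := t/s^2$.

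The key technical lemma I would establish is that, for $\mathcal{Q} \in \Htrace(3,3) \setminus \{0\}$,
\[
\tfrac{1}{3} \le \kappa \le \tfrac{15}{29},
\]
with $\kappa = 1/3$ iff $M = (s/3)\, I(3)$, and $\kappa = 15/29$ attained at a codimension-one locus that includes (up to scaling and rotation) the embedded MB-frame configurations characterized in Theorem~\ref{t:isomorphism2D}. The lower bound is Cauchy--Schwarz applied to the three eigenvalues of $M$; the combination with Theorem~\ref{t:recovery_3_d} then identifies the equality case with scaled tetrahedral frames. The upper bound is the main technical obstacle: it requires solving a Lagrange-multiplier problem for $|\mathcal{Q}\mathcal{Q}^T|^2/|\mathcal{Q}|^4$ on the 7-dimensional space $\Htrace(3,3)$, using the algebraic identities (\ref{product_of_blocks})--(\ref{Q_i_e_j}) for the blocks and, crucially, the structural result from Appendix~B that any $\mathcal{Q} \in \Htrace(3,3)$ with rank-$2$ outer product $\mathcal{Q}\mathcal{Q}^T$ is automatically forced (via the analysis of $\mathcal{Q}_3=0$ in the eigenbasis of $M$, together with Theorem~\ref{t:isomorphism2D}) to lie in an embedded MB configuration with equal nonzero eigenvalues.

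With the range of $\kappa$ in hand, I would carry out the case analysis. \emph{Unboundedness for $\beta \le -2$:} along a family $\mathcal{Q}_n = n\,\mathcal{Q}_{\mathrm{MB}}$ with $\mathcal{Q}_{\mathrm{MB}}$ an embedded planar MB frame we have $\kappa_n \equiv 1/2$, so $W(\mathcal{Q}_n) = \tfrac{1}{4}(1+\beta/2) s_n^2 - \tfrac{\alpha}{2} s_n \to -\infty$ as $s_n \to \infty$. \emph{Case $\alpha = 0$, $\beta > -29/15$, $\beta\ne 0$:} for $\beta\ge 0$ we have $W \ge 0$ trivially; for $-29/15 < \beta < 0$ the bound $t \le (15/29)\, s^2$ yields $W \ge \tfrac{s^2}{4}(1 + 15\beta/29) > 0$ for $\mathcal{Q}\ne 0$. \emph{Case $\alpha \ne 0$, $\beta > 0$:} since $\beta>0$, minimizing over $t$ for fixed $s$ picks $t = s^2/3$ (the tetrahedral extreme), reducing $W$ to $\tfrac{(3+\beta) s^2 - 6\alpha s}{12}$, minimized at $s^* = 3\alpha/(3+\beta)$ when $\alpha > 0$ (at $s^*=0$ when $\alpha<0$); the minimum is realized precisely when $M = (s^*/3) I(3)$, and Theorem~\ref{t:recovery_3_d} identifies these $\mathcal{Q}$ as scaled tetrahedral frames. \emph{Case $\alpha \ne 0$, $-29/15 < \beta < 0$:} since $\beta<0$, for fixed $s$ one maximizes $t$ at the upper extreme $t = (15/29) s^2$, reducing to a one-variable quadratic in $s$ which is bounded below exactly when $1+15\beta/29 > 0$, with minimum achieved on the codimension-one extremal locus; by the equality case of the upper bound these minimizers correspond to $\mathcal{Q}\mathcal{Q}^T$ being a rank-$2$ (multiple of a) projection --- equivalently an embedded MB frame. \emph{Case $\beta = 0$:} $W = s^2/4 - \alpha s/2$ is the standard scalar Ginzburg--Landau, with minimum at $s = \alpha$ for $\alpha >0$ and at $\mathcal{Q}=0$ for $\alpha \le 0$.

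The hardest step by far is the sharp upper bound $\kappa \le 15/29$ with identification of the extremizer set, as the naively generic bound on a $3\times 3$ psd matrix only gives $\kappa \le 1$. Showing that the extra nonlinear algebra of $\Htrace(3,3)$ (encoded in the identities of Section~\ref{s:nD} and the analysis of $\mathcal{Q}^T\mathcal{Q}$ carried out in Appendix~\ref{appendix:b}) collapses this bound to $15/29$ is the central difficulty; once this is established, the remainder is case analysis and direct invocation of Theorems~\ref{t:isomorphism2D} and~\ref{t:recovery_3_d} for the geometric recovery of MB and tetrahedral frames from the optimal $M$.
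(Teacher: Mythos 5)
Your plan to reduce $W$ to the two invariants $s = |\mathcal{Q}|^2 = \operatorname{tr}(M)$ and $t = |\mathcal{Q}\mathcal{Q}^T|^2 = \operatorname{tr}(M^2)$, where $M = \mathcal{Q}\mathcal{Q}^T$, is essentially the same starting point as the paper's proof, which rotates $M$ to diagonal form and works with the eigenvalues $\lambda_j$ (so $s = \sum_j\lambda_j$, $t = \sum_j\lambda_j^2$). The scaling reduction to $\kappa = t/s^2$ is a clean rephrasing, and the lower bound $\kappa \ge 1/3$ with equality iff $M$ is isotropic is correct.

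However, there is a fatal numerical error in the claimed equality set for the upper bound. You assert that $\kappa = 15/29$ is attained at the rank-2 (embedded MB) configurations, but if $M = c\,\mathrm{diag}(1,1,0)$ then $s = 2c$, $t = 2c^2$, so $\kappa = 1/2 < 15/29$. Thus the MB configurations lie strictly inside the bound $\kappa \le 15/29$; that bound (obtained in Appendix~C by summing the three quadratic eigenvalue inequalities) is a necessary but not tight condition. This breaks your $\beta < 0$ case internally: minimizing $W$ over scalings at fixed shape gives $-\alpha^2/\bigl(4(1 + \beta\kappa)\bigr)$, which is strictly decreasing in $\kappa$ when $\beta < 0$, so if any $\mathcal{Q}\in\Htrace(3,3)$ realized $\kappa = 15/29$ it would strictly beat the rank-2 configurations, contradicting the proposition you are trying to prove. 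What your plan actually requires for $-29/15 < \beta < 0$ is the \emph{sharp} bound $\kappa \le 1/2$ with equality exactly on the rank-2 stratum; neither your sketch nor the Appendix~B identities you cite deliver that.

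The paper sidesteps the sharp-$\kappa$ question by staying in $\lambda$-coordinates and doing a stratified critical-point analysis of $\omega(\lambda) = \tfrac14(\sum_j\lambda_j)^2 - \tfrac{\alpha}{2}\sum_j\lambda_j + \tfrac{\beta}{4}\sum_j\lambda_j^2$ over the feasible cone: on the open stratum the unconstrained critical-point equations force $\lambda_1 = \lambda_2 = \lambda_3$ (tetrahedral); on the face $\lambda_3 = 0$ the inequality $\tfrac14(\lambda_1 - \lambda_2)^2 \le \lambda_3\bigl(\lambda_1 + \lambda_2 - \tfrac{2\lambda_3}{3}\bigr)$ forces $\lambda_1 = \lambda_2$ (MB); and the equality locus of the aggregate constraint only supplies the a~priori lower bound $-29\alpha^2/\bigl(4(29 + 15\beta)\bigr)$ giving coercivity for $\beta > -29/15$. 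Comparing the tetrahedral and rank-2 candidate values then shows the crossover occurs exactly at $\beta = 0$. The role you assign to the (empty) extremal locus $\kappa = 15/29$ is in fact played by the face $\lambda_3 = 0$, and your proposal offers no mechanism for locating or comparing a critical point on that face.
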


As discussed in \cite{LR2002}, this proposition indicates that there is a temperature at which phase transition occurs between two phases---one with tetrahedral and another with MB symmetry---that can be modeled using the energy of the type \eqref{eq:grafstrong}, but with the potential \eqref{eq:radlub} if one were to ignore contributions from lower moments.

\begin{proof}
For $R\in SO(3)$ define
$$
\mathcal{R}_R = \left ( \begin{array}{ccc} R_{11}R & R_{12}R & R_{13}R \\ R_{21}R & R_{22}R & R_{23}R \\ R_{31}R & R_{32}R & R_{33}R \end{array}\right ).
$$
We know that for $\mathcal{Q} \in \Htrace(3, 3)$ we have
$$
R\,\mathcal{Q}\,\mathcal{R}_R^T \in \Htrace(3, 3).
$$
Also, a direct computation shows that for any $\mathcal{Q}\in \Htrace(3, 3)$ and any $R\in SO(3)$ we have
$$
W(\mathcal{Q}) = W(R\,\mathcal{Q}\, \mathcal{R}_R^T).
$$
Since $\mathcal{Q}\mathcal{Q}^T \in \Hsym(3, 2)$ and is non-negative definite, we can find $R\in SO(3)$ such that
$$
R\mathcal{Q}\mathcal{Q}^TR^T = \left ( \begin{array}{ccc}\lambda_1 &0 &0 \\ 0 & \lambda_2 & 0 \\ 0 & 0 & \lambda_3 \end{array} \right ),
$$
where $\lambda_j = \abs{\mathcal{Q}_j}^2 \geq 0$, $j=1, 2, 3$, and $\abs{\mathcal{Q}}^2 = \lambda_1 + \lambda_2 + \lambda_3$.  For this $R\in SO(3)$, and writing $\tilde{\mathcal{Q}} = R\,\mathcal{Q}\,\mathcal{R}_R^T$, we have
$$
W(\mathcal{Q}) = W(\tilde{\mathcal{Q}}) = \frac{1}{4}\left ( \sum_{j=1}^3 \lambda_j \right )^2 - \frac{\alpha}{2} \sum_{j=1}^3 \lambda_j + \frac{\beta}{4}\sum_{j=1}^3 \lambda_j^2.
$$
Next, the fact that $\mathcal{Q}\in \Htrace(3, 3)$ gives us some constraints the $\lambda_j$, $j=1, 2, 3$, must satisfy.  To see what these are we let $\mathcal{Q}\in \Htrace(3, 3)$, and write it in the form $\mathcal{Q} = ( \mathcal{Q}_1 | \mathcal{Q}_2| \mathcal{Q}_3 )$, where
$$
\mathcal{Q}_1 = \left ( \begin{array}{ccc}a & b & c \\ b & d & e \\ c & e & -(a+d) \end{array}  \right ), \,\,\,  \mathcal{Q}_2 = \left ( \begin{array}{ccc} b & d & e \\ d & f & g \\  e & g & -(b+f) \end{array}  \right ) \,\,\,\mbox{and}\,\,\, \mathcal{Q}_3 = \left ( \begin{array}{ccc} c & e & -(a+d) \\ e & g & -(b+f) \\  -(a+d)&-(b+f)&-(c+g) \end{array}  \right ).
$$
Recalling the notation $\lambda_j = \abs{\mathcal{Q}_j}^2$, a straight forward computation shows that
$$
\lambda_1 + \lambda_2 - \frac{2\lambda_3}{3}\geq \frac{5}{8}(a^2 + f^2) + \frac{2}{3}(c-g)^2
$$
and
$$
\lambda_3 \geq \frac{3}{2}(c+g)^2 + 2(b+f)^2 + 2(a+d)^2.
$$
From here we obtain
\begin{align}
\frac{\abs{\lambda_1-\lambda_2}}{2} &= \abs{a^2+ad+c^2-g^2-bf-b^2} = \abs{(a, -f, c-g)\cdot (a+d, b+f, c+g)} \nonumber \\ &\leq \left (  \lambda_3 \left ( \lambda_1 + \lambda_2 - \frac{2\lambda_3}{3} \right ) \right )^{\frac{1}{2}}. \label{e:est_lam_1_minus_Lam_2}
\end{align}
The fact that $\mathcal{Q}\in \Htrace(3, 3)$ implies, in summary, that the following inequalities hold:
\begin{align}
\frac{(\lambda_1-\lambda_2)^2}{4} &\leq \lambda_3 \left ( \lambda_1+\lambda_2 - \frac{2\lambda_3}{3}\right ), \nonumber \\
\frac{(\lambda_2-\lambda_3)^2}{4} &\leq \lambda_1 \left ( \lambda_2+\lambda_3 - \frac{2\lambda_1}{3}\right )\,\,\,\mbox{and} \nonumber \\
\frac{(\lambda_1-\lambda_3)^2}{4} &\leq \lambda_2 \left ( \lambda_1+\lambda_3 - \frac{2\lambda_2}{3}\right ).  \label{eq:restr_lambdas}
\end{align}
Adding these three inequalities, and passing the cross terms all to the right hand side, we obtain
$$
\frac{7}{6}\sum_{j=1}^3 (\lambda_j)^2 \leq \frac{5}{2}(\lambda_1\lambda_2 + \lambda_1\lambda_3 + \lambda_2\lambda_3).
$$
Finally, adding $\frac{5}{4}\mathop{\sum}\limits_{j=1}^3 (\lambda_j)^2$ to both sides of this last inequality, we obtain
\begin{equation}\label{eq:restr_sum}
\frac{29}{15}\sum_{j=1}^3 ( \lambda_j)^2 \leq \left ( \sum_{j=1}^3 \lambda_j \right )^2.
\end{equation}

\medskip
\medskip

We then seek the critical points of
$$
\omega(\lambda_1, \lambda_2, \lambda_3) = \frac{1}{4}\left ( \sum_{j=1}^3 \lambda_j \right )^2 - \frac{\alpha}{2} \sum_{j=1}^3 \lambda_j + \frac{\beta}{4}\sum_{j=1}^3 (\lambda_j)^2,
$$
under the restrictions that $\lambda_j \geq 0$, $j=1, 2, 3$, and that contained in \eqref{eq:restr_sum}.

We first observe that $\beta > -\frac{29}{15}$ implies that, as $\max\{\lambda_1, \lambda_2, \lambda_3\}\to \infty$ under condition \eqref{eq:restr_sum}, we obtain $\omega(\lambda_1, \lambda_2, \lambda_3) \to \infty$.

Assume now all the restrictions hold with strict inequalities.  Differentiating $\omega$ with respect to $\lambda_j$ we obtain
$$
2\frac{\partial \omega}{\partial \lambda_j} = \sum_{i=1}^3 \lambda_i - \alpha + \beta \,\lambda_j.
$$
For $\lambda_j > 0$, $j=1, 2, 3$, $\nabla \omega = 0$ implies
$$
\beta\,\lambda_j = \alpha - \sum_{i=1}^3 \lambda_j.
$$
We deduce then that the critical points $\lambda^* = ( \lambda_1^*, \lambda_2^*, \lambda_3^*)$ satisfy $\lambda_1^* = \lambda_2^* = \lambda_3^* = \frac{\alpha}{3+\beta}$.  In this case $\mathcal{Q}\mathcal{Q}^T$ is a multiple of the $3\times 3$ identity matrix, and
$$
W(\lambda_1^*, \lambda_2^*, \lambda_3^*) = -\frac{3\alpha^2}{3+\beta}.
$$

If one of the $\lambda_j=0$, say $\lambda_3=0$, we have
$$
\omega(\lambda_1, \lambda_2, 0) = \frac{1}{4}\left ( \sum_{j=2}^3 \lambda_j \right )^2 - \frac{\alpha}{2} \sum_{j=1}^2 \lambda_j + \frac{\beta}{4}\sum_{j=2}^2 (\lambda_j)^2,
$$
and the critical point condition for $\lambda_* = (\lambda_1^*, \lambda_2^*, 0)$ then becomes $\lambda_3=0$, $\lambda_1^* = \lambda_2^* = \frac{\alpha}{2+\beta}$, and
$$
\omega(\lambda_1^*, \lambda_2^*, 0) = \frac{-2\alpha^2}{2+\beta}.
$$
In this case $\mathcal{Q}\mathcal{Q}^T$ is a multiple of a rank-$2$ projection.

\medskip
\medskip

Next, if two of the $\lambda_j=0$, condition \eqref{eq:restr_sum} implies that the third $\lambda_j$ is also $0$.  

Next, if condition \eqref{eq:restr_sum} holds with equality, $\omega(\lambda_1, \lambda_2, \lambda_3)$ depends of only one variable, which we may call $t = \mathop{\sum}\limits_{j=1}^3 \lambda_j$.  Under this restriction, the minimum value $\omega_*$ of $\omega$ is
$$
\omega_* = -\frac{29\alpha^2}{4(29+15\beta)}.
$$
Finally, from all these computations, it is easy to check that the conclusions of the Proposition hold.

\end{proof}




\section{Acknowledgements}
DG was supported in part by the NSF grant DMS-2106551. DS was supported in part by the NSF grant DMS-2009352. The authors would like to thank the IMA where the project was initiated.

\vskip1cm

\noindent {\bf Declarations } 

\noindent Availability of Data and Materials: Data will be made available on reasonable request.

\noindent Conflict of interest: The authors declare that they have no conflict of interests.

\bibliographystyle{abbrv}

\bibliography{Frames,GLFbib}

\end{document}